\newtheorem{theorem}{Theorem}[section]
    \newtheorem*{theorem*}{Main Theorem}
    \newtheorem{corollary}[theorem]{Corollary}
    \newtheorem*{corollary*}{Corollary}
    \newtheorem{proposition}[theorem]{Proposition}
    \newtheorem{lemma}[theorem]{Lemma}
    \newtheorem{mainthm}{Theorem}
\theoremstyle{definition}
    \newtheorem{definition}[theorem]{Definition}
    \newtheorem{setup}[theorem]{Setup}
    \newtheorem{remark}[theorem]{Remark}
	\newtheorem{notation}[theorem]{Notation}
    \newtheorem*{setup*}{Setup}
    \newtheorem*{definition*}{Definition}
    \newtheorem*{remark*}{Remark}
    \newtheorem{example}[theorem]{Example}
\def\Ob{\operatorname{Ob}}
\def\Ext{\operatorname{Ext}}
\def\mod{\operatorname{\mathsf{mod}}}
\def\Inj{\operatorname{\mathsf{-Inj}}}
\def\Ab{\mathsf{Ab}}
\def\add{\operatorname{\mathsf{add}}}
\newcommand{\op}{\mathsf{op}}
\renewcommand{\sp}{\mathsf{sp}}
\newcommand{\A}{\mathscr{A}}
\newcommand{\B}{\mathscr{B}}
\newcommand{\C}{\mathscr{C}}
\newcommand{\D}{\mathscr{D}}
\newcommand{\E}{\mathcal{E}}
\newcommand{\I}{\mathcal{I}}
\newcommand{\K}{\mathcal{K}}
\renewcommand{\L}{\mathcal{L}}
\newcommand{\M}{\mathscr{M}}
\newcommand{\N}{\mathscr{N}}
\newcommand{\Q}{\mathcal{Q}}
\newcommand{\R}{\mathcal{R}}
\newcommand{\T}{\mathcal{T}}
\newcommand{\U}{\mathcal{U}}
\newcommand{\V}{\mathcal{V}}
\newcommand{\X}{\mathscr{X}}
\newcommand{\Ker}{\operatorname{Ker}}
\renewcommand{\Im}{\operatorname{Im}}
\newcommand{\Cok}{\operatorname{Cok}}
\newcommand{\id}{\mathsf{id}}
\newcommand{\Sn}{\mathscr{S}_\N}
\def\cone{\operatorname{\mathsf{Cone}}}
\def\cocone{\operatorname{\mathsf{CoCone}}}
\def\fib{\operatorname{\mathsf{F}}}
\def\cof{\operatorname{\mathsf{C}}}
\def\weq{\operatorname{\mathsf{W}}}
\def\veq{\operatorname{\mathsf{V}}}
\def\Iso{\operatorname{\mathsf{Iso}}}
\def\isoclass{\operatorname{\mathsf{isoclass}}}
\def\seq{\operatorname{\mathsf{Seq}}}
\def\ind{\operatorname{\mathsf{ind}}}
\def\Ho{\operatorname{\mathsf{Ho}}}
\newcommand\ET{\mathsf{Extri}}
\newcommand\TR{\mathsf{Tri}}
\newcommand\EX{\mathsf{Exa}}
\newcommand\AB{\mathsf{Abel}}
\newcommand\Wald{\mathsf{Wald}}
\newcommand\wWald{\mathsf{wWald}}
 \newcommand{\lra}{\longrightarrow}    
 \newcommand{\dra}{\dashrightarrow}    
\let\overlinedtocsubsection=\tocsubsection
\renewcommand{\tocsubsection}[2]{\hspace{21pt}\overlinedtocsubsection{#1}{#2}}
	\newcommand{\deff}{\coloneqq}
	\newcommand{\sse}{\subseteq}
	\newcommand{\rtail}{\rightarrowtail}
	\newcommand{\into}{\hookrightarrow}
	\newcommand{\onto}{\twoheadrightarrow}
        \newcommand{\iso}{\cong}
	\newcommand{\cok}{\operatorname{cok}\nolimits}
        \newcommand{\ac}{\mathsf{ac}}
        \renewcommand{\sec}{\mathsf{sec}}
        \newcommand{\ret}{\mathsf{ret}}
        \newcommand{\inc}{\mathsf{inc}}
    \newcommand{\fs}{\mathfrak{s}}
    \newcommand{\ft}{\mathfrak{t}}
	\newcommand{\BE}{\mathbb{E}}
	\newcommand{\BF}{\mathbb{F}}
        \newcommand{\BW}{\mathbb{W}}
        \newcommand{\BY}{\mathbb{Y}}
	\newcommand{\BZ}{\mathbb{Z}}
	\newcommand{\CE}{\mathcal{E}}
\newcommand{\bincoprod}{\mathbin\amalg}
	\tikzset{commutative diagrams/.cd, 
		mysymbol/.style = {start anchor=center, end anchor = center, draw = none}}
    \tikzset{
    labl/.style={anchor=north, rotate=90, inner sep=1mm}
    }
	\let\amph=& 
\newcommand{\PO}[1]{\arrow[mysymbol]{#1}[description]{\mathrm{(PO)}}}
\newcommand{\wPO}[1]{\arrow[mysymbol]{#1}[description]{\mathrm{(wPO)}}}
\newcommand{\wPB}[1]{\arrow[mysymbol]{#1}[description]{\mathrm{(wPB)}}}
 \def\@setaddresses{\par
     \nobreak \begingroup
     \setstretch{1} 
     \setlength{\parindent}{0cm}
     \footnotesize
     \def\author##1{\nobreak\addvspace\bigskipamount}%
     \def\\{\unskip, \ignorespaces}%
     \interlinepenalty\@M
     \def\address##1##2{\begingroup
         \par\addvspace\bigskipamount
     \@ifnotempty{##1}{(\ignorespaces##1\unskip) }%
     {\scshape\ignorespaces##2}\par\endgroup}%
     \def\curraddr##1##2{\begingroup
     \@ifnotempty{##2}{\nobreak\curraddrname
     \@ifnotempty{##1}{, \ignorespaces##1\unskip}\/:\space
     ##2\par}\endgroup}%
     \def\email##1##2{\begingroup
     \@ifnotempty{##2}{\nobreak\emailaddrname
     \@ifnotempty{##1}{, \ignorespaces##1\unskip}\/:\space
     \ttfamily##2\par}\endgroup}%
     \def\urladdr##1##2{\begingroup
     \def~{\char'\~}%
     \@ifnotempty{##2}{\nobreak\urladdrname
     \@ifnotempty{##1}{, \ignorespaces##1\unskip}\/:\space
     \ttfamily##2\par}\endgroup}%
     \addresses
     \endgroup
 }
\begin{document}
\title{Weak Waldhausen categories and a localization theorem}

\author[Ogawa]{Yasuaki Ogawa}
	\address{Faculty of Engineering Science, Kansai University, Suita Osaka 564-8680, Japan}
	\email{y\_ogawa@kansai-u.ac.jp} 

\author[Shah]{Amit Shah}
	\address{Department of Mathematics, City St George's, University of London, London EC1V 0HB, UK}
	\email{amit.shah.3@city.ac.uk}

\keywords{%
Extriangulated category, 
Grothendieck group, 
localization, 
Waldhausen category, 
weak Waldhausen category%
}

\subjclass[2020]{%
Primary 18F25; Secondary 18E05, 18E35, 18G50, 18G80}

\begin{abstract}
Waldhausen categories were introduced to extend algebraic $K$-theory beyond Quillen's exact categories. In this article, we modify Waldhausen's axioms so that it matches better with the theory of extriangulated categories, introducing a \emph{weak Waldhausen} category and defining its Grothendieck group. Examples of weak Waldhausen categories include any extriangulated category, hence any exact or triangulated category, and any Waldhausen category. A key feature of this structure is that it allows for ``one-sided'' extriangulated localization theory, and thus enables us to extract right exact sequences of Grothendieck groups that we cannot obtain from the theory currently available.

To demonstrate the utility of our Weak Waldhausen Localization Theorem, we give three applications. First, we give a new proof of the Extriangulated  Localization Theorem proven by Enomoto--Saito, which is a generalization at the level of $K_0$ of Quillen's classical Localization Theorem for exact categories. Second, we give a new proof that the index with respect to an $n$-cluster tilting subcategory $\mathscr{X}$ of a triangulated category $\mathscr{C}$ induces an isomorphism between $K_0^{\mathsf{sp}}(\mathscr{X})$ and the Grothendieck group of an extriangulated substructure of $\mathscr{C}$. Last, we produce a weak Waldhausen $K_0$-generalization of a localization construction due to Sarazola that involves cotorsion pairs but allows for non-Serre localizations. We show that the right exact sequences of Grothendieck groups obtained from our Sarazola construction and the Extriangulated Localization Theorem agree under a common setup.
\end{abstract}

\maketitle

{\footnotesize
\tableofcontents
}

\setlength{\baselineskip}{15pt}
\section{Introduction}
\label{sec:intro}

The idea behind $K$-theory is to assign invariants, known as $K$-groups, to an object $X$ of interest, e.g.\ a ring, topological space, category etc. 
Although this purpose of $K$-groups is modestly stated, they can (if well-defined) reflect key properties of $X$ and in doing so have been used to answer significant questions. Indeed, $K$-theory has a broad spectrum of connections to other fields, including algebraic geometry, topology, number theory, ring theory, and string theory (see e.g.\ \cite{Wei99}, \cite[Preface]{FG05}, \cite[p.~x]{Wei13}).

The $K$-theory of a ring $R$ begins with its Grothendieck group $K_{0}(R)$ (due to Grothendieck \cite{SGA6}), the Whitehead group $K_{1}(R)$ (due to Bass--Schanuel \cite{BS62}), and the group $K_{2}(R)$ (due to Milnor \cite{Mil71}), all defined using linear algebra and basic group theory. 
However, it turned out that producing a suitable notion of $K_{i}(R)$ for $i\geq 3$ using a similar approach was somewhat harder to see at that time.

This was impressively overcome by Quillen through his $+$-construction for rings \cite{Qui71a}. Firstly, his work used geometric and topological methods, and secondly he produced higher $K$-groups $K_{i}(R)$ ($i \geq 0$) not one at a time, but rather all at once as the homotopy groups of a classifying space. In further groundbreaking work \cite{Qui73}, Quillen laid out his $Q$-construction, which resulted in a unified and widely accepted higher algebraic $K$-theory for exact categories. 
This was the beginning of higher algebraic $K$-theory and Quillen's efforts in this direction earned him the Fields Medal in $1978$.

However, there was one drawback: computation in algebraic $K$-theory was generally challenging. 
To mitigate this, Quillen gave several reduction results \cite[Thms.~3, 4, 5]{Qui73}, allowing one to determine the $K$-theory of more complicated categories via simpler ones. 
The Localization Theorem \cite[Thm.~5]{Qui73} is one of the most powerful. It yields a long exact sequence 
\[
\begin{tikzcd}
\cdots
    \arrow{r}
& K_{1}(\A/\B)
    \arrow{r}
& K_{0}(\B)
    \arrow{r}
& K_{0}(\A)
    \arrow{r}
& K_{0}(\A/\B)
    \arrow{r}
& 0
\end{tikzcd}
\]
of $K$-groups from an exact sequence $\B \into \A \to \A/\B$ of abelian categories (see Definition~\ref{def:exact_seq_ET}), where $\B$ is a Serre subcategory of $\A$ and $\A/\B$ is the Serre quotient (i.e.\ a certain localization of $\A$). 
This extended the right exact sequence of only Grothendieck groups given by Bass \cite[Ch.~VIII, Cor.~5.5]{Bas68} (see also \cite{Hel65}) to the left using higher $K$-groups. 
A generalization of Quillen's result for exact categories was proven by C\'{a}rdenas-Escudero \cite[Thm.~7.0.63]{C-E98}.

Despite the wide scope of Quillen's $K$-theory, it was not sufficiently general for some applications $K$-theorists had in mind, e.g.\ where triangulated categories come into play. 
To tackle this problem, Waldhausen \cite{Wal85} introduced a generalization of exact categories, now called  Waldhausen categories (see Definition~\ref{def:Waldhausen_category}). Waldhausen even gave non-additive examples arising from topology \cite[Exam.~II.9.1.4]{Wei13}. 
More importantly, it was shown that an algebraic $K$-theory still exists for Waldhausen categories and a Localization Theorem \cite[Thm.~1.6.4]{Wal85} was also proved.

As a consequence of Waldhausen's theory, 
Thomason--Trobaugh \cite{TT90} established a $K$-theory for derived categories (of complicial biWaldhausen categories), as well as a Localization Theorem for the $K$-theory of schemes that was a substantial improvement on previous results in the literature; and later, Schlichting produced a $K$-theory for algebraic triangulated categories \cite{Sch06}.
In particular, in analogy with Quillen's \cite[Thm.~5]{Qui73} recalled above, the exact sequence of triangulated categories associated to the Verdier quotient of an algebraic triangulated category yields a long exact sequence of $K$-groups.

In this paper, we bring together several of the above ideas from the modern perspective of Nakaoka--Palu's extriangulated categories \cite{NP19} (see Section~\ref{sec:ET}), which give a simultaneous generalization of exact and triangulated categories. 
See Figure~\ref{fig:summary} below for a summary of the interactions we establish between these ideas. 
A localization theory for extriangulated categories was developed in \cite{NOS22} by Nakaoka--Ogawa--Sakai, which recovers classical localizations such as the Serre and Verdier quotients of abelian and triangulated categories, respectively (see Section~\ref{sec_localization}).
Our first main result (see Theorem~\ref{ThmA} below) is motivated by Schlichting's cylinder-free Localization Theorem \cite[Thm.~11]{Sch06} for Waldhausen categories. 
However, in order to draw from the theories of Waldhausen and extriangulated categories at the same time, we introduce the notion of a \emph{weak Waldhausen} (additive) category $(\C,\seq,\weq)$ and define its Grothendieck group $K_{0}(\C,\seq,\weq)$. 
Just like how a Waldhausen category is a category $\C$ equipped with classes $\cof$ and $\weq$ of cofibrations and weak equivalences, the data of a weak Waldhausen category includes classes $\seq$ and $\weq$ of distinguished $3$-term sequences and weak equivalences; see Definition~\ref{def:Waldhausen_category} and Definition~\ref{def:weak_Wald_cat}. 
An (additive) Waldhausen category $(\C,\cof,\weq)$ gives rise canonically to a weak Waldhausen category $(\C,\seq,\weq)$, where $\seq$ is the class of cofibration sequences of $(\C,\cof,\weq)$ (see Proposition~\ref{prop:Wald_vs_wWald}).
The following Localization Theorem for weak Waldhausen categories summarises our Theorem~\ref{thm:Quillen_localization1_new} and Corollary~\ref{cor:Quillen_localization2}.

\begin{mainthm}
\label{ThmA}
Suppose $(\C,\seq,\veq)$ and $(\C,\seq,\weq)$ are weak Waldhausen categories with $\veq\sse\weq$, and let $(\C^{\weq},\seq^{\weq},\veq^{\weq})$ be the weak Waldhausen subcategory of $\weq$-acyclic objects.
If 
\[
(\C^{\weq},\seq^{\weq},\veq^{\weq})\to (\C,\seq,\veq)\to (\C,\seq,\weq)
\]
is a localization sequence (see Definition~\ref{def:exact_sequence_wWald}) 
of weak (bi)Waldhausen categories, 
such that any weak equivalence in $\weq$ admits a suitable factorization, 
then we have an exact sequence of their Grothendieck groups as follows.
\begin{equation}\label{seq:ThmA}
\begin{tikzcd}
    K_{0}(\C^{\weq},\seq^{\weq},\veq^{\weq}) 
        \arrow{r}
    & K_{0}(\C,\seq,\veq)
        \arrow{r}
    & K_{0}(\C,\seq,\weq)
        \arrow{r}
    & 0
\end{tikzcd}
\end{equation}
\end{mainthm}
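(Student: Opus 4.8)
The plan is to compare the three Grothendieck groups directly. The crucial observation is that the middle and right-hand weak Waldhausen categories have the \emph{same} underlying additive category $\C$ and the \emph{same} class $\seq$ of distinguished sequences, and differ only in their classes of weak equivalences, with $\veq\sse\weq$. Consequently every defining relation of $K_{0}(\C,\seq,\veq)$ is also a relation in $K_{0}(\C,\seq,\weq)$, so there is a canonical homomorphism $q\colon K_{0}(\C,\seq,\veq)\to K_{0}(\C,\seq,\weq)$ which is the identity on the class $[X]$ of each object. In particular $q$ is surjective, which gives exactness of \eqref{seq:ThmA} at $K_{0}(\C,\seq,\weq)$. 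Writing $N$ for the image of the homomorphism $f\colon K_{0}(\C^{\weq},\seq^{\weq},\veq^{\weq})\to K_{0}(\C,\seq,\veq)$ induced by the inclusion $\C^{\weq}\into\C$ (so $f([A])=[A]$), it then remains to prove $N=\Ker q$, which is exactness at $K_{0}(\C,\seq,\veq)$.

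To see $N\sse\Ker q$, note that if $A$ is a $\weq$-acyclic object then, by the definition of the weak Waldhausen subcategory $(\C^{\weq},\seq^{\weq},\veq^{\weq})$, the object $A$ is linked to $0$ by a weak equivalence in $\weq$, whence $q([A])=[A]=[0]=0$ in $K_{0}(\C,\seq,\weq)$. As such classes generate $N$, this gives $N\sse\Ker q$; equivalently, \eqref{seq:ThmA} is a complex.

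The reverse inclusion $\Ker q\sse N$ is the substance of the theorem, and I would prove it by exhibiting a two-sided inverse to the induced surjection $\overline{q}\colon K_{0}(\C,\seq,\veq)/N\to K_{0}(\C,\seq,\weq)$, namely the map $s\colon K_{0}(\C,\seq,\weq)\to K_{0}(\C,\seq,\veq)/N$ defined on generators by $[X]\mapsto[X]+N$. The point is to check that $s$ is well defined, i.e.\ that it respects the relations defining $K_{0}(\C,\seq,\weq)$; the $\seq$-relations already hold in $K_{0}(\C,\seq,\veq)$, so it suffices to verify that $[X]-[X']\in N$ for every weak equivalence $w\colon X\to X'$ in $\weq$. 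This is where the factorization hypothesis and the bi-Waldhausen structure enter: using them, factor $w$ through a cylinder-type object as an inflation followed by a deflation, $X\rtail T\onto X'$, where both factors are again weak equivalences in $\weq$. Completing $X\rtail T$ and $T\onto X'$ to distinguished sequences $X\rtail T\onto C$ and $K\rtail T\onto X'$ in $\seq$, the axioms of a weak Waldhausen category (notably the gluing/extension property of weak equivalences, \cref{def:weak_Wald_cat}) force $C$ and $K$ to be $\weq$-acyclic, hence $[C],[K]\in N$. Since $[X]+[C]=[T]=[K]+[X']$ in $K_{0}(\C,\seq,\veq)$, we obtain $[X]-[X']=[K]-[C]\in N$, as needed. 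Then $s$ is well defined, and $s$ and $\overline{q}$ are mutually inverse because both are the identity on object classes; hence $\Ker q=N$ and \eqref{seq:ThmA} is exact at $K_{0}(\C,\seq,\veq)$.

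I expect the main obstacle to be the middle of the last paragraph: extracting, from the precise notion of a localization sequence of weak (bi)Waldhausen categories (\cref{def:exact_sequence_wWald}) together with the ``suitable factorization'' hypothesis, the exact cylinder factorization $X\rtail T\onto X'$ with both factors weak equivalences, and then verifying \emph{from the weak Waldhausen axioms alone} that the cofiber of a weak-equivalence inflation (resp.\ the kernel of a weak-equivalence deflation) is $\weq$-acyclic, so that its class genuinely lies in $N$ rather than merely in $\Ker q$. One must also check that $C$ and $K$ really belong to the subcategory $\C^{\weq}$, so that $[C]$ and $[K]$ are defined in $K_{0}(\C^{\weq},\seq^{\weq},\veq^{\weq})$ and are carried by $f$ to the classes used above.
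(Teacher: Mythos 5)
Your proposal follows essentially the same route as the paper's proof of \cref{thm:Quillen_localization1_new} and \cref{cor:Quillen_localization2}: both arguments identify $K_{0}(\C,\seq,\weq)$ with the cokernel of $K_{0}(\C^{\weq},\seq^{\weq},\veq^{\weq})\to K_{0}(\C,\seq,\veq)$ by checking that the extra relations imposed by $\weq$ already hold modulo the image $N$, the key point being that each generating weak equivalence sits in a distinguished sequence whose third (resp.\ first) term is $\weq$-acyclic, so that its class lies in $N$. Two calibrations are needed. First, the precise form of the ``suitable factorization'' hypothesis is that every morphism of $\weq$ is a \emph{finite composition} of morphisms from $\L^{\ac}\cup\T\cup\veq$ (resp.\ $\L^{\ac}\cup\R^{\ac}\cup\veq$ in the weak biWaldhausen case; see \cref{def:mors_assoc_to_wWH_cat}), not a single two-step cylinder factorization $X\rtail T\onto X'$; this changes nothing of substance, since $[X]-[X']$ telescopes over the factors and each generator is treated exactly as you treat the two factors through $T$. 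Second, on the obstacle you flagged: for a cofibration lying in $\weq$, the $\weq$-acyclicity of any distinguished weak cokernel does follow from the weak Waldhausen axioms alone, via the gluing axiom in the form \ref{WW1'} (this is \cref{lem:L_sec}); but for a fibration lying in $\weq$, acyclicity of a cocone is \emph{not} a consequence of the weak Waldhausen axioms --- it requires either the weak coWaldhausen half of a biWaldhausen structure (\cref{lem:weak-biWald-implies-R-is-T}) or is built directly into the hypothesis through the class $\T$, whose members come equipped by definition with a distinguished sequence having $\weq$-acyclic first term. With these two adjustments your argument coincides with the paper's (which phrases the inverse map via the universal property of $K_{0}(\C,\seq,\weq)$ rather than defining $s$ on generators, an immaterial difference); note also that the localization-sequence property is a hypothesis of \cref{ThmA} and plays no role in the $K_{0}$ computation, so you were right not to use it.
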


As a first application of Theorem~\ref{ThmA}, we prove that 
an extriangulated localization sequence (see Theorem~\ref{thm:NOS} and Remark~\ref{rem:on-theorem-4-14}\ref{item:extri_localization}) naturally gives rise to a localization sequence of weak Waldhausen categories which induces a right exact sequence \eqref{seq:ThmA}.
Consequently, we recover and, through the lens of weak Waldhausen categories, give a new proof of Enomoto--Saito's \cite[Cor.~4.32]{ES22}, which is a Localization Theorem for extriangulated categories; see Corollary~\ref{cor:ES_via_Waldhausen}.

An important aspect of the viewpoint we adopt here is that weak Waldhausen categories allow us to consider ``one-sided'' localizations that are beyond the scope of the extriangulated localization theory of \cite{NOS22}. This viewpoint was taken in \cite{Oga22a, Oga22b}, unifying several important constructions in the literature e.g.\ \cite{BBD,KZ08, KR07,Bel13,BM12,BM13}. In Section~\ref{sec:application-to-tri-cat}, 
we prove Theorem~\ref{thm:ex_seq_wWald_from_abel_loc} and exploit this one-sided localization perspective to understand the index in triangulated categories.

The index with respect to a $2$-cluster tilting subcategory of a triangulated category $\C$ was introduced by Palu \cite{Pal08}, and this definition was generalized to $n$-cluster tilting subcategories $\X$ in \cite{Jor21} for $n\geq 2$ an integer. The importance of the index is evidenced, for instance, by its use in the theory of 
cluster algebras \cite{Pal08,DehyKeller-On-the-combinatorics-of-rigid-objects-in-2-Calabi-Yau-categories,Pal09,Plam11} and of
friezes \cite{Guo-On-tropical-friezes-associated-with-Dynkin-diagrams,HolmJorgensen-generalized-friezes-and-a-modified-caldero-chapoton-map-depending-on-a-rigid-object-1,HolmJorgensen-generalized-friezes-and-a-modified-caldero-chapoton-map-depending-on-a-rigid-object-2,Jor21,JS21}. 
The index with respect to $\X$ induces an isomorphism between the split Grothendieck group
$K^{\sp}_{0}(\X)$ and the Grothendieck group 
of a certain extriangulated substructure of the triangulated structure on $\C$; see Corollary~\ref{cor:JS_index} or \cite[Cor.~5.5]{OS23}, and also \cite[Thm.~B]{JS23}, \cite[Prop.~4.11]{PPPP19}. 
Using Theorem~\ref{thm:ex_seq_wWald_from_abel_loc}, and hence as a second application of Theorem~\ref{ThmA}, we give a proof of this index isomorphism using some reduction-style techniques; see Section~\ref{sec:JS_index}. 
In fact, this process recovers the Extriangulated Resolution Theorem \cite[Thm.~4.5]{OS23}, demonstrating that there is a close connection between the Localization Theorem and the Resolution Theorem, two fundamental $K$-theoretic reduction theorems.

As our third and final application of Theorem~\ref{ThmA}, we generalize to the extriangulated setting a construction due to Sarazola \cite{Sar20}. 
Sarazola \cite[Thm.~3]{Sar20} gives a Localization Theorem for exact categories allowing one to ``localize'' at subcategories that are not necessarily Serre, and this is done using Waldhausen categories and cotorsion pairs. 
We produce a $K_{0}$-version of \cite[Thm.~3]{Sar20} for extriangulated categories and cotorsion pairs on them, using our weak Waldhausen categories; see Proposition~\ref{prop:Sarazola_homotopy_fiber}.

We will encounter exact sequences of extriangulated categories and Grothendieck groups, and localization sequences of (weak) Waldhausen categories in this article. We summarize 
some key ideas and links we make in this article in Figure~\ref{fig:summary}:

\begin{figure}[h]
\begin{tikzpicture}
    \node[rectangle, draw=black, text centered,align=center] at (-0.5,3.5) (ABTR) 
    {Serre and Verdier\\quotients};
    \node[rectangle, draw=black, text centered, minimum height=1.0cm, minimum width=1.0cm,align=center] at (-0.5,1) (ET) {
    Exact sequence\\of
    extriangulated\\categories};
  \node[rectangle, draw=black, text centered, minimum height=1.0cm, minimum width=1.0cm,align=center] at (-0.5,-1) (rET) {
    One-sided exact\\localization of\\
    extriangulated categories};
  \node[rectangle, draw=black, text centered, minimum height=1.0cm, minimum width=1cm,align=center] at (6,2.5) (Wald) {
    Localization sequence of\\
    Waldhausen categories};
  \node[rectangle, draw=black, text centered, minimum height=1.0cm, minimum width=1cm,align=center] at (6,0) (wWald) {
    Localization sequence of\\weak Waldhausen\\categories};
  \node[rectangle, draw=black, text centered, minimum height=1.0cm, minimum width=1cm,align=center] at (6,-2.5) (Gr) {
    Right exact\\sequence of\\
    Grothendieck groups};
    \node[rectangle, draw=black, text centered, minimum height=0.5cm, minimum width=1cm,align=center] at (11.5,1.25) (Co) {Cotorsion pairs};
  \draw[->] (ABTR) -- node[right] {\cite{NOS22}} (ET);
  \draw[->] (ET) -- node[above, yshift=0.1cm] {Cor.~\ref{cor:ES_via_Waldhausen}} (wWald);
  \draw[->] (Wald) -- node[right] {Prop.~\ref{prop:Wald_vs_wWald}} (wWald);
  \draw[->] (rET) -- node[below, yshift=-0.1cm] {Thm.~\ref{thm:ex_seq_wWald_from_abel_loc}} (wWald);
  \draw[->] (wWald) -- node[right] {Thm.~\ref{ThmA}} (Gr);
  \draw[->] (Co) -- node[right, yshift=0.3cm, xshift=-0.2cm] {\cite{Sar20}} (Wald);
  \draw[->] (Co) -- node[right, yshift=-0.4cm, xshift=-0.3cm] {Prop.~\ref{prop:Sarazola_homotopy_fiber}} (wWald);
\end{tikzpicture}
\caption{Connections between the key ideas in paper.}
\label{fig:summary}
\end{figure}
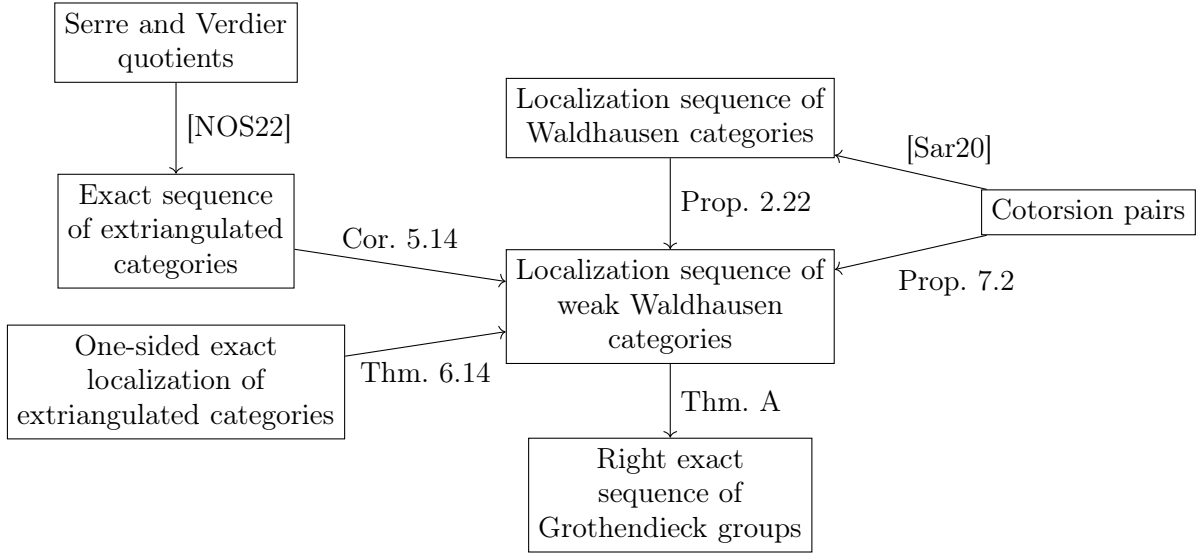

\subsection{Notation and conventions}
\label{sec:notation-convention}

Throughout this article we adopt the following conventions.
For any category $\C$, we use the following notation. 
\begin{itemize}
    \item 
    $\C(A,B)$ is the class of morphisms $A\to B$ for objects $A,B\in\C$.
    
    \item 
    $\C^{\to}$ is the class of all morphisms in $\C$.
    
    \item 
    $\C^{\to\to}$ is the subclass of $\C^{\to} \times \C^{\to}$ consisting of all pairs $(f,g)$ of composable morphisms, indicated by diagrams  
    $A \overset{f}{\to} B \overset{g}{\to} C$.
    
    \item 
    $\Iso\C \sse \C^{\to}$ is the class of all isomorphisms in $\C$.
    
    \item 
    $\isoclass(\C)$ is the class of all isomorphism classes of objects in $\C$. 
\end{itemize}
Subcategories are usually full but not always (e.g.\ see Remark~\ref{rem:Wald_cat_rem2}), so we explicitly indicate this. 
Functors are covariant unless stated otherwise. 

Suppose $\C$ is an additive category for the remainder of Section~\ref{sec:notation-convention}. 
If $f\colon A \to B$ and $g\colon C \to D$ are in $\C^{\to}$, 
then we denote the unique induced morphism 
$A \oplus C \to B\oplus D$ by 
$
f\oplus g 
    = \begin{psmallmatrix}
        f & 0 \\
        0 & g
    \end{psmallmatrix}
$.
A functor between additive categories is always additive.
If $F\colon \C\to \D$ is an additive functor, 
then its \emph{kernel} $\Ker F \sse \C$ 
and 
\emph{essential image} $\Im F \sse \D$ 
are the full subcategories defined as follows: 
\begin{align}
\Ker F &= \Set{C\in\C | FC\cong 0\textnormal{\ in\ }\D },\label{eqn:KerQ}\\
\Im F &= \Set{D\in\D | FC\cong D\textnormal{\ for some\ }C\in\C }.\nonumber
\end{align}

By an \emph{additive} subcategory of $\C$, we mean a full subcategory $\A\sse\C$ that is closed under finite direct sums. 
If $\A\sse\C$ is an additive subcategory, then $[\A]$ is the (two-sided) ideal of morphisms in $\C$ that factor through an object in $\A$, and the canonical additive quotient functor is  $\overline{(-)}\colon \C\to \C/[\A]$. In this case, 
the image in $\C/[\A]$ of a morphism $f\in\C(A,B)$ is denoted $\overline{f}$. 

We denote the abelian category of abelian groups by $\Ab$. 
Lastly, for reference, we list some categories of categories (and where they are introduced) that will often appear in the sequel:
\begin{itemize}[leftmargin=0.5cm]
    \item $\Wald$: the category of skeletally small Waldhausen (additive) categories (Section~\ref{sec:classical_Wald_cat}); 
    \item $\wWald$: the category of skeletally small weak Waldhausen (additive) categories (Section~\ref{sec:weak_Wald_cat}); and 
    \item $\ET$: the category of skeletally small extriangulated categories (Section~\ref{sec:ET}).
\end{itemize}

\section{Weak Waldhausen categories}
\label{sec:Waldhausen_categories}

A Waldhausen category is a category equipped with cofibrations and weak equivalences \cite{Wal85}. In this way, Waldhausen categories are a generalization of exact categories that still permit a higher $K$-theory. In Section~\ref{sec:classical_Wald_cat} we recall the definitions and basic properties of a Waldhausen category.
The axioms for Waldhausen categories rely on the notion of a pushout. Since pushouts do not necessarily exist in an arbitrary extriangulated category, we introduce \emph{weak Waldhausen additive} categories in Section~\ref{sec:weak_Wald_cat} and show that any extriangulated category is indeed an example. 
We introduce the Grothendieck group of a weak Waldhausen additive category in Section~\ref{sec:K0_of_wWald}, and this is used in Section~\ref{sec:localization-theorem-for-wWald} to produce a generalization of Quillen’s localization theorem.

\subsection{Classical Waldhausen categories}
\label{sec:classical_Wald_cat}

In this article we only consider a Waldhausen structure on an \emph{additive} category $\C$. Waldhausen's original definition in \cite{Wal85} only required a pointed category, but nothing is lost for our purposes. 
We refer to \cite[Ch.~II, Sec.~9]{Wei13} for a succinct treatment of the basics of Waldhausen categories.

\begin{definition}
\label{def:Waldhausen_category}
Let $\C$ be an additive category (with a distinguished zero object $0$) equipped with two classes 
$
\cof
$ and  
$\weq$
of morphisms in $\C$.
Morphisms in $\cof$ (resp.\ $\weq$) are called \emph{cofibrations} (resp.\ \emph{weak equivalences}) and indicated with feathered arrows 
$\begin{tikzcd}[column sep=0.5cm,cramped]
    {}\arrow[tail]{r}&{}
\end{tikzcd}$ 
(resp.\ decorated arrows 
$\begin{tikzcd}[column sep=0.5cm,cramped]
    {}\arrow[]{r}{\sim}&{}
\end{tikzcd}$
).

The triplet $(\C,\cof,\weq)$ is a \emph{Waldhausen (additive) category} if the following axioms are satisfied.
\begin{enumerate}[label=\textup{(C\arabic*)}]
\setcounter{enumi}{-1}
\item\label{C0} The class $\cof$ is closed under composition and contains each isomorphism in $\C$.
\item\label{C1} The unique morphism $0\to C$ to any object in $\C$ is a cofibration.
\item\label{C2} For any cofibration $A\overset{f}{\rightarrowtail} B$ and any morphism $A\overset{c}{\lra} C$, there exists a pushout diagram
    \[
    \begin{tikzcd}[row sep=0.5cm]
        A \arrow[tail]{r}{f}\arrow{d}[swap]{c}\PO{dr}& B\arrow{d}{b} \\
        C \arrow[tail]{r}[swap]{g}& B\bincoprod_{A} C
    \end{tikzcd}
    \]
with the morphism $C\overset{g}{\rightarrowtail} B\bincoprod_{A}C$ being a cofibration.
\end{enumerate}
\begin{enumerate}[label=\textup{(W\arabic*)}]
\setcounter{enumi}{-1}
\item\label{W0} The class $\weq$ is closed under composition and contains each isomorphism in $\C$.
\item\label{W1} (Gluing Axiom) Consider a commutative diagram of the form
\[
\begin{tikzcd}[row sep=0.5cm]
    C_{1} 
        \arrow{d}{\sim}[swap]{z}
    & A_{1} 
        \arrow{l}[swap]{c_{1}}
        \arrow[tail]{r}{f_{1}}
        \arrow{d}{\sim}[swap]{x}
    & B_{1}
        \arrow{d}{\sim}[swap]{y} \\
    C_{2} 
    & A_{2} 
        \arrow{l}[swap]{c_{2}}
        \arrow[tail]{r}{f_{2}}
    & B_{2}
\end{tikzcd}
\]
in which all vertical arrows are weak equivalences and the feathered arrows are cofibrations.
Then the induced map $B_{1}\bincoprod_{A_{1}} C_{1}\to B_{2}\bincoprod_{A_{2}}C_{2}$ is also a weak equivalence.
\end{enumerate}
A Waldhausen category $(\C,\cof,\weq)$ is simply denoted by $\C$ if there is no confusion.
\end{definition}

\begin{remark}
\label{rem:Wald_cat_rem1}
\begin{enumerate}[label=\textup{(\arabic*)}]

    \item\label{item:consequences-of-c0} Axiom \ref{C0} has two immediate consequences. First, it implies $\cof$ is closed under isomorphism. 
    Second, in the context of \ref{C2}, any pushout of $f$ along $c$ is a cofibration. That is, if 
    \[
    \begin{tikzcd}[row sep=0.5cm]
        A \arrow[tail]{r}{f}\arrow{d}[swap]{c}\PO{dr}& B\arrow{d} \\
        C \arrow{r}[swap]{g'}& P
    \end{tikzcd}
    \]
    is a pushout square in $\C$, then $g' \colon C \rtail P$ is also a cofibration.
    
    \item\label{item:consequences-of-w0} Similarly, axiom \ref{W0} also has some nice consequences. Indeed, $\weq$ is also closed under isomorphism; and, in the setup of \ref{W1}, the canonical morphism from any pushout of $f_{1}$ along $c_{1}$ to any pushout of $f_{2}$ along $c_{2}$ is a weak equivalence (see Lemma~\ref{lem:WW1'}).
	
    \item\label{item:cofib-sequences} Let $A\overset{f}{\rightarrowtail} B$ be a cofibration. Using \ref{C2}, we see that $f$ admits a cokernel in $\C$.     
    Note that any cokernel $g\colon B\to C$ of $f$ satisfies $C\cong0\bincoprod_{A} B$. 
    We call any cokernel sequence $A\overset{f}{\rightarrowtail} B\overset{g}{\twoheadrightarrow} C$ (i.e.\ $g=\cok f$) a \emph{cofibration sequence}, and any cokernel $g$ of $f$ is called a \emph{fibration}.
    Note that the class of cofibration sequences is closed under isomorphisms, but that the class $\fib = \fib_{\cof}$ of fibrations is not closed under composition in general. 
\end{enumerate}
\end{remark}

\begin{example}\label{exam:exact-is-waldhausen}
Any exact category $(\C,\CE)$ can be regarded as a Waldhausen category by declaring that all (admissible) inflations are cofibrations and all isomorphisms are weak equivalences. 
In this case, the collection of all cofibration sequences is $\CE$.
\end{example}

\begin{remark}
\label{rem:Wald_cat_rem2}
In the original language of \cite{Wal85}, $\cof$ and $\weq$ are considered as subcategories of $\C$ that are \emph{not necessarily full}. For instance, from the class $\cof$ of morphisms, we define a subcategory of $\C$, denoted also by $\cof$ by abuse of notation, with $\Ob(\cof) = \Ob(\C)$ and with $\cof(A,B) = \Set{f\in\C(A,B) | f\in\cof}$. 
Note that $\cof$ must contain all objects of $\C$ as $\cof$ contains all identity morphisms of $\C$. 
Similarly, we can think of $\weq$ as a subcategory of $\C$. 
However, $\fib$ cannot be thought of as subcategory of $\C$ since it is not necessarily closed under composition (see Remark~\ref{rem:Wald_cat_rem1}\ref{item:cofib-sequences}).
\end{remark}

We will need the notion of a morphism of Waldhausen categories. 

\begin{definition}\label{def:exact_functor_Wald-cats}
Suppose $(\C,\cof,\weq)$ and $(\C',\cof',\weq')$ are Waldhausen categories. 
\begin{enumerate}
    \item 
An additive functor $F\colon \C \to \C'$ is called \emph{exact} if 
$F(\cof)\sse\cof'$,
$F(\weq)\sse\weq'$,
and, for every cofibration $f\colon A \rtail B$ in $\cof$ and every morphism $c\colon A\to C$ in $\C$, 
the canonical morphism 
$FB \bincoprod_{FA} FC \to F(B\bincoprod_{A}C)$ 
is an isomorphism in $\C'$. 

    \item 
If $\C$ is an additive subcategory of $\C'$ and $\mathsf{inc}\colon \C \into \C'$ is the inclusion functor, then $(\C,\cof,\weq)$ is called a \emph{Waldhausen subcategory} of $(\C',\cof',\weq')$ if:
\begin{itemize}[itemsep=5pt]

	\item $\mathsf{inc}$ is an exact functor of Waldhausen categories;
	
	\item $\cof = \Set{ f\colon A\rtail B | 
            \begin{aligned}
                &A,B\in\C\text{ and }f\in\cof'\text{, and if }A\overset{f}{\rtail} B \onto C\text{ is a} \\
                &\text{cofibration sequence in }\C'\text{, then }C\in\C
            \end{aligned}
            }$; and 
	
	\item $\weq = \weq' \cap \C^{\to} = \Set{ f\colon \begin{tikzcd}[column sep=0.5cm,ampersand replacement=\&, cramped]A\arrow{r}{\sim}\& B\end{tikzcd} | A,B\in\C\text{ and }f\in\weq'}$.

\end{itemize}
\end{enumerate}
Note that the composition of exact functors is exact as well.
Thus, we denote by $\Wald$ the category of skeletally small Waldhausen categories and exact functors.
\end{definition}

The following additional axioms are standard in classical Waldhausen theory (see \cite[p.~327]{Wal85}) and will be used to define localization sequences in $\Wald$ (see Definition~\ref{def:exact_sequence_Wald}).

\begin{definition}
\label{def:saturation_extension}
Let $(\C,\cof,\weq)$ be a Waldhausen category.
\begin{enumerate}[label=\textup{(\arabic*)}]

\item\label{item_saturation}
    \emph{Saturation axiom}: $\weq$ satisfies the $2$-out-of-$3$ property for compositions of morphisms; namely, for any composite morphism $gf$, if any two of $f, g, gf$ belong to $\weq$, then so does the third.
    
\item\label{item_extension}
    \emph{Extension axiom}: For any commutative diagram
    \[
    \begin{tikzcd}[row sep=0.5cm]
    A_{1} \arrow{d}{a}[swap]{\sim}\arrow[tail]{r}& B_{1} \arrow[two heads]{r}\arrow{d}{b} & C_{1} \arrow{d}{c}[swap]{\sim}\\
    A_{2}\arrow[tail]{r} & B_{2} \arrow[two heads]{r} & C_{2}
    \end{tikzcd}
    \]
    of cofibration sequences in $\C$, if $a,c\in\weq$ then $b\in\weq$.
    
\end{enumerate}
\end{definition}

For later use, we prove the following easy lemma.

\begin{lemma}
\label{lem:Wald_contain_split}
Any split exact sequence is a cofibration sequence in a Waldhausen category $(\C,\cof,\weq)$.
\end{lemma}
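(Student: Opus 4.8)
The plan is to reduce to the canonical split short exact sequence and then recognise it as a cofibration sequence coming from a coproduct square, which is in particular a pushout.

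First I would recall that, up to isomorphism, a split exact sequence in the additive category $\C$ is of the form
\[
\begin{tikzcd}[column sep=1cm, cramped]
A \arrow{r}{\iota} & A\bincoprod C \arrow{r}{\pi} & C,
\end{tikzcd}
\]
where $\iota$ and $\pi$ are the canonical coprojection and projection. Since the class of cofibration sequences is closed under isomorphism (see \cref{rem:Wald_cat_rem1}\ref{item:cofib-sequences}), it therefore suffices to show that this particular sequence is a cofibration sequence in $(\C,\cof,\weq)$.

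Next I would observe that the commutative square
\[
\begin{tikzcd}[row sep=0.5cm]
0 \arrow[tail]{r}\arrow{d}\PO{dr} & C \arrow{d} \\
A \arrow{r}[swap]{\iota} & A\bincoprod C
\end{tikzcd}
\]
is a pushout square in $\C$, simply because $A\bincoprod C$ is the coproduct of $A$ and $C$ (and the two maps out of $0$ are the unique ones). The top morphism $0\rtail C$ is a cofibration by \ref{C1}, so by the second consequence of \ref{C0} recorded in \cref{rem:Wald_cat_rem1}\ref{item:consequences-of-c0}, the other leg $\iota\colon A \to A\bincoprod C$ of this pushout is also a cofibration.

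Finally, it remains to check that $\pi\colon A\bincoprod C \to C$ is a cokernel of $\iota$, which is a standard fact in any additive category (and is consistent with the remark that any cokernel of $\iota$ has domain object isomorphic to $0\bincoprod_{A}(A\bincoprod C)\cong C$, cf.\ \cref{rem:Wald_cat_rem1}\ref{item:cofib-sequences}). Hence the canonical split sequence is a cofibration sequence, and therefore so is every split exact sequence. I do not anticipate any genuine obstacle here; the only point requiring a little care is to identify the coproduct diagram as an honest pushout along the zero morphisms and to confirm that it is precisely the leg of that pushout which \cref{rem:Wald_cat_rem1}\ref{item:consequences-of-c0} declares to be a cofibration.
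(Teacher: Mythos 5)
Your proposal is correct and follows essentially the same route as the paper: both reduce (via the splitting of the sequence and isomorphism-closure) to the canonical direct-sum sequence, realise the inclusion $A\to A\oplus C$ as a cofibration by pushing out a cofibration of the form $0\rtail -$ using \ref{C1}, \ref{C2} and \cref{rem:Wald_cat_rem1}\ref{item:consequences-of-c0}, and then observe that the projection is the cokernel. The only differences are cosmetic (you transport the whole sequence along an isomorphism, the paper transports just the monomorphism via the Splitting Lemma).
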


\begin{proof}
First, for any $X,Y\in\C$, note that if we pushout the cofibration $0\rtail Y$ along $0\rtail X$, we see that the canonical inclusion $\begin{psmallmatrix}\id_X \\ 0\end{psmallmatrix}\colon X \to X\oplus Y(\cong Y\bincoprod_0 X)$ is a cofibration. 
Now suppose $\begin{tikzcd}[column sep=0.5cm, cramped]A \arrow{r}{f}& B\arrow{r}{g} & C\end{tikzcd}$ is a split exact sequence. 
Then $f$ is isomorphic to $\begin{psmallmatrix}\id_A \\ 0\end{psmallmatrix}$ by the Splitting Lemma \cite[Lem.~2.7]{Sha19}, so $f$ is also a cofibration by Remark~\ref{rem:Wald_cat_rem1}\ref{item:consequences-of-c0}. 
Since $g = \cok f$, we have that $\begin{tikzcd}[column sep=0.5cm, cramped]A \arrow{r}{f}& B\arrow{r}{g} & C\end{tikzcd}$ is a cofibration sequence by definition (see Remark~\ref{rem:Wald_cat_rem1}\ref{item:cofib-sequences}).
\end{proof}

\subsubsection{The Grothendieck group and some localization theorems}
\label{sec:K0_of_Wald_cat}

There is a natural definition of the Grothendieck group of a Waldhausen category, which extends to a functor $K_{0}\colon \Wald\to \Ab$. 
For a skeletally small additive category $\C$, we denote the \emph{split Grothendieck group of $\C$} by $K^{\sp}_{0}(\C)$. 
Recall that $K^{\sp}_{0}(\C)$ is the free abelian group generated by symbols $[A]$, one for each $A\in\C$, modulo the subgroup generated by 
elements $[A] - [B] + [C]$, one for each split exact sequence 
$\begin{tikzcd}[column sep=0.5cm, cramped]
    A \arrow{r}& B \arrow{r}& C.
\end{tikzcd}$

\begin{definition}\label{def:K0_of_Wald_cat}
Assume that $(\C,\cof,\weq)$ is a skeletally small Waldhausen category.
The \emph{Grothendieck group} $K_{0}(\C)\deff K_{0}(\C,\cof,\weq)$ is 
$K^{\sp}_{0}(\C)$ modulo the relations:
\begin{itemize}
\item $[C]=[C']$ for each weak equivalence 
$\begin{tikzcd}[column sep=0.5cm, cramped]C\arrow{r}{\sim}&C',\end{tikzcd}$ and 
\item $[B]=[A]+[C]$ for each cofibration sequence $A\rightarrowtail B\twoheadrightarrow C$.
\end{itemize}
\end{definition}

From Example~\ref{exam:exact-is-waldhausen}, it is clear that if $(\C,\E)$ is a skeletally small exact category and $(\C,\cof,\weq)$ the corresponding Waldhausen structure, then $K_{0}(\C,\cof,\weq)$ is the same as the Grothendieck group $K_{0}(\C) \deff K_{0}(\C,\E)$ of the exact category $(\C,\E)$.

In the rest of this subsection, we motivate our main results by recalling two key localization theorems that are classical. The first concerns the localization of an abelian category by a Serre subcategory. We also remark that Quillen generalized Theorem~\ref{thm:Quillen_localization_for_abel_cat} when introducing higher algebraic $K$-theory (see \cite[Sec.~5, Thm.~5]{Qui73}).

\begin{theorem}
\label{thm:Quillen_localization_for_abel_cat}
\cite[Ch.~VIII, Cor.~5.5]{Bas68}
Let $\N\to\C\to\C/\N$ be a Serre quotient of an abelian category $\C$ by a Serre subcategory $\N\subseteq \C$.
Then it induces an exact sequence $K_{0}(\N)\to K_{0}(\C)\to K_{0}(\C/\N)\to 0$ of their Grothendieck groups.
\end{theorem}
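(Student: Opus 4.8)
The plan is to establish the exactness of
\[
K_{0}(\N)\xrightarrow{\iota_{*}} K_{0}(\C)\xrightarrow{\pi_{*}} K_{0}(\C/\N)\to 0
\]
by hand, treating the three conditions (surjectivity of $\pi_{*}$, the inclusion $\Im\iota_{*}\sse\Ker\pi_{*}$, and the reverse inclusion $\Ker\pi_{*}\sse\Im\iota_{*}$) separately. The first two are formal. For surjectivity, recall that the Serre quotient functor $\pi\colon\C\to\C/\N$ is essentially surjective on objects, so every generator $[X]$ of $K_{0}(\C/\N)$ is of the form $[\pi C]=\pi_{*}[C]$; hence $\pi_{*}$ is onto. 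For $\Im\iota_{*}\sse\Ker\pi_{*}$, observe that if $N\in\N$ then $\pi N\cong 0$ in $\C/\N$, so $\pi_{*}\iota_{*}[N]=[\pi N]=0$; since the $[N]$ generate $K_{0}(\N)$ this gives $\pi_{*}\iota_{*}=0$.

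The substance is the inclusion $\Ker\pi_{*}\sse\Im\iota_{*}$. The standard approach is to construct a well-defined group homomorphism $\psi\colon K_{0}(\C/\N)\to K_{0}(\C)/\Im\iota_{*}$ that is inverse to the map induced by $\pi_{*}$, which forces $\Ker\pi_{*}=\Im\iota_{*}$. First I would define $\psi$ on generators by $\psi[\pi C]\deff [C]+\Im\iota_{*}$ for $C\in\C$, where I use that every object of $\C/\N$ has the form $\pi C$. The crux is checking this is well defined. There are two things to verify: (i) independence of the choice of preimage, i.e.\ if $\pi C\cong\pi C'$ in $\C/\N$ then $[C]\equiv[C']$ modulo $\Im\iota_{*}$; and (ii) that the defining relations of $K_{0}(\C/\N)$ — the relation $[\pi B]=[\pi A]+[\pi B/A]$ for a short exact sequence in $\C/\N$, equivalently for each exact sequence $\pi A\rtail \pi B\onto \pi C$ — are respected. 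For (i), given an isomorphism $\pi C\cong\pi C'$ in the localization, it is a standard fact about Serre quotients (the calculus of fractions) that there exist subobjects and quotients realizing this isomorphism by a ``roof'' $C\leftarrow C''\to C'$ in $\C$ whose kernels and cokernels lie in $\N$; chasing the exact sequences $K''\rtail C''\onto C$ and $K''\rtail C''\onto C'$ with $K'',\dots\in\N$ through the relations in $K_{0}(\C)$ shows $[C]\equiv[C'']\equiv[C']$ modulo $\Im\iota_{*}$. For (ii), lift a short exact sequence in $\C/\N$ to a morphism in $\C$, take kernel and cokernel, and use that these lie in $\N$ together with the additivity relation in $K_{0}(\C)$ and (i) to conclude $[B]\equiv[A]+[C]$ modulo $\Im\iota_{*}$.

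Once $\psi$ is shown to be a well-defined homomorphism, the verification that $\psi$ and the map $\overline{\pi_{*}}\colon K_{0}(\C)/\Im\iota_{*}\to K_{0}(\C/\N)$ induced by $\pi_{*}$ are mutually inverse is immediate on generators: $\overline{\pi_{*}}\psi[\pi C]=[\pi C]$ and $\psi\,\overline{\pi_{*}}([C]+\Im\iota_{*})=[C]+\Im\iota_{*}$. An isomorphism $K_{0}(\C)/\Im\iota_{*}\xrightarrow{\ \sim\ }K_{0}(\C/\N)$ induced by $\pi_{*}$ says precisely that $\pi_{*}$ is surjective (already known) with kernel exactly $\Im\iota_{*}$, completing the proof.

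The main obstacle is the well-definedness of $\psi$, and within that specifically part (i): transporting an isomorphism in the quotient category $\C/\N$ back to a concrete span in $\C$ with kernels and cokernels in $\N$. This is where one must actually invoke the structure of the Serre quotient — every morphism $\pi C\to\pi C'$ is represented by a fraction $s^{-1}f$ with $f\colon C''\to C'$ a morphism in $\C$ and $s\colon C''\to C$ a morphism with $\Ker s,\Cok s\in\N$ — rather than just category-theoretic generalities. Everything else is routine diagram chasing with the additivity relation $[B]=[A]+[C]$ for short exact sequences in $\C$, together with the observation that objects of $\N$ map into the subgroup $\Im\iota_{*}$ by construction.
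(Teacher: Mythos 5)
Your proposal is correct in outline, but it takes a genuinely different route from the paper. You give the classical direct argument (essentially Bass's/Heller's): surjectivity and $\pi_{*}\iota_{*}=0$ are formal, and the content is the construction of the inverse map $\psi\colon K_{0}(\C/\N)\to K_{0}(\C)/\Im\iota_{*}$, whose well-definedness you check by hand using the calculus of fractions of the Serre quotient (every morphism is a roof whose denominator, and in fact any morphism of $\C$ inverted by $\pi$, has kernel and cokernel in $\N$ because $\pi$ is exact with kernel $\N$). The paper never argues this way for \cref{thm:Quillen_localization_for_abel_cat}: it cites Bass and then \emph{recovers} the statement from its general machinery --- the Serre quotient is an extriangulated localization (\cref{ex:list_mult_loc}), \cref{thm:NOS} gives the exact sequence in $\ET$, \cref{prop:ex_seq_wWald_from_NOS} converts it into a localization sequence in $\wWald$, and the Weak Waldhausen Localization \cref{thm:Quillen_localization1_new} (via \cref{cor:Quillen_localization2}) together with \cref{prop:K0_of_homotopy_cat} yields the right exact sequence of Grothendieck groups; this is \cref{cor:ES_via_Waldhausen}. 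The engine inside \cref{thm:Quillen_localization1_new} is in fact the same idea as your $\psi$ --- define a map on isomorphism classes into $\Cok F$ and check it kills the generating relations --- but the fraction-calculus bookkeeping you perform explicitly is absorbed there into the hypotheses that $\weq$ is generated by $\L^{\ac}\cup\T\cup\veq$ and that $\Sn$ is saturated. Your route is more elementary and self-contained; the paper's buys uniformity, recovering Enomoto--Saito's extriangulated localization theorem and one-sided localizations at the same time. Two small points to tighten in your sketch: in (i) the legs of the roof need not be epimorphisms, so the relevant sequences are $\Ker s\rtail C''\onto \Im s$ and $\Im s\rtail C\onto\Cok s$ with $\Ker s,\Cok s\in\N$ (and the second leg is handled because $\pi$ inverts it, being an isomorphism composed with $\pi s$); and in (ii) the kernel of the lifted morphism does \emph{not} lie in $\N$ in general --- only the cokernel does --- rather $\pi$ of that kernel is isomorphic to the subobject term, and you must invoke (i) to get $[\Ker g]\equiv[A]$ modulo $\Im\iota_{*}$.
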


Theorem~\ref{thm:Quillen_localization_for_abel_cat} is a special case of a very recent result of Enomoto--Saito \cite[Cor.~4.32]{ES22} about extriangulated localizations. 
We establish a more general version of Theorem~\ref{thm:Quillen_localization_for_abel_cat} in Section~\ref{sec:localization-theorem-for-wWald} and consequently recover \cite[Cor.~4.32]{ES22}; see Corollary~\ref{cor:Quillen_localization2} and Corollary~\ref{cor:ES_via_Waldhausen}. 
Our results are framed in terms of weak Waldhausen additive categories that we introduce in Section~\ref{sec:weak_Wald_cat}. Furthermore, the results are motivated by a theorem very related to Theorem~\ref{thm:Quillen_localization_for_abel_cat} but for Waldhausen categories as we now begin to recall.

\begin{definition}
\label{def:w_acyclic}
\cite[p.~181]{Wei13} 
Let $(\C,\cof,\weq)$ be a Waldhausen category. 
An object $C\in \C$ is called \emph{$\weq$-acyclic} if the zero morphism $0\rightarrowtail C$ belongs to $\weq$.
We denote by $\C^{\weq}$ the full subcategory of $\C$ consisting of all $\weq$-acyclic objects.
\end{definition}

Suppose $\id_{\C}\colon (\C,\cof,\veq) \to (\C,\cof,\weq)$ is an inclusion of a Waldhausen subcategory, i.e.\ 
$(\C,\cof,\veq)$ and $(\C,\cof,\weq)$ are Waldhausen categories such that $\veq \sse \weq$. 
It is known that $\C^{\weq}$ inherits a natural Waldhausen structure from $(\C,\cof,\veq)$, giving rise to a Waldhausen subcategory $(\C^{\weq},\cof^{\weq},\veq^{\weq})$ of $(\C,\cof,\veq)$,
where 
$\cof^{\weq} = \cof \cap (\C^{\weq})^{\to}$, and 
$\veq^{\weq} = \veq \cap (\C^{\weq})^{\to}$; see \cite[p.~181]{Wei13}. 
Note that if $A\overset{f}{\rtail} B \onto C$ is a cofibration sequence in $\C$ with $A,B\in\C^{\weq}$ then it is automatic that $C\in\C^{\weq}$ 
(see Lemma~\ref{lem:localization_sequence_wWald} for details of a similar argument).
In particular, the inclusion functor $\mathsf{inc}\colon \C^{\weq} \into \C$ is an exact functor $(\C^{\weq},\cof^{\weq},\veq^{\weq}) \into (\C,\cof,\veq)$ and 
\begin{equation}
\label{seq:localization_sequence_Wald}
\begin{tikzcd}
(\C^{\weq},\cof^{\weq},\veq^{\weq}) 
    \arrow[hook]{r}{\mathsf{inc}} 
&(\C,\cof,\veq) 
    \arrow[hook]{r}{\id_{\C}} 
&(\C,\cof,\weq)
\end{tikzcd}
\end{equation}
is a sequence of Waldhausen subcategories.

The next definition is inspired by \cite[Thm.~6.3]{Sar20}.

\begin{definition}
\label{def:exact_sequence_Wald}
Suppose that $(\C,\cof,\veq)$ and $(\C,\cof,\weq)$ are Waldhausen categories with $\veq\sse\weq$. 
We call \eqref{seq:localization_sequence_Wald}
a \emph{localization sequence} in $\Wald$ 
if the functor 
$\id_{\C}$ satisfies the following property: 
for each Waldhausen category $(\C',\cof',\weq')$ satisfying the saturation and extension axioms and for each exact functor 
$F\colon (\C,\cof,\veq)\to (\C',\cof',\weq')$ with $F(\C^{\weq})\sse (\C')^{\weq'}$, 
we have that 
$F$ extends to an exact functor $F\colon (\C,\cof,\weq)\to (\C',\cof',\weq')$, i.e.\ $F(\weq)\sse \weq'$.
\[
\begin{tikzcd}[row sep=0.5cm]
(\C^{\weq},\cof^{\weq},\veq^{\weq}) 
    \arrow[hook]{r}{\mathsf{inc}} 
&(\C,\cof,\veq) 
    \arrow[hook]{r}{\id_{\C}} 
    \arrow{d}[swap]{F}
&(\C,\cof,\weq)
    \arrow[dotted]{dl}{F}\\
{}
& (\C',\cof',\weq')
&{}
\end{tikzcd}
\]
\end{definition}

It is natural to ask when the sequence \eqref{seq:localization_sequence_Wald} becomes a localization sequence in $\Wald$.
The following result, which can be regarded as the $K_{0}$-part of Schlichting's cylinder-free localization theorem \cite[Thm.~11]{Sch06}, gives us a partial answer. In this case, the functor $K_{0}\colon \Wald\to\Ab$ sends such a localization sequence to a right exact sequence of Grothendieck groups. 
A direct proof at the level of $K_{0}$ can be found in \cite{Wei13} and, on inspection, one can weaken the hypotheses of \cite[Ch.~II, Thm.~9.6]{Wei13} to obtain the same right exact sequence of Grothendieck groups. 
However, by one of our main results in Section~\ref{sec:localization-theorem-for-wWald}, we also see that there is an associated localization sequence of Waldhausen categories.
The proof of the following is given after Theorem~\ref{thm:Quillen_localization1_new}.

\begin{theorem}
\label{cor:Schlichting_localization}
(cf.\ \cite[Ch.~II, Thm.~9.6]{Wei13}) 
Consider the sequence \eqref{seq:localization_sequence_Wald}.
Assume $(\C,\cof, \weq)$ satisfies the saturation axiom, 
and that every morphism in $\weq$ is the composition of a morphism from $\cof$ followed by a morphism from $\veq$. 
Then \eqref{seq:localization_sequence_Wald} is a localization sequence in $\Wald$ and induces the following right exact sequence in $\Ab$.
\begin{equation*}
\begin{tikzcd}[column sep=1.5cm]
    K_{0}(\C^{\weq},\cof^{\weq},\veq^{\weq})
        \arrow{r}{K_{0}(\inc)}
    & K_{0}(\C,\cof,\veq)
        \arrow{r}{K_{0}(\id_{\C})}
    & K_{0}(\C,\cof,\weq)
        \arrow{r}
    & 0
\end{tikzcd}
\end{equation*}
\end{theorem}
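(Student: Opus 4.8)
The plan is to establish the two assertions separately. For the statement about Grothendieck groups one could deduce it from \cref{ThmA} after reinterpreting $(\C,\cof,\veq)$ and $(\C,\cof,\weq)$ as weak Waldhausen categories via \cref{prop:Wald_vs_wWald} (taking $\seq$ to be the common class of cofibration sequences, which depends only on $\cof$); I will instead give a direct argument at the level of $K_{0}$. The lemma underpinning everything is that \emph{the cokernel of a cofibration which is also a $\weq$-weak equivalence is $\weq$-acyclic}. To prove this I would take $c\colon X\rtail Z$ in $\cof\cap\weq$ with cokernel $p\colon Z\onto C$ and apply the Gluing Axiom~\ref{W1} to the diagram with rows $0\leftarrow X\overset{c}{\rtail}Z$ and $0\leftarrow Z\overset{=}{\rtail}Z$ and vertical maps $\id_{0}$, $c$, $\id_{Z}$ (all weak equivalences since $c\in\weq$); since $Z\bincoprod_{X}0\iso C$ and $Z\bincoprod_{Z}0\iso 0$, this forces $C\to 0$ to lie in $\weq$, whence $0\rtail C\in\weq$ by the saturation axiom applied to the composite $0\to C\to 0$, which is $\id_{0}$. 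Call this $(\star)$.

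To prove that \eqref{seq:localization_sequence_Wald} is a localization sequence in $\Wald$, I would verify the universal property of \cref{def:exact_sequence_Wald} directly. Let $(\C',\cof',\weq')$ be a Waldhausen category satisfying the saturation and extension axioms, and let $F\colon(\C,\cof,\veq)\to(\C',\cof',\weq')$ be exact with $F(\C^{\weq})\sse(\C')^{\weq'}$; I must show $F(\weq)\sse\weq'$. Given $w\colon X\to Y$ in $\weq$, use the factorization hypothesis to write $w=v\circ c$ with $c\colon X\rtail Z$ in $\cof$ and $v\colon Z\to Y$ in $\veq$. The saturation axiom gives $c\in\weq$, so $(\star)$ yields $C\deff\Cok c\in\C^{\weq}$ and hence $FC\in(\C')^{\weq'}$. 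Since $F$ is exact it sends the cofibration sequence $X\overset{c}{\rtail}Z\overset{p}{\onto}C$ to a cofibration sequence $FX\overset{Fc}{\rtail}FZ\overset{Fp}{\onto}FC$; applying the extension axiom in $\C'$ to the morphism of cofibration sequences from $FX\overset{\id}{\rtail}FX\onto 0$ to this one, whose leftmost and rightmost columns are $\id_{FX}$ and the map $0\to FC$ — both in $\weq'$, the latter because $FC\in(\C')^{\weq'}$ — and whose middle column is $Fc$, gives $Fc\in\weq'$. Then $Fw=(Fv)(Fc)\in\weq'$, since $Fv\in\weq'$ ($F$ being exact and $v\in\veq$) and $\weq'$ is closed under composition.

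For the induced sequence of Grothendieck groups, note that $K_{0}(\C,\cof,\veq)$ and $K_{0}(\C,\cof,\weq)$ are both quotients of $K_{0}^{\sp}(\C)$ and that $K_{0}(\id_{\C})$ carries each generator $[A]$ to $[A]$, so $K_{0}(\id_{\C})$ is surjective. If $A\in\C^{\weq}$ then $0\rtail A\in\weq$, so $[A]=[0]=0$ in $K_{0}(\C,\cof,\weq)$; hence $K_{0}(\id_{\C})\circ K_{0}(\inc)=0$, so $\Im K_{0}(\inc)\sse\Ker K_{0}(\id_{\C})$. For the reverse inclusion I would build a homomorphism $\phi\colon K_{0}(\C,\cof,\weq)\to K_{0}(\C,\cof,\veq)/\Im K_{0}(\inc)$ sending $[A]$ to the class of $[A]$; then $\phi\circ K_{0}(\id_{\C})$ is the canonical surjection onto $K_{0}(\C,\cof,\veq)/\Im K_{0}(\inc)$, which forces $\Ker K_{0}(\id_{\C})\sse\Im K_{0}(\inc)$. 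To see that $\phi$ is well defined I would check it respects the defining relations of $K_{0}(\C,\cof,\weq)$: the cofibration-sequence relations and the relations from $\veq$-weak equivalences already hold in $K_{0}(\C,\cof,\veq)$ (the cofibration sequences being common to both structures), and for a $\weq$-weak equivalence $w=v\circ c$ as above, the cofibration sequence $X\overset{c}{\rtail}Z\onto C$ gives $[X]=[Z]-[C]$ in $K_{0}(\C,\cof,\veq)$ with $[C]\in\Im K_{0}(\inc)$ by $(\star)$, while $[Z]=[Y]$ there because $v\in\veq$; thus $[X]$ and $[Y]$ agree modulo $\Im K_{0}(\inc)$.

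I expect the main obstacle to be the lemma $(\star)$ together with the construction in the second paragraph of the morphism of cofibration sequences to which the extension axiom of the target applies: that is where the Gluing and saturation axioms and the behaviour of $\weq$-acyclic objects must be made to interact, and where one must verify that all the squares commute and that the relevant $3$-term sequences genuinely are cofibration sequences. Once $(\star)$ is in hand, the $K_{0}$-bookkeeping in the third paragraph should be routine.
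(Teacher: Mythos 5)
Your argument is correct, but it takes a genuinely different route from the paper. The paper proves this theorem by a short reduction: it passes to the corresponding weak Waldhausen category via \cref{prop:Wald_vs_wWald}\ref{item:WisweakWH}, uses the saturation axiom to see that in a factorization $w=vf$ (with $f\in\cof$, $v\in\veq$) the cofibration $f$ lies in $\cof\cap\weq=\L^{\ac}$, and then simply invokes the Weak Waldhausen Localization \cref{thm:Quillen_localization1_new} together with \cref{rem:wWH-WH-comparison}\ref{item:wWH-acyclics} and \cref{exam:K0-for-WH-is-K0-for-wWH}. You instead give a self-contained proof entirely in classical Waldhausen language, which in effect re-proves the special case of \cref{thm:Quillen_localization1_new} that is needed here: your lemma $(\star)$ is the classical analogue of \cref{lem:L_sec}, your verification of the universal property parallels the $\L^{\ac}$- and $\veq$-cases in the proof of \cref{thm:Quillen_localization1_new} (the $\T$-case being unnecessary since $\weq\sse\veq\circ\cof$ by hypothesis), and your map $\phi$ into $K_{0}(\C,\cof,\veq)/\Im K_{0}(\inc)$ plays the role of the factorization through $\Cok F$ there. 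The paper's reduction is shorter and exhibits the classical statement as a corollary of the general theorem, which is one of its purposes; your direct argument stays within the axioms \ref{C0}--\ref{C2}, \ref{W0}--\ref{W1} and makes explicit exactly where saturation (of $\weq$), the gluing axiom, and the extension axiom of the target category enter. One small streamlining: in $(\star)$ you do not actually need saturation — applying \ref{W1} to the rows $0\leftarrow X\overset{\id_X}{\rtail}X$ and $0\leftarrow X\overset{c}{\rtail}Z$ with vertical maps $\id_{0},\id_{X},c$ gives directly that $0\cong X\bincoprod_{X}0\to Z\bincoprod_{X}0\cong C$ is in $\weq$ (this is how \cref{lem:L_sec} proceeds); your version with saturation is of course valid since saturation is a hypothesis of the theorem.
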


\subsection{Weak Waldhausen categories}
\label{sec:weak_Wald_cat}

Since the conflations of an extriangulated category are not kernel-cokernel pairs (but only weak-kernel-weak-cokernel pairs) in general, but cofibration sequences in Waldhausen categories are cokernel sequences, 
we cannot expect to equip an extriangulated category with a Waldhausen structure in general.
We propose a more general framework in this subsection, which is fundamental in the remainder of the article.

Although we will compare the theory developed in this subsection to that of Section~\ref{sec:classical_Wald_cat} (see Proposition~\ref{prop:Wald_vs_wWald} and Remark~\ref{rem:wWH-WH-comparison}), 
we delay providing examples that are not Waldhausen until Section~\ref{sec:wWET-cats} in which extriangulated category theory will arm us with novel examples.

\begin{definition}
\label{def:weak_Wald_cat}
Let $\C$ be an additive category (with a distinguished zero object $0$) equipped with a class $\seq\sse\C^{\to\to}$ of \emph{distinguished sequences} of the form 
\begin{equation}
\label{eqn:dist-sequence}
\begin{tikzcd}
A \arrow{r}{f}& B \arrow{r}{g}& C
\end{tikzcd}
\end{equation}
in $\C$, and a class $\weq$ of morphisms in $\C$. 
Denote by $\cof = \cof_{\seq}$ (resp.\ $\fib = \fib_{\seq}$) the class consisting of morphisms $f$ (resp.\ $g$) such that there is a distinguished sequence of the form \eqref{eqn:dist-sequence}. 
The morphisms in $\cof$ (resp.\ $\fib)$ are called \emph{cofibrations} (resp.\ \emph{fibrations}) and denoted by 
$\begin{tikzcd}[column sep=0.5cm, cramped]{}\arrow[tail]{r}{}&{}\end{tikzcd}$ 
(resp.\ 
$\begin{tikzcd}[column sep=0.5cm, cramped]{}\arrow[two heads]{r}{}&{}\end{tikzcd}$). 
The morphisms in $\weq$ are called \emph{weak equivalences} and are denoted by 
$\begin{tikzcd}[column sep=0.5cm, cramped]{}\arrow{r}{\sim}&{}\end{tikzcd}$. 

\begin{enumerate}[label=\textup{(\arabic*)}]
\item The triplet $(\C,\seq,\weq)$ is called a \emph{weak Waldhausen (additive) category} if the following axioms are satisfied.

    \begin{enumerate}[label=\textup{(WC\arabic*)},leftmargin=41pt]
    \setcounter{enumii}{-1}
    \item\label{WC0} The class $\cof$ is closed under composition and contains each isomorphism in $\C$.
    \item\label{WC1} $\mathsf{Seq}$ contains all split exact sequences and is closed under isomorphism.
Any distinguished sequence \eqref{eqn:dist-sequence} is a weak cokernel sequence, in that $g$ is a weak cokernel of $f$.
For a cofibration $f$, the corresponding fibrations are also called \emph{distinguished weak cokernels}.
    \item\label{WC2} Any pair $(f,c)$ of a cofibration $A\overset{f}{\rightarrowtail} B$ and a morphism $A\overset{c}{\lra}C$ yields a cofibration 
    $\begin{tikzcd}[cramped]
    A \arrow{r}{\begin{psmallmatrix}
        f\\-c
    \end{psmallmatrix}}& B\oplus C.
    \end{tikzcd}$
    Furthermore, the associated distinguished sequences of the form 
    $\begin{tikzcd}[column sep=1cm, cramped]
     A \arrow{r}{\begin{psmallmatrix}
         f\\-c
        \end{psmallmatrix}}
    & B\oplus C
        \arrow{r}{\begin{psmallmatrix}
            b, \amph g
        \end{psmallmatrix}}
    & D
    \end{tikzcd}$
    satisfy that $g$ belongs to $\cof$.
    \end{enumerate}
    \begin{enumerate}[label=\textup{(WW\arabic*)},leftmargin=41pt]
    \setcounter{enumii}{-1}
    
    \item\label{WW0} The class $\weq$ is closed under composition and contains each isomorphism in $\C$.
    
	\item\label{WW1} (Gluing Axiom) Suppose that 
    \begin{equation}
    \label{eqn:gluing-axiom}
    \begin{tikzcd}[row sep=0.5cm]
    C_{1} \arrow{d}{\sim}[swap]{z}
    & A_{1} \arrow{l}[swap]{c_{1}}\arrow[tail]{r}{f_{1}}\arrow{d}{\sim}[swap]{x}& B_{1} \arrow{d}{\sim}[swap]{y}\\
    C_{2} & A_{2} \arrow{l}[swap]{c_{2}}\arrow[tail]{r}{f_{2}}& B_{2}
    \end{tikzcd}
    \end{equation}
    is a commutative diagram 
    in which all vertical arrows are weak equivalences and the feathered arrows are cofibrations, and that 
    $\begin{tikzcd}[column sep=1.2cm, cramped]
     A_{i} \arrow{r}{\begin{psmallmatrix}
         f_{i}\\-c_{i}
        \end{psmallmatrix}}
    & B_{i}\oplus C_{i}
        \arrow{r}{\begin{psmallmatrix}
            b_{i}, \amph g_{i}
        \end{psmallmatrix}}
    & D_{i}
    \end{tikzcd}$
    are distinguished sequences for $i=1,2$. 
    Then there is a weak equivalence  
    $w\colon D_{1}\overset{\sim}{\lra} D_{2}$ such that the following diagram commutes.
    \[
    \begin{tikzcd}[column sep=1.5cm,row sep=0.6cm]
     A_{1} 
        \arrow{r}{\begin{psmallmatrix}
         f_{1}\\-c_{1}
        \end{psmallmatrix}}
        \arrow{d}{x}
    & B_{1}\oplus C_{1}
        \arrow{r}{\begin{psmallmatrix}
            b_{1}, \amph g_{1}
        \end{psmallmatrix}}
        \arrow{d}{y \oplus z}
    & D_{1}
        \arrow{d}{w}
    \\
    A_{2} 
        \arrow{r}{\begin{psmallmatrix}
         f_{2}\\-c_{2}
        \end{psmallmatrix}}
    & B_{2}\oplus C_{2}
        \arrow{r}{\begin{psmallmatrix}
            b_{2}, \amph g_{2}
        \end{psmallmatrix}}
    & D_{2}
    \end{tikzcd}
    \]
    \end{enumerate}

\setcounter{enumi}{1}
\item The triplet $(\C,\seq,\weq)$ is called a \emph{weak coWaldhausen (additive) category} 
if the triplet $(\C^{\op},\seq^{\op},\weq^{\op})$ is a weak Waldhausen additive category.

\item The triplet $(\C,\seq,\weq)$ is called a \emph{weak biWaldhausen (additive) category} if $(\C,\seq,\weq)$ is both weak Waldhausen and weak coWaldhausen. 

\item\label{item:saturation-for-wWH} A weak Waldhausen category $(\C,\seq,\weq)$ is said to satisfy the \emph{saturation axiom} if $\weq$ satisfies Definition~\ref{def:saturation_extension}\ref{item_saturation}.

\item A weak Waldhausen category $(\C,\seq,\weq)$ is said to satisfy the \emph{extension axiom} if it satisfies the analogue of Definition~\ref{def:saturation_extension}\ref{item_extension} for distinguished sequences.
\end{enumerate}
\end{definition}

As the name suggests, the notion of a weak Waldhausen category is rather weaker than the classical notion since the fibrations are now only \emph{weak} cokernels. Thus, an increased level of care is needed in arguments.

\begin{remark}\label{rem:wPO}
Suppose $(\C,\seq,\weq)$ is a weak Waldhausen category.  
\begin{enumerate}[label=\textup{(\arabic*)}]

\item 
As for Waldhausen categories, note that the classes $\cof = \cof_{\seq}$ of cofibrations and $\weq$ of weak equivalences induce subcategories of $\C$; see Remark~\ref{rem:Wald_cat_rem2}.

\item 
Since 
$\begin{tikzcd}[column sep=0.7cm, cramped]0 \arrow{r}& C \arrow{r}{\id_{C}}& C\end{tikzcd}$ 
is split exact, axiom \ref{WC1} implies $0\rtail C$ is a cofibration for any $C\in\C$.

\item\label{item:weak-PO-defn} Suppose $f\colon A \rtail B$ is a cofibration and $c\colon A\to C$ is any morphism. Then, by \ref{WC2}, there is a distinguished sequence 
$\begin{tikzcd}[column sep=1cm, cramped]
    A \arrow{r}{\begin{psmallmatrix}
        f\\-c
    \end{psmallmatrix}}
    & B\oplus C
        \arrow{r}{\begin{psmallmatrix}
            b, \amph g
        \end{psmallmatrix}}
    & D
\end{tikzcd}$
with $g\in\cof$. Since this distinguished sequence is a weak cokernel sequence, it gives rise to a \emph{weak pushout square} as follows.
\begin{equation}\label{eqn:remark-about-weak-POs}
\begin{tikzcd}[row sep=0.5cm]
A \arrow[tail]{r}{f}\arrow{d}[swap]{c}\wPO{dr}& B\arrow{d}{b} \\
C \arrow[tail]{r}[swap]{g}& D.
\end{tikzcd}
\end{equation}
That is, given morphisms $d\in\C(C,E)$ and $e\in\C(B,E)$ with $dc=ef$, there exists (a not necessarily unique) $h\in\C(D,E)$ such that $hg=d$ and $hb=e$.
We note that the object $D$ is not uniquely determined, even up to isomorphism. 
Therefore, a given cofibration may possibly admit non-isomorphic distinguished weak cokernels. However, these weak cokernels are all \emph{weakly equivalent} (i.e.\ there is a weak equivalence between them) by \ref{WW1}; see also \ref{WW1'} in Lemma~\ref{lem:WW1'}. 

Notice that if $\begin{psmallmatrix}
    b, & g
\end{psmallmatrix}$ 
is a cokernel of 
$\begin{psmallmatrix}
        f\\-c
    \end{psmallmatrix}$ 
(i.e.\ if $\begin{psmallmatrix}
    b, & g
\end{psmallmatrix}$
is also epic), 
then \eqref{eqn:remark-about-weak-POs} is a pushout square, with the usual universal property including uniqueness.
\end{enumerate}
\end{remark}

\begin{remark}\label{rem:wPB}
If $(\C,\seq,\weq)$ is weak coWaldhausen, then the concept dual to a weak pushout exists.
For any pair $(f,c)$ of a fibration $B\overset{f}{\twoheadrightarrow}C$ and a morphism $D\overset{c}{\lra}C$, there exists a distinguished sequence 
$\begin{tikzcd}[column sep=1.2cm, cramped]
    E \arrow{r}{\begin{psmallmatrix}
        b \\ g
    \end{psmallmatrix}}
    & B\oplus D 
        \arrow{r}{\begin{psmallmatrix}
            f,\amph -c
        \end{psmallmatrix}}
    & C
\end{tikzcd}$
with $g\in\fib_{\seq}$.
The associated commutative diagram 
\[
\begin{tikzcd}[row sep=0.5cm]
E \arrow[two heads]{r}{g}\arrow{d}[swap]{b}\wPB{dr}& D \arrow{d}{c}\\
B \arrow[two heads]{r}[swap]{f}&C 
\end{tikzcd}
\]
will be referred to as a \emph{weak pullback of $f$ along $c$}. 
\end{remark}

\begin{remark}\label{rem:why-distinguished-seqs}
We note that specifying a class of distinguished sequences in Definition~\ref{def:weak_Wald_cat} is essential for our purposes. 
Indeed, if we were to specify only a collection of cofibrations and define weak Waldhausen structures by simply replacing pushouts in the definition of a Waldhausen category (i.e.\ in Definition~\ref{def:Waldhausen_category}) with weak pushouts,
then triangulated categories would yield undesirable phenomena.
For instance, given a distinguished triangle 
$A\overset{f}{\lra}B\overset{g}{\lra}C\lra A[1]$ in a triangulated category $\C$,
the sequence
$
\begin{tikzcd}[cramped, column sep=0.7cm]
A \arrow{r}{f}
    & B\arrow{r}{\begin{psmallmatrix}
g\\
0
\end{psmallmatrix}}
    & C\oplus D
\end{tikzcd}
$
is a weak cokernel sequence for any object $D\in\C$. 
Then \ref{WW1} would force $C$ to be weakly equivalent to  $C\oplus D$. Since we would want $\C$ equipped with $\cof = \C^{\to}$ and $\weq = \Iso\C$ to be weak Waldhausen, this would mean $C$ would be isomorphic to  $C\oplus D$, which is the problem.

This shows that the class of all weak cokernel sequences is too large to capture the intended structure. 
The introduction of the class $\seq$ of distinguished sequences solves this issue. 

Equally, one could consider ``distinguished weak pushouts'' in the non-additive setting. But for simplicity, and especially from the perspective of our applications, we focus on the additive setting, leading to the more natural formulation of the axioms in Definition~\ref{def:weak_Wald_cat}.
\end{remark}

Note that the first assertion in the following lemma uses neither \ref{WW0} nor \ref{WW1}.

\begin{lemma}
\label{lem:basics_of_cofibration_and_weq}
Suppose $(\C,\seq,\weq)$ is a weak Waldhausen category. 
Then: 
\begin{enumerate}[label=\textup{(\arabic*)}]
    \item\label{item:direct-sum-of-cofs-is-cof} the finite direct sum of cofibrations is again a cofibration, and
    \item\label{item:direct-sum-of-weqs-is-weq} the finite direct sum of weak equivalences is again a weak equivalence.
\end{enumerate}
\end{lemma}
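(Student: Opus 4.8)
The plan is to prove both assertions by induction on the number of summands, so that in each case it suffices to handle two summands; the empty sum is $\id_0$, which is an isomorphism and hence lies in $\cof$ by \ref{WC0} and in $\weq$ by \ref{WW0}, and a single summand is trivial. The shared subtlety, discussed at the end, is that \ref{WC2} and \ref{WW1} only hand us a distinguished sequence whose third term is a priori uncontrolled, so the real work is choosing the input data carefully.

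\textbf{Part \ref{item:direct-sum-of-cofs-is-cof}.} Given cofibrations $f\colon A\rtail B$ and $f'\colon A'\rtail B'$, I would factor $f\oplus f' = (f\oplus\id_{B'})\circ(\id_A\oplus f')$ and use \ref{WC0} — closure of $\cof$ under composition and under isomorphism, the latter also absorbing the reordering of summands — to reduce to the claim that for any cofibration $f\colon A\rtail B$ and any object $X\in\C$ the morphism $f\oplus\id_X\colon A\oplus X\to B\oplus X$ lies in $\cof$. To prove this I would apply \ref{WC2} to $f$ together with the morphism $c\deff\begin{psmallmatrix}\id_A\\0\end{psmallmatrix}\colon A\to A\oplus X$, obtaining the cofibration $\begin{psmallmatrix}f\\-c\end{psmallmatrix}=\begin{psmallmatrix}f\\-\id_A\\0\end{psmallmatrix}\colon A\to B\oplus A\oplus X$. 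The key observation is that this morphism is a split monomorphism — a retraction is $\begin{psmallmatrix}0 & -\id_A & 0\end{psmallmatrix}$ — whose cokernel is $q\deff\begin{psmallmatrix}\id_B & f & 0\\0 & 0 & \id_X\end{psmallmatrix}\colon B\oplus A\oplus X\to B\oplus X$, and the $(A\oplus X)$-component of $q$ is precisely $f\oplus\id_X$. Hence the pair $\bigl(\begin{psmallmatrix}f\\-c\end{psmallmatrix},q\bigr)$ is a split exact sequence, so it is a distinguished sequence by \ref{WC1}; and now the second assertion of \ref{WC2} tells us exactly that the relevant component of $q$, namely $f\oplus\id_X$, is a cofibration.

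\textbf{Part \ref{item:direct-sum-of-weqs-is-weq}.} Given weak equivalences $y\colon B_1\to B_2$ and $z\colon C_1\to C_2$, I would apply the Gluing Axiom \ref{WW1} to the diagram \eqref{eqn:gluing-axiom} with $A_1=A_2=0$, with $f_i\colon 0\rtail B_i$ the canonical cofibration (see \cref{rem:wPO}) and $c_i\colon 0\to C_i$ the zero morphism, and with vertical weak equivalences $\id_0$, $y$, $z$; commutativity is automatic. As the distinguished sequences required in \ref{WW1} one may take the split exact sequences $0\to B_i\oplus C_i\xrightarrow{\ \id\ }B_i\oplus C_i$, which are distinguished by \ref{WC1}, so that $D_i=B_i\oplus C_i$ and $\begin{psmallmatrix}b_i, & g_i\end{psmallmatrix}=\id$. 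Then \ref{WW1} yields a weak equivalence $w\colon D_1\to D_2$ fitting into the conclusion diagram, whose middle vertical map is $y\oplus z$; since the two horizontal maps $\begin{psmallmatrix}b_i, & g_i\end{psmallmatrix}$ are identities, commutativity of the right-hand square forces $w=y\oplus z$, so $y\oplus z\in\weq$.

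The main obstacle in both parts is the same: the axioms \ref{WC2} and \ref{WW1} produce a distinguished sequence with a third term $D$ that is not pinned down (and need not be unique even up to isomorphism), so one cannot directly read off a conclusion about $f\oplus\id_X$ or $y\oplus z$. The point of taking $c=\begin{psmallmatrix}\id_A\\0\end{psmallmatrix}$ in the first case, and of taking $A_i=0$ with the trivial split sequences in the second, is precisely to rigidify the situation — making the sequence split exact, hence distinguished, and making $D$ exactly the relevant direct sum — so that the output of the axiom becomes verbatim the statement we want.
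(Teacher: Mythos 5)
Your proposal is correct and follows essentially the same route as the paper: part \ref{item:direct-sum-of-cofs-is-cof} reduces to $f\oplus\id_X$ via the same split exact sequence $A\to B\oplus A\oplus X\to B\oplus X$ and the second clause of \ref{WC2}, and part \ref{item:direct-sum-of-weqs-is-weq} applies \ref{WW1} with zero left-hand terms and the identity split sequences on $B_i\oplus C_i$, exactly as in the paper. The only differences are cosmetic (order of the factorization of $f\oplus f'$, explicit induction), and your remark that commutativity forces $w=y\oplus z$ makes explicit a point the paper leaves terse.
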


\begin{proof}
\ref{item:direct-sum-of-cofs-is-cof}:\;\;
Suppose $A_i\overset{f_i}{\rightarrowtail}B_i$ lies in $\cof$ for $i=1,2$.
Their direct sum 
$\begin{tikzcd}[cramped, column sep=1.3cm]
    A_1\oplus A_2
    \arrow{r}{f_1\oplus f_2}
    &B_1\oplus B_2
\end{tikzcd}$
factors as
\[
\begin{tikzcd}[column sep=1.5cm]
    A_1\oplus A_2\arrow{r}{f_1\oplus \id_{A_2}}
    &B_1\oplus A_2\arrow{r}{\id_{B_1}\oplus f_2}
    &B_1\oplus B_2.
\end{tikzcd}
\]
We shall show $f_1\oplus \id_{A_2}$ belongs to $\cof$.
Since the sequence 
\[
\begin{tikzcd}[column sep=2cm]
A_{1} \arrow{r}{\begin{psmallmatrix}f_1 \\ -\id_{A_{1}}\\0 \end{psmallmatrix}}
	& B_1 \oplus A_1 \oplus A_2 
		\arrow{r}
		{
			\begin{psmallmatrix}
			\id_{B_{1}} \amph f_{1} \amph 0 \\ 
			0 \amph 0 \amph \id_{A_{2}}
			\end{psmallmatrix}
		}
	& B_1 \oplus A_2
\end{tikzcd}
\]
is split exact, it lies in $\seq$ by \ref{WC1}. 
Moreover, we have that $f_1\oplus \id_{A_{2}}\colon A_{1}\oplus A_{2} \to B_{1}\oplus A_{2}$ is a cofibration by \ref{WC2}. 
Closure under isomorphism of $\cof$ shows $\id_{B_{1}}\oplus f_2\in \cof$, and so $f_1\oplus f_2\in \cof$ by \ref{WC0}.

\ref{item:direct-sum-of-weqs-is-weq}:\;\; 
Suppose $f_{i}\colon A_i\overset{\sim}{\lra}B_i$ lies in $\weq$ for $i=1,2$. 
Notice that we have pushout squares
\[
\begin{tikzcd}[column sep=1.2cm]
0 
    \arrow{r}
    \arrow{d}
& A_{1}
    \arrow{d}{\begin{psmallmatrix}
        \id_{A_{1}} \\ 0
    \end{psmallmatrix}}
\\
A_{2}
    \arrow{r}{\begin{psmallmatrix}
        0 \\ \id_{A_{2}}
    \end{psmallmatrix}}
& A_{1}\oplus A_{2}
\end{tikzcd}
\hspace{0.5cm}
\text{and}
\hspace{0.5cm}
\begin{tikzcd}[column sep=1.2cm]
0 
    \arrow{r}
    \arrow{d}
& B_{1}
    \arrow{d}{\begin{psmallmatrix}
        \id_{B_{1}} \\ 0
    \end{psmallmatrix}}
\\
B_{2}
    \arrow{r}{\begin{psmallmatrix}
        0 \\ \id_{B_{2}}
    \end{psmallmatrix}}
& B_{1}\oplus B_{2}
\end{tikzcd}
\]
(see Remark~\ref{rem:wPO}\ref{item:weak-PO-defn}), and by \ref{WC1} 
the distinguished split exact sequences
\[
\begin{tikzcd}[column sep=1.8cm]
0
    \arrow{r}
&[-0.5cm]
A_{1} \oplus A_{2}
    \arrow{r}{\id_{A_{1}} \oplus \id_{A_{2}}}
& A_{1} \oplus A_{2}
\end{tikzcd}
\hspace{0.5cm}
\text{and}
\hspace{0.5cm}
\begin{tikzcd}[column sep=1.8cm]
0
    \arrow{r}
&[-0.5cm]
B_{1} \oplus B_{2}
    \arrow{r}{\id_{B_{1}} \oplus \id_{B_{2}}}
& B_{1} \oplus B_{2}.
\end{tikzcd}
\]
Thus, applying \ref{WW1} to 
\[
\begin{tikzcd}[row sep=0.5cm]
A_{2}
    \arrow{d}{\sim}[swap]{f_{2}}
& 0 
    \arrow{l}
    \arrow[tail]{r}
    \arrow{d}{\sim}
& A_{1}
    \arrow{d}{\sim}[swap]{f_{1}}\\
B_{2}
& 0
    \arrow{l}
    \arrow[tail]{r}
& B_{1}
\end{tikzcd}
\]
shows that the unique morphism 
$
f_{1}\oplus f_{2}
\colon A_{1}\oplus A_{2} \to B_{1}\oplus B_{2}
$
must be a weak equivalence.
\end{proof}

We present a useful alternative version of the Gluing Axiom \ref{WW1}.

\begin{lemma}\label{lem:WW1'}
Suppose $\C$ is an additive category equipped with a class of distinguished sequences $\seq$ and weak equivalences $\weq$. 
Then $(\C,\seq,\weq)$ satisfies $\ref{WW1}$ if and only if 
it satisfies:
\begin{enumerate}[label=\textup{(WW\arabic*')},leftmargin=50pt]
\item\label{WW1'}
For any pair of distinguished sequences 
$\begin{tikzcd}[column sep=0.6cm, cramped]
A_{i}
    \arrow[tail]{r}{f_{i}}
& B_{i}
    \arrow[two heads]{r}{g_{i}}
& C_{i}
\end{tikzcd}$
($i=1,2$),
and weak equivalences 
$x\colon A_{1} \overset{\sim}{\to} A_{2}$
and 
$y\colon B_{1} \overset{\sim}{\to} B_{2}$ 
with 
$yf_{1} = f_{2}x$, 
there exists a weak equivalence 
$z\colon C_{1} \overset{\sim}{\to} C_{2}$
such that $zg_{1} = g_{2}y$.
\begin{equation}
\label{eqn:WW1'}
\begin{tikzcd}[row sep=0.5cm]
A_{1}
    \arrow[tail]{r}{f_{1}}
    \arrow{d}{x}[swap]{\sim}
& B_{1}
    \arrow[two heads]{r}{g_{1}}
    \arrow{d}{y}[swap]{\sim}
& C_{1}
    \arrow[dotted]{d}{\exists z}[swap]{\sim}
\\
A_{2}
    \arrow[tail]{r}{f_{2}}
& B_{2}
    \arrow[two heads]{r}{g_{2}}
& C_{2}
\end{tikzcd}
\end{equation}
\end{enumerate}
\end{lemma}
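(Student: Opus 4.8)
The plan is to prove the two implications directly, in each case by feeding a suitably chosen instance of the available data into whichever of the two axioms is being assumed. Both directions are essentially formal manipulations with biproducts, so I do not anticipate any real difficulty; the only non-routine ingredient will be recalled below.

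First I would treat \ref{WW1} $\Rightarrow$ \ref{WW1'} by applying the Gluing Axiom \ref{WW1} in the degenerate situation where the third objects are zero. Concretely: given distinguished sequences $A_i\overset{f_i}{\rtail}B_i\overset{g_i}{\onto}C_i$ for $i=1,2$ and weak equivalences $x\colon A_1\to A_2$ and $y\colon B_1\to B_2$ with $yf_1=f_2x$, take each $c_i$ to be the zero morphism $A_i\lra 0$. Then the commutative diagram \eqref{eqn:gluing-axiom} demanded by \ref{WW1} has $0$ in its left column, has vertical maps $\id_0$, $x$, $y$ (with $\id_0\in\weq$ by \ref{WW0}), and has cofibrations in its rows. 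Using the canonical isomorphism $B_i\oplus 0\cong B_i$ together with closure of $\seq$ under isomorphism (part of \ref{WC1}), the sequence $A_i\overset{\begin{psmallmatrix}f_i\\0\end{psmallmatrix}}{\lra}B_i\oplus 0\overset{\begin{psmallmatrix}g_i,\amph 0\end{psmallmatrix}}{\lra}C_i$ lies in $\seq$ and serves as the distinguished sequence required by \ref{WW1}, with distinguished weak cokernel object $D_i\deff C_i$. The conclusion of \ref{WW1} then supplies a weak equivalence $z\colon C_1\to C_2$ fitting into the asserted commutative diagram; precomposing the right-hand square of that diagram with the summand inclusion $B_1\into B_1\oplus 0$ yields exactly $zg_1=g_2y$, which is what \ref{WW1'} asks for.

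Next I would treat the converse \ref{WW1'} $\Rightarrow$ \ref{WW1} by reversing this reasoning. Starting from the data of \ref{WW1} --- the diagram \eqref{eqn:gluing-axiom} with $x,y,z$ weak equivalences and the distinguished sequences $A_i\overset{\begin{psmallmatrix}f_i\\-c_i\end{psmallmatrix}}{\rtail}B_i\oplus C_i\overset{\begin{psmallmatrix}b_i,\amph g_i\end{psmallmatrix}}{\onto}D_i$ for $i=1,2$ --- I would apply \ref{WW1'} to this pair of distinguished sequences, using $x\colon A_1\to A_2$ and $y\oplus z\colon B_1\oplus C_1\to B_2\oplus C_2$ as the two weak equivalences on the first two terms. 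The compatibility hypothesis of \ref{WW1'} is then just $(y\oplus z)\begin{psmallmatrix}f_1\\-c_1\end{psmallmatrix}=\begin{psmallmatrix}f_2\\-c_2\end{psmallmatrix}x$, which is the componentwise form of the commutativity of the two squares in \eqref{eqn:gluing-axiom}; and \ref{WW1'} returns a weak equivalence $w\colon D_1\to D_2$ with $w\begin{psmallmatrix}b_1,\amph g_1\end{psmallmatrix}=\begin{psmallmatrix}b_2,\amph g_2\end{psmallmatrix}(y\oplus z)$, which is precisely the right-hand square in the conclusion of \ref{WW1} (the left-hand square being the compatibility identity just noted). The one step here that is not purely formal is the assertion that $y\oplus z$ is a weak equivalence; this is provided by \cref{lem:basics_of_cofibration_and_weq}\ref{item:direct-sum-of-weqs-is-weq}. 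I expect this to be the only ``obstacle'', and it is a minor one: the entire extra content is the observation that the middle vertical in the Gluing Axiom, being a direct sum of weak equivalences, is itself a weak equivalence.
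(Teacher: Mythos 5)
Your proof is correct and is essentially the paper's own argument: there, \ref{WW1'}$\Rightarrow$\ref{WW1} is likewise deduced by applying \cref{lem:basics_of_cofibration_and_weq}\ref{item:direct-sum-of-weqs-is-weq} to $y\oplus z$, and \ref{WW1}$\Rightarrow$\ref{WW1'} is obtained by applying the Gluing Axiom to the diagram whose left-hand column is $0$ together with the given sequences $A_i\rtail B_i\onto C_i$. Your write-up merely makes explicit the identification $B_i\oplus 0\cong B_i$ and the isomorphism-closure of $\seq$, which the paper leaves implicit.
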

\begin{proof}
Due to Lemma~\ref{lem:basics_of_cofibration_and_weq}\ref{item:direct-sum-of-weqs-is-weq}, it is clear that \ref{WW1'} implies \ref{WW1}.
For the converse, assume \ref{WW1} holds and that we have the hypotheses of \ref{WW1'}. 
Applying \ref{WW1} to
\[
\begin{tikzcd}[row sep=0.5cm]
0
    \arrow[equals]{d}
& A_{1}
    \arrow[tail]{r}{f_{1}}
    \arrow{d}{x}[swap]{\sim}
    \arrow{l}
& B_{1}
    \arrow{d}{y}[swap]{\sim}
\\
0
& A_{2}
    \arrow[tail]{r}{f_{2}}
    \arrow{l}
& B_{2}
\end{tikzcd}
\]
and the distinguished sequences 
$\begin{tikzcd}[column sep=0.6cm, cramped]
A_{i}
    \arrow[tail]{r}{f_{i}}
& B_{i}
    \arrow[two heads]{r}{g_{i}}
& C_{i}
\end{tikzcd}$ 
($i=1,2$)
shows that \ref{WW1'} holds.
\end{proof}

We introduce some concepts for weak Waldhausen categories by analogy to the classical theory 
(cf.\ Definitions~\ref{def:exact_functor_Wald-cats} and \ref{def:w_acyclic}).

\begin{definition}
\label{def:exact_func_wWald}
Let $(\C,\seq,\weq)$ and $(\C',\seq',\weq')$ be weak Waldhausen categories.
\begin{enumerate}

\item 
An additive functor $F\colon \C\to\C'$ is called an \emph{exact functor} if it preserves distinguished sequences and weak equivalences, namely,
$F(\seq) \subseteq \seq'$ 
and 
$F(\weq)\subseteq \weq'$ hold. 
Moreover, if a quasi-inverse $F^{-1}$ of $F$ exists and is exact, then $F$ is called an \emph{exact equivalence}.

\item 
Suppose $\C$ is an additive subcategory of $\C'$ and $\inc\colon \C\to\C'$ is the inclusion functor. 
The triplet $(\C,\seq,\weq)$ is called a \emph{weak Waldhausen (additive) subcategory} of $(\C',\seq',\weq')$ if 
\begin{enumerate}[label=\textup{(\roman*)}]
\item $\mathsf{inc}$ is an exact functor of weak Waldhausen categories,

\item $\seq = \seq' \cap \C^{\to\to}$,
and 

\item $\weq = \weq' \cap \C^{\to}$.
\end{enumerate}

\item
An object $C\in \C$ is \emph{$\weq$-acyclic} if 
the zero map $0\rightarrowtail C$ belongs to $\weq$. 
We denote by $\C^{\weq}$ the full subcategory of all $\weq$-acyclic objects in $\C$.

\end{enumerate}
Like the case of classical Waldhausen categories, the composition of exact functors is exact.
Thus we denote by $\wWald$ the category of skeletally small weak Waldhausen categories and exact functors.
\end{definition}

The definition of an exact functor between weak Waldhausen categories differs from the original one (e.g., \cite[Ch.~II, 9.1.8]{Wei13}), which is attributable to specifying a class of distinguished sequences rather than cofibrations. 
However, see Remark~\ref{rem:wWH-WH-comparison}\ref{item:wWH-functor}.

The next result allows us to define a localization sequence for weak Waldhausen categories.

\begin{lemma}
\label{lem:localization_sequence_wWald}
Suppose $(\C,\seq,\veq)$ and $(\C,\seq,\weq)$ are weak Waldhausen categories with $\veq\sse\weq$. 
Consider the subcategory $\C^{\weq}\sse\C$ of $\weq$-acyclic objects. 
There is a weak Waldhausen subcategory $(\C^{\weq},\seq^{\weq},\veq^{\weq})$ of $(\C,\seq,\veq)$, and a sequence in $\wWald$
\begin{equation}
\label{seq:localization_sequence_wWald}
\begin{tikzcd}
    (\C^{\weq},\seq^{\weq},\veq^{\weq})
        \arrow[hook]{r}{\inc}
    &(\C,\seq,\veq)
        \arrow[hook]{r}{\id_{\C}} 
    &(\C,\seq,\weq)
\end{tikzcd}
\end{equation}
of weak Waldhausen subcategories where the first arrow is the canonical inclusion.
\end{lemma}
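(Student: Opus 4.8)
The plan is to verify the three defining conditions of a weak Waldhausen subcategory (\cref{def:exact_func_wWald}(2)) for the triplet $(\C^{\weq},\seq^{\weq},\veq^{\weq})$, where we set $\seq^{\weq} \deff \seq \cap (\C^{\weq})^{\to\to}$ and $\veq^{\weq} \deff \veq \cap (\C^{\weq})^{\to}$, and then observe that the two maps in \eqref{seq:localization_sequence_wWald} are exact functors. The first task is to check that $(\C^{\weq},\seq^{\weq},\veq^{\weq})$ is itself a weak Waldhausen category, i.e.\ that it satisfies \ref{WC0}--\ref{WC2} and \ref{WW0}--\ref{WW1}. Axioms \ref{WC0}, \ref{WW0} are inherited immediately since $\cof_{\seq^{\weq}} = \cof_{\seq} \cap (\C^{\weq})^{\to}$ and $\veq^{\weq}$ are intersections of classes closed under composition and isomorphism with subcategories. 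For \ref{WC1}, split exact sequences on objects of $\C^{\weq}$ lie in $\seq$ (hence in $\seq^{\weq}$), and the weak-cokernel property is preserved since it is tested only against the objects in the sequence; closure under isomorphism is clear.

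The main point is \ref{WC2} (and the analogous check inside \ref{WW1}): given a cofibration $A \overset{f}{\rtail} B$ with $A, B \in \C^{\weq}$ and a morphism $c\colon A \to C$ with $C \in \C^{\weq}$, axiom \ref{WC2} in $(\C,\seq,\veq)$ produces a distinguished sequence $\begin{tikzcd}[column sep=0.8cm, cramped] A \arrow{r}{\left(\begin{smallmatrix}f\\-c\end{smallmatrix}\right)} & B\oplus C \arrow{r}{\left(\begin{smallmatrix}b,& g\end{smallmatrix}\right)} & D\end{tikzcd}$ with $g \in \cof_{\seq}$, and one must show the weak cokernel object $D$ lies in $\C^{\weq}$. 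This is the expected obstacle, and it is resolved by the argument the text alludes to (``a similar argument'' following \cref{def:w_acyclic}): since $\veq \sse \weq$, the sequence is also distinguished in $(\C,\seq,\weq)$; then $0 \rtail B\oplus C$ is in $\weq$ by \ref{WW0} and \cref{lem:basics_of_cofibration_and_weq}\ref{item:direct-sum-of-weqs-is-weq} (as $0\rtail B$ and $0\rtail C$ are in $\weq$), and one applies the Gluing Axiom \ref{WW1} (via \ref{WW1'} of \cref{lem:WW1'}) to the diagram with top row $A \rtail B\oplus C \onto D$ and bottom row $0 \rtail 0 \onto 0$, using the weak equivalences $0 \overset{\sim}{\to} A$ — wait, more precisely: compare the distinguished sequence with the split one $0 \rtail B\oplus C \overset{=}{\to} B\oplus C$ using the weak equivalence $A \to 0$ — to produce a weak equivalence $D \overset{\sim}{\to} 0$, i.e.\ $0 \rtail D$ in $\weq$, so $D \in \C^{\weq}$. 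Hence $\begin{psmallmatrix}f\\-c\end{psmallmatrix}$ is a cofibration in $\seq^{\weq}$ with $g \in \cof_{\seq^{\weq}}$. The Gluing Axiom \ref{WW1} for $(\C^{\weq},\seq^{\weq},\veq^{\weq})$ itself follows by applying \ref{WW1} in $(\C,\seq,\veq)$ and noting that the resulting weak equivalence $w\colon D_1 \to D_2$ is between objects of $\C^{\weq}$ (just shown), hence lies in $\veq \cap (\C^{\weq})^{\to} = \veq^{\weq}$ by definition of $\veq^{\weq}$ as a restriction of $\veq$.

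It remains to record that the two arrows in \eqref{seq:localization_sequence_wWald} are exact. For $\inc\colon (\C^{\weq},\seq^{\weq},\veq^{\weq}) \to (\C,\seq,\veq)$ this is immediate from the equalities $\seq^{\weq} = \seq \cap (\C^{\weq})^{\to\to}$ and $\veq^{\weq} = \veq \cap (\C^{\weq})^{\to}$, and $(\C^{\weq},\seq^{\weq},\veq^{\weq})$ is then a weak Waldhausen subcategory of $(\C,\seq,\veq)$ in the sense of \cref{def:exact_func_wWald}(2) by the verifications above. For $\id_\C\colon (\C,\seq,\veq)\to(\C,\seq,\weq)$ exactness amounts to $\seq \sse \seq$ and $\veq \sse \weq$, the latter being the standing hypothesis. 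Thus \eqref{seq:localization_sequence_wWald} is a sequence in $\wWald$, completing the proof.
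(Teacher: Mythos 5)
Your overall strategy is the same as the paper's: restrict $\seq$ and $\veq$ to $\C^{\weq}$, inherit \ref{WC0}, \ref{WC1}, \ref{WW0}, and reduce \ref{WC2} and \ref{WW1} to the closure claim that the third term of a distinguished sequence whose first two terms are $\weq$-acyclic is again $\weq$-acyclic. However, your execution of that closure claim does not work as written. Both of your attempted applications of \ref{WW1'} invoke ``the weak equivalence $A\to 0$'', which you do not have: $\weq$-acyclicity of $A$ only says that $0\rtail A$ lies in $\weq$, and a weak Waldhausen category carries no saturation or two-out-of-three hypothesis, so $A\to 0\in\weq$ is a genuinely stronger statement (compare \cref{lem:L_sec}, where both directions have to be proved under extra hypotheses). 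For the same reason, your conclusion ``a weak equivalence $D\overset{\sim}{\to}0$, i.e.\ $0\rtail D$ in $\weq$'' conflates two different conditions. The fix is to orient \ref{WW1'} the other way, which is exactly what the paper does: take as top row the split exact (hence distinguished) sequence $0\rtail 0\onto 0$, as bottom row the given sequence $A\rtail B\oplus C\onto D$, and as vertical weak equivalences $0\to A$ and $0\to B\oplus C$, the latter lying in $\weq$ by \cref{lem:basics_of_cofibration_and_weq}\ref{item:direct-sum-of-weqs-is-weq} since $B,C\in\C^{\weq}$; then \ref{WW1'} produces a weak equivalence $0\to D$, so $D\in\C^{\weq}$. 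The paper states this once and for all as the claim: if $A\rtail B\onto C$ lies in $\seq$ with $A,B\in\C^{\weq}$, then $C\in\C^{\weq}$.

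Two smaller points. In \ref{WC1}, your justification ``the weak-cokernel property is tested only against the objects in the sequence'' is not right as stated; what you mean, and what suffices, is that $\C^{\weq}$ is a full subcategory, so a weak cokernel in $\C$ remains a weak cokernel after restriction. And in \ref{WC2} for the subcategory you still owe the assertion $g\in\cof_{\seq^{\weq}}$: this requires the closure claim a second time, applied to some distinguished sequence $C\rtail D\onto E$ in $\seq$ beginning with $g$ (which exists since $g\in\cof_{\seq}$ by \ref{WC2} in $\C$), to see that $E\in\C^{\weq}$ and hence that this sequence lies in $\seq^{\weq}$. With these corrections your argument coincides with the paper's proof.
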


\begin{proof}
First, the subcategory $\C^{\weq}$ is full by definition, 
additive by Lemma~\ref{lem:basics_of_cofibration_and_weq}\ref{item:direct-sum-of-weqs-is-weq}, and closed under isomorphisms by \ref{WW0}. 

Second, we claim that if 
$A\overset{f}{\rightarrowtail} B\overset{g}{\twoheadrightarrow} C$ lies in $\seq$ with $A,B\in\C^{\weq}$, then $C\in\C^{\weq}$. 
Indeed, applying \ref{WW1'} (see Lemma~\ref{lem:WW1'}) for $(\C,\seq,\weq)$ to 
\[
\begin{tikzcd}[row sep=0.5cm]
0
    \arrow[tail]{r}
    \arrow{d}[swap]{\sim}
& 0
    \arrow[two heads]{r}{}
    \arrow{d}{}[swap]{\sim}
& 0
    \arrow[dotted]{d}
\\
A
    \arrow[tail]{r}{f}
& B
    \arrow[two heads]{r}{g}
& C
\end{tikzcd}
\]
shows that the (unique) morphism $0\to C$ lies in $\weq$ and so $C$ is $\weq$-acyclic. 

Thus, define 
\[
\seq^{\weq} \deff \seq \cap (\C^{\weq})^{\to\to}
\hspace{0.5cm}\text{and} \hspace{0.5cm}
\veq^{\weq} \deff \veq \cap (\C^{\weq})^{\to}.
\]
Notice that 
$
    \cof_{\seq^{\weq}} 
    = \cof_{\seq} \cap (\C^{\weq})^{\to}$, 
where $\cof_{\seq}$ is the class of cofibrations of $\seq$ (see Definition~\ref{def:weak_Wald_cat}). 
Axioms \ref{WC0}, \ref{WC1} and \ref{WW0} are all clear. 
Axioms \ref{WC2} and \ref{WW1} follow from those for $(\C,\seq,\veq)$ using the claim above. 
Hence, $(\C^{\weq},\seq^{\weq},\veq^{\weq})$ is a weak Waldhausen category. 
The remaining assertions are clear.
\end{proof}

\begin{remark}\label{rem:acyclics-inherit-coWH}
Note that in Lemma~\ref{lem:localization_sequence_wWald}, if $(\C,\seq,\weq)$ is weak coWaldhausen, then so too is $(\C^{\weq},\seq^{\weq},\veq^{\weq})$. 
This is because, in this case, if $A \rtail B \twoheadrightarrow C$ is in $\seq$ with $B,C\in\C^{\weq}$, then it follows that $A\in\C^{\weq}$.
\end{remark}

We make the following definition as a generalization of Definition~\ref{def:exact_sequence_Wald}.

\begin{definition}
\label{def:exact_sequence_wWald} 
We call the sequence \eqref{seq:localization_sequence_wWald} a \emph{localization sequence} in $\wWald$ if it satisfies the analogue of Definition~\ref{def:exact_sequence_Wald} for weak Waldhausen categories.
\end{definition}

We conclude Section~\ref{sec:weak_Wald_cat} by comparing the definitions introduced here with the corresponding notions for classical Waldhausen categories from Section~\ref{sec:classical_Wald_cat}. 
First, we characterise those weak Waldhausen categories that are Waldhausen, and show that the notion of a weak Waldhausen category indeed generalizes the classical notion.

\begin{proposition}
\label{prop:Wald_vs_wWald}
The following statements hold.
\begin{enumerate}[label=\textup{(\arabic*)}]
    \item\label{item:weakWHisWH} If $(\C,\seq,\weq)$ is a weak Waldhausen category where each distinguished sequence is a cokernel sequence, then $(\C,\cof_{\seq},\weq)$ is a Waldhausen category. 
    
    \item\label{item:WisweakWH} Suppose $(\C,\cof,\weq)$ is a Waldhausen category, and define $\seq$ to be the collection of all cofibration sequences. Then $(\C,\seq,\weq)$ is a weak Waldhausen category in which all the distinguished sequences are cokernel sequences.
\end{enumerate}
\end{proposition}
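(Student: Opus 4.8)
Both statements are bookkeeping exercises translating between the Waldhausen and weak Waldhausen axiom systems, and in each direction the bridge is the last sentence of \cref{rem:wPO}\ref{item:weak-PO-defn}: a distinguished sequence that is moreover a cokernel sequence has an honest pushout square, not merely a weak pushout square. The plan is to check the axioms one by one, swapping ``weak pushout'' for ``pushout'' exactly where \cref{def:Waldhausen_category} and \cref{def:weak_Wald_cat} differ.

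For the first statement, I would verify the Waldhausen axioms for $(\C,\cof_{\seq},\weq)$ as follows. Axioms \ref{C0} and \ref{W0} are literally \ref{WC0} and \ref{WW0}, and \ref{C1} holds because the split exact sequence $0\rtail C\xrightarrow{\id_C}C$ lies in $\seq$ by \ref{WC1}. For \ref{C2}, given a cofibration $f\colon A\rtail B$ and a morphism $c\colon A\to C$, axiom \ref{WC2} furnishes a distinguished sequence with first map $\begin{psmallmatrix}f\\-c\end{psmallmatrix}\colon A\to B\oplus C$ and second map $(b,g)\colon B\oplus C\to D$, where $g\in\cof_{\seq}$; by hypothesis it is a cokernel sequence, so $(b,g)$ is epic and the cited remark promotes the associated weak pushout to a genuine pushout. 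This exhibits $D\cong B\bincoprod_{A}C$ together with the cofibration $g\colon C\rtail B\bincoprod_{A}C$ demanded by \ref{C2}. For the Gluing Axiom \ref{W1}, I would feed the given commutative diagram into \ref{WC2} to construct such distinguished sequences, so that the resulting objects $D_1,D_2$ are the pushouts $B_i\bincoprod_{A_i}C_i$; then \ref{WW1} returns a weak equivalence $w\colon D_1\to D_2$ compatible with the structure maps, and the uniqueness clause of the universal property of the pushout $D_1$ forces $w$ to coincide with the canonical comparison morphism $B_1\bincoprod_{A_1}C_1\to B_2\bincoprod_{A_2}C_2$, which is hence a weak equivalence.

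For the second statement, I would first note that $\cof_{\seq}=\cof$ (a cofibration sequence starts with a cofibration, and conversely every cofibration admits a cokernel by \cref{rem:Wald_cat_rem1}\ref{item:cofib-sequences}), and that a cofibration sequence is tautologically a cokernel sequence, which disposes of the last clause. Axioms \ref{WC0}, \ref{WW0} are \ref{C0}, \ref{W0}, and \ref{WC1} follows from \cref{lem:Wald_contain_split}, closure of cofibration sequences under isomorphism (\cref{rem:Wald_cat_rem1}\ref{item:cofib-sequences}), and the observation that a cokernel is in particular a weak cokernel. The heart of the argument is \ref{WC2}: for $f\colon A\rtail B$ in $\cof$ and $c\colon A\to C$, I would show $\begin{psmallmatrix}f\\-c\end{psmallmatrix}\colon A\to B\oplus C$ is a cofibration by factoring it as the split inclusion $\begin{psmallmatrix}\id_A\\0\end{psmallmatrix}\colon A\rtail A\oplus C$ (a cofibration by \ref{C1} and \ref{C2}), then the automorphism $\begin{psmallmatrix}\id_A&0\\-c&\id_C\end{psmallmatrix}$ of $A\oplus C$, then $f\oplus\id_C\colon A\oplus C\to B\oplus C$, the last being a pushout of the cofibration $f$ and hence a cofibration by \cref{rem:Wald_cat_rem1}\ref{item:consequences-of-c0}; one then checks that the cokernel of $\begin{psmallmatrix}f\\-c\end{psmallmatrix}$ is precisely the pushout cocone $(b,g)\colon B\oplus C\to B\bincoprod_{A}C$ of \ref{C2}, so the cofibration sequence thus obtained has $g\colon C\rtail B\bincoprod_{A}C$ a cofibration, and closure under isomorphism handles any other choice of cokernel object. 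Axiom \ref{WW1} is then deduced from \ref{W1} exactly as in the first statement, once more using that the objects $D_1,D_2$ occurring there are honest pushouts.

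The only non-formal step is the computation inside \ref{WC2} of the second statement: proving $\begin{psmallmatrix}f\\-c\end{psmallmatrix}$ is a cofibration with cokernel $B\bincoprod_{A}C$ is where the ``pushout-along-$c$'' shape of \ref{C2} has to be matched with the ``direct-sum'' shape of \ref{WC2}, and it requires the small matrix identities above rather than a formal appeal. Everything else is routine once the identification $D\cong B\bincoprod_{A}C$ is in hand and one reads the comparison morphism in the Gluing Axiom off the uniqueness part of the universal property of a pushout.
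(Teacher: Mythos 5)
Your proof is correct and follows essentially the same route as the paper: an axiom-by-axiom check, using in one direction that a distinguished cokernel sequence yields an honest pushout (so \ref{WC2}/\ref{WW1} give \ref{C2}/\ref{W1}, with the uniqueness of the pushout identifying the weak equivalence with the canonical comparison map), and in the other direction factoring $\begin{psmallmatrix}f\\-c\end{psmallmatrix}$ through split pieces and $f\oplus\id_C$ to verify \ref{WC2}. The only difference is cosmetic: the paper writes $\begin{psmallmatrix}f\\-c\end{psmallmatrix}=(f\oplus\id_C)\circ\begin{psmallmatrix}\id_A\\-c\end{psmallmatrix}$ and quotes \cref{lem:Wald_contain_split} and \cref{lem:basics_of_cofibration_and_weq}\ref{item:direct-sum-of-cofs-is-cof}, whereas you insert an automorphism of $A\oplus C$ and realize $f\oplus\id_C$ directly as a pushout of $f$, which is an equally valid (arguably self-contained) variant.
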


\begin{proof}
\ref{item:weakWHisWH}:\;\;
Note that \ref{C0}, \ref{C1} and \ref{W0} follow from \ref{WC0}, \ref{WC1} and \ref{WW0}, respectively. 
Since each distinguished sequence is a cokernel sequence and $\seq$ is closed under isomorphism, we see that $\seq$ contains \emph{all} cokernel sequences 
$A \overset{f}{\rightarrowtail} B \to C$
for each cofibration $f\in\cof_{\seq}$. 
Combining this observation with \ref{WC2} (resp.\ \ref{WW1}) yields \ref{C2} (resp.\ \ref{W1}).

\ref{item:WisweakWH}:\;\; 
First, note that \ref{WC0}, \ref{WW0} and \ref{WW1} follow from \ref{C0}, \ref{W0} and \ref{W1}, respectively. 

For \ref{WC1}, we have that $\seq$ contains all split exact sequences by Lemma~\ref{lem:Wald_contain_split} and it is clear that $\seq$ consists of weak cokernel sequences. It is closed under isomorphism by Remark~\ref{rem:Wald_cat_rem1}\ref{item:cofib-sequences}.

For \ref{WC2}, suppose $A\overset{f}{\rightarrowtail} B$ is a cofibration (i.e.\ belongs to $\cof = \cof_{\seq}$) and $A\overset{c}{\lra} C$ is any morphism. Then the morphism $\binom{f}{-c}$ is equal to the composite 
$(f\oplus \id_{C}) \circ \binom{\id_{A}}{-c}$. 
By Lemma~\ref{lem:basics_of_cofibration_and_weq}\ref{item:direct-sum-of-cofs-is-cof}, we have $\cof$ is closed under direct sums so $f\oplus \id_{C}\in\cof$. 
By Lemma~\ref{lem:Wald_contain_split}, the morphism $\binom{\id_{A}}{-c}$ lies in $\cof$ since 
$\begin{tikzcd}[column sep=1.2cm, cramped]
A
    \arrow{r}{\begin{psmallmatrix}
        \id_{A}\\-c
    \end{psmallmatrix}}
& A\oplus C
    \arrow{r}{\begin{psmallmatrix}
        c, \amph \id_{C}
    \end{psmallmatrix}}
& C
\end{tikzcd}$
is split exact. 
Hence, the composite is also a cofibration by \ref{C0}. 
Then $\ref{C2}$, and that $\seq$ and $\cof$ are closed under isomorphisms, imply there is a cofibration (i.e.\ distinguished) sequence of the form
$\begin{tikzcd}[column sep=1.1cm, cramped]
     A \arrow{r}{\begin{psmallmatrix}
         f\\-c
        \end{psmallmatrix}}
    & B\oplus C
        \arrow{r}{\begin{psmallmatrix}
            b, \amph g
        \end{psmallmatrix}}
    & B \bincoprod_{A} C
    \end{tikzcd}$
    with $g\in \cof$.
\end{proof}

In the setup of Proposition~\ref{prop:Wald_vs_wWald}\ref{item:WisweakWH}, we say that $(\C,\seq,\weq)$ is the \emph{corresponding weak Waldhausen category} of the Waldhausen category $(\C,\cof,\weq)$.

\begin{remark}
\label{rem:wWH-WH-comparison}
Suppose 
    $(\C,\cof,\veq)$, 
    $(\C,\cof,\weq)$ 
and 
    $(\C',\cof',\weq')$ 
are Waldhausen additive categories, 
with corresponding weak Waldhausen categories 
    $(\C,\seq,\veq)$,
    $(\C,\seq,\weq)$ 
and 
    $(\C',\seq',\weq')$, 
respectively.
The following observations are straightforward to verify. 
\begin{enumerate}[label=\textup{(\arabic*)}]

\item \label{item:wWH-functor}
An additive functor $F\colon \C\to\C'$ is an exact functor 
$F\colon (\C,\seq,\weq) \to (\C',\seq',\weq')$ of weak Waldhausen categories (as defined in Definition~\ref{def:exact_func_wWald}) 
if and only if it is an exact functor 
$F\colon (\C,\cof,\weq) \to (\C',\cof',\weq')$ 
of Waldhausen categories (in the sense of Definition~\ref{def:exact_functor_Wald-cats}).

\item \label{item:wWH-subcategory}
Suppose $\C$ is an additive subcategory of $\C'$ and $\mathsf{inc}\colon \C \into \C'$ is the inclusion functor. 
Then 
$\inc \colon (\C,\cof,\weq) \into (\C',\cof',\weq')$ is an inclusion of a Waldhausen subcategory if and only if 
$\inc \colon (\C,\seq,\weq) \into (\C',\seq',\weq')$ is an inclusion of a weak Waldhausen subcategory.

\item \label{item:wWH-acyclics}
Consider the Waldhausen subcategory $(\C^{\weq},\cof^{\weq},\veq^{\weq})\sse (\C,\cof,\veq)$ 
of $\weq$-acyclic objects. 
The corresponding weak Waldhausen category of $(\C^{\weq},\cof^{\weq},\veq^{\weq})$ is the weak Waldhausen subcategory $(\C^{\weq},\seq^{\weq},\veq^{\weq})$ of $(\C,\seq,\veq)$ as produced in Lemma~\ref{lem:localization_sequence_wWald}. 

\item \label{item:wWH-localization-seq}
Lastly, we note that if \eqref{seq:localization_sequence_wWald} a localization sequence of weak Waldhausen categories, then \eqref{seq:localization_sequence_Wald} is a localization sequence of Waldhausen categories (since we are assuming the categories in this remark are Waldhausen). However, the converse does not necessarily follow. 

\end{enumerate}
\end{remark}

\subsection{The Grothendieck group of a weak Waldhausen category}
\label{sec:K0_of_wWald}

The \emph{Grothendieck group} $K_{0}(\C,\seq,\weq)$ of a skeletally small weak Waldhausen category $(\C,\seq,\weq)$ is given by a straightforward adjustment of Definition~\ref{def:K0_of_Wald_cat}, using distinguished sequences in place of cofibration sequences. 
It is also clear that an exact functor 
$F\colon (\C,\seq,\weq) \to (\C',\seq',\weq')$ 
of weak Waldhausen categories 
induces a group homomorphism
$K_{0}(F) \colon K_{0}(\C,\seq,\weq) \to K_{0}(\C',\seq',\weq')$, which extends to a functor 
$K_{0}\colon \wWald\to \Ab$.

We end this subsection with some simple examples.

\begin{example}\label{exam:K0-for-WH-is-K0-for-wWH}
Suppose $(\C,\cof,\weq)$ is a skeletally small Waldhausen category with corresponding weak Waldhausen category $(\C,\seq,\weq)$. Then it is clear that 
$K_{0}(\C,\cof,\weq) = K_{0}(\C,\seq,\weq)$.
\end{example}

\begin{example}\label{ex:contractible}
Let $\A$ be a skeletally small abelian category.
Let $\seq$ denote the class of all right exact sequences
$A\overset{f}{\to} B\overset{g}{\twoheadrightarrow} C$, 
namely, 
sequences where $g$ is a cokernel of $f$; 
and put $\weq \deff\Iso\C$, the class of all isomorphisms in $\C$.
Then $(\A,\seq,\weq)$ is a weak Waldhausen additive category. 
Indeed, since $\cof_{\seq}=\A^{\to}$, the conditions \ref{WC0} and \ref{WW0} are clear.
The other conditions also follow from this and the fact that all distinguished sequences are cokernel sequences. Furthermore, Proposition~\ref{prop:Wald_vs_wWald}\ref{item:weakWHisWH} tells us that $(\A,\cof_{\seq},\weq)$ is a Waldhausen category.

Lastly, in this example, we have $K_{0}(\A,\seq,\weq)\cong0$. Indeed, as any object $C\in\A$ fits into a distinguished sequence $C\lra 0\lra 0$,  we have $[C]=0$ in $K_{0}(\A,\seq,\weq)$.
\end{example}

\section{Localization theorems for weak Waldhausen categories}
\label{sec:localization-theorem-for-wWald}

Having established the framework of weak Waldhausen categories, we can state and prove our two main localization results, namely, Theorem~\ref{thm:Quillen_localization1_new} and Corollary~\ref{cor:Quillen_localization2}, in this section. 
These show that, under certain assumptions on $\weq$, the sequence \eqref{seq:localization_sequence_wWald3} below is a localization sequence in $\wWald$, which in turn induces a right exact sequence of their Grothendieck groups.
Our localization results are based on Corollary~\ref{cor:Schlichting_localization}. 
To accurately handle the Serre and Verdier quotients in terms of weak Waldhausen categories, we introduce the \emph{homotopy category} of a weak Waldhausen category in Section~\ref{subsec:homotopy-category}. 
Under some conditions, we show that the Grothendieck group of a weak Waldhausen category is isomorphic to that of its homotopy category (when it exists). 
This will be used to prove Corollary~\ref{cor:ES_via_Waldhausen}.

We work under the following setup throughout Section~\ref{sec:localization-theorem-for-wWald}.

\begin{setup}
\label{setup:Quillen_loc1}
We assume there is a sequence 
\begin{equation}
\label{seq:localization_sequence_wWald3}
\begin{tikzcd}
    (\C^{\weq},\seq^{\weq},\veq^{\weq})
        \arrow[hook]{r}{\inc}
    &(\C,\seq,\veq)
        \arrow[hook]{r}{\id_{\C}} 
    &(\C,\seq,\weq)
\end{tikzcd}
\end{equation}
of skeletally small weak Waldhausen subcategories, where $\C^{\weq}\sse\C$ is the subcategory of $\weq$-acyclic objects in $\C$. 
We put $\cof\deff\cof_{\seq}$ and $\fib\deff\fib_{\seq}$. 
\end{setup}

To compare the Grothendieck groups of $(\C,\seq,\veq)$ and $(\C,\seq,\weq)$, we introduce certain classes of morphisms (cf.\ Definition~\ref{def:Sn_from_thick}).

\begin{definition}
\label{def:mors_assoc_to_wWH_cat}
We define the following four classes of morphisms in $\C$.
\begin{enumerate}[label=\textup{(\roman*)}]

    \item $\L^{\ac}\deff \cof \cap \weq$

    \item $\L^{\ac}_{\sec}\deff\Set{ f\in\L^{\ac} | f \text{ is a section} } \sse\L^{\ac}$

    \item $\R^{\ac}\deff \fib \cap \weq$

    \item $\T\deff\set{ g\in\R^{\ac} | \text{there is a sequence  $A\overset{f}{\rightarrowtail}B\overset{g}{\twoheadrightarrow}C$ in $\seq$ with $A\in\C^{\weq}$}} \sse \R^{\ac}$ 
\end{enumerate}
\end{definition}

For the proof of Theorem~\ref{thm:Quillen_localization1_new}, we need the following lemma.

\begin{lemma}
\label{lem:L_sec}
If $f\in\L^{\ac}$ and 
$A\overset{\substack{f\\ \sim}}{\rightarrowtail}B\overset{g}{\twoheadrightarrow}C$ is a distinguished sequence in $\seq$, 
then the morphisms $0\to C$ and $C \to 0$ both lie in $\weq$. In particular, $C\in\C^{\weq}$.
\end{lemma}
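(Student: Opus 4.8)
The statement asks us to show: if $f \in \L^{\ac} = \cof \cap \weq$ fits into a distinguished sequence $A \overset{f}{\rtail} B \overset{g}{\twoheadrightarrow} C$ in $\seq$, then both $0 \to C$ and $C \to 0$ lie in $\weq$, whence $C \in \C^{\weq}$. The natural approach is to apply the alternative Gluing Axiom \ref{WW1'} (from \cref{lem:WW1'}) to a carefully chosen morphism of distinguished sequences, comparing the given sequence against a split one built from the \emph{same} cofibration $f$.

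\textbf{Key steps.} First I would produce an auxiliary distinguished sequence with the same left map $f$ whose cokernel is $0$: namely, since $f$ is a cofibration, the pair $(f, \id_A)$ yields by \ref{WC2} a distinguished sequence $\begin{tikzcd}[column sep=1cm, cramped] A \arrow{r}{\begin{psmallmatrix} f \\ -\id_A \end{psmallmatrix}} & B \oplus A \arrow{r}{} & D \end{tikzcd}$; but in fact it is cleaner to observe directly that $\begin{tikzcd}[column sep=0.9cm, cramped] A \arrow{r}{f} & B \arrow{r}{} & 0 \end{tikzcd}$ \emph{need not} be distinguished, so instead I would compare against the split sequence $\begin{tikzcd}[column sep=0.7cm, cramped] B \arrow{r}{\id_B} & B \arrow{r}{} & 0 \end{tikzcd}$, which lies in $\seq$ by \ref{WC1}. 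Second, I set up the square with rows $\big(A \overset{f}{\rtail} B \overset{g}{\twoheadrightarrow} C\big)$ on top and $\big(B \overset{\id_B}{\rtail} B \to 0\big)$ on the bottom, with left vertical map $f \colon A \overset{\sim}{\to} B$ (a weak equivalence, since $f \in \weq$) and middle vertical map $\id_B$; this square commutes since $\id_B \circ f = \id_B \circ f$. Applying \ref{WW1'} to this square gives a weak equivalence $z \colon C \overset{\sim}{\to} 0$, so $C \to 0$ lies in $\weq$. Third, to get $0 \to C$ in $\weq$, I run the symmetric comparison: take the split sequence $\begin{tikzcd}[column sep=0.7cm, cramped] B \arrow{r}{\id_B} & B \arrow{r}{} & 0 \end{tikzcd}$ as the \emph{top} row and $\big(A \overset{f}{\rtail} B \overset{g}{\twoheadrightarrow} C\big)$ as the bottom row, with left vertical $f^{-1}$-type map — but $f$ is only a cofibration and weak equivalence, not necessarily invertible, so this direction is more delicate. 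Instead I would argue: we now know $C \overset{\sim}{\to} 0$ is a weak equivalence, and $0 \to C$ followed by $C \to 0$ is $\id_0$ (an isomorphism, hence in $\weq$ by \ref{WW0}); but \ref{WW0} only gives closure under composition, not $2$-out-of-$3$, so this alone does not suffice. The fix is to apply \ref{WW1'} once more, now to the square with top row $\big(0 \overset{}{\rtail} 0 \to 0\big)$ (split exact, in $\seq$) and bottom row $\big(A \overset{f}{\rtail} B \overset{g}{\twoheadrightarrow} C\big)$ — no wait, that forces the left and middle verticals to be $0 \to A$ and $0 \to B$, which are weak equivalences only if $A, B \in \C^{\weq}$, which we do not assume. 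So instead I would directly exhibit $0 \to C$ as a composite: since $z \colon C \to 0$ is in $\weq$ and we want $0 \to C$ in $\weq$, and because $0$ is a zero object, any weak equivalence $C \overset{\sim}{\to} 0$ is in particular such that there is \emph{some} morphism $0 \to C$; the cleanest route is to note $0 \to C \to 0 \to C$ type arguments fail without $2$-out-of-$3$, so the honest plan is: apply \ref{WW1'} a second time to the square with top row $\big(B \overset{\id}{\rtail} B \to 0\big)$, bottom row $\big(A \overset{f}{\rtail} B \overset{g}{\to} C\big)$ is \emph{not} valid directionally. The resolution is that $\seq$ contains split exact sequences in both ``directions'' up to the structure available, and one should instead use the split sequence $\begin{tikzcd}[column sep=0.7cm, cramped] 0 \arrow{r} & C \arrow{r}{\id_C} & C \end{tikzcd}$: compare the distinguished sequence $\big(A \overset{f}{\rtail} B \overset{g}{\twoheadrightarrow} C\big)$ with $\big(0 \overset{}{\rtail} C \overset{\id_C}{\twoheadrightarrow} C\big)$ via... this still has a directionality mismatch on the cofibration side. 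Thus the genuinely clean argument is: having shown $C \to 0 \in \weq$, use that $g \colon B \to C$ composed appropriately, together with \ref{WW0} applied to $0 \to C \overset{\sim}{\to} 0 \xrightarrow{\text{?}}$ — I expect the intended proof simply invokes that $C \to 0 \in \weq$ plus the fact (available once $C \to 0$ is a weak equivalence and $0$ is zero) via a one-line \ref{WW1'} application with the split row $0 \rtail C \twoheadrightarrow C$ matched against itself shifted, yielding $0 \to C \in \weq$ by symmetry/the biWaldhausen hypothesis if present. Since \cref{lem:L_sec} does not assume the coWaldhausen structure, the correct second application is \ref{WW1'} to
\[
\begin{tikzcd}[row sep=0.5cm]
A \arrow[tail]{r}{f} \arrow{d}[swap]{\sim}{f} & B \arrow[two heads]{r}{g} \arrow[equals]{d} & C \arrow[dotted]{d}{\sim} \\
B \arrow[tail]{r}{\id_B} & B \arrow[two heads]{r} & 0
\end{tikzcd}
\]
giving $C \overset{\sim}{\to} 0$; and then applying \ref{WW1'} to the \emph{same} picture but reading off that the induced map on a split row $0 \rtail C \overset{\id}{\twoheadrightarrow} C$ versus $0 \rtail 0 \overset{}{\twoheadrightarrow} 0$ forces $0 \to C \in \weq$. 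Finally, $C \in \C^{\weq}$ is immediate from $0 \to C \in \weq$ by \cref{def:exact_func_wWald}(3).

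\textbf{Main obstacle.} The delicate point is obtaining $0 \to C \in \weq$ (as opposed to $C \to 0 \in \weq$), because a weak Waldhausen category is \emph{not} assumed to satisfy $2$-out-of-$3$ for $\weq$, so we cannot simply deduce it from $\id_0 = (0 \to C)(C \to 0)$ plus $(C \to 0) \in \weq$. The resolution must route through a second, carefully oriented application of the Gluing Axiom \ref{WW1'} in which a split distinguished sequence with cokernel $C$ is compared against one with cokernel $0$, exploiting that $\seq$ contains \emph{all} split exact sequences (axiom \ref{WC1}) and is closed under isomorphism; I expect this to be the only step requiring genuine thought, the rest being bookkeeping with \ref{WC1}, \ref{WW0}, and \cref{lem:WW1'}.
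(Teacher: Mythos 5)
Your first application of \ref{WW1'} is fine and coincides with half of the paper's proof: comparing the given sequence with the split sequence $B \overset{\id_B}{\rtail} B \twoheadrightarrow 0$ (which lies in $\seq$ by \ref{WC1}), with verticals $f\in\weq$ and $\id_B$, does produce a weak equivalence $C \overset{\sim}{\lra} 0$. The genuine gap is the other half, $0 \to C \in \weq$ — and this is the half that the conclusion actually needs, since $C\in\C^{\weq}$ means by definition that $0\rtail C$ lies in $\weq$ (\cref{def:exact_func_wWald}). You correctly rule out $2$-out-of-$3$ and the comparison against $0\rtail 0\to 0$ with verticals $0\to A$, $0\to B$, but your final suggestion — rereading ``the same picture'' via a split row $0\rtail C \overset{\id_C}{\twoheadrightarrow} C$ versus $0\rtail 0 \twoheadrightarrow 0$ — does not work in either orientation: with $0\rtail C\twoheadrightarrow C$ on top the output of \ref{WW1'} is again a weak equivalence $C\to 0$, and with the rows interchanged the required middle vertical is $0\to C$, which is exactly what you are trying to prove. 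So as written the proposal never establishes $0\to C\in\weq$.

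The missing idea is a second, oppositely oriented application of \ref{WW1'} in which the split sequence is built on $A$ rather than on $B$: apply \ref{WW1'} to
\[
\begin{tikzcd}[row sep=0.5cm]
A \arrow[equals]{r} \arrow[equals]{d} & A \arrow[two heads]{r} \arrow{d}{\sim}[swap]{f} & 0 \arrow[dotted]{d} \\
A \arrow[tail]{r}{f} & B \arrow[two heads]{r}{g} & C
\end{tikzcd}
\]
where the top row $A \overset{\id_A}{\rtail} A \twoheadrightarrow 0$ is split exact (hence in $\seq$ by \ref{WC1}), the left vertical is $\id_A$ and the middle vertical is $f\in\weq$; the left square commutes, so \ref{WW1'} yields a weak equivalence $0 \overset{\sim}{\lra} C$. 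Together with your diagram for $C\to 0$ this is precisely the paper's proof; without this second diagram the statement remains unproved.
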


\begin{proof}
Apply \ref{WW1'} (see Lemma~\ref{lem:WW1'}) to
\[
\begin{tikzcd}[row sep=0.6cm]
A
    \arrow[equals]{r}
    \arrow[equals]{d}[swap]{\sim}
& A
    \arrow[two heads]{r}
    \arrow{d}{f}[swap]{\sim}
& 0
    \arrow{d}
\\ 
A 
    \arrow[tail]{r}{f}
& B 
    \arrow[two heads]{r}{g}
& C
\end{tikzcd}
\hspace{1cm}
\text{and}
\hspace{1cm}
\begin{tikzcd}[row sep=0.6cm]
A 
    \arrow[tail]{r}{f}
    \arrow{d}{f}[swap]{\sim}
& B 
    \arrow[two heads]{r}{g}
    \arrow[equals]{d}[swap]{\sim}
& C
    \arrow{d}
\\
B
    \arrow[equals]{r}
& B
    \arrow[two heads]{r}
& 0.
\end{tikzcd}
\]
\end{proof}

\begin{remark}
\label{rem:fail_converse_L_sec}
For an arbitrary weak Waldhausen category $(\C,\seq,\weq)$, the converse of the first assertion of Lemma~\ref{lem:L_sec} may fail. 
For instance, let $(\A,\seq,\weq)$ denote the weak Waldhausen category, where 
$\A$ is an abelian category with at least one proper epimorphism $f\colon A\to B$ (i.e.\ $f$ is epic but not monic), 
$\seq$ is the collection of all right exact sequences in $\A$, and $\weq = \Iso\C$. 
Then $f$ is a cofibration in $(\A,\seq,\weq)$; see Example~\ref{ex:contractible}.
However, even though the distinguished weak cokernel of $f$ is (up to isomorphism) $B \to 0$ and $0\in\C^{\weq}$, the morphism $f$ does not lie in $\weq=\Iso\A$ and hence $f\notin\L^{\ac}$.
\end{remark}

\begin{lemma}\label{lem:weak-biWald-implies-R-is-T}
Suppose $(\C,\seq,\weq)$ is weak biWaldhausen. 
If $g\in\R^{\ac}$ and 
$A\overset{f}{\rightarrowtail}B\overset{\substack{g\\ \sim}}{\twoheadrightarrow}C$ is in $\seq$, 
then $0\to A$ and $A \to 0$ both belong to $\weq$. 
In particular, $\R^{\ac} = \T$ in this case.
\end{lemma}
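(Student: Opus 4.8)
The plan is to obtain this lemma as the categorical dual of \cref{lem:L_sec}. Since $(\C,\seq,\weq)$ is weak biWaldhausen it is in particular weak coWaldhausen, so by \cref{def:weak_Wald_cat}(2) the triplet $(\C^{\op},\seq^{\op},\weq^{\op})$ is itself a weak Waldhausen category. Under this dualization, a distinguished sequence $A\overset{f}{\rightarrowtail}B\overset{g}{\twoheadrightarrow}C$ in $\seq$ is the same datum as a distinguished sequence $C\overset{g^{\op}}{\rightarrowtail}B\overset{f^{\op}}{\twoheadrightarrow}A$ in $\seq^{\op}$; hence $\fib_{\seq}$ corresponds to the class of cofibrations of $\seq^{\op}$, and $\weq$ corresponds to $\weq^{\op}$. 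Consequently $g$ lies in $\R^{\ac}=\fib_{\seq}\cap\weq$ exactly when $g^{\op}$ lies in the class ``$\L^{\ac}$'' formed from $(\C^{\op},\seq^{\op},\weq^{\op})$ in the manner of \cref{def:mors_assoc_to_wWH_cat}.

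With this translation in hand, I would first fix $g\in\R^{\ac}$ together with a witnessing distinguished sequence $A\overset{f}{\rightarrowtail}B\overset{g}{\twoheadrightarrow}C$ in $\seq$ (such a sequence exists since $g\in\fib_{\seq}$). Applying \cref{lem:L_sec} to the weak Waldhausen category $(\C^{\op},\seq^{\op},\weq^{\op})$ and to the distinguished sequence $C\overset{g^{\op}}{\rightarrowtail}B\overset{f^{\op}}{\twoheadrightarrow}A$ in $\seq^{\op}$, whose leftmost morphism $g^{\op}$ lies in ``$\L^{\ac}$'', yields that the morphisms $0\to A$ and $A\to 0$ in $\C^{\op}$ both belong to $\weq^{\op}$. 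Unwinding the duality (a morphism $0\to A$ in $\C^{\op}$ is a morphism $A\to 0$ in $\C$, and conversely) shows that both $A\to 0$ and $0\to A$ lie in $\weq$ in $\C$, which is precisely the first assertion. In particular $0\to A\in\weq$, so $A\in\C^{\weq}$; hence $g\in\T$ by the definition of $\T$ in \cref{def:mors_assoc_to_wWH_cat}. As $g\in\R^{\ac}$ was arbitrary this gives $\R^{\ac}\sse\T$, and since the reverse inclusion $\T\sse\R^{\ac}$ is immediate from the definitions, we conclude $\R^{\ac}=\T$.

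I do not anticipate a genuine obstacle here, as the argument is a formal dualization of \cref{lem:L_sec} together with the definitions of $\R^{\ac}$ and $\T$. The only point that warrants a little care is the bookkeeping of which zero morphism is sent to which under $(-)^{\op}$; but because \cref{lem:L_sec} already outputs \emph{both} $0\to C$ and $C\to 0$, the conclusion is symmetric enough that this translation is painless.
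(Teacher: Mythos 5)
Your proposal is correct and follows essentially the same route as the paper, which likewise deduces the statement by applying \cref{lem:L_sec} to the distinguished sequence $C\overset{g^{\op}}{\rightarrowtail}B\overset{f^{\op}}{\twoheadrightarrow}A$ in $(\C^{\op},\seq^{\op},\weq^{\op})$. Your additional unwinding of the duality and the explicit derivation of $\R^{\ac}=\T$ merely spells out what the paper leaves implicit.
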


\begin{proof}
Apply Lemma~\ref{lem:L_sec} to the distinguished sequence
$C\overset{g^{\op}}{\rightarrowtail}B\overset{f^{\op}}{\twoheadrightarrow}A$
in the weak Waldhausen category $(\C^{\op},\seq^{\op},\weq^{\op})$, where $g^{\op}\in\weq^{\op}$.
\end{proof}

We can now prove our main results of Section~\ref{sec:localization-theorem-for-wWald}.
These show that the functor $K_{0}\colon \wWald\to\Ab$ sends certain localization sequences to right exact sequences. Recall we are assuming Setup~\ref{setup:Quillen_loc1}.

\begin{theorem}[Weak Waldhausen Localization Theorem]
\label{thm:Quillen_localization1_new}
Assume that $\weq$ consists of finite compositions of morphisms from $\L^{\ac} \cup \T \cup \veq$. 
Then \eqref{seq:localization_sequence_wWald3} is a localization sequence in $\wWald$ that induces a right exact sequence in $\Ab$ as follows.
\begin{equation}
\label{seq:homotopy_fiber1}
\begin{tikzcd}[column sep=2.1cm]
K_{0}(\C^{\weq},\seq^{\weq},\veq^{\weq}) 
    \arrow{r}{F \deff K_{0}(\inc)}
& K_{0}(\C,\seq,\veq)
    \arrow{r}{G \deff K_{0}(\id_{\C})}
& K_{0}(\C,\seq,\weq)
    \arrow{r}
&[-1.5cm] 0
\end{tikzcd}
\end{equation}
\end{theorem}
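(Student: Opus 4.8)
The plan is to reduce \cref{thm:Quillen_localization1_new} to \cref{cor:Schlichting_localization} by a two-step argument: first show the stated hypothesis on $\weq$ forces a clean description of $\C^\weq$ and of the relations defining $K_0(\C,\seq,\weq)$, and second verify the universal property defining a localization sequence in $\wWald$ directly, producing the right-exact sequence \eqref{seq:homotopy_fiber1} by hand at the level of $K_0$. The reason we cannot simply quote \cref{cor:Schlichting_localization} verbatim is that that result lives in $\Wald$, with honest pushouts and cokernel sequences, whereas here distinguished sequences are only weak cokernel sequences; so the classical proof must be re-run with the weak universal property of \eqref{eqn:remark-about-weak-POs}, using \cref{lem:WW1'} in place of the gluing axiom at every point where uniqueness of induced maps was previously invoked.

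First I would establish the two key structural facts. Using \cref{lem:L_sec}, any $f\in\L^\ac$ has a distinguished weak cokernel lying in $\C^\weq$; dually, using \cref{lem:weak-biWald-implies-R-is-T} (available since we are in the weak biWaldhausen setting of \cref{ThmA}), any $g\in\T\subseteq\R^\ac$ fits into a distinguished sequence whose kernel term is $\weq$-acyclic, so $\R^\ac=\T$. Consequently every $w\in\weq$, being by hypothesis a composite of morphisms from $\L^\ac\cup\T\cup\veq$, induces in $K_0(\C,\seq,\veq)$ the identity after passing to the quotient that also kills the classes of $\weq$-acyclic objects: a morphism in $\L^\ac$ with distinguished sequence $A\rtail B\onto C$ gives $[B]=[A]+[C]$ with $[C]=0$ in the target since $C\in\C^\weq$ maps to $0$; a morphism in $\T$ is handled dually; and $\veq$ is already inverted. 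This shows $G=K_0(\id_\C)$ is well-defined and identifies $K_0(\C,\seq,\weq)$ with the cokernel of $F=K_0(\inc)$, i.e.\ with $K_0(\C,\seq,\veq)/\langle [X] : X\in\C^\weq\rangle$ — this gives exactness at $K_0(\C,\seq,\veq)$ and surjectivity at $K_0(\C,\seq,\weq)$ simultaneously, which is precisely \eqref{seq:homotopy_fiber1}.

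Second I would verify that \eqref{seq:localization_sequence_wWald3} satisfies \cref{def:exact_sequence_wWald}: given a weak Waldhausen category $(\C',\seq',\weq')$ satisfying saturation and extension, and an exact $F\colon(\C,\seq,\veq)\to(\C',\seq',\weq')$ with $F(\C^\weq)\sse(\C')^{\weq'}$, one must check $F(\weq)\sse\weq'$. By the hypothesis it suffices to treat $w\in\L^\ac$, $w\in\T$, and $w\in\veq$ separately; the last is automatic. For $w\in\L^\ac$ with distinguished sequence $A\overset{w}{\rtail}B\onto C$ and $C\in\C^\weq$, applying $F$ and then the extension axiom in $\C'$ to the comparison of $A\overset{w}{\rtail}B\onto C$ with $A\overset{\id}{\to}A\to 0$ forces $Fw\in\weq'$; the case $w\in\T=\R^\ac$ is dual, using that $(\C',\seq',\weq')$ — or rather the relevant comparison — again reduces to the extension axiom. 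The main obstacle is exactly this last verification: matching the ``one-sided'' weak-cokernel data with the extension axiom requires care, since $\seq'$-sequences are not cokernel sequences, so one cannot conclude the needed maps are isomorphisms and must instead argue via \cref{lem:WW1'} and the saturation axiom to splice the weak equivalences together. Once functoriality of $K_0$ and this universal property are in hand, the right-exact sequence \eqref{seq:homotopy_fiber1} follows formally from the computation of the cokernel of $F$ above, mirroring the proof of \cref{cor:Schlichting_localization} but with every appeal to uniqueness of pushout-induced maps replaced by an appeal to \ref{WW1'}.
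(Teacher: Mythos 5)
Your proposal is essentially the paper's own proof: the paper also argues in two steps, (i) verifying the universal property of \cref{def:exact_sequence_wWald} by treating $w\in\L^{\ac}$, $w\in\T$ and $w\in\veq$ separately, and (ii) showing that the canonical map $\isoclass(\C)\to\Cok F$ respects the defining relations of $K_{0}(\C,\seq,\weq)$ (using \cref{lem:L_sec} for $\L^{\ac}$ and the defining distinguished sequence for $\T$), so that $K_{0}(\C,\seq,\weq)\cong\Cok F$ and \eqref{seq:homotopy_fiber1} is right exact. Two small deviations are worth flagging. First, your appeal to \cref{lem:weak-biWald-implies-R-is-T} to get $\R^{\ac}=\T$ is neither available nor needed here: \cref{thm:Quillen_localization1_new} is stated for weak Waldhausen (not biWaldhausen) categories, and for $g\in\T$ the required distinguished sequence with $\weq$-acyclic first term is part of the definition of $\T$ (\cref{def:mors_assoc_to_wWH_cat}); the biWaldhausen reduction $\R^{\ac}=\T$ belongs to \cref{cor:Quillen_localization2}, not to the theorem itself. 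Second, for the case $w\in\T$ in the universal-property check you propose the extension axiom, which works but requires the saturation axiom of $(\C',\seq',\weq')$ to turn $0\to HA'\in\weq'$ into $HA'\to 0\in\weq'$; the paper instead compares $0\rtail HB'\twoheadrightarrow HB'$ with $HA'\rtail HB'\twoheadrightarrow HC'$ and applies \ref{WW1'} (\cref{lem:WW1'}), which avoids saturation altogether (this is the point of the remark following \cref{cor:Quillen_localization2}). Finally, note that in the paper \cref{cor:Schlichting_localization} is deduced from \cref{thm:Quillen_localization1_new}, so your framing of ``reducing'' to it would be circular if taken literally; since you in fact re-run the argument with weak cokernels and \ref{WW1'}, there is no circularity in substance.
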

\begin{proof}
First, we confirm that \eqref{seq:localization_sequence_wWald3} is a localization sequence in $\wWald$. 
Thus, suppose $(\C',\seq',\weq')$ is a weak Waldhausen category satisfying the saturation and extension axioms, and also that $H\colon (\C,\seq,\veq)\to (\C',\seq',\weq')$ is an exact functor with $H(\C^{\weq})\sse (\C')^{\weq'}$. 
Since $(\C,\seq,\veq)$ and $(\C,\seq,\weq)$ have the same underlying category and the same distinguished sequences, we need only show $H(\weq)\sse\weq'$.
Since $H$ is exact, we know $H(\veq)\sse\weq'$.

To show $H(\L^{\ac})\sse\weq'$, 
consider $f\in\L^{\ac}$ together with a distinguished sequence $A\overset{f}{\rtail}B\onto C$, where $C\in\C^{\weq}$ by Lemma~\ref{lem:L_sec}.
There is a commutative diagram
\[
\begin{tikzcd}
HA \arrow[equals]{d}[swap]{\sim}\arrow[equals]{r}& HA \arrow[two heads]{r}\arrow[tail]{d}{Hf} & 0\arrow{d}{\sim}
\\
HA\arrow[tail]{r}{Hf} & HB \arrow[two heads]{r} & HC
\end{tikzcd}
\]
of distinguished sequences in $\C'$. 
By the assumption, we know $HC\in(\C')^{\weq'}$ which shows $0\to HC$ is in $\weq'$.
Thus, by the extension axiom, the morphism $HA\overset{Hf}{\rightarrowtail} HB$ also lies in $\weq'$.

Lastly, consider a morphism $g'\in\T$ which admits 
a distinguished sequence
$A'\overset{f'}{\rightarrowtail} B'\overset{g'}{\twoheadrightarrow} C'$ 
with $A'\in\C^{\weq}$. There is a commutative diagram
\begin{equation}
\label{eqn:g-primt-case}
\begin{tikzcd}
0
    \arrow{r}{}
    \arrow{d}[swap]{\sim}
& HB' 
    \arrow[equals]{r}
    \arrow[equals]{d}[swap]{\sim}
& HB' 
    \arrow{d}{Hg'}
\\
HA' 
    \arrow[tail]{r}{Hf'}
& HB' 
    \arrow[two heads]{r}{Hg'}
& HC' 
\end{tikzcd}
\end{equation}
in $\C'$ of sequences in $\seq'$, where $HA'\in (\C')^{\weq'}$. 
Then \ref{WW1'} (see Lemma~\ref{lem:WW1'}) implies $Hg'$ belongs to $\weq'$, noting that an induced morphism $HB' \to HC'$ must be equal to $Hg'$ in order to make \eqref{eqn:g-primt-case} commute.
We have thus verified $H(\weq)\sse\weq'$, and hence  \eqref{seq:localization_sequence_wWald3} is indeed a localization sequence.

Next we verify the last assertion.
Since \eqref{seq:localization_sequence_wWald3} sits in $\wWald$, we have the induced group homomorphisms $F$ and $G$ in \eqref{seq:homotopy_fiber1}.
Since $0\to A$ belongs to $\weq$ for any object $A\in\C^{\weq}$, the composition $F\circ G$ vanishes.
Thus, $G$ factors uniquely through the cokernel $\Cok F$ of $F$ as follows.
\begin{equation}
\label{diag:homotopy_fiber1}
\begin{tikzcd}
K_{0}(\C^{\weq},\seq^{\weq},\veq^{\weq}) 
    \arrow{r}{F}
& K_{0}(\C,\seq,\veq)
    \arrow[two heads]{r}{P}
    \arrow[two heads]{d}{G}
& \Cok F
    \arrow{r}{}
    \arrow[dotted]{dl}{^{\exists !}G'}
& 0\\
\isoclass(\C) 
    \arrow[two heads]{ur}{\phi}
    \arrow[two heads]{r}[swap]{\psi}
&K_{0}(\C,\seq,\weq)&&
\end{tikzcd}
\end{equation}
The homomorphisms $\phi$ and $\psi$ are the canonical surjective maps, 
and $\psi$ factors uniquely as $G\phi$ because $\veq\sse\weq$. 
We shall show that there exists an inverse of $G'$ by showing that $P\phi$ must factor uniquely through $\psi$.

First note that for any distinguished sequence 
$\begin{tikzcd}[column sep=0.5cm, cramped]
A \arrow[tail]{r}& B \arrow[two heads]{r}{}& C
\end{tikzcd}$
in $\seq$, we have 
$P[A] - P[B] + P[C] = 0$ in $\Cok F$, because already 
$\phi[A] - \phi[B] + \phi[C] = 0$.

Second, let $f\colon A\to B$ be a morphism in $\weq$. To see that 
$P[A] = P[B]$, it suffices to consider the three cases since $\weq$ consists of compositions from $\L^{\ac}\cup \T \cup \veq$. 
\begin{enumerate}
    \item If $f\in\veq$, then we see that $[A] = [B]$ already in $K_{0}(\C,\seq,\veq)$, and hence $P[A]=P[B]$ in $\Cok F$. 

    \item If $f\in\T$, then there is a distinguished sequence 
    $\begin{tikzcd}[column sep=0.5cm, cramped]
    K \arrow[tail]{r}& A \arrow[two heads]{r}{f}& B
    \end{tikzcd}$ in $\seq$ 
    with $K\in \C^{\weq}$. 
    In particular, $P[K] = 0$ in $\Cok F$ and hence the equality 
    $[A] = [K] + [B]$ in $K_{0}(\C,\seq,\veq)$ 
    implies 
    $P[A]=P[B]$ in $\Cok F$.  

    \item If $f\in\L^{\ac}$, then we have a distinguished sequence $A\overset{f}{\rightarrowtail} B\twoheadrightarrow C$ with $C\in\C^{\weq}$ by Lemma~\ref{lem:L_sec}. As above we deduce $P[A]=P[B]$ in $\Cok F$. 
\end{enumerate}

Hence, there exists a unique homomorphism $G''\colon K_{0}(\C,\seq,\weq)\to\Cok F$ with $P=G''\circ G$ by the universal property of $K_{0}(\C,\seq,\weq)$ using that $\phi$ is epic. 
This proves that $G'$ is an isomorphism and that \eqref{seq:homotopy_fiber1} is exact.
\end{proof}

Now we can see that Corollary~\ref{cor:Schlichting_localization}, a localization theorem for classical Waldhausen categories, is a consequence of Theorem~\ref{thm:Quillen_localization1_new}.

\begin{proof}[Proof of Corollary~\ref{cor:Schlichting_localization}]
Due to Proposition~\ref{prop:Wald_vs_wWald}\ref{item:WisweakWH}, we know that the Waldhausen category $(\C,\cof,\weq)$ corresponds to a weak Waldhausen category $(\C,\seq,\weq)$. Since $(\C,\cof,\weq)$ satisfies the saturation axiom, if $w = vf$ where $w\in\weq$, $v\in\veq\sse\weq$ and $f\in\cof$, we see that $f\in\cof\cap\weq = \L^{\ac}$. The result then follows from an application of Theorem~\ref{thm:Quillen_localization1_new}, noting Remark~\ref{rem:wWH-WH-comparison}\ref{item:wWH-acyclics} and Example~\ref{exam:K0-for-WH-is-K0-for-wWH}.
\end{proof}

In the case of biWaldhausen structures, we can slightly relax the assumption on $\weq$.

\begin{corollary}[Weak biWaldhausen Localization Theorem]
\label{cor:Quillen_localization2}
Assume that \eqref{seq:localization_sequence_wWald3} is a sequence of weak biWaldhausen categories. 
If $\weq$ consists of finite compositions of morphisms from $\L^{\ac}\cup\R^{\ac}\cup \veq$, 
then \eqref{seq:localization_sequence_wWald3} is a localization sequence in $\wWald$ and \eqref{seq:homotopy_fiber1} is exact.
\end{corollary}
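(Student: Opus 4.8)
The plan is to deduce the statement directly from \cref{thm:Quillen_localization1_new}: the only apparent gap between the hypothesis here and the one there is that we are now permitted to use $\R^{\ac}$ in place of $\T$ when expressing a weak equivalence as a finite composite. So the first and essentially only step is to observe that, under the standing assumption that \eqref{seq:localization_sequence_wWald3} is a sequence of weak biWaldhausen categories, the class $\R^{\ac}$ associated to $(\C,\seq,\weq)$ coincides with $\T$. This is precisely \cref{lem:weak-biWald-implies-R-is-T} applied to the weak biWaldhausen category $(\C,\seq,\weq)$: given $g\in\R^{\ac}$, pick a distinguished sequence $A\rtail B\overset{\substack{g\\\sim}}{\twoheadrightarrow}C$; the lemma yields $0\to A\in\weq$, so $A\in\C^{\weq}$ and hence $g\in\T$. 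The reverse inclusion $\T\sse\R^{\ac}$ is immediate from \cref{def:mors_assoc_to_wWH_cat}.

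With the equality $\R^{\ac}=\T$ in hand, we have $\L^{\ac}\cup\R^{\ac}\cup\veq=\L^{\ac}\cup\T\cup\veq$, so the hypothesis that $\weq$ consists of finite compositions of morphisms from $\L^{\ac}\cup\R^{\ac}\cup\veq$ is literally the hypothesis of \cref{thm:Quillen_localization1_new}. Since \cref{setup:Quillen_loc1} is assumed throughout the section, all remaining hypotheses of \cref{thm:Quillen_localization1_new} are already available. Applying that theorem then shows both that \eqref{seq:localization_sequence_wWald3} is a localization sequence in $\wWald$ and that the sequence \eqref{seq:homotopy_fiber1} of Grothendieck groups is exact, which is exactly the claim.

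I do not expect a genuine obstacle here; the corollary is immediate once $\R^{\ac}=\T$ is established. The only point warranting a moment's care is checking that \cref{lem:weak-biWald-implies-R-is-T} is actually applicable — that is, that it is $(\C,\seq,\weq)$ (and not merely $(\C,\seq,\veq)$) which is assumed weak biWaldhausen, this being the content of ``a sequence of weak biWaldhausen categories'' — and that the class $\T$ occurring there is the same $\T$ of \cref{def:mors_assoc_to_wWH_cat} fed into \cref{thm:Quillen_localization1_new}. Both are clear, so the proof reduces to the single reduction above.
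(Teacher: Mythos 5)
Your argument is exactly the paper's: the proof of \cref{cor:Quillen_localization2} invokes \cref{lem:weak-biWald-implies-R-is-T} to get $\R^{\ac}=\T$ and then applies \cref{thm:Quillen_localization1_new}. The proposal is correct and matches the paper's route, with the added (correct) care about which triplet must be weak biWaldhausen.
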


\begin{proof}
By Lemma~\ref{lem:weak-biWald-implies-R-is-T}, this follows immediately from Theorem~\ref{thm:Quillen_localization1_new}.
\end{proof}

\begin{remark}
Note that in Theorem~\ref{thm:Quillen_localization1_new}, and hence also in Corollary~\ref{cor:Quillen_localization2}, we did not need the saturation axiom for $(\C',\seq',\weq')$.
\end{remark}

\subsection{The homotopy category}
\label{subsec:homotopy-category}
 
This subsection contains a preparatory result relating the Grothen\-dieck group of a weak Waldhausen category to that of its homotopy category (when it exists). This will be used to prove Corollary~\ref{cor:ES_via_Waldhausen}.

Recall that Setup~\ref{setup:Quillen_loc1} is in play. 
Let $\overline{(-)}\colon \C\to \overline{\C}\deff \C/[\C^{\weq}]$ denote 
the additive quotient functor and 
define $\overline{\weq}\deff \set{ \overline{w}\in \overline{\C}^{\to} | w\in\weq }$.
Consider the localization functors $Q\colon \C \to \C[\weq^{-1}]$ 
and 
$\overline{Q}\colon \overline{\C} \to \overline{\C}[\overline{\weq}^{-1}]$. 
Note that we cannot deduce these localizations are additive without further assumptions (see Setup~\ref{setup:section-3-1}). However, we do have the following.

\begin{lemma}
\label{lem:homotopy_cat}
There is an isomorphism $\C[\weq^{-1}]\cong \overline{\C}[\overline{\weq}^{-1}]$ of categories.
\end{lemma}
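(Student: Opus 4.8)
The plan is to exhibit both categories as solutions of the same universal problem, modulo one key computation. Both $\C[\weq^{-1}]$ and $\overline{\C}[\overline{\weq}^{-1}]$ have the same objects as $\C$ (note $\overline{\C}$ and $\C$ do too). First I would observe that the composite $\C\xrightarrow{\overline{(-)}}\overline{\C}\xrightarrow{\overline{Q}}\overline{\C}[\overline{\weq}^{-1}]$ sends every weak equivalence to an isomorphism: indeed $\overline{(-)}(\weq)\sse\overline{\weq}$ by the definition of $\overline{\weq}$, and $\overline{Q}$ inverts $\overline{\weq}$. Hence the universal property of $Q\colon\C\to\C[\weq^{-1}]$ yields a unique functor $\Phi\colon\C[\weq^{-1}]\to\overline{\C}[\overline{\weq}^{-1}]$ with $\Phi\circ Q=\overline{Q}\circ\overline{(-)}$, and $\Phi$ is the identity on objects.

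The crucial step is the claim that $Q$ respects the ideal $[\C^{\weq}]$, i.e.\ that $Q(f)=Q(g)$ in $\C[\weq^{-1}]$ whenever $\overline{f}=\overline{g}$ in $\overline{\C}$. Since $\C^{\weq}$ is closed under finite direct sums, $[\C^{\weq}](A,B)$ consists exactly of the morphisms factoring through a single object of $\C^{\weq}$, so it suffices to treat $f,g\colon A\to B$ with $f-g$ factoring as $A\xrightarrow{a}N\xrightarrow{b}B$ for some $N\in\C^{\weq}$. The key sub-observation is that $\begin{psmallmatrix}\id_{B}\\0\end{psmallmatrix}\colon B\to B\oplus N$ is a weak equivalence: it equals $\id_{B}\oplus(0\to N)\colon B=B\oplus 0\to B\oplus N$, which lies in $\weq$ by \cref{lem:basics_of_cofibration_and_weq}\ref{item:direct-sum-of-weqs-is-weq} together with \ref{WW0}, because $0\to N\in\weq$ by $\weq$-acyclicity of $N$. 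Therefore $Q\begin{psmallmatrix}\id_{B}\\0\end{psmallmatrix}$ is invertible in $\C[\weq^{-1}]$. As $\begin{psmallmatrix}\id_{B}&b\end{psmallmatrix}\begin{psmallmatrix}\id_{B}\\0\end{psmallmatrix}=\id_{B}=\begin{psmallmatrix}\id_{B}&0\end{psmallmatrix}\begin{psmallmatrix}\id_{B}\\0\end{psmallmatrix}$ in $\C$, applying $Q$ and right-cancelling the isomorphism $Q\begin{psmallmatrix}\id_{B}\\0\end{psmallmatrix}$ gives $Q\begin{psmallmatrix}\id_{B}&b\end{psmallmatrix}=Q\begin{psmallmatrix}\id_{B}&0\end{psmallmatrix}$. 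Since $f=\begin{psmallmatrix}\id_{B}&b\end{psmallmatrix}\begin{psmallmatrix}g\\a\end{psmallmatrix}$ and $g=\begin{psmallmatrix}\id_{B}&0\end{psmallmatrix}\begin{psmallmatrix}g\\a\end{psmallmatrix}$, applying $Q$ now yields $Q(f)=Q(g)$, proving the claim. (It is precisely this matrix-factorisation trick that avoids having to ``subtract'' morphisms, which is important because $\C[\weq^{-1}]$ is not known to be additive; this is the only genuinely delicate point of the proof.)

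By the claim, $Q$ factors uniquely as $\C\xrightarrow{\overline{(-)}}\overline{\C}\xrightarrow{\widetilde{Q}}\C[\weq^{-1}]$ for some functor $\widetilde{Q}$. Since $\widetilde{Q}(\overline{w})=Q(w)$ is an isomorphism for every $w\in\weq$, the functor $\widetilde{Q}$ inverts $\overline{\weq}$, so the universal property of $\overline{Q}$ produces a unique functor $\Psi\colon\overline{\C}[\overline{\weq}^{-1}]\to\C[\weq^{-1}]$ with $\Psi\circ\overline{Q}=\widetilde{Q}$. Finally I would check $\Phi$ and $\Psi$ are mutually inverse using the uniqueness clauses: $\Psi\Phi\circ Q=\Psi\circ\overline{Q}\circ\overline{(-)}=\widetilde{Q}\circ\overline{(-)}=Q$, so $\Psi\Phi=\id$; and $\Phi\Psi\circ\overline{Q}\circ\overline{(-)}=\Phi\circ\widetilde{Q}\circ\overline{(-)}=\Phi\circ Q=\overline{Q}\circ\overline{(-)}$, whence $\Phi\Psi\circ\overline{Q}=\overline{Q}$ (as $\overline{(-)}$ is the identity on objects and full), so $\Phi\Psi=\id$. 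Thus $\Phi$ is an isomorphism of categories, as required. The main obstacle is the claim in the second paragraph; everything else is formal diagram-chasing with universal properties.
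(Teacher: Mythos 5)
Your proof is correct, and it takes a genuinely different route from the paper at the one non-formal step. The paper does not verify directly that $Q$ respects the ideal $[\C^{\weq}]$; instead it introduces the saturation $\weq_{0}$ of $\weq$, invokes an external result (the dual of \cite[Exam.~2.6]{Oga22a}) identifying the additive quotient $\overline{\C}$ with the localization $\C[\mathsf{S}^{-1}]$ at the class $\mathsf{S}$ of retractions with kernel in $\C^{\weq}$, and then shows $\mathsf{S}\sse\weq_{0}$ by composing such a retraction with the weak equivalence $0\oplus\id_{A}\colon 0\oplus A\to N\oplus A$ and using saturatedness of $\weq_{0}$; after that, the same triangle of universal properties you use produces the mutually inverse functors. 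Your argument replaces both the saturation device and the citation by the direct computation that $Q(f)=Q(g)$ whenever $f-g$ factors through an object $N\in\C^{\weq}$, via the factorizations $f=\begin{psmallmatrix}\id_{B}, & b\end{psmallmatrix}\begin{psmallmatrix}g\\ a\end{psmallmatrix}$ and $g=\begin{psmallmatrix}\id_{B}, & 0\end{psmallmatrix}\begin{psmallmatrix}g\\ a\end{psmallmatrix}$ together with the observation that $\begin{psmallmatrix}\id_{B}\\ 0\end{psmallmatrix}\colon B\to B\oplus N$ lies in $\weq$ by \cref{lem:basics_of_cofibration_and_weq}\ref{item:direct-sum-of-weqs-is-weq} and \ref{WW0}, so it becomes invertible and can be cancelled. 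This is essentially the content of the quoted result from \cite{Oga22a} proved by hand, and your matrix trick correctly sidesteps the fact that $\C[\weq^{-1}]$ is not known to be additive. What the paper's route buys is brevity (it delegates the ideal-quotient-as-localization fact) and it makes the auxiliary class $\weq_{0}$ explicit, which is then echoed in \cref{setup:section-3-1}; what your route buys is a self-contained argument that uses neither $\weq_{0}$ nor the external characterization of $\overline{\C}$. The remaining universal-property bookkeeping, including cancelling the identity-on-objects, full functor $\overline{(-)}$ to get $\Phi\Psi\circ\overline{Q}=\overline{Q}$, is sound.
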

\begin{proof}
We consider the \emph{saturation} of $\weq$, namely,
\[
\weq_0\deff \set{f\in\C^{\to}|Q(f)\text{\ is an isomorphism in\ }\C[\weq^{-1}]} \supseteq \weq,
\]
and establish the associated localization $Q_0\colon \C\to\C[\weq_0^{-1}]$.
It is clear that the class $\weq_0$ is \emph{saturated}, that is, 
for each $f\in\C^{\to}$ we have  
$Q(f)$ is an isomorphism in $\C[\weq_0^{-1}]$ if and only if $f\in\weq_0$.
Then, by the universality of $\C[\weq^{-1}]$ and $\C[\weq_0^{-1}]$, we have an isomorphism $\C[\weq^{-1}]\cong\C[\weq_0^{-1}]$.

The ideal quotient $\C\to\overline{\C}$ is the localization $\C[\mathsf{S}^{-1}]$ with respect to the class $\mathsf{S}$ of retractions in $\C^{\to}$ that admit a kernel in $\C^{\weq}$; see \cite[the dual of Exam.~2.6]{Oga22a}.
We claim that $\mathsf{S}\sse\weq_0$. 
Indeed, by e.g\ \cite[Prop.~2.7]{Sha19}, 
any morphism in $\mathsf{S}$ is isomorphic to one of the form 
$\begin{tikzcd}[column sep=1.2cm, cramped]
N \oplus A
    \arrow{r}{\begin{psmallmatrix}
        0, \amph \id_{A}
    \end{psmallmatrix}}
& A,
\end{tikzcd}
$
where $N\in\C^{\weq}$. In particular, $0\colon 0 \to N$ is in $\weq$, and hence  
$
0 \oplus \id_{A}
\colon 0 \oplus A \to N \oplus A
$ 
also lies in $\weq \sse \weq_{0}$ by Lemma~\ref{lem:basics_of_cofibration_and_weq}\ref{item:direct-sum-of-weqs-is-weq}. 
The composition
$
\begin{tikzcd}[column sep=1.3cm, cramped]
0\oplus A
    \arrow{r}{0\oplus \id_{A}}
& N \oplus A
    \arrow{r}{\begin{psmallmatrix}
        0, \amph \id_{A}
    \end{psmallmatrix}}
& A
\end{tikzcd}
$
is an isomorphism in $\C$, so 
we have 
$\begin{psmallmatrix}
         0, & \id_{A}
    \end{psmallmatrix}
\in\weq_0$
as $\weq_0$ is saturated.

Since $Q$ inverts all morphisms in $\mathsf{S}\sse\weq_{0}$, 
there exists a unique functor 
$Q_{1}\colon \overline{\C} \iso \C[\mathsf{S}^{-1}] \to \C[\weq_0^{-1}]$ 
satisfying $Q = Q_{1} \circ \overline{(-)}$.
Also, by the universality of $\overline{\C}[\overline{\weq}^{-1}]$, there is a unique functor $Q_2\colon \overline{\C}[\overline{\weq}^{-1}]\to \C[\weq_0^{-1}]$ with $Q_1=Q_2\overline{Q}$.
Lastly, the universality of $\C[\weq^{-1}]$ guarantees a unique functor $Q_{3}\colon \C[\weq^{-1}]\to\overline{\C}[\overline{\weq}^{-1}]$ with $Q_{3} Q = \overline{Q} \circ \overline{(-)}$. 
The diagram below summarizes the situation.
\[
\begin{tikzcd}[column sep=2.5cm, row sep=0.7cm]
\C 
    \arrow{r}{Q\iso Q_{0}}
    \arrow{d}[swap]{\overline{(-)}}
& \C[\weq^{-1}]\iso \C[\weq_{0}^{-1}] 
    \arrow[dotted, bend left]{d}[yshift=-0.4cm]{^{\exists !}Q_{3}}
\\
\C[\mathsf{S}^{-1}] \iso\overline{\C}  
    \arrow[dotted]{ur}{^{\exists !}Q_{1}}
    \arrow{r}{\overline{Q}}
& \overline{\C}[\overline{\weq}^{-1}]
    \arrow[dotted, bend left]{u}[yshift=-0.4cm]{^{\exists !}Q_{2}}
\end{tikzcd}
\]

One can then check that $Q_{2}$ and $Q_{3}$ are mutually inverse  functors, yielding the isomorphism $\C[\weq^{-1}]\cong \overline{\C}[\overline{\weq}^{-1}]$ as desired.
\end{proof}

Given Lemma~\ref{lem:homotopy_cat}, we typically identify the categories $\C[\weq^{-1}]$  and $\overline{\C}[\overline{\weq}^{-1}]$. Note also that nothing is lost by assuming $\weq$ is saturated; see the proof of Lemma~\ref{lem:homotopy_cat}.
Thus, in addition to Setup~\ref{setup:Quillen_loc1}, we assume the following for the rest of Section~\ref{sec:localization-theorem-for-wWald}. 

\begin{setup}\label{setup:section-3-1}
Suppose $\overline{\weq}$ is a multiplicative system in $\overline{\C}$ and the class $\weq$ is saturated. 
\end{setup}

Under Setup~\ref{setup:section-3-1}, it follows from \cite[I.3.3]{GZ67} that $\overline{Q}\colon \overline{\C} \to \overline{\C}[\overline{\weq}^{-1}]$ is an additive functor of additive categories. 
Thus, in turn, the localization $Q\colon \C\to\C[\weq^{-1}]$ is also additive, which allows us to consider weak Waldhausen structures on $\C[\weq^{-1}]$.

\begin{definition}
\label{def:homotopy_cat}
We call a triplet 
$(\C[\weq^{-1}], \seq',\weq')$ the \emph{weak Waldhausen (additive) homotopy category of $(\C,\seq,\weq)$} if: 
\begin{enumerate}[label=\textup{(\roman*)}]
    \item $(\C[\weq^{-1}], \seq',\weq')$ is a weak Waldhausen category; 
    \item $\seq'$ coincides with the isomorphism closure of $Q(\seq)$; and 
    \item $\weq' = \Iso (\C[\weq^{-1}])$.
\end{enumerate}
In this case, we use the notation $\Ho(\C,\seq,\weq)$ to denote the triplet $(\C[\weq^{-1}], \seq',\weq')$. 
Note also that $Q\colon (\C,\seq,\weq) \to \Ho(\C,\seq,\weq)$ is an exact functor in $\wWald$.
\end{definition}

We may immediately prove the following.

\begin{proposition}
\label{prop:K0_of_homotopy_cat} 
If $\Ho(\C,\seq,\weq)$ exists, then 
$K_{0}(\C,\seq,\weq)\cong K_{0}(\Ho(\C,\seq,\weq))$.
\end{proposition}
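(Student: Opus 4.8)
The plan is to show that the homomorphism $K_{0}(Q)\colon K_{0}(\C,\seq,\weq)\to K_{0}(\Ho(\C,\seq,\weq))$ induced by the exact functor $Q\colon(\C,\seq,\weq)\to\Ho(\C,\seq,\weq)$ is an isomorphism, by producing an explicit inverse. Since $Q$ is the identity on objects, every generator $[X]$ of $K_{0}(\Ho(\C,\seq,\weq))$ (with $X$ an object of $\C[\weq^{-1}]$) equals $K_{0}(Q)[X]$, so $K_{0}(Q)$ is surjective, and it will suffice to construct a homomorphism $\Psi\colon K_{0}(\Ho(\C,\seq,\weq))\to K_{0}(\C,\seq,\weq)$ with $\Psi\circ K_{0}(Q)=\id$. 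The natural candidate sends a generator $[X]$ to the class $[X]$ of $X$ regarded as an object of $\C$ (recall $\Ob(\C[\weq^{-1}])=\Ob(\C)$). Using \ref{WC1} — which forces every split exact sequence of $\C[\weq^{-1}]$ to lie in $\seq'$ — this assignment descends to a homomorphism on $K_{0}(\Ho(\C,\seq,\weq))$ provided: \textup{(i)} $[C]=[C']$ in $K_{0}(\C,\seq,\weq)$ whenever $QC\cong QC'$ in $\C[\weq^{-1}]$; and \textup{(ii)} $[Y]=[X]+[Z]$ in $K_{0}(\C,\seq,\weq)$ for every distinguished sequence $X\rtail Y\onto Z$ in $\seq'$.

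The hard part will be \textup{(i)}, which is exactly where \cref{setup:section-3-1} is used. First I would check that $\overline{\weq}$ is saturated: given a morphism $h$ of $\overline{\C}$ with $\overline{Q}(h)$ invertible, lift $h$ to some $g$ along $\overline{(-)}\colon\C\to\overline{\C}$; under the isomorphism $\C[\weq^{-1}]\cong\overline{\C}[\overline{\weq}^{-1}]$ of \cref{lem:homotopy_cat} the morphism $Q(g)$ corresponds to $\overline{Q}(h)$, so $Q(g)$ is invertible, so $g\in\weq$ because $\weq$ is saturated, and hence $h=\overline{(g)}\in\overline{\weq}$. Now if $QC\cong QC'$, then $\overline{C}\cong\overline{C'}$ in $\overline{\C}[\overline{\weq}^{-1}]$; since $\overline{\weq}$ admits a calculus of fractions, this isomorphism is represented by a roof $\overline{C}\xleftarrow{\overline{s}}\overline{D}\xrightarrow{\overline{t}}\overline{C'}$ with $\overline{s}\in\overline{\weq}$, and as both the roof and $\overline{Q}(\overline{s})$ are invertible so is $\overline{Q}(\overline{t})$, whence $\overline{t}\in\overline{\weq}$ by saturation. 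Because $\overline{\weq}$ is the image of $\weq$ under $\overline{(-)}$ and $\overline{(-)}$ is the identity on objects, $\overline{s}$ and $\overline{t}$ lift to weak equivalences $s\colon D\xrightarrow{\sim}C$ and $t\colon D\xrightarrow{\sim}C'$ in $\C$, so the weak-equivalence relations give $[C]=[D]=[C']$ in $K_{0}(\C,\seq,\weq)$, proving \textup{(i)}. One should check this argument is insensitive to whether $\overline{\weq}$ admits a left or a right calculus of fractions; the two cases are formally dual.

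Granting \textup{(i)}, part \textup{(ii)} is routine: since $\seq'$ is the isomorphism closure of $Q(\seq)$, any distinguished sequence $X\rtail Y\onto Z$ in $\seq'$ comes with a sequence $A\rtail B\onto C$ in $\seq$ together with isomorphisms $QA\cong X$, $QB\cong Y$, $QC\cong Z$ in $\C[\weq^{-1}]$; then $[X]=[A]$, $[Y]=[B]$, $[Z]=[C]$ in $K_{0}(\C,\seq,\weq)$ by \textup{(i)}, and $[B]=[A]+[C]$ because $A\rtail B\onto C\in\seq$. This gives the homomorphism $\Psi$, and evaluating on generators yields $\Psi(K_{0}(Q)[C])=\Psi[QC]=[C]$ and $K_{0}(Q)(\Psi[X])=K_{0}(Q)[X]=[QX]=[X]$, so $\Psi$ and $K_{0}(Q)$ are mutually inverse, which completes the proof.
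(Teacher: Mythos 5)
Your proposal is correct and follows essentially the same route as the paper: both induce $K_{0}(Q)$ from the exactness of $Q$, define the inverse on generators by $[X]\mapsto[X]$, and verify the two relations via roof representations over the multiplicative system $\overline{\weq}$ together with saturation (the paper invokes saturation of $\weq$ directly, while you make the small detour of first transferring saturation to $\overline{\weq}$ and then lifting, which is equivalent). No gaps.
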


\begin{proof}
Since the localization functor 
$Q\colon (\C,\seq,\weq) \to \Ho(\C,\seq,\weq)$ 
is exact, 
it induces a group homomorphism 
$K_{0}(Q)\colon K_{0}(\C,\seq,\weq)\to K_{0}(\Ho(\C,\seq,\weq))$.

We shall construct an inverse of $K_{0}(Q)$. 
Since  
$\Ob(\C,\seq,\weq) = \Ob(\Ho(\C,\seq,\weq))$, 
there is a canonical map 
$\isoclass(\C)\to K_{0}(\C,\seq,\weq)$ 
given by $[X]\mapsto [X]$, 
which we claim respects the generating relations of 
$K_{0}(\Ho(\C,\seq,\weq))$.

First, suppose $\alpha \colon A\to B$ is in $\weq'= \Iso (\C[\weq^{-1}])$, hence also an isomorphism in $\overline{\C}[\overline{\weq}^{-1}]$. 
As $\overline{\weq}$ is a multiplicative system (see Setup~\ref{setup:section-3-1}), 
there is a roof diagram 
$\begin{tikzcd}[column sep=0.5cm, cramped]
    A \arrow{r}{\overline{t}}& B' & B\arrow{l}[swap]{\overline{u}}
\end{tikzcd}$
in $\overline{\C}$ representing $\alpha$ with 
$u\in \weq$. 
Since $\alpha$ is an isomorphism and $\weq$ is saturated (see Setup~\ref{setup:section-3-1}), 
we actually have $t\in\weq$ also, 
so $[A] = [B'] = [B]$ in 
$K_{0}(\C,\seq,\weq)$.

Second, suppose  
$A\rightarrowtail B\twoheadrightarrow C$ 
is a distinguished sequence
in $\Ho(\C,\seq,\weq)$, which by definition is thus 
isomorphic to the image of a 
distinguished sequence 
$A'\rightarrowtail B'\twoheadrightarrow C'$ 
in $\seq$. 
For $X=A,B,C$, as argued above for $\alpha$, 
there is a roof diagram from $X$ to $X'$ 
consisting of morphisms in $\weq$. 
In particular, we have
$
[A]-[B]+[C]=[A']-[B']+[C']=0
$.
Hence, there is an induced group homomorphism 
$\psi\colon K_{0}(\Ho(\C,\seq,\weq)) \to K_{0}(\C,\seq,\weq)$ that is clearly inverse to $K_{0}(Q)$.
\end{proof}

\section{Extriangulated categories}
\label{sec:ET}

We recall some details on extriangulated categories in this section; see \cite{NP19} for more details. 
An \emph{extriangulated category} is a triplet $(\C,\BE,\fs)$ consisting of: 
\begin{enumerate}[label=\textup{(\arabic*)}]
    \item an additive category $\C$, 
    
    \item a biadditive functor $\BE\colon\C^{\op}\times\C\to \Ab$, and
    
    \item\label{item:realization} an assignment $\fs$ that, 
    for all $A,C\in\C$ and each element $\delta\in\BE(C,A)$, 
    associates an equivalence class $\fs(\delta) = [A\overset{f}{\lra} B\overset{g}{\lra} C]$ of a pair $(f,g) \in \C^{\to\to}$.
\end{enumerate}
(The equivalence mentioned in \ref{item:realization} above is the usual Yoneda equivalence for $3$-term sequences; see \cite[Def.~2.7]{NP19}.) 
Moreover, this triplet must satisfy the axioms 
(ET$1$)--\ref{ET4}, (ET$3)^{\op}$ and (ET$4)^{\op}$ as 
detailed in \cite[Def.~2.12]{NP19}. 
We recall the \ref{ET4} axiom after Definition~\ref{def:extri-terminology} since it used several times later.

For the remainder of Section~\ref{sec:ET}, let $(\C,\BE,\fs)$ be an extriangulated category.

\begin{definition}\label{def:extri-terminology}
The following terminology is now standard in extriangulated category theory; it is motivated by that used for exact categories (see \cite[App.~A]{Kel90}) and for triangulated categories.
\begin{enumerate}
\item
Let $A,C\in\C$ be objects and suppose $\delta\in\BE(C,A)$. 
Then $\delta\in\BE(C,A)$ is known as an \emph{$\BE$-extension}. 
For morphisms $a\colon A\to A'$ and $c\colon C' \to C$ in $\C$, we obtain two new $\BE$-extensions 
$a_{*}\delta\deff\BE(C,a)(\delta)\in\BE(C,A')$ and $c^{*}\delta\deff\BE(c,A)(\delta)\in\BE(C',A)$.

\item
Suppose $\fs(\delta) = [A\overset{f}{\lra}B\overset{g}{\lra}C]$ for some $\BE$-extension $\delta\in\BE(C,A)$. 
Then $A\overset{f}{\lra}B\overset{g}{\lra}C$ is called an \emph{$\fs$-conflation}, and 
the morphisms $f$ and $g$ are called an \emph{$\fs$-inflation} and an \emph{$\fs$-deflation}, respectively.
This situation is usually summarised by the diagram 
$A\overset{f}{\lra}B\overset{g}{\lra}C\overset{\delta}{\dashrightarrow}$, which is known as an \emph{$\fs$-triangle}.

\item
A \emph{morphism} of $\fs$-triangles 
from 
$A\overset{f}{\lra}B\overset{g}{\lra}C\overset{\delta}{\dashrightarrow}$ 
to 
$A'\overset{f'}{\lra}B'\overset{g'}{\lra}C'\overset{\delta'}{\dashrightarrow}$
is a triplet $(a,b,c)$ of morphisms 
$a\in\C(A,A')$, 
$b\in\C(B,B')$ and 
$c\in\C(C,C')$
such that $a_{*}\delta=c^{*}\delta'$ and so that the following diagram commutes. 
\[
\begin{tikzcd}[row sep=0.6cm]
    A 
        \arrow{r}{f}
        \arrow{d}{a}
    & B 
        \arrow{r}{g}
        \arrow{d}{b}
    & C 
        \arrow[dashed]{r}{\delta}
        \arrow{d}{c}
    & {}\\
    A' 
        \arrow{r}{f'}
    & B'
        \arrow{r}{g'}
    & C' 
        \arrow[dashed]{r}{\delta'}
    & {}
\end{tikzcd}
\]
\end{enumerate}
\end{definition}

In contrast to an exact category, an $\fs$-conflation is not a kernel-cokernel pair in general, but only a weak-kernel-weak-cokernel pair (see \cite[Prop.~3.3]{NP19}).
As expected, $\fs$-conflations play the role of short exact sequences or of distinguished triangles; see Examples~\ref{ex:exact_is_extri} and \ref{exam:triangulated-is-extri}.

With this terminology in place, let us recall \ref{ET4} which holds for $(\C,\BE,\fs)$; (ET$4)^{\op}$ is dual. 
\begin{enumerate}[label=\textup{(ET\arabic*)}]
\setcounter{enumi}{3}
\item\label{ET4}
Let $A\overset{f}{\lra}B\overset{f'}{\lra}D\overset{\delta}{\dra}$ and $B\overset{g}{\lra}C\overset{g'}{\lra}F\overset{\rho}{\dra}$ be $\fs$-triangles in $(\C,\BE,\fs)$. 
Then there is a commutative diagram 
\[
\begin{tikzcd}[column sep=1.7cm, row sep=0.6cm]
    A \arrow{r}{f}\arrow[equals]{d}{}& B \arrow{r}{f'}\arrow{d}{g}& D \arrow[dashed]{r}{\delta}\arrow{d}{d}& {} \\
    A \arrow{r}[swap]{h\deff g\circ f}& C\arrow{r}[swap]{h'}\arrow{d}{g'} & E \arrow[dashed]{r}{\delta'}\arrow{d}{e}& {} \\
      & F \arrow[equals]{r}{}\arrow[dashed]{d}{\rho}& F,\arrow[dashed]{d}{f'_{*}\rho} & {} \\
      & {}  &{}   &
\end{tikzcd}
\]
in $\C$, where 
$A\overset{h}{\lra}C\overset{h'}{\lra}E\overset{\delta'}{\dra}$ and $D\overset{d}{\lra}E\overset{e}{\lra}F\overset{f'_{*}\rho}{\dra}$ are $\fs$-triangles, 
and
$d^{*}\delta' = \delta$ and $e^{*}\rho=f_{*}\delta'$. 
\end{enumerate}

An extriangulated category $(\C,\BE,\fs)$ is simply denoted by $\C$ if there is no confusion.

The next lemma will be used in many places, and it follows from \cite[Prop.~1.20]{LN19} (see also \cite[Cor.~3.16]{NP19}, \cite[Prop.~2.22]{HS20}). 
It says that, like in the triangulated case, extriangulated categories admit weak pushouts and weak pullbacks.

\begin{lemma}
\label{lem:wPO_wPB}
The following properties hold.
\begin{enumerate}[label=\textup{(\arabic*)}]
\item\label{item:wPO}
For any $\fs$-triangle $A\overset{f}\lra B\overset{g}{\lra} C\overset{\delta}{\dra} $ together with a morphism $a\in\C(A, A')$ and an associated $\fs$-triangle $A'\overset{f'}\lra B'\overset{g'}{\lra} C\overset{a_{*}\delta}{\dra} $,
there exists $b\in\C(B,B')$ which gives a morphism of $\fs$-triangles 
\[
\begin{tikzcd}[row sep=0.6cm]
    A 
        \arrow{r}{f}
        \arrow{d}[swap]{a}
        \wPO{dr}
    & B 
        \arrow{r}{g}
        \arrow{d}{b}
    & C 
        \arrow[dashed]{r}{\delta}
        \arrow[equals]{d}{}
    & {}\\
    A' 
        \arrow{r}{f'}
    & B'
        \arrow{r}{g'}
    & C 
        \arrow[dashed]{r}{a_{*}\delta}
    & {}
\end{tikzcd}
\]
and makes 
$
\begin{tikzcd}[column sep=1.3cm, cramped]
A
    \arrow{r}{\begin{psmallmatrix}
        f\\a
    \end{psmallmatrix}}
& B\oplus A'
    \arrow{r}{\begin{psmallmatrix}
        b, \amph -f'
    \end{psmallmatrix}}
& B'
    \arrow[dashed]{r}{(g')^{*}\delta}
&{}
\end{tikzcd}
$
an $\fs$-triangle.
Furthermore, the commutative square ${\rm (wPO)}$ is a weak pushout of $f$ along $a$ (in the sense of Remark~\ref{rem:wPO}\ref{item:weak-PO-defn}).

\item\label{item:wPB}
Dually, for any $\fs$-triangle $A\overset{f}\lra B\overset{g}{\lra} C\overset{\delta}{\dra} $ together with a morphism $c\in\C(C',C)$ and an associated $\fs$-triangle $A\overset{f'}\lra B'\overset{g'}{\lra} C'\overset{c^{*}\delta}{\dra} $,
there exists $b\in\C(B',B)$ which gives a morphism of $\fs$-triangles
\[
\begin{tikzcd}[row sep=0.6cm]
    A 
        \arrow{r}{f'}
        \arrow[equals]{d}{}
    & B' 
        \arrow{r}{g'}
        \arrow{d}[swap]{b}
        \wPB{dr}
    & C' 
        \arrow[dashed]{r}{c^{*}\delta}
        \arrow{d}{c}
    & {}\\
    A 
        \arrow{r}{f}
    & B
        \arrow{r}{g}
    & C 
        \arrow[dashed]{r}{\delta}
    & {}
\end{tikzcd}
\]
and makes 
$
\begin{tikzcd}[column sep=1.3cm, cramped]
B'
    \arrow{r}{\begin{psmallmatrix}
        -g'\\b
    \end{psmallmatrix}}
& C'\oplus B
    \arrow{r}{\begin{psmallmatrix}
        c, \amph g
    \end{psmallmatrix}}
& C
    \arrow[dashed]{r}{f'_{*}\delta}
&{}
\end{tikzcd}
$
an $\fs$-triangle.
The commutative square ${\rm (wPB)}$ is a weak pullback of $g$ along $c$
(in the sense of Remark~\ref{rem:wPB}).
\end{enumerate}
\end{lemma}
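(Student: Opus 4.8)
The plan is to prove \ref{item:wPO} and then obtain \ref{item:wPB} by duality, so fix an $\fs$-triangle $A\overset{f}{\lra}B\overset{g}{\lra}C\overset{\delta}{\dra}$, a morphism $a\in\C(A,A')$, and an associated $\fs$-triangle $A'\overset{f'}{\lra}B'\overset{g'}{\lra}C\overset{a_{*}\delta}{\dra}$. The core of \ref{item:wPO} is a single structural input: there exists $b\in\C(B,B')$ completing $(a,b,\id_{C})$ to a morphism of $\fs$-triangles --- in particular making the square $\mathrm{(wPO)}$ commute --- and for any such $b$ the sequence
\[
\begin{tikzcd}[column sep=1.4cm, cramped]
A
    \arrow{r}{\begin{psmallmatrix} f\\ a \end{psmallmatrix}}
& B\oplus A'
    \arrow{r}{\begin{psmallmatrix} b, \amph -f' \end{psmallmatrix}}
& B'
\end{tikzcd}
\]
is an $\fs$-conflation realizing the $\BE$-extension $(g')^{*}\delta$. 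I would simply invoke \cite[Prop.~1.20]{LN19} for this (see also \cite[Cor.~3.16]{NP19} and \cite[Prop.~2.22]{HS20}); a self-contained alternative is to produce $b$ from the standard lifting of the compatible pair $(a,\id_{C})$ (using $(\id_{C})^{*}(a_{*}\delta)=a_{*}\delta$) and then to build the conflation by applying \ref{ET4} to $A\overset{f}{\lra}B\overset{g}{\lra}C\overset{\delta}{\dra}$ and $A'\overset{f'}{\lra}B'\overset{g'}{\lra}C\overset{a_{*}\delta}{\dra}$, reading off the realized extension at each stage.

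Granting this input, it remains to verify that $\mathrm{(wPO)}$ is a weak pushout of $f$ along $a$ in the sense of \cref{rem:wPO}\ref{item:weak-PO-defn}. By \cite[Prop.~3.3]{NP19}, the $\fs$-deflation $\begin{psmallmatrix} b, & -f' \end{psmallmatrix}$ of the conflation above is a weak cokernel of $\begin{psmallmatrix} f\\ a \end{psmallmatrix}$. Hence, given $e\in\C(B,E)$ and $d\in\C(A',E)$ with $da=ef$, the morphism $\begin{psmallmatrix} e, & -d \end{psmallmatrix}\colon B\oplus A'\to E$ satisfies $\begin{psmallmatrix} e, & -d \end{psmallmatrix}\begin{psmallmatrix} f\\ a \end{psmallmatrix}=ef-da=0$, so it factors (not necessarily uniquely) as $\begin{psmallmatrix} e, & -d \end{psmallmatrix}=h\begin{psmallmatrix} b, & -f' \end{psmallmatrix}$ for some $h\in\C(B',E)$; comparing components gives $hb=e$ and $hf'=d$, which is exactly the weak pushout property.

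Finally, \ref{item:wPB} is just \ref{item:wPO} read in the opposite extriangulated category $(\C^{\op},\BE^{\op},\fs^{\op})$: the morphism $c\in\C(C',C)$ becomes a morphism $C\to C'$ in $\C^{\op}$, and applying \ref{item:wPO} there yields $b\in\C(B',B)$, the displayed morphism of $\fs$-triangles, and the $\fs$-triangle $B'\to C'\oplus B\to C$ realizing $f'_{*}\delta$ displayed in \ref{item:wPB}; the weak pullback property of $\mathrm{(wPB)}$ is then the formal dual of the computation above. I expect the only genuinely delicate point to be the bookkeeping of signs and, above all, checking that the realized $\BE$-extension is exactly $(g')^{*}\delta$ (resp.\ $f'_{*}\delta$) --- which is precisely why it is cleanest to quote \cite[Prop.~1.20]{LN19} rather than re-run the \ref{ET4}-octahedron here.
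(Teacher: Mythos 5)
Your proposal is correct and matches the paper's treatment: the paper gives no independent proof of this lemma but simply cites \cite[Prop.~1.20]{LN19} (with \cite[Cor.~3.16]{NP19} and \cite[Prop.~2.22]{HS20}), exactly as you do, and your added verifications (the weak pushout property via the weak-cokernel property of conflations, and obtaining (2) by passing to $(\C^{\op},\BE^{\op},\fs^{\op})$) are the routine steps the paper leaves implicit.
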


We recall how exact categories and triangulated categories naturally give rise to extriangulated categories.

\begin{example}
\label{ex:exact_is_extri}
\cite[Exam.~2.13]{NP19}
Let $(\C,\E)$ be an exact category, such that 
the collection 
\[
\BE(C,A)\deff 
    \Ext^{1}_{\C}(C,A) 
    = \Set{[A\overset{f}{\lra} B\overset{g}{\lra} C]|A\overset{f}{\lra} B\overset{g}{\lra} C \text{ lies in }\E}
\]
is a set for all $A,C\in\C$. 
(This set-theoretic requirement is met for example if $\C$ is skeletally small, or if $\C$ has enough projectives or enough injectives.) 
We can extend this definition of $\BE$ on objects in $\C^{\op}\times\C$ to a biadditive functor $\C^{\op}\times\C\to\Ab$ using pushouts and pullbacks of (admissible) conflations. 
Declare that $A\overset{f}{\lra}B\overset{g}{\lra}C\overset{}{\dashrightarrow}$ 
is an $\fs$-triangle 
if and only if 
$A\overset{f}{\lra}B\overset{g}{\lra}C$
is a kernel-cokernel pair in $\E$.
Then $(\C,\BE,\fs)$ is an extriangulated category and we say it \emph{corresponds to} or \emph{is an exact category}. 
As a special case, if $\C$ is an abelian category and $\E$ consists of all the short exact sequences in $\C$, then we say $(\C,\BE,\fs)$ \emph{corresponds to} or \emph{is an abelian category}. 
\end{example}

\begin{example}\label{exam:triangulated-is-extri}
\cite[Prop.~3.22]{NP19} 
Let $(\C,[1],\triangle)$ be a triangulated category, where $[1]$ is the suspension automorphism and $\triangle$ is the triangulation consisting of (distinguished) triangles. 
Define $\BE(C,A)\deff\C(C,A[1])$ for $A,C\in\C$. Thus, the $\BE$-extensions in this case are just morphisms. 
If $a\colon A\to A'$ and $c\colon C' \to C$ are morphisms, and $h\in\BE(C,A)$, then 
$a_{*}h = \BE(C,a)(h) = a[1] \circ h $
and 
$c^{*}h = \BE(c,A)(h) = hc $. 
This yields a biadditive functor $\C^{\op}\times\C\to\Ab$. 
Lastly, declare that 
$\begin{tikzcd}[column sep=0.6cm, cramped]
    A
        \arrow{r}{f}
    & B 
        \arrow{r}{g}
    & C
        \arrow[dashed]{r}{h}
    &{}
\end{tikzcd}$
is an $\fs$-triangle 
if and only if 
$A\overset{f}{\lra}B\overset{g}{\lra}C\overset{h}{\lra}A[1]$
is a triangle in $\C$.
Then $(\C,\BE,\fs)$ is an extriangulated category and we say it \emph{corresponds to} or \emph{is a triangulated category}.
\end{example}

\subsection{The category \texorpdfstring{$\ET$}{ET} of extriangulated categories}

Just as for exact and triangulated categories, there is a notion of an exact functor between extriangulated categories.

\begin{definition}
\label{def:exact_functor}
Suppose $(\C,\BE,\fs)$, $(\C',\BE',\fs')$ and $(\C'',\BE'',\fs'')$ are extriangulated categories.
\begin{enumerate}[label=\textup{(\arabic*)}]

\item\label{item:exact_functor}
\cite[Def.~2.32]{B-TS21} 
An \emph{exact} (or \emph{extriangulated}) \emph{functor} from 
$(\C,\BE,\fs)$ to $(\C',\BE',\fs')$ 
is a pair $(F,\phi)$ of an additive functor $F\colon \C\to\C'$ and a natural transformation $\phi\colon \BE\Rightarrow\BE' \circ (F^{\op}\times F)$, 
such that if 
$A\overset{f}{\lra}B\overset{g}{\lra}C\overset{\delta}{\dashrightarrow}$ 
is an $\fs$-triangle, then 
$\begin{tikzcd}[column sep=1cm, cramped]
FA
    \arrow{r}{Ff}
& FB
    \arrow{r}{Fg}
& FC
    \arrow[dashed]{r}{\phi_{C,A}(\delta)}
&{}
\end{tikzcd}$
is an $\fs'$-triangle.
 
\item
\cite[Def.~2.11(2)]{NOS22} 
Suppose $(F,\phi)\colon (\C,\BE,\fs)\to (\C',\BE',\fs')$ and $(F',\phi')\colon (\C',\BE',\fs')\to (\C'',\BE'',\fs'')$ are exact functors. 
Then the composition 
$(F',\phi')\circ(F,\phi)\colon (\C,\BE,\fs)\to (\C'',\BE'',\fs'')$ 
is the exact functor $(F'',\phi'')$ given by 
$F''\deff F'\circ F$ 
and 
$\phi''\deff (\phi'\cdot(F^{\op}\times F))\circ\phi$, 
where $\phi'\cdot(F^{\op}\times F)$ denotes the whiskering of $\phi'$ with $F^{\op}\times F$.

\end{enumerate}
\end{definition}

If $(\C,\BE,\fs)$ and $(\C',\BE',\fs')$ are exact (resp.\ triangulated) categories, then 
an exact functor $(\C,\BE,\fs) \to (\C',\BE',\fs')$ 
coincides with the classical notion of an exact functor between exact categories (resp.\ an exact or triangulated functor between triangulated categories); see \cite[Thms.~2.34 and 2.33]{B-TS21}.

Exact functors between extriangulated categories allow us to consider the category of extriangulated categories.

\begin{definition}
We denote by $\ET$ the category of skeletally small extriangulated categories and exact functors between them. 
In addition, we let $\TR$, $\EX$ and $\AB$, respectively, denote the full subcategories of $\ET$ consisting of triangulated, exact and abelian categories, respectively.
\end{definition}

We restrict to skeletally small categories because we will usually be considering localizations or Grothendieck groups. 
If one further restricts to small extriangulated categories, then we obtain the $2$-category of small extriangulated categories, exact functors, and morphisms of extriangulated functors as introduced in \cite[Def.~2.11(3)]{NOS22}. 
This $2$-category-theoretic viewpoint has been developed in \cite{BTHSS23} for $n$-exangulated categories, which are a generalisation of extriangulated categories to higher homological algebra.

It is well known that an extension-closed subcategory of an exact category inherits an exact structure from the ambient category, whereas the analogue is not necessarily true for an extension-closed subcategory of a triangulated category. That is, $\EX$ is closed under taking extension-closed subcategories but $\TR$ is not. 
A similar issue arises when taking relative theories. 
Moreover, both $\EX$ and $\TR$ are not closed under general localizations in a useful sense. 
Extriangulated category theory has the flexibility to be closed under all these operations.

In the next two subsections, we recall the needed details about extension-closure and the localization theory of extriangulated categories. 
We recall the theory about relative structures later in Section~\ref{sec:application-to-tri-cat} where it will be used extensively.

\subsection{Extension-closed subcategories}
\label{sec:ext_closed_extri_cats}

Let $\N$ be a subcategory of the extriangulated category $(\C,\BE,\fs)$. Then $\N$ is said to be \emph{extension-closed} if it satisfies the following:
\begin{enumerate}[label=\textup{(\roman*)}]
    \item $\N$ is full, additive and closed under isomorphisms in $\C$, and 
    \item if $A\lra B\lra C$ is an $\fs$-conflation with $A,C\in\N$, then $B\in\N$. 
\end{enumerate}

The following is then easily verified.

\begin{proposition}
\label{prop:ext_closed_ET}
\cite[Rem.~2.18]{NP19}
Let $\N$ be an extension-closed subcategory of $(\C,\BE,\fs)$. Then there is an extriangulated category $(\N,\BE|_\N,\fs|_\N)$, where 
$\BE|_\N$ is the restriction of $\BE$ to $\N^{\op}\times \N$, and $\fs|_\N$ is the restriction of $\fs$ to $\BE|_\N$. 
Furthermore, the inclusion functor $\mathsf{inc}\colon\N\to\C$ is part of an exact functor 
$(\mathsf{inc},\iota)\colon (\N,\BE|_\N,\fs|_\N)\to(\C,\BE,\fs)$, where 
$\iota\colon \BE|_\N \Rightarrow \BE$ 
is the canonical inclusion. 
\end{proposition}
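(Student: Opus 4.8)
The plan is to verify the extriangulated axioms (ET$1$)--\ref{ET4}, (ET$3)^{\op}$ and (ET$4)^{\op}$ from \cite{NP19} for the triplet $(\N,\BE|_\N,\fs|_\N)$, leaning throughout on two elementary observations. First, since $\N$ is a \emph{full} subcategory of $\C$ closed under isomorphisms, any morphism produced in $\C$ between objects of $\N$ automatically lies in $\N$, and any object isomorphic in $\C$ to an object of $\N$ lies in $\N$. Second, extension-closedness is exactly what we need to keep the ``new'' middle terms appearing in realizations inside $\N$.

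I would begin with the soft axioms. $\BE|_\N \colon \N^{\op}\times\N \to \Ab$ is biadditive, being the restriction of the biadditive functor $\BE$ along the additive inclusion, which gives (ET$1$). For the realization, the one point needing care is well-definedness of $\fs|_\N$: given $\delta\in\BE|_\N(C,A)=\BE(C,A)$ with $A,C\in\N$, write $\fs(\delta)=[A\overset{f}{\to}B\overset{g}{\to}C]$; since $A\overset{f}{\to}B\overset{g}{\to}C$ is an $\fs$-conflation with $A,C\in\N$, extension-closedness forces $B\in\N$, and closure of $\N$ under isomorphisms means the whole Yoneda equivalence class is represented by sequences in $\N$. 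Additivity of $\fs|_\N$ (the split realization of $0$ and $\fs|_\N(\delta\oplus\delta') = \fs|_\N(\delta)\oplus\fs|_\N(\delta')$) is then inherited verbatim, using that $\N$ is additive so that $A\oplus C$ and $B\oplus B'$ stay in $\N$; and the realization property in (ET$2$), together with (ET$3$) and (ET$3)^{\op}$, follow because the completing morphism of $\fs$-triangles (the map $b$, respectively the third component $c$) is supplied by $(\C,\BE,\fs)$ and lands in $\N$ by fullness.

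The only genuinely new step is \ref{ET4} (and dually (ET$4)^{\op}$). Given $\fs|_\N$-triangles $A\to B\to D\overset{\delta}{\dra}$ and $B\to C\to F\overset{\rho}{\dra}$ in $\N$ — hence $\fs$-triangles in $\C$ with $A,B,C,D,F\in\N$ — I would apply \ref{ET4} in $(\C,\BE,\fs)$ to obtain the object $E$ together with the $\fs$-triangles $A\to C\to E\overset{\delta'}{\dra}$ and $D\to E\to F\overset{f'_{*}\rho}{\dra}$ and all the accompanying commuting data. Feeding the second of these $\fs$-conflations, whose end terms $D$ and $F$ lie in $\N$, into the extension-closure hypothesis yields $E\in\N$; thereafter every object, arrow, and relation in the \ref{ET4}-diagram lives in $\N$, so it is an \ref{ET4}-diagram for $(\N,\BE|_\N,\fs|_\N)$. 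The dual argument handles (ET$4)^{\op}$, and this completes the proof that $(\N,\BE|_\N,\fs|_\N)$ is extriangulated.

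For the exact functor statement, note that $\BE\circ(\mathsf{inc}^{\op}\times\mathsf{inc})$ is \emph{by construction} the functor $\BE|_\N$, so the canonical inclusion $\iota\colon\BE|_\N\Rightarrow\BE\circ(\mathsf{inc}^{\op}\times\mathsf{inc})$ is the identity natural transformation (trivially natural). The compatibility condition in \cref{def:exact_functor}\ref{item:exact_functor} then reduces to the assertion that if $A\to B\to C\overset{\delta}{\dra}$ is an $\fs|_\N$-triangle then $A\to B\to C\overset{\delta}{\dra}$ is an $\fs$-triangle, which holds by the very definition of $\fs|_\N(\delta)$ as $\fs(\delta)$. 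I expect the only obstacle to be the bookkeeping around \ref{ET4} — in particular, noticing that it is the $\fs$-triangle $D\to E\to F\overset{f'_{*}\rho}{\dra}$, and not $A\to C\to E\overset{\delta'}{\dra}$, whose endpoints one uses to conclude $E\in\N$.
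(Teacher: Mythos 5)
Your verification is correct and is exactly the standard argument the paper has in mind when it cites \cite[Rem.~2.18]{NP19} and declares the statement ``easily verified'': restriction handles the additive/realization axioms, fullness supplies the completing morphisms for (ET2), (ET3), (ET3)$^{\op}$, and for \ref{ET4} one applies the axiom in $(\C,\BE,\fs)$ and uses extension-closure on the $\fs$-conflation $D\to E\to F$ to get $E\in\N$, with the dual argument for (ET4)$^{\op}$. The observation that $\iota$ is the identity inclusion, so exactness of $(\mathsf{inc},\iota)$ is immediate from the definition of $\fs|_\N$, likewise matches the intended reasoning.
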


\subsection{Localization theory}
\label{sec_localization}

The localization theory of extriangulated categories was initiated in \cite{NOS22} in order to unify Verdier \cite{Ver96} and Serre quotients \cite{Gab62} (see Example~\ref{ex:list_mult_loc}). In this subsection we remain general, whereas in Section~\ref{sec:application-to-tri-cat} we will specialize to the localization of triangulated categories.

We begin by recalling the definition of a thick subcategory of an extriangulated category and then introducing exact sequences in $\ET$.

\begin{definition}\label{def:thick}
\cite[Def.~4.1]{NOS22} Let $\N$ be a subcategory of the extriangulated category $(\C,\BE,\fs)$. 
Then $\N$ is called \emph{thick} if:
\begin{enumerate}[label=\textup{(\roman*)}]
    \item $\N$ is full, additive and closed under direct summands in $\C$, and 
    \item $\N$ satisfies the \emph{$2$-out-of-$3$ property} for $\fs$-conflations, i.e\ if any two of $A, B$ or $C$ belong to $\N$ in an $\fs$-conflation $A\lra B\lra C$, then so does the third.
\end{enumerate}
\end{definition}

It follows that a thick subcategory $\N\sse\C$ is closed under isomorphisms and is extension-closed. Hence, $\N$ inherits an extriangulated structure by Proposition~\ref{prop:ext_closed_ET} and, in this case, we will sometimes write  $(\N,\BE|_\N,\fs|_\N)\sse(\C,\BE,\fs)$ is a thick subcategory. 
Furthermore, if $(\C,\BE,\fs)$ is a triangulated category, 
then a subcategory is thick in the sense of Definition~\ref{def:thick} if and only if it is thick in usual sense for triangulated categories (i.e.\ a triangulated subcategory that is closed under direct summands); see \cite[Exam.~4.8]{NOS22}.

\begin{definition}\label{def:Serre}
\cite[Def.~1.17]{Oga22b}
A thick subcategory $\N\sse\C$ is called a \emph{Serre} subcategory if any $\fs$-conflation $A\lra B\lra C$ with $B\in\N$ implies $A,C\in\N$.
\end{definition}

If $(\C,\BE,\fs)$ is an exact category, then the notion of a Serre subcategory as in Definition~\ref{def:Serre} is the same as the classical definition for exact categories given in \cite[Def.~4.0.35]{C-E98}.

We now introduce exact sequences of extriangulated categories in $\ET$, motivated by the notion of exact sequences in $\TR$ and $\AB$; see e.g.\ \cite[Sec.~2.3]{PV18}. The justification of our terminology is given by Proposition~\ref{prop:justification-for-exact-sequence-terminology} after we have recalled extriangulated localizations.

\begin{definition}
\label{def:exact_seq_ET}
Suppose $(\N,\BE|_\N,\fs|_\N)\sse(\C,\BE,\fs)$ is thick. 
We say that the sequence
\begin{equation}\label{eqn:exact-seq-in-ET}
\begin{tikzcd}[column sep=1.2cm]
(\N,\BE|_\N,\fs|_\N)
	\arrow{r}{(\inc,\iota)}
& (\C,\BE,\fs)
	\arrow{r}{(Q,\mu)}
& (\D,\BF,\ft)
\end{tikzcd}
\end{equation}
in $\ET$ is an \emph{exact sequence} of extriangulated categories, 
if the following conditions are fulfilled.
\begin{enumerate}[label=\textup{(\arabic*)}]
    \item\label{item:image-equals-kernel}
    $\N = \Im(\inc) = \Ker Q$ holds.

    \item\label{item:universality}
    For any exact functor 
    $(G,\psi)\colon (\C,\BE,\fs)\to (\C',\BE',\fs')$ 
    in $\ET$ with $G|_\N = G \circ \inc = 0$, 
    there is a unique exact functor
    $(G',\psi')\colon (\D,\BF,\ft) \to (\C',\BE',\fs')$ 
    such that $(G,\psi)=(G',\psi')\circ (Q,\mu)$.
	
\end{enumerate}

Exact sequences in $\TR$, $\EX$ and $\AB$ are defined similarly. 
\end{definition}

We will work under the following setup for the rest of Section~\ref{sec:ET}.

\begin{setup}
We suppose that $(\N,\BE|_\N,\fs|_\N)$ is a thick subcategory of a skeletally small extriangulated category $(\C,\BE,\fs)$.
\end{setup}

In the remainder of Subsection~\ref{sec_localization}, we review some sufficient conditions on the pair $(\C,\N)$ to guarantee the existence of an exact sequence in $\ET$ with end-term a certain localization of $\C$ dependent on $\N$.

\begin{definition}
Suppose $A\overset{f}{\lra} B\overset{g}{\lra} C$ is an $\fs$-conflation. 
The object $A$ is known as a \emph{cocone} of the $\fs$-deflation $g$ and $C$ a \emph{cone} of the $\fs$-inflation $f$. 
By \cite[Rem.~3.10]{NP19}, these objects are uniquely determined up to isomorphism by $g$ and $f$, respectively. 
Therefore, we write $\cocone(g)\deff A$ and $\cone(f)\deff C$.
\end{definition}

Associated to $\N$ we define several classes of morphisms in $\C$; 
we will compare Definitions~\ref{def:Sn_from_thick} and \ref{def:mors_assoc_to_wWH_cat} in Section~\ref{sec:wWET-cats} once we have seen that extriangulated categories are examples of weak Waldhausen categories (see Lemma~\ref{lem:compare-Lac-and-L}).

\begin{definition}
\label{def:Sn_from_thick}
\cite[Def.~4.3]{NOS22}, \cite[Lem.~2.12]{Oga22b}
Define subsets of $\C^{\to}$ as follows.
\begin{enumerate}[label=\textup{(\roman*)}]

\item
$\L \deff \Set{f\in\C^{\to} | f \ \textnormal{is an $\fs$-inflation with}\ \cone(f)\in\N}$

\item
$\L_{\sec}\deff\Set{ f\in\L | f\textnormal{ is a section}}$ 

\item
$\R \deff \Set{g\in\C^{\to} | g \ \textnormal{is an $\fs$-deflation with}\ \cocone(g)\in\N}$

\item
$\R_{\ret}\deff\Set{ g\in\R | g\textnormal{ is a retraction}}$

\item $\Sn$ is the smallest subclass of $\C^{\to}$ closed under composition containing both $\L$ and $\R$

\end{enumerate}
\end{definition}

A morphism $f\in\C^{\to}$ lies in $\Sn$ if and only if $f$ is a finite composition of morphisms from $\L \cup \R$; see \cite[p.~374]{NOS22}. 
It is clear that $\Sn$ contains all isomorphisms in $\C$ (as each isomorphism is e.g.\ an $\fs$-inflation with cone $0$), and $\Sn$ is closed under composition by definition. Furthermore, one can mimic the argument in Lemma~\ref{lem:basics_of_cofibration_and_weq}\ref{item:direct-sum-of-cofs-is-cof} to see that $\Sn$ is closed under finite direct sums.
Thus, $\Sn$ satisfies: 
\begin{enumerate}[label=\textup{(M\arabic*)}]
\setcounter{enumi}{-1}
    \item\label{MS0}
    $\Sn$ contains $\Iso\C$, and is closed under composition and taking finite direct sums.
\end{enumerate}
Consider the quotient functor 
$\overline{(-)}\colon \C\to \overline{\C} \deff \C/[\N]$, and define 
$\overline{\Sn} \deff \Set{ \overline{f} | f \in \Sn }\sse \overline{\C}^{\to}$. 
We denote by 
\[
\C/\N\deff \C[\Sn^{-1}] \cong\overline{\C}[\overline{\Sn}^{-1}]
\]
the localization of $\C$ at $\Sn$, where the isomorphism follows from the same argument as in Lemma~\ref{lem:homotopy_cat}. 
In addition, we denote the localization functor by $Q\colon \C\to\C/\N$. 
Under certain conditions, it indeed gives rise to an exact functor between extriangulated categories.

\begin{theorem}
\label{thm:NOS}
If $\Sn$ is saturated and $\overline{\Sn}$ satisfies conditions 
\textup{(MR1)}--\textup{(MR4)} of \cite[p.~343]{NOS22},
then there is an extriangulated category $(\C/\N,\widetilde{\BE},\widetilde{\fs})$ together with an 
exact functor $(Q,\mu)\colon (\C,\BE,\fs)\to (\C/\N,\widetilde{\BE},\widetilde{\fs})$, such that the following sequence is exact in $\ET$.
\begin{equation}\label{seq_exact_squence_in_ET}
\begin{tikzcd}[column sep=1.2cm]
(\N,\BE|_\N,\fs|_\N) 
    \arrow{r}{(\mathsf{inc},\iota)} 
&(\C,\BE,\fs) 
    \arrow{r}{(Q,\mu)} 
&(\C/\N,\widetilde{\BE},\widetilde{\fs})
\end{tikzcd}
\end{equation}
\end{theorem}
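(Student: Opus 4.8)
The plan is to derive the extriangulated structure and the exact functor $(Q,\mu)$ directly from the localization machinery of \cite{NOS22}, and then to verify by hand that the resulting sequence \eqref{seq_exact_squence_in_ET} meets the two requirements of \cref{def:exact_seq_ET}. For the first part there is nothing genuinely new: the hypotheses that $\Sn$ is saturated and that $\overline{\Sn}$ satisfies \textup{(MR1)}--\textup{(MR4)} are precisely those under which \cite{NOS22} equips $\C/\N\cong\overline{\C}[\overline{\Sn}^{-1}]$ with a canonical biadditive functor $\widetilde{\BE}$ and realization $\widetilde{\fs}$, makes $(\C/\N,\widetilde{\BE},\widetilde{\fs})$ extriangulated, and promotes the quotient functor $Q$ to an exact functor $(Q,\mu)$ for a suitable natural transformation $\mu$. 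So I would quote that result and spend the remainder of the argument on exactness in the sense of \cref{def:exact_seq_ET}.

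For \cref{def:exact_seq_ET}\ref{item:image-equals-kernel}, the equality $\Im(\inc)=\N$ is immediate since $\N$ is closed under isomorphisms. To see $\Ker Q=\N$, note that for $N\in\N$ a cone of the $\fs$-inflation $0\to N$ is $N$ itself, so $0\to N$ lies in $\L\sse\Sn$; hence $Q(0\to N)$ is invertible and $QN\cong 0$, giving $\N\sse\Ker Q$. Conversely, if $QX\cong 0$ then $Q$ sends $X\to 0$ to an isomorphism, so saturatedness of $\Sn$ forces $(X\to 0)\in\Sn$, i.e.\ it is a finite composite of morphisms from $\L\cup\R$. Inducting on the length of this composite, at each stage the relevant $\fs$-inflation has cone in $\N$ (or the relevant $\fs$-deflation has cocone in $\N$), and the $2$-out-of-$3$ property of the thick subcategory $\N$ for $\fs$-conflations propagates membership in $\N$ along the composite; hence $X\in\N$.

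For the universal property \cref{def:exact_seq_ET}\ref{item:universality}, let $(G,\psi)\colon(\C,\BE,\fs)\to(\C',\BE',\fs')$ be exact with $G\circ\inc=0$. The crucial observation is that $G$ inverts every morphism in $\Sn$; since $\Sn$ is generated under composition by $\L$ and $\R$, it is enough to check this for these two classes. If $f\colon A\rtail B$ belongs to $\L$, fix an $\fs$-triangle $A\overset{f}{\lra}B\overset{g}{\lra}N\overset{\delta}{\dra}$ with $N\in\N$; applying $(G,\psi)$ yields an $\fs'$-triangle $GA\overset{Gf}{\lra}GB\overset{Gg}{\lra}GN\dra$ with $GN\cong 0$, and since $\BE'(GN,GA)\cong 0$ this $\fs'$-triangle splits, so $Gf$ is an isomorphism; the case $f\in\R$ is dual. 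Thus, by the universal property of $\C[\Sn^{-1}]$, the functor $G$ factors uniquely as $G=G'\circ Q$ for an additive functor $G'\colon\C/\N\to\C'$. It then remains to equip $G'$ with a natural transformation $\psi'$ making $(G',\psi')$ an exact functor with $(G,\psi)=(G',\psi')\circ(Q,\mu)$, and to check that this lift is unique; for this I would invoke the finer universal property of the extriangulated localization of \cite{NOS22}, by which $\widetilde{\BE}$ is the universal biadditive functor receiving a map from $\BE$ compatible with $\Sn$, so that $\psi$ determines $\psi'$ uniquely.

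The step I expect to be the main obstacle is exactly this last one: producing $\psi'$ and establishing uniqueness of the pair $(G',\psi')$. The plain $1$-categorical universal property of $\C[\Sn^{-1}]$ delivers only the underlying additive functor $G'$, so one genuinely has to unwind the construction of $(\widetilde{\BE},\widetilde{\fs})$ and the content of \textup{(MR1)}--\textup{(MR4)} from \cite{NOS22} in order to transport the extension-theoretic data through the localization. Everything else is a routine reconciliation of the formalism of \cite{NOS22} with \cref{def:exact_seq_ET}.
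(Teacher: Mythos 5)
Your proposal is correct in substance, but it takes a different route from the paper: the paper's proof is a two-line citation, deducing \cref{thm:NOS} from \cite[Thm.~3.8]{OS23} together with \cite[Prop.~4.3]{ES22}, whereas you reconstruct the exactness directly. Your two hand-checked ingredients are sound: the identification $\Ker Q=\N$ via saturation of $\Sn$ and the $2$-out-of-$3$ property of the thick subcategory $\N$ (inducting along a factorization of $X\to 0$ into morphisms of $\L\cup\R$), and the bridge lemma that an exact functor $(G,\psi)$ with $G|_{\N}=0$ inverts $\L$ and $\R$ (hence $\Sn$) because the image conflation is realized by a vanishing extension and so splits. The step you flag as the main obstacle — transporting the natural transformation to get a unique exact lift $(G',\psi')$ rather than just the additive functor $G'$ — does not actually require unwinding the construction of $(\widetilde{\BE},\widetilde{\fs})$: the universal property of the extriangulated localization among exact functors inverting $\Sn$ is part of the statement of \cite[Thm.~3.5]{NOS22} (restated under the present hypotheses in \cite[Thm.~3.8]{OS23}), and combined with your bridge lemma it yields \cref{def:exact_seq_ET}\ref{item:universality} immediately; this is precisely what the paper's citations encapsulate, with \cite[Prop.~4.3]{ES22} supplying the remaining compatibility of the hypotheses. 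One small correction: saturation of $\Sn$ is \emph{not} among the hypotheses needed for the construction in \cite{NOS22} (cf.\ \cref{rem:on-theorem-4-14}\ref{item:saturation}); it is the extra assumption used exactly where you use it, namely to get $\Ker Q\sse\N$. So your approach buys a self-contained verification that isolates the role of each hypothesis, at the cost of re-proving what the cited results already package.
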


\begin{proof}
This follows from \cite[Thm.~3.8]{OS23} and \cite[Prop.~4.3]{ES22}.
\end{proof}

\begin{remark}\label{rem:on-theorem-4-14}
We make some remarks on Theorem~\ref{thm:NOS}.
\begin{enumerate}[label=\textup{(\arabic*)}]

\item\label{item:extri_localization}
We call the exact functor 
$(Q,\mu)\colon (\C,\BE,\fs)
	\to (\C/\N,\widetilde{\BE},\widetilde{\fs})$
the \emph{extriangulated localization of $\C$ with respect to $\N$}.
It is denoted more simply by $Q\colon \C\to\C/\N$ if the extriangulated structures are understood. 
We sometimes refer to \eqref{seq_exact_squence_in_ET} as an \emph{extriangulated localization sequence}. 

\item\label{item:saturation}
Although $\Sn$ being saturated is not necessary to prove \cite[Thm.~3.5]{NOS22}, it is essential to prove our main results and hence we assume this condition from the outset. Consequently, assuming (MR1) is redundant since the saturation of $\Sn$ implies (MR1). 

\item\label{item:Sn-bar}
The definition of $\overline{\Sn}$ we have used is not the original one given in \cite[Def.~3.1]{NOS22}. Indeed, let $\overline{\Sn}^{*}$ denote the closure of $\overline{\Sn}$ with respect to composition with isomorphisms. The condition $\overline{\Sn}=\overline{\Sn}^{*}$ is required in \cite[Thm.~3.5]{NOS22}, but this is guaranteed in our setup by \cite[Lem.~3.2(3)]{NOS22} when (MR1) is satisfied.

\item\label{item:MR2-for-ol-Sn}
The condition (MR2) says that $\overline{\Sn}$ is a multiplicative system in $\overline{\C}$. 

\item\label{item:conflations-in-localization}
The biadditive functor $\widetilde{\BE}$ and realization $\widetilde{\fs}$ are defined in \cite[Secs.~3.2--3.3]{NOS22}. 
A description of the $\widetilde{\fs}$-inflations and $\widetilde{\fs}$-deflations is given in \cite[Lem.~3.32]{NOS22}.

\end{enumerate}
\end{remark}

As mentioned earlier, Verdier and Serre quotients are motivating examples of extriangulated localizations; see \cite[Sec.~4]{NOS22}.

\begin{example}
\label{ex:list_mult_loc}
\cite[Exams.~4.8, 4.9]{NOS22}
\begin{enumerate}[label=\textup{(\arabic*)}]

\item\label{item:Verdier} (Verdier quotient.) 
Suppose $\N$ is thick in a (skeletally small) triangulated category $(\C,\BE,\fs)$. 
Then we have $\Sn=\L=\R$, 
$Q\colon \C \to \C/\N$ is the localization functor for the Verdier quotient of $\C$ by $\N$, 
and 
$\N \to \C \to \C/\N$ 
is an exact sequence in $\ET$.

\item (Serre quotient.) 
Suppose $\N$ is a Serre subcategory of a (skeletally small) abelian category $(\C,\BE,\fs)$. 
Then we have $\Sn=\L\circ\R$, 
$Q\colon \C \to \C/\N$ is the localization functor for the Serre quotient of $\C$ by $\N$, 
and 
$\N \to \C \to \C/\N$ 
is an exact sequence in $\ET$.

\end{enumerate}
\end{example}

In light of Example~\ref{ex:list_mult_loc}, we are in position to justify the choice of terminology in Definition~\ref{def:exact_seq_ET}.

\begin{proposition}\label{prop:justification-for-exact-sequence-terminology}
If \eqref{eqn:exact-seq-in-ET} is an exact sequence in $\ET$ that also lies in $\TR$ (resp.\ $\AB$), then it is the usual Verdier (resp.\ Serre) quotient. 
Conversely, a Verdier (resp.\ Serre) quotient $\N \into \C \to \D$ gives rise to an exact sequence \eqref{eqn:exact-seq-in-ET} in $\ET$.
\end{proposition}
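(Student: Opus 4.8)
The plan is to reduce both implications to \cref{ex:list_mult_loc}, where the classical Verdier and Serre quotients are identified with the extriangulated localization $\C/\N = \C[\Sn^{-1}]$ and the associated sequence is already declared exact in $\ET$, together with the fact --- recorded in the discussion following \cref{def:exact_functor}, see \cite[Thms.~2.34 and 2.33]{B-TS21} --- that between triangulated (resp.\ abelian) categories, exact functors in $\ET$ are precisely triangulated (resp.\ classical exact) functors.

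For the converse direction, I would start from a Verdier quotient $\N \into \C \to \D$ (so $\C$ is triangulated and $\N$ thick) or a Serre quotient $\N \into \C \to \D$ (so $\C$ is abelian and $\N$ a Serre subcategory). By definition $\D$ is equivalent, as a triangulated (resp.\ abelian) category, to the classical quotient $\C/\N$, which by \cref{ex:list_mult_loc} coincides with the extriangulated localization $\C[\Sn^{-1}]$, and by the same result $\N \to \C \to \C[\Sn^{-1}]$ is an exact sequence in $\ET$. I would then observe that conditions \cref{def:exact_seq_ET}\ref{item:image-equals-kernel} and \ref{item:universality} are preserved when the third term is replaced by an exactly equivalent one over $\C$: an equivalence reflects zero objects, so the kernel is unchanged, and the two universal factorizations transport along the equivalence. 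Hence $\N \into \C \to \D$ is exact in $\ET$.

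For the forward direction, I would assume \eqref{eqn:exact-seq-in-ET} is an exact sequence in $\ET$ lying in $\TR$ (resp.\ $\AB$), and first check that $\N$ is classically thick (resp.\ Serre): since $\N$ is thick in the sense of \cref{def:thick} and $\C$ is triangulated this is immediate from the discussion following \cref{def:thick}; in the abelian case $(Q,\mu)$ is an exact, hence biexact, functor of abelian categories and $\N = \Ker Q$ by \cref{def:exact_seq_ET}\ref{item:image-equals-kernel}, so $\N$ is closed under subobjects, quotients and extensions, i.e.\ is a Serre subcategory. Then \cref{ex:list_mult_loc} yields an exact sequence $\N \to \C \to \C/\N$ in $\ET$ whose third term is the usual Verdier (resp.\ Serre) quotient. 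Now $\D$ and $\C/\N$ are both third terms of exact sequences in $\ET$ extending $\N \xrightarrow{(\inc,\iota)} \C$; applying the universal property \cref{def:exact_seq_ET}\ref{item:universality} of each to the localization functor of the other produces exact functors $\D \leftrightarrows \C/\N$ over $\C$, and the uniqueness clause forces their composites to be the identities. This exact equivalence is an equivalence of triangulated (resp.\ abelian) categories again by \cite[Thm.~2.33]{B-TS21} (resp.\ \cite[Thm.~2.34]{B-TS21}), so under this identification $(Q,\mu)$ is the usual Verdier (resp.\ Serre) quotient functor.

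The main obstacle will be purely a matter of bookkeeping: to invoke \cref{def:exact_seq_ET}\ref{item:universality} with $G\circ\inc = 0$ on the nose, one should first replace each localization functor by an isomorphic one that strictly annihilates $\N$, which is harmless since $\Ker$ is defined only up to isomorphism and exactness is invariant under natural isomorphism. Once this caveat is dispatched, every step above is formal.
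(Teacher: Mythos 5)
Your proposal is correct and takes essentially the same route as the paper: in the forward direction you likewise check that $\N=\Ker Q$ is thick (resp.\ Serre) and then identify $\D$ with the canonical quotient $\C/\N$ by playing universal properties off against each other, and the converse is the same appeal to \cref{ex:list_mult_loc}. The only cosmetic differences are that you invoke \cref{def:exact_seq_ET}\ref{item:universality} for the localization sequence of \cref{thm:NOS} on both sides where the paper uses the classical universal property of the Verdier quotient on one side, and your (self-flagged) strictness caveats about $G\circ\inc=0$ and transporting along an equivalence, none of which changes the substance.
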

\begin{proof}
First, assume \eqref{eqn:exact-seq-in-ET} is an exact sequence in $\ET$ in the sense of Definition~\ref{def:exact_seq_ET}, and that 
$(\N,\BE|_\N,\fs|_\N)$, 
$(\C,\BE,\fs)$
and 
$(\D,\BF,\ft)$
are triangulated categories. 
By definition, the subcategory $\N=\Ker Q$ is thick.
Thus, we can construct the Verdier quotient $\N\into \C\overset{Q'}{\lra} \C/\N$.
By universality of the Verdier quotient and \ref{item:universality} above, we have a triangle isomorphism $F\colon \D \cong \C/\N$ with $Q'=FQ$, and thus $Q \colon \C \to \D$ is a Verdier quotient.

Second, assume \eqref{eqn:exact-seq-in-ET} is an exact sequence in $\ET$ that lies in $\AB$. We must show that $\N=\Ker Q$ is a Serre subcategory of $\C$. 
Thus, let $A\lra N\lra C$ be a short exact sequence in $\C$ with $N\in\N$.
Applying $Q$, we get a $\ft$-conflation (i.e.\ short exact sequence as $\D$ is abelian) $QA\lra 0\lra QC$.
Thus, $QA\cong 0 \cong QC$ and hence  $A,B\in\Ker Q = \N$. Proceeding as in the triangulated case, we see that $Q \colon \C \to \D$ is (isomorphic to) the Serre quotient functor of $\C$ by $\N$. 

The converses follow from Example~\ref{ex:list_mult_loc}.
\end{proof}

\subsection{The Grothendieck group}
\label{sec:grothendieck-for-extri-cats}

Lastly in this section, we recall the definition of the Grothendieck group associated to an extriangulated category.

\begin{definition}\label{def:K0_of_extri-cat}
\cite[Sec.~4]{ZZ21} 
The \emph{Grothendieck group} of $(\C,\BE,\fs)$ is the abelian group  
\[
K_{0}(\C,\BE,\fs) 
	\deff 
        K^{\sp}_{0}(\C)  /  \braket{\, [A] - [B] + [C] \, |\begin{tikzcd}[column sep=0.5cm, ampersand replacement=\&]
			A \arrow{r}\& B \arrow{r}\& C \arrow[dashed]{r}\& {}
			\end{tikzcd} 
			\text{is an $\fs$-triangle}\, } .
\]
\end{definition}

Note that if $(\C,\BE,\fs)$ is the extriangulated category described in Example~\ref{ex:exact_is_extri} corresponding to a skeletally small exact category $(\C,\E)$, then the group $K_{0}(\C,\BE,\fs)$ coincides with the classical definition of the Grothendieck group $K_{0}(\C,\E)$ defined for an exact category. Similarly for a triangulated category.

Taking the Grothendieck groups of extriangulated categories provides a functor $K_{0}\colon \ET\to \Ab$.
To help understand the localization process, it is useful to investigate how exact sequences in $\ET$ behave under the functor $K_{0}$.
In this direction, Enomoto--Saito \cite[Cor.~4.32]{ES22} proved that a certain exact sequence in $\ET$ induces an exact sequence of Grothendieck groups, generalizing the results for Serre and Verdier quotients (see \cite[Ch.~VIII, Cor.~5.5]{Bas68} and \cite[VIII, Prop.~3.1]{SGA5}, respectively). 
We will apply Corollary~\ref{cor:Quillen_localization2} to an extriangulated localization sequence and recover \cite[Cor.~4.32]{ES22} in Corollary~\ref{cor:ES_via_Waldhausen}.

\section{Weak Waldhausen extriangulated categories}
\label{sec:wWET-cats}

In this section, we combine the frameworks of weak Waldhausen and of extriangulated categories. We will see 
that extriangulated categories give rise to a canonical weak biWaldhausen structure, and 
that an extriangulated localization sequence induces a localization sequence of weak Waldhausen categories which yields a right exact sequence of Grothendieck groups.

\subsection{Definitions and examples}

We begin by showing each extriangulated category has a canonical weak biWaldhausen structure.

\begin{example}
\label{ex:WET1}
Let $(\C,\BE,\fs)$ be an extriangulated category.
Define 
$\seq_{\fs}$ to be the class of all $\fs$-conflations, and $\veq_{\fs} \deff \Iso\C$. 
Then $(\C,\seq_{\fs}, \veq_{\fs})$ is a weak biWaldhausen category, where the cofibrations are all $\fs$-inflations and the fibrations are all $\fs$-deflations. 
Since the definition of an extriangulated category is self-dual (see \cite[Caution 2.20]{NP19}), it suffices to show that $(\C,\seq_{\fs}, \veq_{\fs})$ is weak Waldhausen.

Let us check the axioms laid out in Definition~\ref{def:weak_Wald_cat}.
First, note that $\cof = \cof_{\seq_{\fs}}$ is closed under compositions by \ref{ET4} and contains the isomorphisms in $\C$ as $\fs$ is an additive realization (see \cite[Def.~2.10]{NP19}). Thus, \ref{WC0} holds.
Axiom \ref{WC1} follows from $\fs$ being an additive realization, \cite[Prop.~3.7]{NP19} and \cite[Cor.~3.12]{NP19}.
Axiom \ref{WC2} follows from Lemma~\ref{lem:wPO_wPB}\ref{item:wPO}. 
It is clear that \ref{WW0} is satisfied.
Lastly, the Gluing Axiom \ref{WW1} is straightforward to verify using \cite[Cor.~3.6]{NP19}. 

In addition, we observe that $K_{0}(\C,\seq_{\fs}, \veq_{\fs}) = K_{0}(\C,\BE,\fs)$.
\end{example}

It is classical that the full subcategory of cofibrant objects in a closed model category forms a Waldhausen category; see 
e.g.\ \cite[Prop.~II.8.1, Lem.~II.8.8]{GJ99}.
Example~\ref{ex:WET1} immediately implies the following analogue.

\begin{example}
Let $(\C,\BE,\fs)$ be an extriangulated category and suppose $\M = (\emph{Fib}, \emph{Cof}, \BW)$ is an admissible model structure on $\C$ in the sense of \cite[Def.~5.5]{NP19}. Let $\U$ denote the full subcategory of \emph{cofibrant} objects in $\C$, i.e.\ $X\in\U$ if and only if the morphism $0\to X$ is in $\emph{Cof}$. 
Then, by \cite[Prop.~5.6]{NP19}, the subcategory $\U$ is part of a cotorsion pair $(\U,\U^{\perp_{1}})$ (see \cite[Def.~4.1]{NP19}).
In particular, $\U$ is extension-closed in $(\C,\BE,\fs)$ by \cite[Rem.~4.6]{NP19}. Hence, $\U$ inherits extriangulated, and thus also a weak biWaldhausen, structure from $\C$.
\end{example}

Example~\ref{ex:WET1} motivates the following terminology.

\begin{definition}\label{def:weak_Waldhausen_ET}
Let $(\C,\BE,\fs)$ be an extriangulated category, and suppose $(\C,\seq,\weq)$ is a weak Waldhausen category (not necessarily the canonical one).
If any distinguished sequence in $\seq$ is an $\fs$-conflation, then we call $(\C,\seq,\weq)$ a \emph{weak Waldhausen extriangulated category with respect to $(\C,\BE,\fs)$} (a \emph{wWET category}, for short). 
We say $(\C,\seq,\weq)$ a \emph{weak coWaldhausen extriangulated category with respect to $(\C,\BE,\fs)$} (or a \emph{wcoWET category}) if 
$(\C^{\op},\seq^{\op},\weq^{\op})$ is a wWET category with respect to $(\C^{\op},\BE^{\op},\fs^{\op})$. 
A \emph{weak biWaldhausen extriangulated category with respect to $(\C,\BE,\fs)$} (or a \emph{wbiWET category}) is a triplet $(\C,\seq,\weq)$ that is a wWET and a wcoWET category with respect to $(\C,\BE,\fs)$.
\end{definition}

\begin{example}\label{ex:extria_cat_is_wbiWET}
By Example~\ref{ex:WET1}, there is a canonical wbiWET category $(\C,\seq_{\fs},\veq_{\fs})$ associated to an extriangulated category $(\C,\BE,\fs)$. 
\end{example}

The next example is of a Waldhausen category that is not wWET (with respect to an extriangulated structure arising naturally in its context).

\begin{example}
\label{ex:contractible2}
Let $\A$ be a skeletally small abelian category with at least one non-monomorphism (e.g.\ $\A$ has a non-zero object).
As in Example~\ref{ex:contractible}, there is a weak Waldhausen category 
$(\A,\seq,\weq)$ where $\seq$ is the class of all right exact sequences in $\A$ and $\weq \deff\Iso\C$.

Let $(\A,\BE,\fs)$ denote the extriangulated structure on $\A$ induced by its abelian structure; see Example~\ref{ex:exact_is_extri}. 
Since an arbitrary right exact sequence is not a short exact sequence in general, the weak Waldhausen category $(\A,\seq,\weq)$ is not a wWET category with respect to $(\A,\BE,\fs)$. 
\end{example}

In the above example, if we had assumed $\A$ had a morphism that was not a weak kernel of any morphism, then it would follow that $(\A,\seq,\weq)$ is not wWET with respect to any extriangulated structure on $\A$.

The last example we give is weak Waldhausen but neither Waldhausen nor extension-closed in its ambient triangulated category.

\begin{example}
Consider the path algebra of the quiver $1\to 2\to 3$ over a field, its module category and the bounded derived category of that. Let $(\C,\BE,\fs)$ denote this triangulated category and let $(\C,\seq,\weq)$ be the corresponding weak Waldhausen structure (see Example~\ref{ex:WET1}). 
Consider the full, additive subcategory $\D\deff\add\{2,3,\substack{2\\3},\substack{1\\2}\}$.
This is not extension-closed in $\C$ as, e.g., $\substack{1\\2\\3}$ is not in $\D$. 
Define $\seq_{\D} \deff \seq\cap\D^{\to\to}$ (i.e.\ all $\fs$-conflations with all three terms lying in $\D$). 
Define $\weq_{\D}\deff\weq\cap\D^{\to}$ (i.e.\ all the isomorphisms of $\D$).
Notice that \ref{WC1} holds.

Note that this does not imply $\cof_{\D}$ coincides with $\cof\cap\D^{\to}$. In fact, $\cof_{\D}$ is much smaller: 
up to isomorphism, the only indecomposable cofibrations are $a\colon 3\to\substack{2\\3}$ and split monomorphisms with indecomposable domain, and well-defined compositions of these.
Notice that \ref{WC0} holds.

In particular, $\seq_{\D}$ consists only of the following $\fs$-conflations:
$
\begin{tikzcd}[column sep=1.3cm, cramped]
    3 \oplus 0
        \arrow{r}{a\oplus 0}
    & \substack{2\\3} \oplus X
        \arrow{r}{b\oplus \id_{X}}
    & 2 \oplus X,
\end{tikzcd}
$
$
\begin{tikzcd}[column sep=1.3cm, cramped]
    3 \oplus X
        \arrow{r}{a\oplus \id_{X}}
    & \substack{2\\3} \oplus X
        \arrow{r}{b\oplus 0}
    & 2 \oplus 0,
\end{tikzcd}
$
and split sequences with terms in $\D$. 
Now, if $A\to B \to D$ is in $\seq_{\D}$ and $A\overset{c}{\lra} C$ is a morphism in $\D$, then we can take a weak pushout in $\C$ to obtain an $\fs$-conflation of the form $C\to E \to D$, as well as another $\fs$-conflation $A \to B \oplus C \to E$. 
In order to show \ref{WC2}, we need to show that $E\in\D$. 
\begin{itemize}
    \item If $A\to B \to D$ is split, then $C\to E\to D$ splits and $E\cong C\oplus D\in\D$.

    \item Suppose $A\to B \to D$ is 
    $
\begin{tikzcd}[column sep=0.5cm, cramped]
    3
        \arrow{r}{a}
    & \substack{2\\3}
        \arrow{r}{b}
    & 2.
\end{tikzcd}
$
We have the following cases for $c$:
    \begin{itemize}
        \item $c$ is a section; 
        \item $c$ is a retraction, so either $c$ is either $\id_{3}\colon 3\to 3$ or the zero morphism $3 \to 0$; or
        \item $c=a$.
    \end{itemize}
In all cases, $C\to E\to D$ splits and $E\in\D$. 

    \item More general situations will lead to direct sums of the above and so $E\in\D$ again.
\end{itemize}

Lastly, \ref{WW0} and \ref{WW1} hold because they hold for $(\C,\seq,\weq)$. 
Thus, the triplet $(\D,\seq_{\D},\weq_{\D})$ is weak Waldhausen. Moreover, the weak Waldhausen structure on $\D$ is non-trivial, i.e.\ not just the split exact structure. The Grothendieck group $K_{0}(\D,\seq_{\D},\weq_{\D})$ is isomorphic to $\BZ^3$.
\end{example}

\subsection{\texorpdfstring{Theorem~\ref{thm:NOS}}{Theorem~4.14} induces a localization sequence in \texorpdfstring{$\wWald$}{wWald}}

For the rest of this section, we work under the following setup so Theorem~\ref{thm:NOS} applies.

\begin{setup}\label{setup:5}
Let $(\C,\BE,\fs)$ be a skeletally small extriangulated category with corresponding weak biWaldhausen category $(\C,\seq,\veq) = (\C,\seq_{\fs},\veq_{\fs})$.
Suppose $(\N,\BE|_\N,\fs|_\N)$ is a thick subcategory of $\C$, and that $\weq \deff \Sn$ is saturated and $\overline{\Sn}\sse(\C/[\N])^{\to}$ satisfies conditions 
\textup{(MR1)}--\textup{(MR4)} of \cite[p.~343]{NOS22}. 
\end{setup}

In particular, Setup~\ref{setup:5} implies there is an exact sequence \eqref{seq_exact_squence_in_ET} in $\ET$ by Theorem~\ref{thm:NOS}.

\begin{lemma}\label{lem:CSeqW-is-wbiWET}
The triplet $(\C,\seq,\weq)$ is a wbiWET category with respect to $(\C,\BE,\fs)$.
\end{lemma}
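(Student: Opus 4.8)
The plan is to leverage \cref{ex:WET1}: since $(\C,\seq,\veq) = (\C,\seq_{\fs},\veq_{\fs})$ is already shown there to be a weak biWaldhausen category, and $\seq = \seq_{\fs}$ together with $\cof = \cof_{\seq}$ are unchanged, the axioms \ref{WC0}, \ref{WC1} and \ref{WC2} --- which involve only $\seq$ and $\cof$ --- hold for $(\C,\seq,\weq)$ verbatim. It thus remains to re-check the two axioms involving weak equivalences for the enlarged class $\weq = \Sn$. Axiom \ref{WW0} is immediate: $\Sn$ is closed under composition by \cref{def:Sn_from_thick}, and each isomorphism in $\C$ is an $\fs$-inflation with cone $0\in\N$, hence lies in $\L\sse\Sn$; recall also that $\Sn$ is closed under finite direct sums, as noted after \cref{def:Sn_from_thick}.

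The substance of the proof is the Gluing Axiom \ref{WW1}. Given the data of \ref{WW1}, the distinguished sequences $A_{i}\to B_{i}\oplus C_{i}\to D_{i}$ ($i=1,2$) are $\fs$-conflations, and the hypotheses $yf_{1}=f_{2}x$ and $zc_{1}=c_{2}x$ say precisely that $(x,\,y\oplus z)$ forms a commutative square with the $\fs$-inflations $\begin{psmallmatrix}f_{i}\\-c_{i}\end{psmallmatrix}$. By \cite[Cor.~3.6]{NP19}, this square extends to a morphism $(x,\,y\oplus z,\,w)$ of $\fs$-triangles for some $w\colon D_{1}\to D_{2}$; unravelling the second commuting square of this morphism shows that $w$ makes the diagram required by \ref{WW1} commute. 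So the only remaining task is to verify that $w\in\weq=\Sn$.

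This is the point at which \cref{setup:5} is used. Applying the exact functor $(Q,\mu)\colon(\C,\BE,\fs)\to(\C/\N,\widetilde{\BE},\widetilde{\fs})$ of \cref{thm:NOS} to the morphism $(x,\,y\oplus z,\,w)$ of $\fs$-triangles yields a morphism $(Qx,\,Qy\oplus Qz,\,Qw)$ of $\widetilde{\fs}$-triangles. Since $x,y,z\in\weq=\Sn$ and $Q$ is the localization of $\C$ at $\Sn$, the morphisms $Qx$, $Qy$ and $Qz$ --- hence also $Qy\oplus Qz$ --- are isomorphisms, and the five lemma for extriangulated categories then forces $Qw$ to be an isomorphism. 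As $\Sn$ is saturated, we conclude $w\in\Sn=\weq$. This completes the verification that $(\C,\seq,\weq)$ is a weak Waldhausen category; and since every distinguished sequence in $\seq=\seq_{\fs}$ is an $\fs$-conflation, it is a wWET category with respect to $(\C,\BE,\fs)$.

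For the ``co'' half I would argue by duality. The opposite triplet $(\C^{\op},\seq^{\op},\weq^{\op})$ has $\seq^{\op}=\seq_{\fs^{\op}}$, while $\weq^{\op}$ coincides with the class $\Sn$ formed in the extriangulated category $(\C^{\op},\BE^{\op},\fs^{\op})$ relative to the thick subcategory $\N^{\op}$ (which is again saturated), and $(Q^{\op},\mu)$ is an exact functor of extriangulated categories localizing $\C^{\op}$ at $\weq^{\op}$. The argument of the previous two paragraphs applies mutatis mutandis --- the five lemma being self-dual --- to show that $(\C^{\op},\seq^{\op},\weq^{\op})$ is a wWET category with respect to $(\C^{\op},\BE^{\op},\fs^{\op})$; that is, $(\C,\seq,\weq)$ is a wcoWET, and hence a wbiWET, category with respect to $(\C,\BE,\fs)$. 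The crux of the whole argument, and the one step genuinely requiring the standing hypotheses, is showing that the gluing morphism $w$ is a weak equivalence: this rests on combining the exactness of $Q$ with the five lemma for extriangulated categories and the saturation of $\Sn$.
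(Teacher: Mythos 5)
Your proof is correct and follows essentially the same route as the paper: the \ref{WC0}--\ref{WC2} axioms and \ref{WW0} are inherited/clear, the gluing morphism $w$ is obtained by completing the commutative square of $\fs$-inflations to a morphism of $\fs$-triangles, and is shown to lie in $\weq$ by applying the exact functor $(Q,\mu)$, invoking the extriangulated five lemma and the saturation of $\Sn$, before dualizing for the coWaldhausen half. The only slip is a citation: the completion of the square to a morphism of $\fs$-triangles is axiom (ET3) (the paper uses this together with \cref{lem:wPO_wPB}), whereas \cite[Cor.~3.6]{NP19} is precisely the five-lemma statement you correctly invoke afterwards to see that $Qw$ is an isomorphism.
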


\begin{proof}
Note that \ref{WC0}--\ref{WC2} hold for $(\C,\seq,\weq)$ since they already hold for $(\C,\seq,\veq)$. 
Condition \ref{WW0} is clear.
To show the Gluing Axiom \ref{WW1}, consider the diagram \eqref{eqn:gluing-axiom}, where $x,y,z$ all lie in $\weq=\Sn$. Using Lemma~\ref{lem:wPO_wPB} and (ET$3$), we obtain a morphism
\begin{equation}
\label{diag:ex_seq_wWald_from_NOS}
    \begin{tikzcd}[column sep=1.5cm, row sep=0.6cm]
     A_{1} 
        \arrow{r}{\begin{psmallmatrix}
         f_{1}\\-c_{1}
        \end{psmallmatrix}}
        \arrow{d}{x}
    & B_{1}\oplus C_{1}
        \arrow{r}{\begin{psmallmatrix}
            b_{1}, \amph g_{1}
        \end{psmallmatrix}}
        \arrow{d}{y \oplus z}
    & D_{1}
        \arrow[dotted]{d}{w}
    \\
    A_{2} 
        \arrow{r}{\begin{psmallmatrix}
         f_{2}\\-c_{2}
        \end{psmallmatrix}}
    & B_{2}\oplus C_{2}
        \arrow{r}{\begin{psmallmatrix}
            b_{2}, \amph g_{2}
        \end{psmallmatrix}}
    & D_{2}
    \end{tikzcd}
\end{equation}
of $\fs$-triangles. 
Note that 
$y \oplus z 
\in\Sn$ 
by \ref{MS0}. 
Applying $Q$, we have that $Qx$ and 
$Q(y \oplus z)$ 
are isomorphisms in $\C/\N$. 
As $\mu \colon \BE \Rightarrow \widetilde{\BE}(Q-,Q-)$ is natural, we have that $(Qx,Q(y\oplus z), Qw)$ is a morphism of $\widetilde{\fs}$-triangles. 
Hence, by \cite[Cor.~3.6]{NP19}, we know $Qw$ is an isomorphism in $\C/\N$, and hence 
$w \in \Sn = \weq$ as $\Sn$ is saturated. 
Therefore, $(\C,\seq,\weq)$ is a wWET category with respect to $(\C,\BE,\fs)$. In fact, the above can be dualised and so we are done.
\end{proof}

Thus, using Lemma~\ref{lem:localization_sequence_wWald} and Remark~\ref{rem:acyclics-inherit-coWH}, there is a sequence 
\begin{equation}
\label{seq:localization_sequence_wWald5}
\begin{tikzcd}
    (\C^{\weq},\seq^{\weq},\veq^{\weq})
        \arrow[hook]{r}{\inc}
    &(\C,\seq,\veq)
        \arrow[hook]{r}{\id_{\C}} 
    &(\C,\seq,\weq)
\end{tikzcd}
\end{equation}
of wbiWET categories (with respect to $(\C,\BE,\fs)$) and the assumptions of Setup~\ref{setup:Quillen_loc1} are met. 
We may now compare Definitions~\ref{def:Sn_from_thick} and \ref{def:mors_assoc_to_wWH_cat}.

\begin{lemma}\label{lem:compare-Lac-and-L}
We have $\L^{\ac} = \L$ and $\R^{\ac} = \R$.
\end{lemma}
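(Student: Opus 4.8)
The plan is to unwind the definitions under \cref{setup:5} and prove the two inclusions of each equality separately. Because $\seq=\seq_{\fs}$, the class $\cof=\cof_{\seq}$ is exactly the class of all $\fs$-inflations and $\fib=\fib_{\seq}$ that of all $\fs$-deflations (see \cref{ex:WET1}), while $\weq=\Sn$. So $\L^{\ac}=\cof\cap\weq$ is precisely the class of $\fs$-inflations lying in $\Sn$, and $\L$ (\cref{def:Sn_from_thick}) is the class of $\fs$-inflations $f$ with $\cone(f)\in\N$; likewise on the deflation side. The inclusion $\L\subseteq\L^{\ac}$ is immediate: an $f\in\L$ is an $\fs$-inflation, hence lies in $\cof$, and it lies in $\Sn=\weq$ by the definition of $\Sn$ as the smallest composition-closed class containing $\L$ and $\R$. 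Dually $\R\subseteq\R^{\ac}$.

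For the reverse inclusion $\L^{\ac}\subseteq\L$, which is where the content lies, I would take an $\fs$-inflation $f\colon A\to B$ with $f\in\Sn$, fix an $\fs$-triangle $A\overset{f}{\lra}B\overset{g}{\lra}C\overset{\delta}{\dra}$ so that $C=\cone(f)$, and push it through the extriangulated localization $(Q,\mu)\colon(\C,\BE,\fs)\to(\C/\N,\widetilde{\BE},\widetilde{\fs})$ supplied by \cref{thm:NOS}. Since $Q$ is the localization $\C[\Sn^{-1}]$ and $f\in\Sn$, the morphism $Qf$ is an isomorphism in $\C/\N$, and since $(Q,\mu)$ is exact, $QA\overset{Qf}{\lra}QB\overset{Qg}{\lra}QC\overset{\mu_{C,A}(\delta)}{\dra}$ is a $\widetilde{\fs}$-triangle with invertible inflation. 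The key (and only delicate) point is that such a triangle has $QC\iso0$: the triplet $(\id_{QA},Qf,0\colon0\to QC)$ is a morphism of $\widetilde{\fs}$-triangles from the split triangle $QA\overset{\id}{\lra}QA\lra0\overset{0}{\dra}$ (the squares commute since $Qg\circ Qf=Q(gf)=0$, and the extension compatibility is vacuous as $\widetilde{\BE}(0,QA)=0$), two of its components are isomorphisms, so the third, $0\to QC$, is one too by \cite[Cor.~3.6]{NP19}; equivalently, $Qf$ being a section forces the $\widetilde{\fs}$-triangle to split, whence $QC\iso0$. Now $QC\iso0$ means $C\in\Ker Q$, and $\Ker Q=\N$ because \eqref{seq_exact_squence_in_ET} is an exact sequence in $\ET$ (\cref{def:exact_seq_ET}\ref{item:image-equals-kernel}). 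Hence $\cone(f)=C\in\N$ and $f\in\L$. The inclusion $\R^{\ac}\subseteq\R$ is entirely parallel (equivalently, apply the above in $(\C^{\op},\BE^{\op},\fs^{\op})$, noting that the op of an exact functor is exact and $\Ker Q^{\op}=\Ker Q=\N$).

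I expect the only genuine obstacle to be the cleanest justification of ``invertible inflation $\Rightarrow$ zero cone'': this cannot be extracted from the weak-cokernel property of $\fs$-conflations alone and really uses the realization $\fs$, via the splitting criterion or the comparison-of-$\fs$-triangles result of \cite{NP19}. Everything else is bookkeeping with the identifications $\cof=\{\fs\text{-inflations}\}$, $\fib=\{\fs\text{-deflations}\}$, $\weq=\Sn$, and the fact that $\Sn$ is exactly the class inverted by $Q$ and that $\Ker Q=\N$.
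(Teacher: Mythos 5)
Your proof is correct, but it distributes the work differently from the paper, essentially swapping which inclusion is "easy". For $\L\sse\L^{\ac}$ you simply observe $\L\sse\Sn$ from \cref{def:Sn_from_thick}, whereas the paper proves this inclusion by applying $Q$ to a conflation with cone in $\N$ and invoking saturation of $\Sn$; your observation is shorter and avoids saturation here. For the reverse inclusion $\L^{\ac}\sse\L$ you work inside the localized extriangulated category: $Qf$ is invertible, an invertible $\widetilde{\fs}$-inflation has zero cone, and $\Ker Q=\N$ by exactness of \eqref{seq_exact_squence_in_ET}. The paper instead stays at the weak Waldhausen level: it quotes \cref{lem:L_sec} (a \ref{WW1'}-type gluing argument) to get that the cone is $\weq$-acyclic, and then uses (implicitly, via the argument of \cref{lem:acyclics-is-N}) saturation and $\Ker Q=\N$ to identify acyclics with $\N$. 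So both routes ultimately pass through $Q$ and $\Ker Q=\N$, but your hard-direction argument is an extriangulated computation in $\C/\N$, while the paper's is a formal consequence of lemmas already proved for weak Waldhausen categories; the paper's choice keeps the proof one line long given its earlier lemmas, while yours is more self-contained at this spot.

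One caveat on your "only delicate point": \cite[Cor.~3.6]{NP19} states that if the two \emph{outer} components of a morphism of $\fs$-triangles are isomorphisms then the middle one is, so it does not literally cover your pattern (first two components invertible $\Rightarrow$ third invertible), and such a two-out-of-three statement is not available off the shelf for extriangulated categories. Your fallback argument is the right one, but state it fully: since $Qf$ is a section, the $\widetilde{\fs}$-extension vanishes and the triangle is (equivalent to) the split one $QA\to QA\oplus QC\to QC$; a split conflation is a kernel--cokernel pair, so $QC$ is a cokernel of the isomorphism $Qf$, forcing $QC\iso 0$ (splitting alone does not give this -- you must use that $Qf$ is invertible, not merely a section). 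With that sentence added, your argument, and its dual for $\R^{\ac}=\R$ (which the paper instead handles via \cref{lem:weak-biWald-implies-R-is-T}), is complete.
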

\begin{proof}
The containment $\L^{\ac}\sse\L$ follows from Lemma~\ref{lem:L_sec}. Conversely, if 
$
A\overset{f}{\rtail} B \twoheadrightarrow C
$
is an $\fs$-conflation (i.e.\ lies in $\seq$) with $C\in\N$, applying the exact functor $(Q,\mu)$ of extriangulated categories yields 
the $\widetilde{\fs}$-conflation
$
QA\overset{Qf}{\rtail} QB \twoheadrightarrow 0
$.
It follows that $Qf$ is an isomorphism, so $f\in\Sn$ as $\Sn = \weq$ is saturated. Thus, $f\in\cof\cap\weq = \L^{\ac}$.

The equality $\R = \R^{\ac}$ can be shown in a similar way thanks to Lemma~\ref{lem:weak-biWald-implies-R-is-T}.
\end{proof}

\begin{example}
    As a special case of Lemma~\ref{lem:compare-Lac-and-L}, if $\C$ is a triangulated category with a thick subcategory $\N$, then we have $\L^{\ac} = \L = \Sn = \R = \R^{\ac}$ by Example~\ref{ex:list_mult_loc}\ref{item:Verdier}. 
\end{example}

We now show that the exact sequences in $\ET$ coming from Theorem~\ref{thm:NOS} (which include Verdier and Serre quotients, see Example~\ref{ex:list_mult_loc}) give rise to localization sequences (see Definition~\ref{def:exact_sequence_wWald}) in $\wWald$ and induce right exact sequences of abelian groups under $K_{0}$.

\begin{proposition}
\label{prop:ex_seq_wWald_from_NOS}
The sequence \eqref{seq:localization_sequence_wWald5} is a localization sequence in $\wWald$ 
and 
\begin{equation}
\label{seq:homotopy_fiber5}
\begin{tikzcd}[column sep=1.5cm]
K_{0}(\C^{\weq},\seq^{\weq},\veq^{\weq}) 
    \arrow{r}{K_{0}(\inc)}
& K_{0}(\C,\seq,\veq)
    \arrow{r}{K_{0}(\id_{\C})}
& K_{0}(\C,\seq,\weq)
    \arrow{r}
& 0
\end{tikzcd}
\end{equation}
is a right exact sequence of their Grothendieck groups.
\end{proposition}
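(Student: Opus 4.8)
The plan is to verify that we are in the situation of \cref{cor:Quillen_localization2} (the Weak biWaldhausen Localization Theorem) and simply invoke it. We have already established in \cref{lem:CSeqW-is-wbiWET} that $(\C,\seq,\weq)$ is a wbiWET category with respect to $(\C,\BE,\fs)$; in particular it is weak biWaldhausen, and so is $(\C,\seq,\veq) = (\C,\seq_{\fs},\veq_{\fs})$ by \cref{ex:WET1}, and $(\C^{\weq},\seq^{\weq},\veq^{\weq})$ is weak biWaldhausen by \cref{lem:localization_sequence_wWald} together with \cref{rem:acyclics-inherit-coWH}. Thus \eqref{seq:localization_sequence_wWald5} is a sequence of weak biWaldhausen categories and the standing assumptions of \cref{setup:Quillen_loc1} hold.

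First I would record that, by \cref{lem:compare-Lac-and-L}, we have $\L^{\ac} = \L$ and $\R^{\ac} = \R$, where $\L$ and $\R$ are the classes from \cref{def:Sn_from_thick}. The only remaining hypothesis of \cref{cor:Quillen_localization2} to check is that $\weq$ consists of finite compositions of morphisms from $\L^{\ac}\cup\R^{\ac}\cup\veq$. But $\weq = \Sn$ by \cref{setup:5}, and by definition (see \cref{def:Sn_from_thick}(v), and the remark after it citing \cite[p.~374]{NOS22}) every morphism of $\Sn$ is a finite composition of morphisms from $\L\cup\R = \L^{\ac}\cup\R^{\ac}$. This is a fortiori a finite composition of morphisms from $\L^{\ac}\cup\R^{\ac}\cup\veq$, so the hypothesis is satisfied.

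Applying \cref{cor:Quillen_localization2} now yields that \eqref{seq:localization_sequence_wWald5} is a localization sequence in $\wWald$ and that \eqref{seq:homotopy_fiber5}, which is exactly the instance of \eqref{seq:homotopy_fiber1} for this sequence, is right exact in $\Ab$. This completes the proof. There is no real obstacle here: the substance of the argument was already carried out in \cref{lem:CSeqW-is-wbiWET} (the Gluing Axiom verification via \cref{lem:wPO_wPB}, (ET3) and the saturation of $\Sn$) and in \cref{lem:compare-Lac-and-L}; the present proposition is the pay-off assembling those facts with \cref{cor:Quillen_localization2}. If one wanted, one could alternatively cite \cref{thm:Quillen_localization1_new} directly using $\T = \R^{\ac}$ (valid by \cref{lem:weak-biWald-implies-R-is-T} in the weak biWaldhausen setting), but routing through \cref{cor:Quillen_localization2} is cleanest.
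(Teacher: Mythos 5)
Your proposal is correct and follows essentially the same route as the paper: one observes that \eqref{seq:localization_sequence_wWald5} is a sequence of weak biWaldhausen categories (already established via \cref{lem:CSeqW-is-wbiWET}, \cref{lem:localization_sequence_wWald} and \cref{rem:acyclics-inherit-coWH}), uses \cref{lem:compare-Lac-and-L} to identify $\L^{\ac}\cup\R^{\ac}$ with $\L\cup\R$ so that $\weq=\Sn$ consists of finite compositions from $\L^{\ac}\cup\R^{\ac}\cup\veq$, and then invokes \cref{cor:Quillen_localization2}. The extra remark about routing instead through \cref{thm:Quillen_localization1_new} via $\T=\R^{\ac}$ is accurate but not needed.
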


\begin{proof}
We have already seen above that \eqref{seq:localization_sequence_wWald5} is a sequence of weak biWaldhausen categories. 
From Definition~\ref{def:Sn_from_thick} and the discussion immediately following it, we know a morphism in $\Sn$ is a finite composition of morphisms from $\L \cup \R$. 
By Lemma~\ref{lem:compare-Lac-and-L}, this implies 
$\weq = \Sn$ consists of morphisms that are finite compositions of morphisms from 
$\L^{\ac} \cup \R^{\ac}\sse \L^{\ac} \cup \R^{\ac} \cup \veq$. 
Hence, Corollary~\ref{cor:Quillen_localization2} applies and we are done.
\end{proof}

An example of a localization sequence in $\wWald$ that does not sit in $\ET$ in general is given in Theorem~\ref{thm:ex_seq_wWald_from_abel_loc}.

\begin{lemma}\label{lem:acyclics-is-N}
We have $\C^{\weq} = \N$ and $(\C^{\weq},\seq^{\weq},\veq^{\weq})$
is the weak biWaldhausen category corresponding to the extriangulated category 
$(\N,\BE|_\N,\fs|_\N)$.
\end{lemma}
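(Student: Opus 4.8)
The plan is to first establish the equality $\C^{\weq} = \N$ at the level of objects, and then observe that the weak Waldhausen structure $(\C^{\weq},\seq^{\weq},\veq^{\weq})$ is literally the one obtained by restricting $(\C,\seq_\fs,\veq_\fs)$ to the extension-closed subcategory $\N$. For the object-level equality, recall that $C\in\C^{\weq}$ means precisely that the morphism $0\to C$ lies in $\weq = \Sn$; since $\Sn$ is saturated, this is equivalent to $Q(0\to C)$ being an isomorphism in $\C/\N$, i.e.\ to $QC\cong 0$, i.e.\ to $C\in\Ker Q$. But by \cref{thm:NOS} the sequence \eqref{seq_exact_squence_in_ET} is exact in $\ET$, so $\Ker Q = \N$ by \cref{def:exact_seq_ET}\ref{item:image-equals-kernel}. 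This gives $\C^{\weq} = \N$.

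Next I would identify the structure. By \cref{ex:extria_cat_is_wbiWET}, the extriangulated category $(\N,\BE|_\N,\fs|_\N)$ (which exists since $\N$ is thick, hence extension-closed, by \cref{prop:ext_closed_ET}) has a canonical weak biWaldhausen structure $(\N,\seq_{\fs|_\N},\veq_{\fs|_\N})$, where $\seq_{\fs|_\N}$ is the class of $\fs|_\N$-conflations and $\veq_{\fs|_\N} = \Iso\N$. On the other hand, \cref{lem:localization_sequence_wWald} defines $\seq^{\weq} = \seq\cap(\C^{\weq})^{\to\to}$ and $\veq^{\weq} = \veq\cap(\C^{\weq})^{\to}$. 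Using $\C^{\weq}=\N$ and that $\veq = \veq_\fs = \Iso\C$, we get $\veq^{\weq} = \Iso\C\cap\N^{\to} = \Iso\N = \veq_{\fs|_\N}$. For the distinguished sequences: $\seq = \seq_\fs$ is the class of $\fs$-conflations, so $\seq^{\weq}$ consists of all $\fs$-conflations with all three terms in $\N$; and since $\N$ is extension-closed, an $\fs$-conflation with end terms in $\N$ automatically has its middle term in $\N$, so this is exactly the class of $\fs|_\N$-conflations, which is $\seq_{\fs|_\N}$ by the definition of the restricted extriangulated structure in \cref{prop:ext_closed_ET}. Hence $(\C^{\weq},\seq^{\weq},\veq^{\weq}) = (\N,\seq_{\fs|_\N},\veq_{\fs|_\N})$ is the corresponding weak biWaldhausen category of $(\N,\BE|_\N,\fs|_\N)$.

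I do not expect any real obstacle here — the only point requiring a moment's care is the use of saturation of $\Sn$ to pass between ``$0\to C\in\Sn$'' and ``$QC\cong 0$'', and the use of extension-closure of $\N$ to see that $\seq^{\weq}$ really coincides with the class of $\fs|_\N$-conflations rather than being a priori smaller. Both are immediate from results already in the excerpt (\cref{setup:5}, \cref{thm:NOS}, \cref{prop:ext_closed_ET}).
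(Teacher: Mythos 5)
Your proposal is correct and follows essentially the same route as the paper: the object-level equality $\C^{\weq}=\N$ via saturation of $\Sn$ and $\Ker Q=\N$ from \cref{thm:NOS}, and then the identification $\seq^{\weq}=\seq\cap\N^{\to\to}$, $\veq^{\weq}=\Iso\N$ with the canonical structure on $(\N,\BE|_\N,\fs|_\N)$. The extra remark about extension-closure guaranteeing that the middle terms lie in $\N$ is a harmless elaboration of what the paper leaves implicit.
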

\begin{proof}
Note that 
\begin{align*}
X\in\C^{\weq} 
    &\iff 0\to X \text{ lies in }\weq = \Sn && \text{by definition of $\C^{\weq}$}\\
    &\iff Q(0\to X) \text{ is an isomorphism} && \text{as $\weq$ is saturated}\\
    &\iff X\in \Ker Q && \text{see \eqref{eqn:KerQ}}\\
    &\iff X\in \N  &&\text{by Theorem~\ref{thm:NOS},}
\end{align*}
and hence $\N = \C^{\weq}$. 
Furthermore, since 
$
\seq^{\weq} 
    = \seq \cap (\C^{\weq})^{\to\to} 
    = \seq \cap \N^{\to\to} 
$
is just the collection of $\fs$-conflations with all terms in $\N$, the second assertion follows.
\end{proof}

\subsection{\texorpdfstring{Theorem~\ref{thm:NOS}}{Theorem~4.14} induces a right exact sequence in \texorpdfstring{$\Ab$}{Ab}}

Recall that we defined the homotopy category of a weak Waldhausen category in Section~\ref{subsec:homotopy-category} 
(see Definition~\ref{def:homotopy_cat}). 
Such homotopy categories arise in extriangulated localizations as we now explain.

\begin{example}
\label{ex:exact_localization_homotopy_cat}
Let \eqref{seq_exact_squence_in_ET} be an extriangulated localization sequence in the sense of Theorem~\ref{thm:NOS}. 
In particular, there is an extriangulated localization functor 
$(Q,\mu)\colon 
(\C,\BE,\fs)\to(\C / \N ,\widetilde{\BE},\widetilde{\fs})$. 
Setting $\seq'$ to be the class of all $\widetilde{\fs}$-conflations and $\weq'$ to be the class of isomorphisms in $\C / \N$, 
we have that the triplet 
$(\C/\N,\seq',\weq')$ is the wbiWET category corresponding to $(\C/\N,\widetilde{\BE},\widetilde{\fs})$ 
(see Examples~\ref{ex:WET1} and \ref{ex:extria_cat_is_wbiWET}). 
Moreover, as discussed earlier in this section (see Proposition~\ref{prop:ex_seq_wWald_from_NOS}), we have the localization sequence 
\eqref{seq:localization_sequence_wWald5} 
in $\wWald$, 
and it follows from \cite[Lem.~3.32]{NOS22} that $(\C/\N,\seq',\weq')$ is the weak Waldhausen homotopy category 
$\Ho(\C,\seq,\weq)$. 
Lastly, we note that the localization functor $Q$ is an exact functor 
$(\C,\seq,\weq)\to \Ho(\C,\seq,\weq)$ 
of wbiWET categories.
\end{example}

We can now combine some of our key results to see that an extriangulated localization sequence induces a right exact sequence of Grothendieck groups, and in particular recover \cite[Cor.~4.32]{ES22} with a new proof.
We remark that the hypotheses we are currently assuming are equivalent to those assumed in \cite[Cor.~4.32]{ES22}, which are \cite[Cond.~4.4]{ES22} and that $\Sn$ is saturated.

\begin{corollary}[Enomoto--Saito's Extriangulated Localization Theorem]
\label{cor:ES_via_Waldhausen}
The extriangulated localization sequence 
\eqref{seq_exact_squence_in_ET} of Theorem~\ref{thm:NOS} 
induces the following right exact sequence in $\Ab$.
\begin{equation}\label{eqn:ESS22-cor-res-in-Ab}
\begin{tikzcd}[column sep=1.5cm]
K_{0}(\N, \BE|_{\N},\fs|_{\N})
    \arrow{r}{K_{0}(\inc)}
& K_{0}(\C,\BE,\fs)
    \arrow{r}{K_{0}(Q)}
& K_{0}(\C/\N,\widetilde{\BE},\widetilde{\fs})
    \arrow{r}
& 0 
\end{tikzcd}
\end{equation}
\end{corollary}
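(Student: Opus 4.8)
The plan is to deduce this corollary by assembling the results established earlier in \cref{sec:wWET-cats}, with no genuinely new argument required. Under \cref{setup:5}, \cref{prop:ex_seq_wWald_from_NOS} already tells us that the sequence \eqref{seq:localization_sequence_wWald5} is a localization sequence in $\wWald$ and that it induces a right exact sequence \eqref{seq:homotopy_fiber5} of Grothendieck groups. So the only remaining work is to identify the three groups appearing in \eqref{seq:homotopy_fiber5} with those in \eqref{eqn:ESS22-cor-res-in-Ab}, and to check that the two homomorphisms $K_{0}(\inc)$ and $K_{0}(\id_{\C})$ there correspond to $K_{0}(\inc)$ and $K_{0}(Q)$.

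First I would identify the middle and left terms. By \cref{setup:5} we have $(\C,\seq,\veq) = (\C,\seq_{\fs},\veq_{\fs})$, so $K_{0}(\C,\seq,\veq) = K_{0}(\C,\BE,\fs)$ by the last observation in \cref{ex:WET1}. For the left term, \cref{lem:acyclics-is-N} gives $\C^{\weq} = \N$ and identifies $(\C^{\weq},\seq^{\weq},\veq^{\weq})$ with the weak biWaldhausen category corresponding to $(\N,\BE|_{\N},\fs|_{\N})$; applying \cref{ex:WET1} once more yields $K_{0}(\C^{\weq},\seq^{\weq},\veq^{\weq}) = K_{0}(\N,\BE|_{\N},\fs|_{\N})$, and under these identifications the first arrow of \eqref{seq:homotopy_fiber5} is visibly $K_{0}(\inc)$. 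For the right term, \cref{ex:exact_localization_homotopy_cat} shows that $\Ho(\C,\seq,\weq)$ exists and is the weak biWaldhausen category $(\C/\N,\seq',\weq')$ corresponding to $(\C/\N,\widetilde{\BE},\widetilde{\fs})$; hence by \cref{prop:K0_of_homotopy_cat} together with \cref{ex:WET1} we obtain an isomorphism $K_{0}(\C,\seq,\weq)\cong K_{0}(\Ho(\C,\seq,\weq)) = K_{0}(\C/\N,\widetilde{\BE},\widetilde{\fs})$.

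Finally I would check that, under this last isomorphism, the second arrow $K_{0}(\id_{\C})$ of \eqref{seq:homotopy_fiber5} becomes $K_{0}(Q)$. Here I would use that the localization functor $Q$ is an exact functor $(\C,\seq,\weq)\to\Ho(\C,\seq,\weq)$ (see \cref{ex:exact_localization_homotopy_cat}) and that, by the construction in the proof of \cref{prop:K0_of_homotopy_cat}, the induced map $K_{0}(Q)$ is precisely the isomorphism used above. Then, by functoriality of $K_{0}\colon\wWald\to\Ab$, the composite $K_{0}(\C,\BE,\fs) = K_{0}(\C,\seq,\veq)\xrightarrow{K_{0}(\id_{\C})} K_{0}(\C,\seq,\weq)\xrightarrow{K_{0}(Q)} K_{0}(\C/\N,\widetilde{\BE},\widetilde{\fs})$ equals $K_{0}(Q\circ\id_{\C}) = K_{0}(Q)$, the map appearing in \eqref{eqn:ESS22-cor-res-in-Ab}. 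Substituting all of these identifications into the right exact sequence \eqref{seq:homotopy_fiber5} then yields \eqref{eqn:ESS22-cor-res-in-Ab}. The argument is essentially bookkeeping; the only point needing care is this last compatibility of $K_{0}(\id_{\C})$ with $K_{0}(Q)$, which hinges on the isomorphism of \cref{prop:K0_of_homotopy_cat} being induced by the localization functor itself rather than being merely abstract, together with the observation that \cref{setup:5} supplies all the hypotheses of the cited results.
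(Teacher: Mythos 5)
Your proposal is correct and follows essentially the same route as the paper: invoke \cref{prop:ex_seq_wWald_from_NOS} for the right exact sequence in $\wWald$-Grothendieck groups, identify the outer terms via \cref{lem:acyclics-is-N}, \cref{ex:WET1} and \cref{ex:exact_localization_homotopy_cat}, and use \cref{prop:K0_of_homotopy_cat} (noting \cref{setup:section-3-1} holds) to replace $K_{0}(\C,\seq,\weq)$ by $K_{0}(\C/\N,\widetilde{\BE},\widetilde{\fs})$ via the isomorphism induced by $Q$. Your explicit functoriality check that $K_{0}(Q)\circ K_{0}(\id_{\C})=K_{0}(Q)$ is exactly the compatibility the paper records in its final commutative diagram.
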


\begin{proof}
Proposition~\ref{prop:ex_seq_wWald_from_NOS} tells us that \eqref{seq:homotopy_fiber5} is exact. 
We have the equalities:
\begin{align*}
K_{0}(\C^{\weq},\seq^{\weq},\veq^{\weq}) 
    &= K_{0}(\N,\BE|_\N,\fs|_\N) 
    && \text{by Lemma~\ref{lem:acyclics-is-N} and Example~\ref{ex:WET1},}\\
K_{0}(\C,\seq,\veq) 
    &= K_{0}(\C,\BE,\fs)
    && \text{by Example~\ref{ex:WET1}, and}\\
K_{0}(\Ho(\C,\seq,\weq)) 
    &= K_{0}(\C/\N,\widetilde{\BE},\widetilde{\fs}) 
    &&\text{by Example~\ref{ex:exact_localization_homotopy_cat} and Example~\ref{ex:WET1}.}
\end{align*}
Note that Setup~\ref{setup:section-3-1} is satisfied, so by Proposition~\ref{prop:K0_of_homotopy_cat} there is the isomorphism 
\[
K_{0}(Q)\colon K_{0}(\C,\seq,\weq) \overset{\cong}{\lra} K_{0}(\Ho(\C,\seq,\weq)).
\]
Therefore, \eqref{eqn:ESS22-cor-res-in-Ab} is exact.
\[
\begin{tikzcd}[column sep=1.3cm, row sep=0.5cm]
K_{0}(\C^{\weq},\seq^{\weq},\veq^{\weq}) 
    \arrow{r}{K_{0}(\inc)}
    \arrow[equals]{d}
& K_{0}(\C,\seq,\veq)
    \arrow{r}{K_{0}(\id_{\C})}
    \arrow[equals]{d}
& K_{0}(\C,\seq,\weq)
    \arrow{r}
    \arrow{d}{K_{0}(Q)}[swap]{\cong}
& 0
\\
K_{0}(\N,\BE|_\N,\fs|_\N)
    \arrow{r}{K_{0}(\inc)}
& K_{0}(\C,\BE,\fs)
    \arrow{r}{K_{0}(Q)}
& K_{0}(\C/\N,\widetilde{\BE},\widetilde{\fs})
    \arrow{r}
& 0 
\end{tikzcd}\]\end{proof}

\section{Application to the index in triangulated categories}
\label{sec:application-to-tri-cat}

In this section, we specialize to the case when $(\C,\BE,\fs)$ is a skeletally small, idempotent complete triangulated category with suspension functor $[1]$. 
The index with respect to an $n$-cluster tilting subcategory $\X$ of $\C$ was introduced in \cite{Jor21}, and it induces an isomorphism between $K_{0}^{\sp}(\X)$ and $K_{0}(\C,\BE^{R}_{\X^{\perp_{0}}},\fs^{R}_{\X^{\perp_{0}}})$, the Grothendieck group of a \emph{relative} extriangulated structure of $(\C,\BE,\fs)$; see Definition~\ref{def:n-CT-triangulated}, Proposition~\ref{prop:relative2}, Corollary~\ref{cor:JS_index}. 
This isomorphism was proved in this generality in \cite{OS23}, where it was called the \emph{JS index isomorphism}, 
but it first manifested as \cite[Prop.~4.11]{PPPP19}, 
which has led to the index being defined with respect to more general subcategories 
\cite{JS23,FJS24,FJS24b}.

\begin{definition}
\label{def:n-CT-triangulated}
Let $n\geq 2$ be an integer. 
A subcategory $\X$ of a triangulated category $\C$ is called \emph{$n$-cluster tilting} if $\X$ is functorially finite and 
\begin{align*}
    \X  &= \Set{C\in\C | \C(\X,C[i])=0\text{ for all } 1 \leq i \leq n-1}\\
        &= \Set{C\in\C | \C(C,\X[i])=0\text{ for all } 1 \leq i \leq n-1}.
\end{align*}  
\end{definition}

If $(\C,\BE,\fs)$ is an extriangulated category and $\mathbb{F}\sse\BE$ is an additive subfunctor (see \cite[Def.~3.7]{HLN21}) such that  $(\C,\mathbb{F},\fs|_\mathbb{F})$ is an extriangulated category, then we say 
$(\C,\mathbb{F},\fs|_\mathbb{F})$ is \emph{relative} to or a \emph{relative theory} of $(\C,\BE,\fs)$. 
The identity $\id_{\C}$ and inclusion $\BF\sse\BE$ constitute an exact functor $(\C,\mathbb{F},\fs|_\mathbb{F})\to(\C,\BE,\fs)$, witnessing that 
$(\C,\mathbb{F},\fs|_\mathbb{F})$ is an extriangulated subcategory of $(\C,\BE,\fs)$ in the sense of \cite[Def.~3.7]{Hau21}
(cf.\ \cite[Thm.~2.12]{JS21}). 
We need the following before we recall the specific relative structure we use.

\begin{definition}
\label{def:Cone-CoCone}
If $\U,\V$ are subcategories of $(\C,\BE,\fs)$, 
then $\cone(\V,\U)$ denotes the full subcategory of $\C$ consisting of objects $X$ for which there is an $\fs$-conflation $V\lra U\lra X$ with $U\in\U$ and $V\in\V$.
The full subcategory $\cocone(\V,\U) \sse \C$ is defined similarly. 
\end{definition}

If $(\C,\BE,\fs)$ is a triangulated category, then it follows that 
$\C = \cone(\V,\U)$ if and only if $\C = \cocone(\V,\U)$ since we can rotate triangles in $\C$.

\begin{proposition}
\label{prop:relative2}
Suppose $(\C,\BE,\fs)$ is a triangulated category with an extension-closed subcategory $\N\sse\C$ that is closed under direct summands and satisfies $\cone(\N,\N)=\C$. 
Then there are extriangulated categories 
\[
\C^{L}_{\N}\deff(\C,\BE^L_\N,\fs^L_\N),
\quad 
\C^{R}_{\N}\deff(\C,\BE^R_\N,\fs^R_\N),
\quad 
\C_{\N}\deff(\C,\BE_\N,\fs_\N),
\]
all relative to $(\C,\BE,\fs)$, 
where $\BE_\N\deff\BE^L_\N\cap\BE^R_\N$, and 
\begin{align}
\BE^L_\N(C,A) 
    &\deff \Set{ h\in\BE(C,A)  | \forall x\colon N\to C \text{ with } N\in\N\text{, we have } hx\in[\,\N[1]\,] }\nonumber\\
    &\,= \Set{ h\colon C\to A[1] | h[-1]\textnormal{\ factors through an object in\ }\N},\nonumber\\
\BE^R_\N(C,A) 
    &\deff \Set{ h\in\BE(C,A) | \forall y\colon A\to N \text{ with } N\in\N\text{, we have } y\circ h[-1]\in[\,\N[-1]\,] }\label{eqn:definition-of-ERN}\\
    &\,= \Set{h\colon C\to A[1] | h\textnormal{\ factors through an object in\ }\N}.\label{eqn:definition2-of-ELN}
\end{align}
\end{proposition}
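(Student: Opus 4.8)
The plan is to show that $\BE^{L}_{\N}$, $\BE^{R}_{\N}$ and $\BE_{\N}=\BE^{L}_{\N}\cap\BE^{R}_{\N}$ are \emph{closed} additive subfunctors of $\BE$ in the sense of \cite[Def.~3.7]{HLN21}: for any such subfunctor $\mathbb{F}\sse\BE$ the restriction $\fs|_{\mathbb{F}}$ is automatically an extriangulation, and then the pair consisting of $\id_{\C}$ and the inclusion $\mathbb{F}\Rightarrow\BE$ is an exact functor $(\C,\mathbb{F},\fs|_{\mathbb{F}})\to(\C,\BE,\fs)$, giving all the assertions. Since an intersection of closed additive subfunctors is again one (an $(\mathbb{F}_{1}\cap\mathbb{F}_{2})$-inflation is precisely a morphism which is both an $\mathbb{F}_{1}$- and an $\mathbb{F}_{2}$-inflation), it suffices to treat $\BE^{L}_{\N}$ and $\BE^{R}_{\N}$, and the case of $\BE^{L}_{\N}$ is then handled by the dual argument. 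So three points must be established for, say, $\BE^{R}_{\N}$: that the elementwise formula \eqref{eqn:definition-of-ERN} defines an additive subfunctor; that it coincides with the ``factors through $\N$'' description \eqref{eqn:definition2-of-ELN}; and that this subfunctor is closed.

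First I would verify the subfunctor axioms for \eqref{eqn:definition-of-ERN}. In a triangulated category $\BE(C,A)=\C(C,A[1])$ with $a_{*}h=a[1]\circ h$ and $c^{*}h=h\circ c$, and $h\mapsto h[-1]$ is additive; the defining condition for $\BE^{R}_{\N}(C,A)$ only asks that certain composites lie in the two-sided ideal $[\N[-1]]$, which is a subgroup of each $\C(-,-)$ (as $\N$ is additive) and absorbs composition on either side. Combining this with the $[1]$-equivariance shows directly that $\BE^{R}_{\N}(C,A)$ is a subgroup stable under $a_{*}$ and $c^{*}$. For the identification with \eqref{eqn:definition2-of-ELN}, the inclusion ``$\supseteq$'' is immediate: if $h$ factors through an object of $\N$ then so does $y\circ h[-1]$ for every $y$. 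For ``$\subseteq$'' I would use the hypothesis $\cone(\N,\N)=\C$, which is equivalent to $\cocone(\N,\N)=\C$ by rotating triangles (as noted just before the statement): choose an $\fs$-conflation $A\xrightarrow{\iota}N^{0}\xrightarrow{\pi}N^{1}$ with $N^{0},N^{1}\in\N$, apply the defining condition to $\iota\colon A\to N^{0}$, and then run the long exact sequence obtained by applying $\C(C,-)$ to the rotated $\fs$-triangle $N^{1}\to A[1]\xrightarrow{\iota[1]}N^{0}[1]\to N^{1}[1]$, using extension-closedness of $\N$ to absorb the correction term coming from the membership $\iota[1]\circ h\in[\N]$. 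This is essentially the computation behind \cite[\S5]{OS23}, which I would cite.

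The main obstacle is the last point: that $\BE^{R}_{\N}$ is closed, i.e.\ that $\fs^{R}_{\N}$-inflations (equivalently, $\fs^{R}_{\N}$-deflations) are closed under composition. Here one invokes the weak pushout and weak pullback machinery of \cref{lem:wPO_wPB} together with the octahedron axiom \ref{ET4} and its dual. Concretely, given composable $\fs^{R}_{\N}$-inflations $A\xrightarrow{f}B\xrightarrow{f'}D$, the \ref{ET4}-diagram produces an $\fs$-triangle $\cone(f)\xrightarrow{d}\cone(f'f)\xrightarrow{e}\cone(f')$ together with the compatibilities $d^{*}\delta'=\delta$ and $e^{*}\rho=f_{*}\delta'$ relating the connecting morphisms $\delta,\delta',\rho$; feeding in the hypotheses $\delta,\rho\in[\N]$ and exploiting that $\N$ is extension-closed and closed under direct summands, one shows $\delta'\in[\N]$, i.e.\ $f'f$ is an $\fs^{R}_{\N}$-inflation, the deflation case being dual. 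The delicate part throughout is controlling the ``correction terms'' that arise from replacing the vanishing conditions of the absolute structure by memberships in the ideals $[\N[\pm1]]$ — this is exactly where $\cone(\N,\N)=\C$ and extension-closure of $\N$ enter — and I would organise the argument to parallel \cite{OS23}. Assembling the two displayed formulas for $\BE^{R}_{\N}$, then invoking duality for $\BE^{L}_{\N}$ and taking the intersection for $\BE_{\N}$, completes the proof.
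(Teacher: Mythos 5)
Your proposal is correct but proceeds along a genuinely different route from the paper, whose proof is essentially two citations: that $(\C,\BE^R_\N,\fs^R_\N)$ is extriangulated is quoted from \cite[Prop.~2.1]{Oga22b}, the equality of \eqref{eqn:definition-of-ERN} and \eqref{eqn:definition2-of-ELN} from \cite[Lem.~3.14]{OS23}, and the statements for $\C^{L}_{\N}$ and $\C_{\N}$ then follow by duality and intersection. You instead reduce to the closed-additive-subfunctor criterion of \cite{HLN21} and sketch direct verifications of the three ingredients (subfunctor axioms; identification of the two descriptions, which is where $\cone(\N,\N)=\C$ and extension-closure enter; closure of inflations under composition via \ref{ET4}), i.e.\ you essentially plan to reprove the cited results. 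The plan is completable, with one refinement worth flagging: for the inclusion of \eqref{eqn:definition-of-ERN} in \eqref{eqn:definition2-of-ELN}, a bare long-exact-sequence argument does not by itself cope with the fact that $\iota[1]\circ h$ only lies in the ideal $[\N]$ rather than vanishing; the standard fix (and the content of the cited lemma) is to take a weak pullback of the factorization of $\iota[1]\circ h$ through some $M\in\N$ along $\iota[1]$, so that the resulting object $P$ sits in a triangle $N^{1}\to P\to M$ and hence lies in $\N$ by extension-closure, with $h$ factoring through $P$; a similar pullback-then-pushout manoeuvre, fed with the \ref{ET4} compatibilities $d^{*}\delta'=\delta$ and $e^{*}\rho=f_{*}\delta'$, settles closedness. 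What your route buys is a self-contained argument that makes visible exactly where each hypothesis is used (notably that $\cone(\N,\N)=\C$ is needed only for the equality of the two descriptions --- it already fails for $\N=\add(0)$ --- and not for the subfunctor or closure properties); what the paper's route buys is brevity, at the cost of outsourcing precisely the computations you sketch.
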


\begin{proof}
That $\C^{R}_{\N} = (\C,\BE^R_\N,\fs^R_\N)$, as defined in \eqref{eqn:definition-of-ERN}, is an extriangulated category follows from \cite[Prop.~2.1]{Oga22b}. 
The equality \eqref{eqn:definition2-of-ELN} is \cite[Lem.~3.14]{OS23}. 
The assertions concerning $\C^{L}_{\N}$ follow similarly. 
It also follows that $\C_{\N}$ is an extriangulated category. 
\end{proof}

We need one last definition before we can recall Corollary~\ref{cor:JS_index}.

\begin{definition}
\label{def:X_resolution}
\cite[Def.~4.2]{OS23}
Suppose $(\C,\BE,\fs)$ is an extriangulated category with a subcategory $\X\sse\C$. 
The top row of 
\begin{equation}
\label{seq:X_resolution}
\begin{tikzcd}[column sep=0.5cm]
X_{n} 
    \arrow{rr}{f_{n-1}}
    \arrow[equals]{dr}
&{}& X_{n-1}
    \arrow{rr}{f_{n-2}g_{n-1}}
    \arrow{dr}{g_{n-1}}
&{}& \cdots 
    \arrow{rr}{f_1 g_2}
&{}& X_{1}
    \arrow{rr}{f_0 g_1}
    \arrow{dr}{g_{1}}
&{}& X_{0}
    \arrow{rr}{g_0}
    \arrow{dr}{g_{0}}
&{}& C
\\
{}
&C_{n}
    \arrow{ur}{f_{n-1}}
&{} & C_{n-1}
&{} & C_{2}
    \arrow{ur}{f_{1}}
&{} & C_{1}
    \arrow{ur}{f_{0}}
&{}& C_{0}
    \arrow[equals]{ur}
&{}
\end{tikzcd}
\end{equation}
is said to be a \emph{finite $\X$-resolution} of $C\in\C$ if 
$C_{i+1}\overset{f_i}{\lra} X_i\overset{g_i}{\lra} C_i$ 
is an $\fs$-conflation 
for each $0\leq i \leq n-1$
with $X_{i}\in\X$ 
for all $0\leq i \leq n$.
\end{definition}

\begin{example}
\label{example:n-CT-resolution}
\cite[Exam.~4.3]{OS23} 
Suppose $\C$ is an idempotent complete triangulated category and that $\X\sse\C$ is $n$-cluster tilting. Then each object $C\in\C$ admits a finite $\X$-resolution
$
X_{n-1}\to \cdots\to X_1\to X_0\to C
$ 
of length at most $n-1$. 
In fact, this $\X$-resolution is an $\X$-resolution in the relative extriangulated $\C^{R}_{\N}$ where 
$\N \deff \X^{\perp_{0}} \deff \Set{ C\in\C | \C(\X,C) = 0}$. 
\end{example}

The statement of Corollary~\ref{cor:JS_index} below differs slightly from the statement of \cite[Cor.~5.5]{OS23}: an extriangulated structure $\C^{\X}_{R}$ is used in \cite[Cor.~5.5]{OS23}, but we have $\C^{\X}_{R} = \C^{R}_{\N}$ by \cite[Lem.~3.15]{OS23} where $\N \deff \X^{\perp_{0}}$.
Furthermore, special cases of this result have been shown in \cite[Prop.~4.11]{PPPP19} and \cite[Thm.~4.10]{JS23}.

\begin{corollary}[JS index isomorphism]
\label{cor:JS_index}
\cite[Cor.~5.5]{OS23}
Let $(\C,\BE,\fs)$ be a skeletally small, idempotent complete triangulated category, 
$\X\subseteq \C$ an $n$-cluster tilting subcategory, and 
$\N \deff \X^{\perp_{0}}$. 
Then there exists an isomorphism
\begin{align}
\ind_{\X}\colon K_{0}(\C^{R}_{\N}) & \overset{\cong}{\lra}K_{0}^{\sp}(\X) \label{eqn:JS_index}\\
[C] & \longmapsto \ind_{\X}(C) \deff \sum_{i=0}^{n-1}(-1)^i[X_i]  \nonumber\\
[X] & \longmapsfrom [X], \nonumber
\end{align}
where 
$
X_{n-1}\to \cdots\to X_1\to X_0\to C
$ 
is a finite $\X$-resolution of $C$ in $\C^{R}_{\N}$.
\end{corollary}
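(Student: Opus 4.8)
The plan is to realise the statement as an instance of the Weak biWaldhausen Localization Theorem (\cref{cor:Quillen_localization2}), applied through \cref{thm:ex_seq_wWald_from_abel_loc} to the relative extriangulated category $\C^{R}_{\N}$; in the process one reproves the Extriangulated Resolution Theorem \cite[Thm.~4.5]{OS23}.

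First I would reduce the assertion to the \emph{well-definedness} of $\ind_{\X}$. The rule $[X]\mapsto[X]$ gives a homomorphism $\iota\colon K_{0}^{\sp}(\X)\to K_{0}(\C^{R}_{\N})$: by \cref{prop:relative2} every $\BE^{R}_{\N}$-extension between objects of $\X$ factors through an object of $\N=\X^{\perp_{0}}$, and since $\C(\X,\N)=0$ any such extension vanishes, so the extriangulated structure $\C^{R}_{\N}$ restricts on $\X$ to the split one, whose Grothendieck group is $K_{0}^{\sp}(\X)$, and $\inc\colon\X\hookrightarrow\C^{R}_{\N}$ is exact. Granting that $\ind_{\X}\colon K_{0}(\C^{R}_{\N})\to K_{0}^{\sp}(\X)$ is a well-defined homomorphism, the composite $\ind_{\X}\circ\iota$ is the identity because an object of $\X$ is its own finite $\X$-resolution of length $0$; and $\iota\circ\ind_{\X}$ is the identity because for $C\in\C$ the $\fs^{R}_{\N}$-conflations $C_{i+1}\to X_{i}\to C_{i}$ of a finite $\X$-resolution of $C$ --- which exists in $\C^{R}_{\N}$ by \cref{example:n-CT-resolution} --- give $[C_{i}]=[X_{i}]-[C_{i+1}]$ in $K_{0}(\C^{R}_{\N})$, and iterating yields $[C]=\sum_{i=0}^{n-1}(-1)^{i}[X_{i}]$. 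So the whole corollary follows once one knows that $\sum_{i=0}^{n-1}(-1)^{i}[X_{i}]\in K_{0}^{\sp}(\X)$ is independent of the chosen $\X$-resolution and additive on $\fs^{R}_{\N}$-conflations.

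To obtain this I would equip $\C^{R}_{\N}$ with its canonical weak biWaldhausen structure $(\C,\seq,\veq)$ from \cref{ex:WET1}, so $\seq$ is the class of $\fs^{R}_{\N}$-conflations and $\veq=\Iso\C$, and then pass to the coarser class $\weq$ of weak equivalences coming from the one-sided localization of $\C^{R}_{\N}$ determined by $\X$ (equivalently, by $\N=\X^{\perp_{0}}$). \cref{thm:ex_seq_wWald_from_abel_loc} then exhibits $(\C^{\weq},\seq^{\weq},\veq^{\weq})\to(\C,\seq,\veq)\to(\C,\seq,\weq)$ as a localization sequence in $\wWald$, its hypotheses being checked from the $n$-cluster tilting property of $\X$ (functorial finiteness and the vanishing conditions of \cref{def:n-CT-triangulated}) together with \cref{example:n-CT-resolution}. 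Applying \cref{cor:Quillen_localization2} yields a right exact sequence $K_{0}(\C^{\weq},\seq^{\weq},\veq^{\weq})\xrightarrow{K_{0}(\inc)}K_{0}(\C^{R}_{\N})\to K_{0}(\C,\seq,\weq)\to0$, the middle term being $K_{0}(\C^{R}_{\N})$ by \cref{ex:WET1}. I would then identify the other two terms: the $\weq$-acyclic subcategory $\C^{\weq}$, with its inherited structure, is equivalent as a weak Waldhausen category to $(\X,\mathrm{split})$, since the $\fs^{R}_{\N}$-projectives are precisely the objects of $\X$ (a projective is a summand of the $X_{0}$-term of any of its $\X$-resolutions and $\X$ is closed under summands), whence $K_{0}(\C^{\weq},\seq^{\weq},\veq^{\weq})\cong K_{0}^{\sp}(\X)$; and $K_{0}(\C,\seq,\weq)=0$ because every object of $\C^{R}_{\N}$ has a finite $\X$-resolution whose terms are $\weq$-acyclic. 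This already recovers \cite[Thm.~4.5]{OS23} and shows $\iota$ is surjective; the universal property of the localization sequence (\cref{def:exact_sequence_wWald}) applied to $Q\colon(\C,\seq,\veq)\to\Ho(\C^{R}_{\N},\seq,\weq)$, together with \cref{prop:K0_of_homotopy_cat}, then produces a well-defined homomorphism retracting $\iota$, which one checks equals $\ind_{\X}$; hence $\iota$ is an isomorphism with inverse $\ind_{\X}$.

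The main obstacle will be the verification of the hypotheses of \cref{thm:ex_seq_wWald_from_abel_loc} for the one-sided localization of $\C^{R}_{\N}$ attached to $\X$, and --- going hand in hand with it --- the precise identification of the homotopy category $\Ho(\C^{R}_{\N},\seq,\weq)$ and of the acyclic subcategory $\C^{\weq}$ with $(\X,\mathrm{split})$. This is exactly the point where the $n$-cluster tilting hypothesis is indispensable, through functorial finiteness and the bounded $\X$-resolution dimension supplied by \cref{example:n-CT-resolution}; it may also require keeping careful track of the two relative structures $\C^{R}_{\N}$ and $\C^{L}_{\N}$ of \cref{prop:relative2} in order to land in the weak \emph{bi}Waldhausen setting needed for \cref{cor:Quillen_localization2}.
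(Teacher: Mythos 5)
There is a genuine gap, and it sits exactly at the point you yourself flag as the crux. The weak equivalences $\weq=\Sn$ of the one-sided localization of $\C^{R}_{\N}$ are determined by $\N=\X^{\perp_{0}}$, and the $\weq$-acyclic objects are precisely $\N$, not $\X$: an object is acyclic iff its image under $Q$ vanishes, i.e.\ iff it lies in $\Ker Q=\N$, whereas $\X\cap\N=0$ (for $X\in\X\cap\N$ we have $\id_{X}\in\C(\X,\N)=0$). Your identification of $(\C^{\weq},\seq^{\weq},\veq^{\weq})$ with $(\X,\mathrm{split})$ conflates $\BE^{R}_{\N}$-\emph{projective} objects (which are indeed the objects of $\X$) with \emph{acyclic} ones, and the companion claim that $K_{0}(\C,\seq^{R}_{\N},\weq)=0$ ``because the terms of a finite $\X$-resolution are $\weq$-acyclic'' fails for the same reason (that group does vanish, but because every object admits a finite $\N$-coresolution and objects of $\N$ are acyclic). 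What \cref{thm:ex_seq_wWald_from_abel_loc} actually delivers is an isomorphism $K_{0}(\N,\BE|_{\N},\fs|_{\N})\cong K_{0}(\C^{R}_{\N})$ with $\N=\X^{\perp_{0}}$; by itself it says nothing about $K_{0}^{\sp}(\X)$.

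Moreover, even after correcting the first term, a right exact sequence with vanishing third term only gives surjectivity of $\iota$; it cannot manufacture the retraction, i.e.\ the well-definedness of $\ind_{\X}$ (independence of the chosen $\X$-resolution and additivity on $\fs^{R}_{\N}$-conflations), which you rightly isolated as the whole content, and invoking \cref{prop:K0_of_homotopy_cat} does not help since $K_{0}(\C,\seq^{R}_{\N},\weq)=0$. The paper bridges the remaining distance from $K_{0}(\N)$ to $K_{0}^{\sp}(\X)$ by an $(n-1)$-step reduction: it interpolates the subcategories $\N_{i}=\X[i]*\cdots*\X[n-1]$ between $\X[n-1]$ and $\N_{1}=\N$, equips each with the wWET structure $(\N_{i},\seq^{R}_{\N}|_{\N_{i}},\weq_{i})$ whose weak equivalences are detected by the right exact restricted Yoneda functor $\BY|_{\X[i]}$ (\cref{lem:reduction1,lem:reduction_right_exact,lem:Ni-with-Wi-is-wWET}), proves $K_{0}(\N_{i+1})\cong K_{0}(\N_{i})$ at each stage via the dual Resolution Theorem (\cref{lem:right_ex_loc_via_Waldhausen_ver2}), and finally recovers the explicit alternating-sum formula, including the sign coming from the shift isomorphism $\zeta\colon K_{0}^{\sp}(\X)\cong K_{0}^{\sp}(\X[n-1])$, by tracking the inverse maps $\eta_{i}$ through the coresolutions. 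This inductive reduction, which is where the $n$-cluster tilting hypothesis is used beyond \cref{example:n-CT-resolution}, is entirely absent from your proposal, so the argument as written does not establish the isomorphism.
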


The element $\ind_{\X}(C) = \sum_{i=0}^{n-1}(-1)^i[X_i]$ in $K_{0}^{\sp}(\X)$ is the \emph{triangulated index of $C$ with respect to $\X$} (see \cite[Def.~3.3]{Jor21}), and is a generalization of the index Palu defined in \cite[Def.~2.1]{Pal08} for $2$-cluster tilting subcategories. 
Our goal in this section is to understand and give a different proof of Corollary~\ref{cor:JS_index} via our Weak Waldhausen Localization Theorem~\ref{thm:Quillen_localization1_new}.

In the remainder of Section~\ref{sec:application-to-tri-cat}, we assume the following.

\begin{setup}
\label{setup:abelian_local}
Suppose $(\C,\BE,\fs)$ corresponds to a skeletally small, idempotent complete, triangulated category with suspension $[1]$. 
Let $\N$ denote an extension-closed 
subcategory of $\C$, which is closed under direct summands and such that  $\cone(\N,\N)=\C$. 
\end{setup}

\subsection{An abelian localization of a triangulated category}
\label{sec:abelian_local_of_tri_cats}

In this subsection we study further the relative extriangulated categories defined in Proposition~\ref{prop:relative2}. In particular, in Theorem~\ref{thm:ex_seq_wWald_from_abel_loc} we will see that the groups 
$K_{0}(\C^{L}_{\N})$, $K_{0}(\C^{R}_{\N})$ and $K_{0}(\N,\BE|_\N,\fs|_\N)$ are all isomorphic.

\begin{remark}
\label{rem:extriangulated_structures_on_N}
We record some facts on the extriangulated structures inherited by $\N$.
\begin{enumerate}[label=\textup{(\arabic*)}]
    \item \label{item:N-extension-closed-in-all-relative-structures}
    Since $\N$ is extension-closed in $(\C,\BE,\fs)$, it is still extension-closed in the relative extriangulated subcategories $\C^{L}_{\N}$, $\C^{R}_{\N}$ and $\C_{\N}$.
    
    \item \label{item:inherited-structures-on-N-all-the-same}
    We have equalities  
    \[
    (\N,\BE|_\N,\fs|_\N)
        = (\N,\BE_\N^R|_\N,\fs_\N^R|_\N) 
        = (\N,\BE_\N^L|_\N,\fs_\N^L|_\N) 
        = (\N,\BE_\N|_\N,\fs_\N|_\N) 
    \]
    as extriangulated categories.
    Indeed, if $N_0\lra N_1\lra N_2$ is an $\fs$-conflation whose terms $N_i$ all lie in $\N$, then it is clearly an $\fs_\N$-conflation given the descriptions of $\BE^{L}_{\N}$ and $\BE^{R}_{\N}$ in Proposition~\ref{prop:relative2}. 
\end{enumerate}
\end{remark}

With respect to the relative structure $\C_{\N}$, the pair $(\C,\N)$ yields a saturated class $\Sn\sse\C^{\to}$ of morphisms in $\C$ so that $\overline{\Sn}$ satisfies the needed conditions (MR1)--(MR4) to obtain an exact sequence in $\ET$.
Although $\N$ may not be thick in $(\C,\BE,\fs)$ in general, passing to $\C_{\N}$ allows us to take an extriangulated localization in the sense of Section~\ref{sec_localization}.
Theorem~\ref{thm:localization_of_tri_cat} summarises some key results from \cite{Oga22b} that we use. We need a definition in preparation.

\begin{definition}
\label{def:left_right_exact_functors}
\cite[Def.~2.7]{Oga21} 
An additive functor $F\colon (\D,\BF,\ft) \to \A$ from an extriangulated category $(\D,\BF,\ft)$ to an abelian category $\A$ is called \emph{right exact} if 
$\begin{tikzcd}[column sep=0.7cm, cramped]
        FA \arrow{r}{Ff}& FB \arrow{r}{Fg}& FC \arrow{r}& 0 
\end{tikzcd}$
is exact in $\A$ whenever
$\begin{tikzcd}[column sep=0.5cm, cramped]
        A \arrow{r}{f}& B \arrow{r}{g}& C
\end{tikzcd}$
is a $\ft$-conflation. 
    \emph{Left exact} functors are defined dually.
\end{definition}

\begin{theorem}
\label{thm:localization_of_tri_cat}
We have the following. 
\begin{enumerate}[label={\textup{(\arabic*)}}]

\item 
\label{item:coresolving_subcat}
$\N$ is closed under taking cones of $\fs_\N^R$-inflations and cocones of $\fs_\N^L$-deflations. That is, if 
$\begin{tikzcd}[column sep=0.5cm, cramped]
        A \arrow{r}{f}& B \arrow{r}{}& \cone(f)
\end{tikzcd}$
is an $\fs_\N^R$-conflation and $A,B\in\N$, then $\cone(f)\in\N$; closure under cocones of $\fs_\N^L$-deflations is defined similarly.

\item
\label{item:N_thick}
$\N$ is thick subcategory of $\C_{\N}$, and 
the corresponding class $\Sn\sse\C^{\to}$ defined in Definition~\ref{def:Sn_from_thick} is saturated and $\overline{\Sn}$ satisfies (MR1)--(MR4) of \cite[p.~343]{NOS22}. 
In addition, we have 
$\R_{\ret}\circ\L= \Sn = \R\circ\L_{\sec}$. 

\item
\label{item:extri_local}
There is an extriangulated localization $(Q,\mu)\colon \C_{\N}\to (\C/\N,\widetilde{\BE_\N},\widetilde{\fs_\N})$ of $\C_{\N}$ with respect to $\N$, 
and it gives rise to the following exact sequence in $\ET$.
\begin{equation}\label{eqn:exact-seq-in-ET-for-index}
\begin{tikzcd}[column sep=1.2cm]
(\N,\BE|_\N,\fs|_\N) \arrow{r}{(\mathsf{inc},\iota)} 
&\C_{\N} \arrow{r}{(Q,\mu)} 
&(\C/\N,\widetilde{\BE_\N},\widetilde{\fs_\N})
\end{tikzcd}
\end{equation}

\item
\label{item:N_serre_local_is_abelian}
Moreover, the extriangulated category $(\C/\N,\widetilde{\BE_\N},\widetilde{\fs_\N})$ corresponds to an abelian category 
and 
$Q\colon \C \to \C / \N$ 
is a cohomological functor. 

\item \label{item:abel_loc}
The functor $Q \colon \C^{R}_{\N} \to \C/\N$ (resp.\ $Q \colon \C^{L}_{\N} \to \C/\N$) is right exact (resp.\ left exact).
\end{enumerate}
\end{theorem}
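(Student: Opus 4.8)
The statement asks us to prove five facts about the relative structures $\C^{L}_{\N}$, $\C^{R}_{\N}$, $\C_{\N}$ and the localization by $\N$. Most of it is a matter of assembling results already available in the literature (chiefly \cite{Oga22b}), but translated into the language we have set up. Here is how I would organise the argument.

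\textbf{Step 1: part \ref{item:coresolving_subcat}.} I would start with the description of $\BE^{R}_{\N}$-conflations. Suppose $A\overset{f}{\lra}B\lra \cone(f)$ is an $\fs^{R}_{\N}$-conflation with $A,B\in\N$. Rotating in $(\C,\BE,\fs)$, the object $\cone(f)$ fits into an $\fs$-triangle $A\to B\to \cone(f)\to A[1]$, and by \eqref{eqn:definition2-of-ERN}--type reasoning the connecting map $\cone(f)\to A[1]$ factors through $\N$. Since $\N$ is extension-closed and $\cone(\N,\N)=\C$, one combines $A[1]\in\N[1]$-type information with extension-closure to conclude $\cone(f)\in\N$; the key input is that $\N$ is closed under extensions in $(\C,\BE,\fs)$ and the relative conflation is built from a genuine triangle with two terms in $\N$. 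The dual statement for $\fs^{L}_{\N}$-deflations is symmetric. This is essentially \cite[Lem.~2.?]{Oga22b} rephrased, so I would cite \cite{Oga22b} for the bulk and only spell out the rotation.

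\textbf{Step 2: part \ref{item:N_thick}.} Thickness of $\N$ in $\C_{\N}$: fullness, additivity and closure under summands are inherited from $(\C,\BE,\fs)$ (Remark~\ref{rem:extriangulated_structures_on_N}\ref{item:N-extension-closed-in-all-relative-structures}), so only the $2$-out-of-$3$ property for $\fs_{\N}$-conflations needs checking. If $N_0\to N_1\to N_2$ is an $\fs_{\N}$-conflation with two terms in $\N$, use part~\ref{item:coresolving_subcat} (applied to whichever of $\BE^{L}_{\N}$, $\BE^{R}_{\N}$ is relevant, since $\BE_{\N}=\BE^{L}_{\N}\cap\BE^{R}_{\N}$) together with the fact that a Serre-type argument gives the missing term. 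That $\Sn$ is saturated and $\overline{\Sn}$ satisfies (MR1)--(MR4), and the identities $\R_{\ret}\circ\L=\Sn=\R\circ\L_{\sec}$, are exactly \cite[Prop.~2.1 and the surrounding discussion]{Oga22b} in this relative setting; I would quote \cite{Oga22b} directly here, since reproving (MR1)--(MR4) is routine but long.

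\textbf{Step 3: parts \ref{item:extri_local}, \ref{item:N_serre_local_is_abelian}, \ref{item:abel_loc}.} Given Step~2, part~\ref{item:extri_local} is an immediate application of \cref{thm:NOS} to the pair $(\C_{\N},\N)$, producing the exact sequence \eqref{eqn:exact-seq-in-ET-for-index}. For part~\ref{item:N_serre_local_is_abelian}, the point is that $\N$ is a \emph{Serre} subcategory of $\C_{\N}$ in the sense of \cref{def:Serre} — one checks that if an $\fs_{\N}$-conflation $A\to B\to C$ has $B\in\N$ then $A,C\in\N$, using the left/right descriptions of $\BE_{\N}$ — and the Serre quotient of an extriangulated category whose ``homology'' behaves abelianly yields an abelian category; this is \cite[Thm.~?]{Oga22b}, together with the observation that $Q$ becomes cohomological because $\fs_{\N}$-deflations and $\fs_{\N}$-inflations are sent to epis and monos. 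Finally, part~\ref{item:abel_loc}: since every $\fs^{R}_{\N}$-conflation $A\to B\to C$ is in particular such that $Q$ kills its connecting map up to $\N$, applying $Q$ yields an exact sequence $QA\to QB\to QC\to 0$ in the abelian category $\C/\N$, i.e. $Q\colon \C^{R}_{\N}\to\C/\N$ is right exact in the sense of \cref{def:left_right_exact_functors}; the left-exactness of $Q\colon \C^{L}_{\N}\to\C/\N$ is dual, using that $Q$ is cohomological so that $\fs^{L}_{\N}$-inflations go to monomorphisms with the expected cokernel.

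\textbf{Main obstacle.} The genuinely delicate point is not any single verification but making sure the relative conflation descriptions interact correctly: in $\C^{R}_{\N}$ the inflations are constrained, so ``$\cone(f)\in\N$'' in Step~1 relies on the factorization characterization \eqref{eqn:definition2-of-ERN} rather than on naive extension-closure, and one must be careful that the rotated triangle really has the form that extension-closure of $\N$ can be applied to. The cleanest route is to reduce everything to the already-published statements in \cite{Oga22b} about $\C^{R}_{\N}$ and cite them, only re-deriving the left-handed analogues by the $(-)^{\op}$ duality and noting $\BE_{\N}=\BE^{L}_{\N}\cap\BE^{R}_{\N}$; I would avoid re-proving (MR1)--(MR4) from scratch.
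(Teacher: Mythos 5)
Your proposal takes essentially the same route as the paper: the paper's own proof is an assembly of citations — parts (1) and (2) are \cite[Thm.~3.12(1),(2)]{OS23}, part (3) is \cref{thm:NOS} applied to $(\C_{\N},\N)$ together with \cite[Thm.~3.12(3)]{OS23}, part (4) is \cite[Thm.~3.16]{OS23}, and part (5) is \cite[Cor.~4.4]{Oga22b} — and your plan of reducing everything to the published results of \cite{Oga22b} (with part (3) obtained from \cref{thm:NOS}) is the same strategy, merely citing the earlier paper rather than \cite{OS23}. One small caution: your inline sketch of part (1) does not stand on its own, since extension-closure of $\N$ never yields closure under cones by itself — the factorization of the connecting morphism through $\N$ must be used in an essential, non-formal way, which is precisely the content of the result you (and the paper) cite, so the deferral to the citation is doing the real work there.
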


\begin{proof}
For \ref{item:coresolving_subcat} and \ref{item:N_thick}, see \cite[Thm.~3.12(1),(2)]{OS23}. 
Part \ref{item:extri_local} follows from Theorem~\ref{thm:NOS} given \ref{item:N_thick} (see also \cite[Thm.~3.12(3)]{OS23}).
Part \ref{item:N_serre_local_is_abelian} follows from \cite[Thm.~3.16]{OS23} and \ref{item:abel_loc} is \cite[Cor.~4.4]{Oga22b}. 
\end{proof}

Let us give a useful characterization of the morphisms in $\Sn$ obtained in Theorem~\ref{thm:localization_of_tri_cat}\ref{item:N_thick}, by describing the morphisms in $\C$ that become epic (resp.\ monic) in $\C/\N$ via $Q$.

\begin{lemma}
\label{lem:char_of_Sn}
Let 
$
\begin{tikzcd}[column sep=0.5cm, cramped]
A
	\arrow{r}{f}
& B
	\arrow{r}{g}
& C
	\arrow{r}{h}
& A[1]
\end{tikzcd}
$
be a triangle in $\C$.
Then the statements \ref{Qg-epic}--\ref{h-factors} below are equivalent.
    \begin{enumerate}[label={\textup{(\arabic*)}}]
        \item\label{Qg-epic}
            $Qg$ is epic in $\C/\N$.
        
        \item\label{g-defl}
            $g$ is an $\fs_\N^R$-deflation and 
            $\begin{tikzcd}[column sep=0.5cm, cramped]
                A 
                    \arrow{r}{f}
                & B 
                    \arrow{r}{g}
                & C
                    \arrow[dashed]{r}{h}
                & {}
            \end{tikzcd}$
            is an $\fs_\N^R$-triangle.
        
        \item\label{h-factors}
            $h$ factors through an object in $\N$.
    \end{enumerate}
Dually, the statements \ref{Qf-monic}--\ref{h-1-factors} below equivalent.
    \begin{enumerate}[label={\textup{(\arabic*')}}]
        \item\label{Qf-monic}
            $Qf$ is monic in $\C/\N$.
        \item\label{f-infl}
            $f$ is an $\fs_\N^L$-inflation and 
            $\begin{tikzcd}[column sep=0.5cm, cramped]
                A 
                    \arrow{r}{f}
                & B 
                    \arrow{r}{g}
                & C
                    \arrow[dashed]{r}{h}
                & {}
            \end{tikzcd}$
            is an $\fs_\N^L$-triangle.
        \item\label{h-1-factors}
            $h[-1]$ factors through an object in $\N$.
    \end{enumerate}
In particular, $g\in\Sn$ if and only if both $f$ and $h$ factor through objects in $\N$.
\end{lemma}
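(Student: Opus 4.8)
The plan is to prove the two blocks of equivalences separately (they are dual, so it suffices to treat \ref{Qg-epic}--\ref{h-factors}) and then read off the final ``in particular'' clause by combining them.

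\smallskip

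\textbf{Setting up.} Throughout, I work with the relative extriangulated structure $\C^{R}_{\N}$, whose $\BE^{R}_{\N}$-extensions are exactly those $h\colon C\to A[1]$ that factor through an object of $\N$ (the description \eqref{eqn:definition2-of-ELN} in \cref{prop:relative2}). First I would recall, from \cref{thm:localization_of_tri_cat}\ref{item:N_thick} together with \cref{lem:compare-Lac-and-L} and \cref{ex:list_mult_loc}, the concrete shape of $\Sn$ for the structure $\C_\N$; in particular a morphism in $\R$ is an $\fs_\N$-deflation with cocone in $\N$, and the localization functor $Q\colon \C_\N\to\C/\N$ is exact, so it sends $\fs_\N^R$-deflations to $\widetilde{\fs_\N}$-deflations, which are epimorphisms because $\C/\N$ is abelian by \cref{thm:localization_of_tri_cat}\ref{item:N_serre_local_is_abelian}.

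\smallskip

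\textbf{The cycle $\ref{h-factors}\Rightarrow\ref{g-defl}\Rightarrow\ref{Qg-epic}\Rightarrow\ref{h-factors}$.} For $\ref{h-factors}\Rightarrow\ref{g-defl}$: if $h$ factors through $\N$, then by \eqref{eqn:definition2-of-ELN} the $\BE$-extension corresponding to $h$ lies in $\BE^{R}_{\N}(C,A)$, so the triangle $A\to B\to C\xrightarrow{h} A[1]$ is realized as an $\fs_\N^R$-triangle and $g$ is the associated $\fs_\N^R$-deflation. For $\ref{g-defl}\Rightarrow\ref{Qg-epic}$: apply the exact functor $(Q,\mu)\colon \C^R_\N\to \C/\N$ (this is the restriction of the functor in \cref{thm:localization_of_tri_cat}\ref{item:extri_local}, which is right exact on $\C^R_\N$ by \ref{item:abel_loc}); it sends the $\fs_\N^R$-conflation to a short exact sequence $QA\to QB\xrightarrow{Qg} QC$ in the abelian category $\C/\N$, so $Qg$ is epic. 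For the closing implication $\ref{Qg-epic}\Rightarrow\ref{h-factors}$: since $Qg$ is epic, the rotated triangle $B\xrightarrow{g} C\xrightarrow{h} A[1]\xrightarrow{-f[1]} B[1]$ is a $\widetilde{\fs_\N}$-triangle with $Qg$ a deflation, and $Q$ applied to $h$ must vanish (a cohomological functor, see \cref{thm:localization_of_tri_cat}\ref{item:N_serre_local_is_abelian}, kills the connecting map following an epimorphism; equivalently $Qh=0$ because $Qg$ is epic and $hg=0$ after rotating once more). Then $Q(h)=0$ means $h$ becomes zero in $\C/[\N]$ already — using that $Q$ factors through the ideal quotient by $[\N]$, which inverts $\Sn$ — so $h$ factors through an object of $\N$. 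I would spell this last step out carefully: $Qh=0$ in $\C/\N=\C[\Sn^{-1}]$ combined with the description of $\Sn$ forces $h\in[\N]$; the cleanest route is to observe directly that $h$ factoring through $\N$ is equivalent to the extension class of the triangle lying in $\BE^R_\N$, and the extension class being killed by $Q$ is equivalent (via the construction of $\widetilde{\BE_\N}$) to it lying in $\BE^R_\N$ in the first place.

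\smallskip

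\textbf{The dual block and the final clause.} The equivalences \ref{Qf-monic}--\ref{h-1-factors} follow by applying the already-established block to the opposite category $(\C^{\op},\BE^{\op},\fs^{\op})$ with $\N^{\op}$ in place of $\N$, using that $\C^L_\N$ is the relative structure dual to $\C^R_\N$ (\cref{prop:relative2}) and that $Q$ is left exact on $\C^L_\N$ by \cref{thm:localization_of_tri_cat}\ref{item:abel_loc}; I would simply note $h[-1]$ factoring through $\N$ is $h$'s class lying in $\BE^L_\N$ by \eqref{eqn:definition2-of-ELN}'s dual. Finally, for the ``in particular'' statement: $g\in\Sn$ means (by \cref{thm:localization_of_tri_cat}\ref{item:N_thick}, $\Sn=\R\circ\L_{\sec}$, and the characterisation of $\Sn$) precisely that $Qg$ is an isomorphism in $\C/\N$, i.e.\ $Qg$ is both epic and monic; applying \ref{Qg-epic}$\Leftrightarrow$\ref{h-factors} and \ref{Qf-monic}$\Leftrightarrow$\ref{h-1-factors} this is equivalent to $h$ and $h[-1]=f$ (up to sign) both factoring through objects of $\N$ — I would phrase it as ``$h$ factors through $\N$ and $f$ is an $\fs^L_\N$-inflation'', noting $Qf$ monic $\Leftrightarrow$ $h[-1]$ through $\N$, which is what gives the stated ``both $f$ and $h$ factor through objects in $\N$''.

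\smallskip

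\textbf{Main obstacle.} The delicate point will be the implication $\ref{Qg-epic}\Rightarrow\ref{h-factors}$, i.e.\ extracting genuine factorisation-through-$\N$ information from the purely formal fact that $Qg$ is epic in the localization. The safe way is to avoid chasing roofs in $\C[\Sn^{-1}]$ and instead invoke the explicit description of $\widetilde{\BE_\N}$ and of $\widetilde{\fs_\N}$-deflations from \cite[Lem.~3.32]{NOS22} (cf.\ \cref{rem:on-theorem-4-14}\ref{item:conflations-in-localization}), which translates ``$Qg$ epic'' back into ``the extension class of the defining triangle lies in $\BE^R_\N$'', i.e.\ $h$ factors through $\N$; everything else is bookkeeping with the relative structures and the exactness properties already recorded in \cref{thm:localization_of_tri_cat}.
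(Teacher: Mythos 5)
Your reduction of the first block to the single claim ``if $Qg$ is epic then $h$ factors through an object of $\N$'' has the right shape, and the parts you carry out in detail are fine: $\ref{g-defl}\Leftrightarrow\ref{h-factors}$ is immediate from \eqref{eqn:definition2-of-ELN} in \cref{prop:relative2} (exactly how the paper treats it), $\ref{g-defl}\Rightarrow\ref{Qg-epic}$ follows from right exactness of $Q$ on $\C^{R}_{\N}$ (\cref{thm:localization_of_tri_cat}\ref{item:abel_loc}), and $Qg$ epic together with $hg=0$ does give $Qh=0$. The genuine gap is your justification of ``$Qh=0\Rightarrow h\in[\N]$''. Your first argument --- that since $Q$ factors through $\C/[\N]$ and inverts $\overline{\Sn}$, the vanishing of $Qh$ forces $\overline{h}=0$ in $\C/[\N]$ --- is not valid: localization at a multiplicative system does not reflect vanishing of morphisms; the Ore conditions only give $\overline{s}\,\overline{h}=0$ for some $s\in\Sn$, and extracting $h\in[\N]$ from that requires an argument. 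Your proposed ``cleanest route'' is essentially circular: the natural transformation $\mu$ is defined only on $\BE_{\N}\subseteq\BE$, so ``the extension class of the triangle being killed by $Q$'' has no a priori meaning for an arbitrary $h$, and asserting that it is equivalent to $h\in\BE^{R}_{\N}(C,A)$ is precisely the claim to be proved. Likewise the fallback via \cite[Lem.~3.32]{NOS22}: knowing that the epimorphism $Qg$ agrees, up to composition with images of $\Sn$-morphisms, with the image of some $\fs_{\N}$-deflation says nothing by itself about the connecting morphism $h$ of the \emph{given} triangle. The paper closes exactly this point by citing \cite[Lem.~2.6]{Oga22b}, which \emph{is} the equivalence $\ref{Qg-epic}\Leftrightarrow\ref{h-factors}$; a self-contained argument would have to reprove that lemma (this is where the hypotheses of \cref{setup:abelian_local}, notably $\cone(\N,\N)=\C$, genuinely enter), not deduce it formally from the localization machinery.

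A smaller point: your derivation of the final clause is muddled. Since $\Sn$ is saturated, $g\in\Sn$ iff $Qg$ is an isomorphism, i.e.\ epic and monic in the abelian category $\C/\N$. The epic half gives $h\in[\N]$ by $\ref{Qg-epic}\Leftrightarrow\ref{h-factors}$, but for the monic half you must apply $\ref{Qf-monic}\Leftrightarrow\ref{h-1-factors}$ to the \emph{rotated} triangle $B\xrightarrow{g}C\xrightarrow{h}A[1]\xrightarrow{-f[1]}B[1]$, whose connecting morphism desuspends to $-f$, yielding $Qg$ monic iff $f\in[\N]$. Invoking ``$Qf$ monic iff $h[-1]$ factors through $\N$'' for the original triangle, as you do, addresses $Qf$ rather than the monicity of $Qg$ and does not give the stated conclusion.
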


\begin{proof}
The equivalence $\ref{Qg-epic} \Leftrightarrow \ref{h-factors}$
follows from \cite[Lem.~2.6]{Oga22b}, 
and 
$\ref{g-defl}\Leftrightarrow \ref{h-factors}$ is just Proposition~\ref{prop:relative2}. 
The proofs of the equivalences  
$\ref{Qf-monic}
    \Leftrightarrow \ref{f-infl}
    \Leftrightarrow \ref{h-1-factors}
$ 
are dual; and the last assertion is an immediate consequence.
\end{proof}

\subsection{A localization sequence in \texorpdfstring{$\wWald$}{wWald} induced by \texorpdfstring{$\C^{R}_{\N}$}{CRN}}
\label{sec:RES_for_triangulated}

Although \eqref{eqn:exact-seq-in-ET-for-index} 
is an exact sequence in $\ET$, 
the sequences 
$\N\into\C_{\N}^R\overset{Q}{\lra}\C/\N$ and $\N\into\C_{\N}^L\overset{Q}{\lra}\C/\N$ 
need not be.
They do, however, induce localization sequences in $\wWald$. We will use Theorem~\ref{thm:Quillen_localization1_new} to show this in Theorem~\ref{thm:ex_seq_wWald_from_abel_loc}, and so 
we setup some notation to help the exposition. 

\begin{notation}\label{notation:section-6}
We let $(\C,\seq^{R}_{\N}, \veq)$ (resp.\ $(\C,\seq_{\N}, \veq)$) denote the canonical wbiWET category corresponding to the extriangulated category $\C^{R}_{\N}$ (resp.\ $\C_{\N}$); see Example~\ref{ex:extria_cat_is_wbiWET}. That is, $\seq^{R}_{\N}$ is the collection of all $\fs^{R}_{\N}$-conflations, 
$\seq_{\N}$ is the collection of all $\fs_{\N}$-conflations, 
and $\veq = \Iso\C$. 
Accordingly, we denote by $\cof^{R}_{\N}$ (resp.\ $\fib^{R}_{\N}$) the cofibrations (resp.\ fibrations) in $(\C,\seq^{R}_{\N}, \veq)$.

By Theorem~\ref{thm:localization_of_tri_cat}\ref{item:N_thick}, we know that $\N$ is a thick subcategory of $\C_{\N}$, so we associate classes of morphisms $\L_{\N}\deff\L$ and $\R_{\N}\deff\R$ as in Definition~\ref{def:Sn_from_thick} to $\N$:
\begin{enumerate}[label=\textup{(\roman*)}]
    \item $\L_\N= \Set{f\in\C^{\to} | f \text{ is an } \fs_{\N}\text{-inflation with } \cone(f)\in\N}$;  

    \item $\R_\N = \Set{g\in\C^{\to} | g \text{ is an } \fs_{\N}\text{-deflation with }\cocone(g)\in\N}$; and 
    
    \item $(\R_{\N})_{\ret} = \Set{ g\in\R_{\N} | g\textnormal{ is a retraction}}$.
    
\end{enumerate}
Put $\weq \deff \Sn$, the smallest subclass of morphisms closed under compositions containing both $\L_\N$ and $\R_\N$. Moreover, by Theorem~\ref{thm:localization_of_tri_cat}\ref{item:N_thick}, we have 
\begin{equation}\label{eqn:W-is-R-composed-L}
\weq = (\R_{\N})_{\ret}\circ\L_{\N}.
\end{equation}

Since $(\C,\seq^{R}_{\N}, \veq)$ is weak Waldhausen, we can consider the subcategory 
$(\C^{\weq},(\seq^{R}_{\N})^{\weq},\veq^{\weq})$ of $\weq$-acyclic objects (see Lemma~\ref{lem:localization_sequence_wWald}). In fact, in the proof of Theorem~\ref{thm:ex_seq_wWald_from_abel_loc} we will see that Setup~\ref{setup:Quillen_loc1} is satisfied, and hence we can consider the classes of morphisms given in Definition~\ref{def:mors_assoc_to_wWH_cat}:
\begin{enumerate}[label=\textup{(\roman*)}]
\setcounter{enumi}{3}

    \item\label{item:LRNac} $(\L^{R}_{\N})^{\ac} \deff \cof^{R}_{\N} \cap \weq$; 

    \item $(\R^{R}_{\N})^{\ac} \deff \fib^{R}_{\N} \cap \weq$; and 

    \item\label{item:TRN} $\T^{R}_{\N} \deff \set{ g\in (\R^{R}_{\N})^{\ac} | \text{there is an } \fs^{R}_{\N}\text{-conflation } A\overset{f}{\rightarrowtail}B\overset{g}{\twoheadrightarrow}C \text{ with } A\in\N}$. 
\end{enumerate}
We use the extra sub/superscripts above only to emphasise the origin of these classes of morphisms. 
\end{notation}

Comparing some of the classes of morphisms above, we observe the following.

\begin{lemma}
\label{lem:comparison_of_morphisms}
We have 
$(\R_{\N})_{\ret} \sse \T^{R}_{\N}$
and 
$\L_\N \sse (\L^R_\N)^{\ac}$. 
\end{lemma}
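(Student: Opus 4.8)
The plan is to reduce both inclusions to two elementary observations and then simply read off the memberships. The first observation is that, since $\BE_\N = \BE^L_\N \cap \BE^R_\N$ is in particular a subfunctor of $\BE^R_\N$ and all the realizations occurring in \cref{prop:relative2} are restrictions of the ambient $\fs$, every $\fs_\N$-conflation $A \to B \to C$ is also an $\fs^R_\N$-conflation; consequently every $\fs_\N$-inflation lies in $\cof^R_\N$ and every $\fs_\N$-deflation lies in $\fib^R_\N$. The second observation is immediate from \cref{notation:section-6}: by construction $\Sn$ is the smallest class of morphisms closed under composition containing $\L_\N$ and $\R_\N$, so $\L_\N \sse \Sn = \weq$ and $\R_\N \sse \Sn = \weq$.

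With these in hand, for the inclusion $\L_\N \sse (\L^R_\N)^{\ac}$ I would argue: if $f \in \L_\N$ then $f$ is an $\fs_\N$-inflation, hence $f \in \cof^R_\N$, and also $f \in \L_\N \sse \weq$, so $f \in \cof^R_\N \cap \weq = (\L^R_\N)^{\ac}$. For the inclusion $(\R_\N)_{\ret} \sse \T^R_\N$, take $g \in (\R_\N)_{\ret} \sse \R_\N$; then $g$ is an $\fs_\N$-deflation with $\cocone(g) \in \N$, so there is an $\fs_\N$-conflation $A \overset{f}{\to} B \overset{g}{\to} C$ with $A = \cocone(g) \in \N$. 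By the first observation this is also an $\fs^R_\N$-conflation, so $g \in \fib^R_\N$; together with $g \in \R_\N \sse \weq$ this gives $g \in \fib^R_\N \cap \weq = (\R^R_\N)^{\ac}$. Finally this very $\fs^R_\N$-conflation has first term $A \in \N$, which is exactly the extra condition appearing in the definition of $\T^R_\N$, so $g \in \T^R_\N$.

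There is no real obstacle here; the statement is short and each step is a one-line check. The only point that demands a little care — and which I would make explicit in the write-up — is the bookkeeping between the three relative theories $\C^L_\N$, $\C^R_\N$ and $\C_\N$ of \cref{prop:relative2}: one must keep straight that an "$\fs_\N$-conflation" is the strongest of the three notions of conflation, and that cones, cocones, inflations and deflations are computed compatibly across these structures precisely because every realization involved is a restriction of $\fs$.
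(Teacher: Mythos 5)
Your proof is correct, and it is more direct than the argument given in the paper. The overall shape is the same: split each membership into a ``conflation'' part and a ``weak equivalence'' part, and your handling of the former — every $\fs_{\N}$-conflation is an $\fs^{R}_{\N}$-conflation because $\BE_{\N}=\BE^{L}_{\N}\cap\BE^{R}_{\N}\sse\BE^{R}_{\N}$ and all realizations in \cref{prop:relative2} are restrictions of $\fs$ — is exactly what the paper leaves implicit. The difference is in the $\weq$-part. You simply invoke \cref{notation:section-6}: since $\weq=\Sn$ is by definition the smallest composition-closed class containing $\L_{\N}$ and $\R_{\N}$, the containments $\L_{\N}\sse\weq$ and $\R_{\N}\sse\weq$ are immediate; note this even yields the stronger statement $\R_{\N}\sse\T^{R}_{\N}$, the retraction hypothesis going unused. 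The paper instead re-derives these memberships: for $(\R_{\N})_{\ret}\sse\T^{R}_{\N}$ it observes that $Q$ sends the defining $\fs_{\N}$-conflation to a short exact sequence in $\C/\N$ whose first term vanishes, so the image of the deflation is invertible and saturation of $\Sn$ puts it in $\weq$; for $\L_{\N}\sse(\L^{R}_{\N})^{\ac}$ it rotates the triangle and applies the factorization-through-$\N$ criterion of \cref{lem:char_of_Sn}. Those routes are what one would need if $\weq$ were only accessible through its saturation or through the description $\weq=(\R_{\N})_{\ret}\circ\L_{\N}$ of \cref{thm:localization_of_tri_cat}; given the definition actually adopted in \cref{notation:section-6}, your shortcut is legitimate and shorter, and both arguments are valid.
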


\begin{proof}
The first claim follows from the fact that $Q$ sends an $\fs_{\N}$-conflation to a short exact sequence and that $\Sn = \weq$ is saturated.
For the second claim, suppose $f\colon A\to B$ belongs to $\L_\N$. We only need to show that $f\in\weq$. 
There is a triangle 
$
\begin{tikzcd}[column sep=1cm,cramped]
    C[-1] \arrow{r}{h[-1]}
    & A \arrow{r}{f}
    & B \arrow{r}{g}
    & C
\end{tikzcd}
$
in $\C$,
where $h[-1]$ (and $h$) factor through $\N$ (see Proposition~\ref{prop:relative2}), 
and with $C\in\N$. 
Moreover, $C\in\N$ implies $g$ factors through $\N$ and hence $f\in \weq$ by Lemma~\ref{lem:char_of_Sn}, so $f\in(\L^R_\N)^{\ac}$ and we are done. 
\end{proof}

In the following main result of this subsection, 
the weak Waldhausen category $(\C,\seq^{R}_{\N},\weq)$
can be thought of as a ``one-sided'' localization of $\C^{R}_{\N} = (\C,\BE^{R}_{\N},\fs^{R}_{\N})$.

\begin{theorem}
\label{thm:ex_seq_wWald_from_abel_loc}
The triplet $(\C,\seq^{R}_{\N},\veq)$ is a wbiWET category and $(\C,\seq^{R}_{\N},\weq)$ is a wWET category both with respect to $\C^{R}_{\N}$, and $\N=\C^{\weq}$. 
Moreover, there is a localization sequence 
\begin{equation}\label{seq:localization_sequence_wWald2}
\begin{tikzcd}
    (\N,(\seq^{R}_{\N})^{\weq},\veq^{\weq})
        \arrow[hook]{r}{\inc}
    &(\C,\seq^{R}_{\N},\veq)
        \arrow[hook]{r}{\id_{\C}} 
    &(\C,\seq^{R}_{\N},\weq)
\end{tikzcd}
\end{equation}
of wWET categories, which induces the following exact sequence of abelian groups.
\begin{equation}
\label{eqn:k0inc-is-isomorphism}
\begin{tikzcd}[column sep=1.3cm]
0
	\arrow{r}
&K_{0}(\N,\BE|_{\N},\fs|_{\N}) 
    \arrow{r}{K_{0}(\inc)}
& K_{0}(\C^{R}_{\N})
    \arrow{r}{}
&0
\end{tikzcd}
\end{equation}
Corresponding statements also hold for $\C^{L}_{\N}$.
\end{theorem}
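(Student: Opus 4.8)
The plan is to deduce the first four claims from \cref{thm:Quillen_localization1_new} and then to obtain the exactness of \eqref{eqn:k0inc-is-isomorphism} by a squeeze: right exactness from \cref{thm:Quillen_localization1_new} will give surjectivity of $K_{0}(\inc)$, and a retraction built from length-one $\N$-coresolutions will give injectivity. First I would note that $(\C,\seq^{R}_{\N},\veq)$ is the canonical weak biWaldhausen category of the extriangulated category $\C^{R}_{\N}$ (see \cref{ex:extria_cat_is_wbiWET} and \cref{prop:relative2}), so that assertion is immediate. For $(\C,\seq^{R}_{\N},\weq)$: axioms \ref{WC0}--\ref{WC2} are inherited from $(\C,\seq^{R}_{\N},\veq)$ since they do not mention weak equivalences, and \ref{WW0} is clear as $\weq=\Sn$ is closed under composition and contains $\Iso\C$. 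For the Gluing Axiom \ref{WW1} I would follow the strategy of \cref{lem:CSeqW-is-wbiWET}: given the data of \eqref{eqn:gluing-axiom} in $\C^{R}_{\N}$, apply (ET3) in $\C^{R}_{\N}$ to produce the required morphism $w\colon D_{1}\to D_{2}$ between distinguished weak cokernels, then apply the right exact functor $Q\colon\C^{R}_{\N}\to\C/\N$ of \cref{thm:localization_of_tri_cat}\ref{item:abel_loc} to obtain a commutative ladder with right exact rows in the abelian category $\C/\N$; since $x$ and $y\oplus z$ lie in $\Sn$ they are inverted by $Q$, so $Qw$ is an isomorphism of cokernels, and saturation of $\Sn$ forces $w\in\Sn=\weq$. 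As every distinguished sequence of $\seq^{R}_{\N}$ is an $\fs^{R}_{\N}$-conflation, $(\C,\seq^{R}_{\N},\weq)$ is a wWET category with respect to $\C^{R}_{\N}$. The equality $\N=\C^{\weq}$ holds because $X\in\C^{\weq}$ iff $0\to X\in\weq=\Sn$ iff $QX\cong 0$ (using that $\Sn$ is saturated) iff $X\in\Ker Q=\N$, the last step by \cref{thm:localization_of_tri_cat}\ref{item:extri_local} and \cref{def:exact_seq_ET}; and $(\seq^{R}_{\N})^{\weq}=\seq^{R}_{\N}\cap\N^{\to\to}$ is exactly the class of $\fs|_{\N}$-conflations by \cref{rem:extriangulated_structures_on_N}, so $(\N,(\seq^{R}_{\N})^{\weq},\veq^{\weq})$ is the canonical wWET structure on $(\N,\BE|_{\N},\fs|_{\N})$.

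Next, \cref{setup:Quillen_loc1} is met (via \cref{lem:localization_sequence_wWald}), so I would verify the hypothesis of \cref{thm:Quillen_localization1_new}: by \eqref{eqn:W-is-R-composed-L} we have $\weq=\Sn=(\R_{\N})_{\ret}\circ\L_{\N}$, and \cref{lem:comparison_of_morphisms} gives $(\R_{\N})_{\ret}\sse\T^{R}_{\N}$ and $\L_{\N}\sse(\L^{R}_{\N})^{\ac}$, so every morphism of $\weq$ is a composition of morphisms from $(\L^{R}_{\N})^{\ac}\cup\T^{R}_{\N}\cup\veq$. Hence \cref{thm:Quillen_localization1_new} applies: \eqref{seq:localization_sequence_wWald2} is a localization sequence of wWET categories, and it induces a right exact sequence $K_{0}(\N,\BE|_{\N},\fs|_{\N})\xrightarrow{K_{0}(\inc)}K_{0}(\C^{R}_{\N})\xrightarrow{G}K_{0}(\C,\seq^{R}_{\N},\weq)\to 0$, after identifying Grothendieck groups using \cref{ex:WET1} and the description of the first term above. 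To see $G=0$ I would show $K_{0}(\C,\seq^{R}_{\N},\weq)=0$: for $C\in\C$, since $\C=\cocone(\N,\N)$ there is a triangle $C\to N_{0}\to N_{1}\xrightarrow{h}C[1]$ with $N_{0},N_{1}\in\N$; because $h$ has domain $N_{1}\in\N$ it factors through an object of $\N$, so $C\to N_{0}\to N_{1}$ is an $\fs^{R}_{\N}$-conflation by \eqref{eqn:definition2-of-ELN}, whence $[C]=[N_{0}]-[N_{1}]=0$ as $N_{0},N_{1}\in\N=\C^{\weq}$ have trivial class. Thus $K_{0}(\inc)$ is surjective, and moreover $K_{0}(\C^{R}_{\N})$ is generated by $\{[N]\colon N\in\N\}$.

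The main task is injectivity of $K_{0}(\inc)$, for which I would construct a retraction $\phi\colon K_{0}(\C^{R}_{\N})\to K_{0}(\N,\BE|_{\N},\fs|_{\N})$ by $\phi([C])=[N_{0}]-[N_{1}]$ for a chosen $\fs^{R}_{\N}$-conflation $C\to N_{0}\to N_{1}$ with $N_{i}\in\N$ (available by the previous paragraph). \emph{Well-definedness} is a Schanuel-type argument: given two such coresolutions $C\to N_{0}\to N_{1}$ and $C\to N_{0}'\to N_{1}'$, the weak pushout in $\C^{R}_{\N}$ of $C\to N_{0}$ along $C\to N_{0}'$ (\cref{lem:wPO_wPB}\ref{item:wPO}) produces an object $P$ together with $\fs^{R}_{\N}$-conflations $C\xrightarrow{\binom{u}{u'}}N_{0}\oplus N_{0}'\to P$ and $N_{0}'\to P\to N_{1}$; extension-closure of $\N$ in $\C^{R}_{\N}$ (\cref{rem:extriangulated_structures_on_N}) gives $P\in\N$, and uniqueness of cones in the triangulated category $\C$ applied to $\binom{u}{u'}$ and its reordering $\binom{u'}{u}$ identifies $P$ with the analogous object built the other way, so $[N_{0}]+[N_{1}']=[P]=[N_{0}']+[N_{1}]$. \emph{Additivity} of $\phi$ over an $\fs^{R}_{\N}$-conflation $A\xrightarrow{f}B\xrightarrow{g}C$ is a dual horseshoe argument: surjectivity of $u_{C}^{*}$ on $\BE^{R}_{\N}(-,A)$ lets one lift the conflation along $C\to N_{0}^{C}$ to an $\fs^{R}_{\N}$-conflation $A\to\widetilde B\to N_{0}^{C}$ with a morphism $b\colon B\to\widetilde B$ (\cref{lem:wPO_wPB}\ref{item:wPB}), then a weak pushout along $A\to N_{0}^{A}$ (\cref{lem:wPO_wPB}\ref{item:wPO}) yields $N_{0}^{A}\to M\to N_{0}^{C}$ with $M\in\N$ and a morphism $\beta\colon\widetilde B\to M$; tracking cones in $\C$ through the octahedral axiom shows $\cone(\beta b)=:N_{1}^{B}\in\N$ and that $N_{1}^{B}$ sits in an $\fs|_{\N}$-conflation $N_{1}^{C}\to N_{1}^{B}\to N_{1}^{A}$, while $B\to M\to N_{1}^{B}$ is an $\fs^{R}_{\N}$-conflation (again since its connecting morphism has domain $N_{1}^{B}\in\N$). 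Then $\phi([B])=[M]-[N_{1}^{B}]=([N_{0}^{A}]+[N_{0}^{C}])-([N_{1}^{A}]+[N_{1}^{C}])=\phi([A])+\phi([C])$. Since also $\phi([N])=[N]$ for $N\in\N$ (use $N\xrightarrow{\id}N\to 0$), $\phi$ descends to a group homomorphism with $\phi\circ K_{0}(\inc)=\id$, giving injectivity; together with the previous paragraph, $K_{0}(\inc)$ is an isomorphism, which is \eqref{eqn:k0inc-is-isomorphism}. The statements for $\C^{L}_{\N}$ then follow by applying everything above in $\C^{\op}$, noting that $\C^{L}_{\N}$ for $\C$ is the opposite of $\C^{R}_{\N}$ for $\C^{\op}$ and that $\cone(\N,\N)=\C$ passes to $\cocone(\N,\N)=\C$ under this duality.

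The step I expect to be the main obstacle is precisely this injectivity: establishing the well-definedness and additivity of $\phi$ amounts to re-deriving a $K_{0}$-version of the (dual) Extriangulated Resolution Theorem internally to $\C^{R}_{\N}$, and the bookkeeping of cones across the octahedral axiom in those two lemmas is where the technical care is needed; the rest is a bounded amount of axiom-checking and bookkeeping with results already established in the paper.
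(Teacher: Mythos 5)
Most of your argument coincides with the paper's: the verification that $(\C,\seq^{R}_{\N},\weq)$ is weak Waldhausen via right exactness of $Q$ and saturation of $\Sn$, the identification $\N=\C^{\weq}$, and the application of \cref{thm:Quillen_localization1_new} through \eqref{eqn:W-is-R-composed-L} and \cref{lem:comparison_of_morphisms} are exactly the paper's steps. Your surjectivity argument (showing $K_{0}(\C,\seq^{R}_{\N},\weq)=0$ by coresolving $C$ by a length-one $\N$-coresolution coming from $\cocone(\N,\N)=\C$) is a harmless variation. Your Schanuel-type well-definedness argument for the retraction $\phi$ is also correct.

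The genuine gap is in the additivity step of $\phi$. You assert that $u_{C}^{*}\colon \BE^{R}_{\N}(N_{0}^{C},A)\to\BE^{R}_{\N}(C,A)$ is surjective so that the conflation $A\to B\to C$ lifts along $C\to N_{0}^{C}$. There is no reason for this: objects of $\N$ are not $\BE^{R}_{\N}$-injective, and the obstruction to lifting $\delta\in\BE^{R}_{\N}(C,A)$ along $u_{C}$ is the composite $N_{1}^{C}[-1]\to C\xrightarrow{\delta}A[1]$, which need not vanish merely because $\delta$ factors through $\N$ (e.g.\ whenever $\BE(\N,\N)\neq 0$, as happens for $\N=\X^{\perp_{0}}$ in general). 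So the ``dual horseshoe'' as sketched breaks down, and with it the additivity of $\phi$ over $\fs^{R}_{\N}$-conflations. What you are trying to re-derive by hand is precisely the $K_{0}$-statement of the dual Extriangulated Resolution Theorem, and this is how the paper closes the argument: it checks that every object of $\C^{R}_{\N}$ has a finite $\N$-coresolution (of length one, from $\cocone(\N,\N)=\C$ together with \cref{prop:relative2}), that $\N$ is extension-closed in $\C^{R}_{\N}$ and closed under cones of $\fs^{R}_{\N}$-inflations by \cref{thm:localization_of_tri_cat}\ref{item:coresolving_subcat}, and then invokes the dual of \cite[Thm.~4.5]{OS23} to conclude that $K_{0}(\inc)$ is an isomorphism with inverse $[C]\mapsto[N_{0}]-[N_{1}]$. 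Either cite that theorem (as the paper does) or reproduce its proof; the naive lifting argument you propose does not suffice.
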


\begin{proof}
Since $\C^{R}_{\N}$ is an extriangulated category, the triplet $(\C,\seq^{R}_{\N},\veq)$ is just the canonical wbiWET category with respect to $\C^{R}_{\N}$ by Example~\ref{ex:extria_cat_is_wbiWET}.
Now we shall show that $(\C,\seq^{R}_{\N},\weq)$ is weak Waldhausen, from which it will follow that it is wWET with respect to $\C^{R}_{\N}$. 
Conditions \ref{WC0}--\ref{WC2} are satisfied as they already hold for $(\C,\seq^{R}_{\N},\veq)$, and 
\ref{WW0} follows from \ref{MS0} for $\Sn=\weq$. 
To show the Gluing Axiom \ref{WW1}, consider the diagram \eqref{eqn:gluing-axiom}, where $x,y,z$ all lie in $\weq=\Sn$. 
As in the proof of Lemma~\ref{lem:CSeqW-is-wbiWET}, we obtain a morphism \eqref{diag:ex_seq_wWald_from_NOS} of $\fs^{R}_{\N}$-triangles in $\C^{R}_{\N}$. 
Since $Q$ is right exact on $\fs^{R}_{\N}$-triangles by Theorem~\ref{thm:localization_of_tri_cat}\ref{item:abel_loc}, the morphism  
\[
    \begin{tikzcd}[column sep=1.5cm]
     QA_{1} 
        \arrow{r}{Q\begin{psmallmatrix}
         f_{1}\\-c_{1}
        \end{psmallmatrix}}
        \arrow{d}{Qx}[swap]{\cong}
    & QB_{1}\oplus QC_{1}
        \arrow{r}{Q\begin{psmallmatrix}
            b_{1}, \amph g_{1}
        \end{psmallmatrix}}
        \arrow{d}{Q(y \oplus z)}[swap]{\cong}
    & QD_{1}
        \arrow{d}{Qw} 
        \arrow{r}
    & 0
    \\
    QA_{2} 
        \arrow{r}{Q\begin{psmallmatrix}
         f_{2}\\-c_{2}
        \end{psmallmatrix}}
    & QB_{2}\oplus QC_{2}
        \arrow{r}{Q\begin{psmallmatrix}
            b_{2}, \amph g_{2}
        \end{psmallmatrix}}
    & QD_{2} \arrow{r}
    & 0
    \end{tikzcd}
\]
of right exact sequences in $\C/\N$ 
shows that $Qw$ is an isomorphism. 
Thus, $w\in\weq$ as $\weq$ is saturated.

Thus, we have established that \eqref{seq:localization_sequence_wWald2} is a sequence of skeletally small weak Waldhausen categories and, moreover, Setup~\ref{setup:Quillen_loc1} is met. 
We wish to use Theorem~\ref{thm:Quillen_localization1_new} to deduce  \eqref{seq:localization_sequence_wWald2} is a localization sequence in $\wWald$, so we must show that 
$\weq = \Sn$ consists of finite compositions of morphisms from 
$(\L^{R}_{\N})^{\ac} \cup \T^{R}_{\N} \cup \veq$ (see Notation~\ref{notation:section-6}\ref{item:LRNac},\ref{item:TRN}).  
We have 
$\weq = (\R_{\N})_{\ret}\circ\L_{\N}$ (see \eqref{eqn:W-is-R-composed-L}) 
and Lemma~\ref{lem:comparison_of_morphisms} tells us this is enough. 
The localization sequence \eqref{seq:localization_sequence_wWald2} induces the following right exact sequence in $\Ab$.
\begin{equation}
\label{eqn:prop6-14-sequence1}
\begin{tikzcd}[column sep=1.5cm]
K_{0}(\N,(\seq^{R}_{\N})^{\weq},\veq^{\weq})
    \arrow{r}{K_{0}(\inc)}
& K_{0}(\C,\seq^{R}_{\N},\veq)
    \arrow{r}{K_{0}(\id_{\C})}
& K_{0}(\C,\seq^{R}_{\N},\weq)
    \arrow{r}
&[-1cm] 0
\end{tikzcd}
\end{equation}

Recall that $(\C,\seq_{\N},\veq)$ denotes the wbiWET category corresponding to $\C_{\N}$. 
As $\N$ is thick in $\C_{\N}$  by Theorem~\ref{thm:localization_of_tri_cat}\ref{item:N_thick}, we can apply Lemma~\ref{lem:acyclics-is-N} to obtain $\N=\C^{\weq}$ and that $(\N,(\seq_{\N})^{\weq},\veq^{\weq})$
is the weak biWaldhausen category corresponding to the extriangulated category 
$(\N,\BE|_\N,\fs|_\N)$. 
It follows from Remark~\ref{rem:extriangulated_structures_on_N}\ref{item:inherited-structures-on-N-all-the-same} that 
$(\N,(\seq^{R}_{\N})^{\weq},\veq^{\weq}) = (\N,(\seq_{\N})^{\weq},\veq^{\weq})$
is the wbiWET category corresponding to the extriangulated category 
$(\N,\BE_\N^R|_\N,\fs_\N^R|_\N)  = (\N,\BE|_\N,\fs|_\N)$. 
In particular, we have $K_{0}(\N,(\seq^{R}_{\N})^{\weq},\veq^{\weq}) = K_{0}(\N,\BE|_{\N},\fs|_{\N})$ (see Example~\ref{ex:WET1}). 

We also have 
$K_{0}(\C,\seq^{R}_{\N},\veq) = K_{0}(\C^{R}_{\N})$ by Example~\ref{ex:WET1} again, and so \eqref{eqn:prop6-14-sequence1} is 
\begin{equation}
\label{seq:homotopy_fiber6}
\begin{tikzcd}[column sep=1.5cm]
K_{0}(\N,\BE|_{\N},\fs|_{\N}) 
    \arrow{r}{K_{0}(\inc)}
& K_{0}(\C^{R}_{\N})
    \arrow{r}{K_{0}(\id_{\C})}
& K_{0}(\C,\seq^{R}_{\N},\weq)
    \arrow{r}
&[-1cm] 0.
\end{tikzcd}
\end{equation}
To see that \eqref{eqn:k0inc-is-isomorphism} is exact, it is enough to show 
$K_{0}(\inc)$ is an isomorphism. 
The assumption $\cone(\N,\N) = \C$, which is equivalent to $\cocone(\N,\N)=\C$, implies that each object $C\in\C$ admits a triangle 
$\begin{tikzcd}[column sep=0.5cm, cramped]
C \arrow{r}{}& N_0 \arrow{r}{}& N_1 \arrow{r}{}& C[1]
\end{tikzcd}$
in $\C$ with $N_i\in\N$. 
By Proposition~\ref{prop:relative2}, 
this triangle yields the $\fs_{\N}^R$-conflation 
$\begin{tikzcd}[column sep=0.5cm, cramped]
C \arrow{r}{}& N_0 \arrow{r}{}& N_1,
\end{tikzcd}$
which is thus a finite $\N$-coresolution of $C\in\C^{R}_{\N}$ (see \cite[Def.~4.2]{OS23}). 
Second, we have that $\N$ is extension-closed in $\C^{R}_{\N}$, and it closed under taking cones of $\fs^R_\N$-inflations by Theorem~\ref{thm:localization_of_tri_cat}\ref{item:coresolving_subcat}. 
Hence, the dual of \cite[Thm~4.5]{OS23} shows that $K_{0}(\inc)$ is an isomorphism with inverse given by $[C] \mapsto [N_0] - [N_1]$. 
\end{proof}

\begin{remark}
Unlike the case of the extriangulated localization, such as Proposition~\ref{prop:ex_seq_wWald_from_NOS}, 
the wWET category $(\C,\seq^{R}_{\N},\weq)$ appearing in Theorem~\ref{thm:ex_seq_wWald_from_abel_loc} is not wcoWET with respect to $\C^{R}_{\N}$ in general. 
Therefore, \eqref{seq:localization_sequence_wWald2} is not necessarily a sequence of wbiWET categories.
\end{remark}

\subsection{The JS index isomorphism via localization}
\label{sec:JS_index}

Our aim in this last part of Section~\ref{sec:application-to-tri-cat} is to prove Corollary~\ref{cor:JS_index} via some form of reduction technique. Reduction techniques concerning $n$-cluster tilting subcategories have appeared several times before, e.g.\ \cite{IY08,AI12,Jas15,FMP23}. 
In the rest of this section, $\X$ denotes an $n$-cluster tilting subcategory of the triangulated category $\C$ (see Definition~\ref{def:n-CT-triangulated}). 
We also put $\N \deff \X^{\perp_{0}}$, so that we are in a special case of Setup~\ref{setup:abelian_local}.

For each integer $1\leq i\leq n-1$, we put $\N_{i}\deff\X[i]*\cdots*\X[n-1]$.
There is a cotorsion pair 
$(\X*\cdots *\X[i-1],\N_{i}[-1])$ on $\C$ in the sense of \cite[Def.~2.1]{Nak11} by \cite[Prop.~3.2, Thm.~3.4]{Bel15}, and so $\N_{i}$ is extension-closed and closed under direct summands in the triangulated category $(\C,\BE,\fs)$.
Note that $\N_{n-1}=\X[n-1]$, there are inclusions
$\N_{n-1} \sse \N_{n-2} \sse \cdots \sse\N_{1} \sse \C$, and 
$\N_{1} = \X^{\perp_{0}} = \N$. 
Recall that an object $P$ in an extriangulated category $(\D,\BF,\ft)$ is \emph{$\BF$-projective} if $\BF(P,D) = 0$ for all $D\in\D$; see \cite[Def.~3.23, Prop.~3.24]{NP19}.

\begin{lemma}\label{lem:reduction1}
Let $i\in\set{1,\ldots,n-1}$. 
The following assertions hold.
\begin{enumerate}[label=\textup{(\arabic*)}]

\item\label{item:Ni-inherits-extriangulated-structures} The subcategory $\N_{i}\sse\C$ inherits extriangulated structures $(\N_{i},\BE|_{\N_{i}},\fs|_{\N_{i}})$ from the triangulated category $(\C,\BE,\fs)$ and $(\N_{i},\BE^{R}_{\N}|_{\N_{i}},\fs^{R}_{\N}|_{\N_{i}})$ from the extriangulated category $\C^{R}_{\N}$. 

\item\label{item:ET-structures-on-Ni-coincide} We have 
$(\N_{i},\BE|_{\N_{i}},\fs|_{\N_{i}})=(\N_{i},\BE^{R}_{\N}|_{\N_{i}},\fs^{R}_{\N}|_{\N_{i}})$.

\item\label{item:projectives-in-Ni}
The subcategory of $\BE|_{\N_{i}}$-projectives in $(\N_{i},\BE|_{\N_{i}},\fs|_{\N_{i}})$ coincides with $\X[i]$ and $(\N_{i},\BE|_{\N_{i}},\fs|_{\N_{i}})$ has enough $\BE|_{\N_{i}}$-projectives. 
In particular, $\X[i]$ is contravariantly finite, rigid and closed under direct summands in $\N_{i}$.
\end{enumerate}
\end{lemma}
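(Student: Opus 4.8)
The plan is to verify the three assertions in order, leveraging the structural facts about the subcategories $\N_{i}$ recorded just before the lemma statement. First I would establish \ref{item:Ni-inherits-extriangulated-structures}. Since each $\N_{i}$ is extension-closed and closed under direct summands in the triangulated category $(\C,\BE,\fs)$ — as noted via the cotorsion pair $(\X*\cdots*\X[i-1],\N_{i}[-1])$ and \cite[Prop.~3.2, Thm.~3.4]{Bel15} — \cref{prop:ext_closed_ET} immediately gives the inherited extriangulated structure $(\N_{i},\BE|_{\N_{i}},\fs|_{\N_{i}})$. For the second structure, I would check that $\N_{i}$ is also extension-closed in $\C^{R}_{\N}$; this follows because $\BE^{R}_{\N}\sse\BE$ is a subfunctor, so any $\fs^{R}_{\N}$-conflation with outer terms in $\N_{i}$ is in particular an $\fs$-conflation with outer terms in $\N_{i}$, hence has middle term in $\N_{i}$ by extension-closedness in $(\C,\BE,\fs)$; then \cref{prop:ext_closed_ET} applies again. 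This is essentially the same reasoning used in \cref{rem:extriangulated_structures_on_N}\ref{item:N-extension-closed-in-all-relative-structures}, just with $\N_{i}$ in place of $\N$.

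Next, for \ref{item:ET-structures-on-Ni-coincide}, I would show the two extriangulated structures on $\N_{i}$ agree, i.e.\ $\BE|_{\N_{i}} = \BE^{R}_{\N}|_{\N_{i}}$ as subfunctors of $\BE$ restricted to $\N_{i}^{\op}\times\N_{i}$. The containment $\BE^{R}_{\N}|_{\N_{i}}\sse\BE|_{\N_{i}}$ is automatic. For the reverse, take $A,C\in\N_{i}$ and $h\in\BE(C,A)=\C(C,A[1])$; I need $h\in\BE^{R}_{\N}(C,A)$, which by \eqref{eqn:definition2-of-ELN} means $h$ factors through an object of $\N$. Here is where I would use that $\N_{i}\sse\N_{1}=\N=\X^{\perp_{0}}$ together with the position of $A$ inside $\N_{i}$: since $A\in\N_{i}=\X[i]*\cdots*\X[n-1]$, the shift $A[1]\in\X[i-1]*\cdots*\X[n-2]$, and for $i\geq 1$ this lies in $\X*\X[1]*\cdots*\X[n-2]$, all of whose pieces except possibly $\X$ itself... — more carefully, the cleanest route is: the triangle witnessing $A\in\N_{i}$ rotates to show that $h\colon C\to A[1]$ automatically factors through $\N$ because $A[1]$ has an $\N$-cover structure, or alternatively one invokes \cref{rem:extriangulated_structures_on_N}\ref{item:inherited-structures-on-N-all-the-same} applied to the subcategory $\N_{i}\sse\N$ and notes that restricting $\BE^{R}_{\N}$ further to $\N_{i}$ factors through restricting to $\N$ first, where it already equals $\BE|_{\N}$. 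I expect this identification to be the main obstacle, as it requires tracking exactly how the "factors through $\N$" condition interacts with the filtration $\N_{n-1}\sse\cdots\sse\N_{1}$; a clean statement is that $\N_{i}\sse\N$ and on $\N$ the relative and ambient structures coincide by \cref{rem:extriangulated_structures_on_N}\ref{item:inherited-structures-on-N-all-the-same}, so restricting all the way to $\N_{i}$ the two agree as well.

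Finally, for \ref{item:projectives-in-Ni}, I would identify the $\BE|_{\N_{i}}$-projectives. Given the cotorsion pair $(\X*\cdots*\X[i-1],\N_{i}[-1])$ on $\C$, an object $P\in\N_{i}$ satisfies $\BE|_{\N_{i}}(P,N)=\C(P,N[1])=0$ for all $N\in\N_{i}$ iff $P[1]\perp\N_{i}$, i.e.\ $P\in{}^{\perp_1}(\N_{i}[-1])$ within $\N_{i}$; I would unwind the cluster-tilting equations of \cref{def:n-CT-triangulated} to see this forces $P\in\X[i]$, and conversely that every object of $\X[i]$ is $\BE|_{\N_{i}}$-projective since $\C(\X[i],\X[j][1])=\C(\X,\X[j-i+1])=0$ for $i\leq j\leq n-1$ using $1\leq j-i+1\leq n-1$ (the boundary case $j-i+1 = 0$, i.e. $j = i-1$, does not occur as $j\ge i$; and $j - i + 1 \le n-1$). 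For "enough projectives", I would produce for each $C\in\N_{i}$ an $\fs|_{\N_{i}}$-deflation $X\to C$ with $X\in\X[i]$ and cocone in $\N_{i}$: this comes from the approximation triangle attached to the cotorsion pair, noting the cocone lands in $\N_{i+1}\sse\N_{i}$ (with $\N_{n}\deff 0$). The stated consequences — $\X[i]$ contravariantly finite, rigid, closed under summands in $\N_{i}$ — are then immediate: contravariant finiteness from enough projectives, rigidity from being the projectives, and closure under summands either directly or from \cite[Prop.~3.24]{NP19}.
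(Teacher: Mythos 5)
Your treatment of parts (1) and (2) ends up being exactly the paper's argument: extension-closedness of $\N_{i}$ in $(\C,\BE,\fs)$ (hence in $\C^{R}_{\N}$, since $\BE^{R}_{\N}\sse\BE$) gives the inherited structures, and part (2) follows by restricting the equality $(\N,\BE|_{\N},\fs|_{\N})=(\N,\BE^{R}_{\N}|_{\N},\fs^{R}_{\N}|_{\N})$ of \cref{rem:extriangulated_structures_on_N} along $\N_{i}\sse\N$ — the preliminary attempt to verify the factorization condition by hand is unnecessary once you invoke that remark, since any $h\in\BE(C,A)$ with $C\in\N$ factors through $C$ itself.

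For part (3) you take a slightly different route from the paper, and two points need repair. First, projectivity of $\X[i]$: your computation $\C(\X[i],\X[j][1])=0$ for $i\le j\le n-1$ only handles the generating pieces $\X[j]$ of $\N_{i}$, so you still need a d\'evissage over the filtration $\N_{i}=\X[i]\ast\cdots\ast\X[n-1]$ (apply $\C(X[i],-)$ to the defining triangles) to get $\BE(\X[i],\N_{i})=0$; the paper sidesteps this by quoting the cotorsion pair $(\X\ast\cdots\ast\X[i-1],\N_{i}[-1])$, whose Ext-vanishing is already stated for the whole $\ast$-product. Second, enough projectives: the deflation $X\to C$ with $X\in\X[i]$ comes from rotating the triangle $X\to C\to N\to X[1]$ furnished by $\N_{i}=\X[i]\ast\N_{i+1}$, and its cocone is $N[-1]\in\N_{i+1}[-1]=\X[i]\ast\cdots\ast\X[n-2]$, \emph{not} an object of $\N_{i+1}$ as you claim (e.g.\ $\X[i]\not\sse\N_{i+1}$); the argument survives only because $\N_{i+1}[-1]\sse\N_{i}$, which is what you actually need for the sequence to be an $\fs|_{\N_{i}}$-conflation. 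Also note that the relevant cotorsion-pair approximation triangle alone would only give a deflation from $\X\ast\cdots\ast\X[i]$, so the $\ast$-decomposition of $\N_{i}$ itself is the right source. Finally, for the converse inclusion (projectives lie in $\X[i]$), "unwinding the cluster-tilting equations" can be completed but again requires the same d\'evissage; the paper's argument is cleaner: once enough projectives is known, any $\BE|_{\N_{i}}$-projective splits off the deflation from an object of $\X[i]$, and $\X[i]$ is closed under direct summands.
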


\begin{proof}
\ref{item:Ni-inherits-extriangulated-structures} follows immediately from $\N_{i}$ being extension-closed in $(\C,\BE,\fs)$ and hence also in $\C^{R}_{\N}$.

\ref{item:ET-structures-on-Ni-coincide} is a consequence of  
$(\N,\BE|_{\N},\fs|_{\N}) 
= (\N,\BE^{R}_{\N}|_{\N},\fs^{R}_{\N}|_{\N})$ 
(see Remark~\ref{rem:extriangulated_structures_on_N}\ref{item:inherited-structures-on-N-all-the-same}) 
and $\N_{i}\sse\N$.

For \ref{item:projectives-in-Ni}, we see that $\X[i]$ consists of $\BE|_{\N_{i}}$-projective objects as $(\X*\cdots *\X[i-1],\N_{i}[-1])$ is a cotorsion pair, hence 
$\BE|_{\N_{i}}(\X[i],\N_{i}) = \BE(\X[i],\N_{i}) = 0$. 
If $i=n-1$, then there is nothing to show, so suppose $i\leq n-2$. 
Since $\N_{i}=\X[i]*(\X[i+1]*\cdots *\X[n-1])$, we have 
$\N_{i}= \cone(\X[i]*\cdots *\X[n-2],\X[i])$, which shows that $(\N_{i},\BE|_{\N_{i}},\fs|_{\N_{i}})$ has enough $\BE|_{\N_{i}}$-projectives. 
If $C$ is $\BE|_{\N_{i}}$-projective, then this also shows that it must be a direct summand of an object in $\X[i]$ and so $C\in\X[i]$. Thus, $\X[i]$ is precisely the collection of $\BE|_{\N_{i}}$-projectives. 
The remaining claims follow from this.
\end{proof}

To simplify notation, we put $K_{0}(\N_{i}) \deff K_{0}(\N_{i},\BE|_{\N_{i}},\fs|_{\N_{i}})$. 
Note that the extriangulated structure 
$(\N_{n-1},\BE|_{\N_{n-1}},\fs|_{\N_{n-1}})$
is just the split exact structure on $\X[n-1]$ since $\BE(\X[n-2],\X[n-2])\cong \BE(\X,\X) = 0$, and so $K_{0}(\N_{n-1}) = K^{\sp}_{0}(\X[n-1])$.

For any additive category $\A$, we denote the category of finitely presented functors from $\A^{\op}$ to $\Ab$ by $\mod\A$. Recall that a functor $\mathsf{M}\colon \A^{\op} \to \Ab$ is \emph{finitely presented} if there is an exact sequence
$
\begin{tikzcd}[cramped, column sep=0.5cm]
\A(-,A)
    \arrow{r}
& \A(-,B)
    \arrow{r}
& M 
    \arrow{r}
& 0
\end{tikzcd}
$
for some objects $A,B\in\A$. 
For each $0 \leq i \leq n-1$, consider the restricted Yoneda embedding
$\BY|_{\X[i]} \colon \N_{i}\to\mod(\X[i])$ that sends an object $C\in \N_{i}$ to the contravariant additive functor 
$\BY|_{\X[i]}(C) \deff \N_{i}(-,C)|_{\X[i]}$. 
Note that 
$\BY|_{\X[i]}$ is essentially surjective and 
$\Ker (\BY|_{\X[i]}) = \N_{i+1}$ (viewed as subcategories of $\N_{i}$) for $1\leq i \leq n-2$. 
Furthermore, since the functor $C\mapsto \C(-,C)$ is a cohomological functor $\BY\colon \C \to \mod\C$, we know that $\BY|_{\X[i]}$ sends 
an $\fs|_{\N_{i}}$-conflation $A\to B\to C$ to an exact sequence
$\BY|_{\X[i]}(A)\to \BY|_{\X[i]}(B)\to \BY|_{\X[i]}(C)$ in $\mod(\X[i])$. 
The next result says this is actually a right exact sequence.

\begin{lemma}\label{lem:reduction_right_exact}
For $1 \leq i \leq n-1$, the functor
$
\BY|_{\X[i]} \colon (\N_{i},\BE|_{\N_{i}},\fs|_{\N_{i}})\to\mod(\X[i])
$
is right exact in the sense of Definition~\ref{def:left_right_exact_functors}.
\end{lemma}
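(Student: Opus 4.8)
The plan is to reduce the right exactness of $\BY|_{\X[i]}$ to the vanishing of a single morphism group in $\C$. Let $A\xto{f}B\xto{g}C$ be an $\fs|_{\N_{i}}$-conflation. By \cref{lem:reduction1}\ref{item:Ni-inherits-extriangulated-structures} this is also an $\fs$-conflation in the triangulated category $\C$, so it extends to a triangle $A\xto{f}B\xto{g}C\xto{\delta}A[1]$ in $\C$. Exactness of $\BY|_{\X[i]}(A)\to\BY|_{\X[i]}(B)\to\BY|_{\X[i]}(C)$ in $\mod(\X[i])$ has already been observed before the statement of the lemma (it follows by applying $\C(X,-)$, $X\in\X[i]$, to this triangle), so the only remaining point is that $\BY|_{\X[i]}(g)$ is an epimorphism in $\mod(\X[i])$. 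Since $\X[i]$ is contravariantly finite in the triangulated category $\C$, it has weak kernels, so $\mod(\X[i])$ is abelian with epimorphisms detected pointwise; hence it suffices to show that $\C(X,g)\colon\C(X,B)\to\C(X,C)$ is surjective for every $X\in\X[i]$.

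First I would fix $X\in\X[i]$ and a morphism $c\colon X\to C$, and apply $\C(X,-)$ to the triangle above; the resulting long exact sequence shows that $c$ factors through $g$ if and only if $\delta c=0$ in $\C(X,A[1])$. So the whole argument reduces to the claim that $\C(\X[i],A[1])=0$, or equivalently $\C(\X,A[1-i])=0$ after applying the autoequivalence $[-i]$.

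Finally I would establish this vanishing from the $n$-cluster-tilting combinatorics. Since $A\in\N_{i}=\X[i]*\cdots*\X[n-1]$, shifting by $[1-i]$ yields $A[1-i]\in\X[1]*\cdots*\X[n-i]$. For each index $j$ with $1\le j\le n-i$ one has $1\le j\le n-1$ (as $1\le i\le n-1$), so $\C(\X,\X[j])=0$ by \cref{def:n-CT-triangulated}; and because $\C(\X,-)$ is homological, the class of objects $Y$ with $\C(\X,Y)=0$ is closed under extensions. Therefore $\C(\X,A[1-i])=0$, whence $\delta c\in\C(X,A[1])=0$ and $c$ lifts along $g$. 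Combined with the middle exactness, this shows $\BY|_{\X[i]}$ is right exact in the sense of \cref{def:left_right_exact_functors}. The main thing requiring care is the bookkeeping with shifts and the $*$-operation to locate $A[1-i]$ inside $\X[1]*\cdots*\X[n-i]$ and to conclude the desired vanishing; the remaining steps are routine applications of the triangulated long exact sequence and the definitions.
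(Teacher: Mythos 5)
Your proof is correct, and it follows the same underlying reduction as the paper: everything comes down to the vanishing $\C(\X[i],A[1])=0$ for $A\in\N_{i}$, after which exactness at $\BY|_{\X[i]}(B)$ (already noted before the lemma) and the long exact sequence obtained by applying $\C(X,-)$ to the triangle $A\to B\to C\to A[1]$ give right exactness. The only divergence is how that vanishing is justified. The paper simply quotes \cref{lem:reduction1}\ref{item:projectives-in-Ni}: $\X[i]$ is the class of $\BE|_{\N_{i}}$-projectives, so $\BY|_{\X[i]}(A[1])=\BE|_{\N_{i}}(\X[i],A)=0$; that lemma in turn rests on Beligiannis' cotorsion pair $(\X*\cdots*\X[i-1],\N_{i}[-1])$. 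You instead reprove the vanishing from scratch: shifting $A\in\N_{i}=\X[i]*\cdots*\X[n-1]$ gives $A[1-i]\in\X[1]*\cdots*\X[n-i]$, each layer $\X[j]$ with $1\le j\le n-i\le n-1$ is killed by $\C(\X,-)$ by \cref{def:n-CT-triangulated}, and the vanishing class is extension-closed because $\C(X,-)$ is homological; your bookkeeping with the shifts and the $*$-operation is accurate. So your argument is a self-contained, elementary substitute for the citation of \cref{lem:reduction1}\ref{item:projectives-in-Ni} (in effect you reprove the projectivity statement it encodes), at the cost of redoing work the paper has already packaged; the paper's version is shorter given that lemma is needed elsewhere anyway, while yours makes the cluster-tilting combinatorics behind the vanishing explicit.
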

\begin{proof}
Since $\X[i]$ is the subcategory of $\BE|_{\N_{i}}$-projectives in $(\N_{i},\BE|_{\N_{i}},\fs|_{\N_{i}})$ by Lemma~\ref{lem:reduction1}\ref{item:projectives-in-Ni}, we have 
$
\BY|_{\X[i]}(A[1]) = 
\N_{i}(\X[i],A[1])
    = \BE|_{\N_{i}}(\X[i],A) 
    = 0 
$ 
for any $A\in \N_{i}$.
The result follows.
\end{proof}

Although the functor $\BY|_{\X}\colon \C \to \mod\X$ fails to be right exact in general, its restriction to $\C^{R}_{\N}$ is so. 
Indeed, by \cite[Exam.~3.20(2)]{OS23} we know that there is an exact equivalence 
$G \colon \C/\N \overset{\sim}{\lra} \mod\X$ of abelian categories with $\BY|_{\X} = G Q$. Thus, as $Q \colon \C^{R}_{\N} \to \C/\N$ is right exact (see Theorem~\ref{thm:localization_of_tri_cat}\ref{item:abel_loc}), we have that 
$\BY|_{\X}\colon\C^{R}_{\N}\to \mod\X$ is right exact.

In particular, we have the diagram 
\begin{equation}
\label{diag:summary_reduction}
\begin{tikzcd}[row sep=0.6cm]
    \N_{n-1} \arrow[hook]{r}& \N_{n-2} \arrow[hook]{r}\arrow[two heads]{d}{\BY|_{\X[n-2]}}& \cdots \arrow[hook]{r}& \N_1 \arrow[hook]{r}\arrow[two heads]{d}{\BY|_{\X[1]}}& \C^{R}_{\N}\arrow[two heads]{d}{\BY|_{\X}} \\
    {}& \mod(\X[n-2]) & {} & \mod(\X[1]) & \mod\X
\end{tikzcd}
\end{equation}
in which the horizontal arrows are the canonical inclusions and the 
vertical arrows are the right exact restricted Yoneda functors.
To establish the JS index isomorphism \eqref{eqn:JS_index}, we will show there is a chain of isomorphisms as follows, induced by the top row of \eqref{diag:summary_reduction}.
\begin{equation}
\label{diag:summary_reduction_Grothendieck_groups}
\begin{tikzcd}
K^{\sp}_{0}(\X[n-1]) = K_{0}(\N_{n-1}) \arrow{r}{\cong}& K_{0}(\N_{n-2}) \arrow{r}{\cong}& \cdots \arrow{r}{\cong}& K_{0}(\N_1) \arrow{r}{\cong}& K_{0}(\C^{R}_{\N})
\end{tikzcd}
\end{equation} 
Indeed, the JS index isomorphism then follows from noting that the equivalence \mbox{$\X\overset{\sim}{\lra}\X[n-1]$} yields an isomorphism 
$\zeta\colon K^{\sp}_{0}(\X) \overset{\cong}{\lra} K^{\sp}_{0}(\X[n-1])$ given by 
\begin{equation}\label{eqn:iso-X-Xn-1}
    [X] \overset{\zeta}{\longmapsto} (-1)^{n-1}[X[n-1]].
\end{equation}
The sign is chosen because of the equality 
$[X] = (-1)^{n-1}[X[n-1]]$ in $K_{0}(\C) = K_{0}(\C,\BE,\fs)$, the Grothendieck group of the triangulated category $\C$.

As a first step in the proof of Corollary~\ref{cor:JS_index}, we already have the rightmost isomorphism in \eqref{diag:summary_reduction_Grothendieck_groups} by Theorem~\ref{thm:ex_seq_wWald_from_abel_loc}. 
The remaining steps of our reduction process from $\N_{i}$ to $\N_{i+1}$ will be shown by mimicking the proof of Theorem~\ref{thm:ex_seq_wWald_from_abel_loc}. 
To this end, we will impose natural wWET structures on $\N_{i}$ for any $1\leq i\leq n-1$. We put (cf.\ Notation~\ref{notation:section-6}):
\begin{enumerate}[label=\textup{(\roman*)}]
    \item $\seq^{R}_{\N}|_{\N_{i}}$ to be the collection of all $\fs^{R}_{\N}|_{\N_{i}}$-conflations;
    \item $\veq_i\deff \Iso\N_{i}$; and
    \item $\weq_i\deff\Set{f\in\N^{\to}_i|\BY|_{\X[i]}(f)\text{ is an isomorphism}}$.
\end{enumerate}
It follows that $(\N_{i},\seq^{R}_{\N}|_{\N_{i}},\veq_i)$ is the canonical wbiWET category associated to $(\N_{i},\BE|_{\N_{i}},\fs|_{\N_{i}})$. 
We remark that making the analogous declarations for $i=0$ would yield the wWET categories $(\C,\seq^{R}_{\N},\veq)$ and $(\C,\seq^{R}_{\N},\weq)$, using the equivalence $G \colon \C/\N \to \mod\X$ to see $\weq_{0} = \weq$.

\begin{lemma}\label{lem:Ni-with-Wi-is-wWET}
The triplet 
$(\N_{i},\seq^{R}_{\N}|_{\N_{i}},\weq_i)$ 
is a wWET category with respect to $(\N_{i},\BE|_{\N_{i}},\fs|_{\N_{i}})$. 
\end{lemma}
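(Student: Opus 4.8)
The plan is to follow the proof of \cref{thm:ex_seq_wWald_from_abel_loc} line for line, with the extriangulated localization functor $Q$ there replaced by the restricted Yoneda functor $\BY|_{\X[i]}$ and the relative structure $\C^{R}_{\N}$ replaced by $(\N_{i},\BE|_{\N_{i}},\fs|_{\N_{i}})$. The ``wWET with respect to'' clause of \cref{def:weak_Waldhausen_ET} is immediate here: by \cref{lem:reduction1}\ref{item:ET-structures-on-Ni-coincide} we have the equality of extriangulated categories $(\N_{i},\BE^{R}_{\N}|_{\N_{i}},\fs^{R}_{\N}|_{\N_{i}}) = (\N_{i},\BE|_{\N_{i}},\fs|_{\N_{i}})$, so every distinguished sequence in $\seq^{R}_{\N}|_{\N_{i}}$, being an $\fs^{R}_{\N}|_{\N_{i}}$-conflation, is in particular an $\fs|_{\N_{i}}$-conflation. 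It therefore remains to verify the axioms of \cref{def:weak_Wald_cat} for $(\N_{i},\seq^{R}_{\N}|_{\N_{i}},\weq_{i})$.

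Since axioms \ref{WC0}, \ref{WC1} and \ref{WC2} concern only the cofibrations and distinguished sequences, they hold because they already hold for the canonical wbiWET category $(\N_{i},\seq^{R}_{\N}|_{\N_{i}},\veq_{i})$ associated to $(\N_{i},\BE|_{\N_{i}},\fs|_{\N_{i}})$ (cf.\ \cref{ex:extria_cat_is_wbiWET}). Axiom \ref{WW0} holds because $\BY|_{\X[i]}$ is an additive functor, so $\weq_{i}$ contains every isomorphism (in particular $\veq_{i}\sse\weq_{i}$) and is closed under composition. Thus the only axiom needing a genuine argument is the Gluing Axiom \ref{WW1}. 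Given the commutative square \eqref{eqn:gluing-axiom} with $x,y,z\in\weq_{i}$, one first uses \cref{lem:wPO_wPB} in $(\N_{i},\BE|_{\N_{i}},\fs|_{\N_{i}})$ together with (ET$3$) --- exactly as in the proof of \cref{lem:CSeqW-is-wbiWET} --- to build a morphism of $\fs^{R}_{\N}|_{\N_{i}}$-conflations with left vertical map $x$, middle vertical map $y\oplus z$, and an induced right vertical map $w$. Applying $\BY|_{\X[i]}$, which is right exact by \cref{lem:reduction_right_exact}, turns this into a commuting ladder of right exact sequences in $\mod(\X[i])$ whose first two verticals are $\BY|_{\X[i]}(x)$ and $\BY|_{\X[i]}(y)\oplus\BY|_{\X[i]}(z)$, both isomorphisms by the definition of $\weq_{i}$. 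Since the rightmost term of each row is the cokernel of the preceding map, the induced map $\BY|_{\X[i]}(w)$ on cokernels is an isomorphism, whence $w\in\weq_{i}$, establishing \ref{WW1}.

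The only (minor) obstacle is bookkeeping: one must keep track of the identification of $\seq^{R}_{\N}|_{\N_{i}}$ with the class of $\fs|_{\N_{i}}$-conflations so that \cref{lem:wPO_wPB} and \cref{lem:reduction_right_exact} genuinely apply, and observe the elementary fact that a commutative square of isomorphisms between two right exact sequences induces an isomorphism on cokernels. In contrast to \cref{thm:ex_seq_wWald_from_abel_loc} (and \cref{lem:CSeqW-is-wbiWET}), no appeal to saturation is required here, since $\weq_{i}$ is \emph{defined} as the preimage of the isomorphisms under $\BY|_{\X[i]}$.
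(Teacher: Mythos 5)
Your proposal is correct and follows essentially the same route as the paper: reduce to \ref{WW0} and \ref{WW1} since $(\N_{i},\seq^{R}_{\N}|_{\N_{i}},\veq_{i})$ is already weak Waldhausen, build the morphism of conflations as in \eqref{diag:ex_seq_wWald_from_NOS}, apply the right exact functor $\BY|_{\X[i]}$ from \cref{lem:reduction_right_exact}, and conclude $w\in\weq_{i}$ directly from the definition of $\weq_{i}$. Your extra remarks (the identification of $\seq^{R}_{\N}|_{\N_{i}}$ with the $\fs|_{\N_{i}}$-conflations via \cref{lem:reduction1}, the cokernel argument making the isomorphism of $\BY|_{\X[i]}(w)$ explicit, and the observation that no saturation is needed) just spell out steps the paper leaves implicit.
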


\begin{proof}
Since $(\N_{i},\seq^{R}_{\N}|_{\N_{i}},\veq_i)$ is a weak Waldhausen category, 
we need only show \ref{WW0} and \ref{WW1}.
Since \ref{WW0} is clear, 
suppose we are in the setup \eqref{eqn:gluing-axiom} of the Gluing Axiom \ref{WW1}. 
Then we obtain a morphism 
\eqref{diag:ex_seq_wWald_from_NOS}
of $\fs|_{\N_{i}}$-conflations. 
We apply the right exact functor 
$\BY|_{\X[i]}$ 
to \eqref{diag:ex_seq_wWald_from_NOS}.
Since $\BY|_{\X[i]}(x)$ and 
$\BY|_{\X[i]}(y\oplus z)$
are isomorphisms in $\mod(\X[i])$, 
we have that $\BY|_{\X[i]}(w)$ must also be an isomorphism 
using the right exactness of $\BY|_{\X[i]}$ (see Lemma~\ref{lem:reduction_right_exact}), so $w\in\weq_i$. 
\end{proof}

The last preparatory lemma we need is the following.

\begin{lemma}\label{lem:right_ex_loc_via_Waldhausen_ver2}
For each $1\leq i\leq n-2$, there is a sequence of wWET categories as follows
\begin{equation}
\label{eqn:lem_loc_seq_reduction}
\begin{tikzcd}
    (\N_{i+1},\seq^{R}_{\N}|_{\N_{i+1}},\veq_{i+1})
        \arrow[hook]{r}{\inc_{i}} 
    & (\N_{i},\seq^{R}_{\N}|_{\N_{i}},\veq_i)
        \arrow{r}{\id_{\N_{i}}} 
    & (\N_{i},\seq^{R}_{\N}|_{\N_{i}},\weq_i),
\end{tikzcd}
\end{equation}
where 
$(\N_{i+1},\seq^{R}_{\N}|_{\N_{i+1}},\veq_{i+1})$ is the subcategory of $\weq_{i}$-acyclic objects in $(\N_{i},\seq^{R}_{\N}|_{\N_{i}},\veq_i)$. 
Moreover, the induced homomorphism 
$K_{0}(\inc_{i})
    \colon 
    K_{0}(\N_{i+1})
        \overset{\cong}{\lra} 
    K_{0}(\N_{i})$ 
is an isomorphism. 
\end{lemma}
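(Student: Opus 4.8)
The plan is to mimic the proof of \cref{thm:ex_seq_wWald_from_abel_loc} essentially verbatim, replacing the triangulated category $\C$ (with relative structure $\C^R_\N$ and the localization $Q\colon \C^R_\N \to \C/\N \simeq \mod\X$) by the extriangulated category $(\N_i, \BE|_{\N_i}, \fs|_{\N_i})$ together with the right exact restricted Yoneda functor $\BY|_{\X[i]}\colon (\N_i,\BE|_{\N_i},\fs|_{\N_i}) \to \mod(\X[i])$ from \cref{lem:reduction_right_exact}. First I would set up notation parallel to \cref{notation:section-6}: since $\X[i]$ is the subcategory of $\BE|_{\N_i}$-projectives of $(\N_i,\BE|_{\N_i},\fs|_{\N_i})$ by \cref{lem:reduction1}\ref{item:projectives-in-Ni}, and $(\N_i,\BE|_{\N_i},\fs|_{\N_i})$ has enough projectives, I would invoke \cite[the dual of Thm.~3.12 and Cor.~4.4]{Oga22b} (applied to the extriangulated category $\N_i$ with covariantly/contravariantly finite rigid subcategory $\X[i]$ playing the role of $\N$, and $\N_{i+1} = \Ker(\BY|_{\X[i]})$ playing the role of the localizing subcategory) to conclude that $\N_{i+1}$ is thick in a suitable relative structure $(\N_i)_{\X[i]}$, the associated class $\mathscr{S}_{\N_{i+1}}$ is saturated with $\overline{\mathscr{S}_{\N_{i+1}}}$ satisfying (MR1)--(MR4), and $\weq_i = \mathscr{S}_{\N_{i+1}} = (\R)_{\ret}\circ\L$ where $\L, \R$ are the classes from \cref{def:Sn_from_thick} attached to $\N_{i+1}$ inside $(\N_i)_{\X[i]}$. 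This gives that $(\N_{i+1},\seq^R_\N|_{\N_{i+1}},\veq_{i+1})$ is exactly the wWET subcategory of $\weq_i$-acyclic objects, so that \eqref{eqn:lem_loc_seq_reduction} is a genuine sequence of wWET categories and \cref{setup:Quillen_loc1} is met; the wWET property of $(\N_i,\seq^R_\N|_{\N_i},\weq_i)$ is \cref{lem:Ni-with-Wi-is-wWET}.

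Next, to show \eqref{eqn:lem_loc_seq_reduction} is a localization sequence in $\wWald$, I would apply \cref{thm:Quillen_localization1_new}: it suffices to check that $\weq_i$ consists of finite compositions of morphisms from $\L^{\ac} \cup \T \cup \veq_i$, where these classes are the ones of \cref{def:mors_assoc_to_wWH_cat} attached to the sequence \eqref{eqn:lem_loc_seq_reduction}. Since $\weq_i = (\R)_{\ret}\circ\L$, I would prove the two inclusions $\L \subseteq (\L)^{\ac}$ and $(\R)_{\ret} \subseteq \T$ exactly as in \cref{lem:comparison_of_morphisms} --- using that $\BY|_{\X[i]}$ sends $\fs|_{\N_i}$-conflations to right exact sequences and that $\weq_i$ is saturated, so a retraction with kernel in $\N_{i+1}$ lands in $\T$, and a morphism in $\L$ (an inflation with cone in $\N_{i+1}$) becomes an isomorphism under $\BY|_{\X[i]}$ hence lies in $\weq_i \cap \cof = (\L)^{\ac}$. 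This yields the right exact sequence $K_0(\N_{i+1}) \to K_0(\N_i) \to K_0(\N_i, \seq^R_\N|_{\N_i}, \weq_i) \to 0$ from \cref{thm:Quillen_localization1_new}, identifying $K_0(\N_{i+1},(\seq^R_\N|_{\N_{i+1}})^{\weq_i},\veq^{\weq_i}) = K_0(\N_{i+1})$ via \cref{lem:acyclics-is-N} and \cref{ex:WET1} together with the analogue of \cref{rem:extriangulated_structures_on_N}\ref{item:inherited-structures-on-N-all-the-same} showing the various inherited structures on $\N_{i+1}$ all agree.

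Finally, to upgrade $K_0(\inc_i)$ to an isomorphism I would show the third term $K_0(\N_i,\seq^R_\N|_{\N_i},\weq_i)$ vanishes in the relevant sense --- more precisely, that $K_0(\inc_i)$ is surjective with an explicit inverse --- by invoking the Extriangulated Resolution Theorem \cite[Thm.~4.5]{OS23} (or its dual) exactly as at the end of the proof of \cref{thm:ex_seq_wWald_from_abel_loc}: every $C\in\N_i$ admits, since $\X[i]$ is contravariantly finite and $(\N_i,\BE|_{\N_i},\fs|_{\N_i})$ has enough $\BE|_{\N_i}$-projectives with $\BE|_{\N_i}$-projective dimension bounded (the $\X[i]$-resolution has length $\le 1$ because $\N_i = \X[i] * \N_{i+1}$ and an $\fs$-conflation $C \rtail X \twoheadrightarrow C'$ with $X\in\X[i]$ has $C'\in\N_{i+1}$, which after one more step is already in $\X[i]$... wait, more carefully: $\N_{i+1} = \X[i+1]*\cdots*\X[n-1]$ so the $\X[i]$-resolution length relative to $\N_{i+1}$ as the "kernel" subcategory is $1$), a finite $\X[i]$-resolution of length $1$ in the appropriate relative structure, so that \cite[Thm.~4.5]{OS23} applies to the extension-closed subcategory $\N_{i+1}$ of $(\N_i,\BE|_{\N_i},\fs|_{\N_i})$ which is closed under cocones of $\fs|_{\N_i}$-deflations, giving $K_0(\inc_i)$ an isomorphism with inverse $[C] \mapsto [X_0] - [C']$ for a conflation $C' \rtail X_0 \twoheadrightarrow C$ (or the cone-version dually). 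The main obstacle I anticipate is bookkeeping: correctly identifying which relative extriangulated structure on $\N_i$ makes $\N_{i+1}$ thick and makes $\BY|_{\X[i]}$ the corresponding localization functor, and verifying the $\X[i]$-resolution length is exactly $1$ so that \cite[Thm.~4.5]{OS23} gives an isomorphism rather than merely a right exact sequence --- this is where the $n$-cluster tilting hypothesis (via the filtration $\N_{n-1}\subseteq\cdots\subseteq\N_1$ and the cotorsion pairs of \cite{Bel15}) is really used, and the argument must be applied inductively or structurally rather than citing a single off-the-shelf statement.
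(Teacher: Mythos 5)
Your final step points at the right tool (the dual of the Extriangulated Resolution Theorem \cite[Thm.~4.5]{OS23}, which is exactly what the paper uses), but the verification of its hypotheses --- which is the actual content of the proof --- is where your argument breaks. The coresolutions you need are $\N_{i+1}$-coresolutions of objects of $\N_{i}$, and they come from the equality $\N_{i}=\cocone(\N_{i+1},\X[i+1])$ together with $\X[i+1]\sse\N_{i+1}$: every $C\in\N_{i}$ sits in an $\fs|_{\N_{i}}$-conflation $C\to N\to X[i+1]$ with both end terms in $\N_{i+1}$. Your substitute, a conflation $C\rtail X\onto C'$ with $X\in\X[i]$, need not exist: $\X[i]$ is the class of $\BE|_{\N_{i}}$-\emph{projectives} of $\N_{i}$ (\cref{lem:reduction1}), so it supplies deflations onto $C$, not inflations out of $C$, and your ``$\X[i]$-resolution of length $1$'' discussion conflates the two directions. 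Likewise, the closure property you assert --- $\N_{i+1}$ closed under cocones of $\fs|_{\N_{i}}$-deflations --- is the wrong one and is false in general (right exactness of $\BY|_{\X[i]}$ gives no control on the cocone; concretely $\C(\X[i+1],\N_{i+1})\neq 0$ obstructs it). What is needed, and what the paper proves, is closure under \emph{cones} of $\fs|_{\N_{i}}$-inflations: if $A\to B\to C$ is an $\fs|_{\N_{i}}$-conflation with $A,B\in\N_{i+1}$, then $\BY|_{\X[i]}(B)=0$ forces $\BY|_{\X[i]}(C)=0$ by \cref{lem:reduction_right_exact}, so $C\in\N_{i+1}$. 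Your parenthetical ``or the cone-version dually'' hedges, but neither version's hypotheses are actually established in your write-up.

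The first two-thirds of your proposal is also not licit as written, though it is not needed for the statement. \cref{thm:localization_of_tri_cat} and the results of \cite{Oga22b} you invoke (to make $\N_{i+1}$ thick in a relative structure on $\N_{i}$, to get saturation of $\weq_{i}$ and the factorization $\weq_{i}=\R_{\ret}\circ\L$) are proved for a \emph{triangulated} ambient category (\cref{setup:abelian_local}); $\N_{i}$ is only extriangulated, so these citations do not apply, and checking, e.g., that an $\fs|_{\N_{i}}$-inflation with cone in $\N_{i+1}$ lies in $\weq_{i}$ is genuinely nontrivial (right exactness of $\BY|_{\X[i]}$ only gives epimorphy). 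Fortunately the lemma does not claim \eqref{eqn:lem_loc_seq_reduction} is a localization sequence: the wWET property is \cref{lem:Ni-with-Wi-is-wWET}, and the identification of the $\weq_{i}$-acyclics with $\N_{i+1}$ is immediate from $\Ker(\BY|_{\X[i]})=\N_{i+1}$ and the definition of $\weq_{i}$, with no localization machinery required. So the fix is to drop the detour and supply the two bullet points above, after which the dual of \cite[Thm.~4.5]{OS23} gives the isomorphism $K_{0}(\inc_{i})$ with inverse $[C]\mapsto[N]-[X[i+1]]$.
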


\begin{proof}
We know all the triplets in \eqref{eqn:lem_loc_seq_reduction} 
are wWET by Lemma~\ref{lem:Ni-with-Wi-is-wWET}. 
The equality $\N_{i}^{\weq_{i}} = \N_{i+1}$ follows from $\Ker (\BY|_{\X[i]}) = \N_{i+1}$ and the definition of $\weq_{i}$. 

The last assertion will follow from the dual of \cite[Thm.~4.5]{OS23} once we note the following.
First, any object in $\N_{i}$ has a  finite $\N_{i+1}$-coresolution because $\N_{i}=\cocone(\N_{i+1},\X[i+1])$ and $\X[i+1]\sse\N_{i+1}$. 
Second, suppose $A\to B\to C$ is an $\fs|_{\N_{i}}$-conflation with $A,B\in\N_{i+1}$. Using Lemma~\ref{lem:reduction_right_exact} and $\BY|_{\X[i]}(B)=0$, we have $\BY|_{\X[i]}(C)=0$ and so $C\in\N_{i+1}$. 
\end{proof}

Due to Lemma~\ref{lem:right_ex_loc_via_Waldhausen_ver2}, we have confirmed that  
\eqref{diag:summary_reduction} induces the following diagram of wWET categories.

\begin{equation}
\label{diag:summary_reduction2}
\begin{tikzcd}[column sep=0.7cm, row sep=0.5cm]
    (\N_{n-1},\seq^{R}_{\N}|_{\N_{n-1}},\veq_{n-1}) \arrow[hook]{r}&
    \cdots \arrow[hook]{r}&
    (\N_{1},\seq^{R}_{\N}|_{\N_{1}},\veq_{1}) \arrow[hook]{r}\arrow[two heads]{d}&
    (\N_{0},\seq^{R}_{\N}|_{\N_{0}},\veq_{0})\arrow[two heads]{d} \\
    {}& {}& (\N_{1},\seq^{R}_{\N}|_{\N_{1}},\weq_{1})& (\N_{0},\seq^{R}_{\N}|_{\N_{0}},\weq_{0})
\end{tikzcd}
\end{equation}
In \eqref{diag:summary_reduction} 
each segment of shape $\begin{tikzcd}[column sep=0.4cm, row sep=0.4cm, scale cd=0.4]
    \bullet\arrow[hook]{r}&\bullet\arrow[two heads]{d}\\
    {}&\bullet
\end{tikzcd}$ 
does not necessarily belong to $\ET$, 
whereas the entirety of \eqref{diag:summary_reduction2} sits in $\wWald$.
We can now prove the JS index isomorphism \eqref{eqn:JS_index}.

\begin{proof}[Proof of Corollary~\ref{cor:JS_index}]
Since we have obtained the isomorphisms in \eqref{diag:summary_reduction_Grothendieck_groups}, we have established that the inclusion $\X\into \C^{R}_{\N}$ induces an isomorphism
$K_{0}^{\sp}(\X) \overset{\cong}{\lra} K_{0}(\C^{R}_{\N})$. We need only show that its inverse is as described in Corollary~\ref{cor:JS_index}.

Let $\N_{0}\deff \C^{R}_{\N}$, so that we have an isomorphism 
$K_{0}(\inc_{i})\colon K_{0}(\N_{i+1})\overset{\cong}{\lra}K_{0}(\N_{i})$ induced by the inclusion 
$\inc_{i}\colon \N_{i+1} \into \N_{i}$ 
for each $i\in\set{0,\ldots,n-2}$; see Theorem~\ref{thm:ex_seq_wWald_from_abel_loc} and Lemma~\ref{lem:right_ex_loc_via_Waldhausen_ver2}.
Let $\eta_{i}\colon K_{0}(\N_{i}) \overset{\cong}{\lra} K_{0}(\N_{i+1})$ denote the inverse of $K_{0}(\inc_{i})$. 
Each isomorphism $\eta_{i}$ is given by coresolving a given object via the equality $\N_{i} = \cocone(\N_{i+1},\X[i+1])$, namely, for any generator $[N_{i}]\in K_{0}(\N_{i})$, we have
$
\eta_{i}([N_{i}]) = [N_{i+1}]-[X_{i}[i+1]]
$
where 
$N_{i}\to N_{i+1}\to  X_{i}[i+1]$ 
is an $\fs^{R}_{\N}$-conflation
in $\N_{i}$ with $N_{i+1}\in\N_{i+1}$ and $X_{i}\in\X$. 
Note that if $X\in\X$, then for all $0\leq i \leq n-2$ we have an $\fs^{R}_{\N}$-triangle of the form
$\begin{tikzcd}[column sep=1.3cm, cramped]
X[i]
    \arrow{r}
&[-0.8cm] 0 
    \arrow{r}
&[-0.8cm] X[i+1]
    \arrow[dashed]{r}{\id_{X[i+1]}}
& {}
\end{tikzcd}$
in $\N_{i}$, and hence 
$\eta_{i}[X[i]]=-[X[i+1]]$. 
Thus, for an arbitrary object $N_{0}\deff C \in\C^{R}_{\N}$, 
we have:
\begin{align*}
    [C] = [N_0] &\overset{\eta_{0}}{\longmapsto} [N_1]-[X_0[1]]\\
    &\overset{\eta_{1}}{\longmapsto} [N_2]-[X_1[2]]+[X_0[2]]\\
    &\overset{\eta_{2}}{\longmapsto} [N_3]-[X_2[3]]+[X_1[3]]-[X_0[3]]\\
    &{} \hspace{10pt}\vdots \\
    &\overset{\eta_{n-2}}{\longmapsto} \sum_{i=0}^{n-1}(-1)^{n-1-i}[X_i[n-1]]\\
    &\overset{\zeta}{\longmapsto}  \sum_{i=0}^{n-1}(-1)^{i}[X_i] = \ind_{\X}(C) & (\text{see } \eqref{eqn:iso-X-Xn-1})
\end{align*}
This is indeed the same as the JS index isomorphism \eqref{eqn:JS_index}, because the above coresolving procedure yields the diagram 
\begin{equation*}\label{eqn:resolution_from_reduction_process}
\begin{tikzcd}[column sep=0.2cm]
X_{n-1} 
    \arrow{rr}
    \arrow[equals]{dr}
&[-0.25cm]&[-0.25cm] X_{n-2}
    \arrow{rr}
    \arrow{dr} 
&[-0.25cm]&[-0.25cm] X_{n-3}
    \arrow{rr}
&&\cdots
    \arrow{rr}
&& X_1 
    \arrow{rr}
    \arrow{dr}
&& X_0 
    \arrow{dr}
    \arrow{rr}
&& C\\
& N_{n-1}[-(n-1)] 
    \arrow{ur}
&& N_{n-2}[-(n-2)]
    \arrow{ur}
&&&&
&& N_1[-1] 
    \arrow{ur}
&& N_0
    \arrow[equals]{ur}
\end{tikzcd}
\end{equation*}
and hence a finite $\X$-resolution of $C$. This is an $\X$-resolution in $\C^{R}_{\N}$ 
as each triangle
\[
\begin{tikzcd}
N_{i+1}[-(i+1)] 
	\arrow{r}
&X_{i}
	\arrow{r}
&N_{i}[-i] 
	\arrow{r}
& N_{i+1}[-i]
\end{tikzcd}
\]
satisfies $N_{i+1}[-i]\in\X[1]* \cdots * \X[n-1-i]\sse\X^{\perp_{0}} = \N$.
\end{proof}

\section{Sarazola's \texorpdfstring{$K$}{K}-theory localization}
\label{sec:Sarazola_K-theory}

In \cite{Sar20}, Sarazola showed that there exists a natural Waldhausen structure on an exact category arising from a single cotorsion pair as an attempt to generalize Hovey's abelian model structure corresponding to a certain twin cotorsion pair, see \cite[Thm.~2.2]{Hov02} and \cite[Sec.~4]{NP19}.
A localization theorem of $K$-theory is also established there (see \cite[Thm.~3]{Sar20}). 
In this section we will investigate Sarazola's construction in the extriangulated setting.

\subsection{Weak Waldhausen categories following Sarazola}
\label{sec:Sarazola_Wald_cat}
Motivated by \cite[Sec.~3]{Yu22}, we will work under Setup~\ref{setup:Sarazola1} in this subsection. 
An \emph{$\BE$-injective} object in an extriangulated category $(\C,\BE,\fs)$ is defined dually to an $\BE$-projective. Moreover, $(\C,\BE,\fs)$ \emph{has enough $\BE$-injectives} if each object $A\in\C$ admits an $\fs$-conflation
$A\lra I_{A}\lra \Sigma A$ with $I_{A}$ an $\BE$-injective. 
In this case, $\Sigma A$ is called a \emph{cosyzygy} of $A$.

\begin{setup}\label{setup:Sarazola1}
Let $(\C,\BE,\fs)$ be a skeletally small,  weakly idempotent complete, extriangulated category with enough $\BE$-injectives.
We denote by $\I \deff \BE\Inj(\C) \sse \C$ the full subcategory of $\BE$-injective objects.
In addition, we fix a full subcategory $\N\sse\C$ containing $\I$ that is extension-closed, and is closed under direct summands and cones of $\fs$-inflations. 
\end{setup}

Although $\N$ is not a thick subcategory of $(\C,\BE,\fs)$ in general, following Definition~\ref{def:Sn_from_thick} we consider classes of morphisms: 
\begin{itemize}
\item
$\K_{\N} \deff \Set{f\in\C^{\to} | f \ \textnormal{is an $\fs$-inflation with}\ \cone(f)\in\N}$; 
\item
$\Q_{\I}\deff\Set{ g\in\C^{\to} | g\textnormal{\ is an $\fs$-deflation with } \cocone(g)\in\I }$; and 
\item
$\weq\deff\Q_{\I}\circ\K_{\N}$.
\end{itemize}
Note that $\Q_{\I}$ is the class of retractions in $\C$ with the kernels sitting in $\I$, thanks to the weakly idempotent completeness of $\C$ (see \cite[Prop.~2.5]{BHST2022}) and the $\BE$-injectivity of objects in $\I$.

In the sequel, we will construct the following localization sequence of weak Waldhausen categories, which is an analogue of Sarazola's construction \cite[Thm.~6.1]{Sar20}.
Indeed, if we assume $\C$ is an exact category closed under kernels of epimorphisms and $\N$ is thick in addition to Setup~\ref{setup:Sarazola1}, then we have the setup of \cite[Thm.~6.1]{Sar20}. 
Denote by $(\C,\seq,\veq)$ the canonical wbiWET category corresponding to the extriangulated category $(\C,\BE,\fs)$ (see Example~\ref{ex:extria_cat_is_wbiWET}).

\begin{proposition}
\label{prop:Sarazola_homotopy_fiber}
There is a sequence 
\begin{equation}
\label{seq:Sarazola_homotopy_fiber}
\begin{tikzcd}
(\C^{\weq},\seq^{\weq},\veq^{\weq}) 
    \arrow[hook]{r}{\mathsf{inc}}
&(\C,\seq,\veq) 
    \ar{r}{\id_{\C}} 
&(\C,\seq,\weq)
\end{tikzcd}
\end{equation}
in $\wWald$, 
where $\C^{\weq}=\N$ and which induces the following exact sequence in $\Ab$. 
\begin{equation}\label{eqn:7.2-K0-groups}
\begin{tikzcd}[column sep=1.3cm]
K_{0}(\N,\BE|_{\N},\fs|_{\N}) 
    \arrow{r}{K_{0}(\mathsf{inc})}
&K_{0}(\C,\BE,\fs) 
    \arrow{r}{K_{0}(\id_{\C})}
&K_{0}(\C,\seq,\weq)
    \arrow{r}
&0
\end{tikzcd}
\end{equation}
\end{proposition}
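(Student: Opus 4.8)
The plan is to mimic the proof of \cref{thm:ex_seq_wWald_from_abel_loc}, replacing the role of the right exact localization functor $Q\colon\C^{R}_{\N}\to\C/\N$ by a suitable right exact functor detecting $\weq$, and the role of the relative structure $\C^{R}_{\N}$ by the ambient $(\C,\BE,\fs)$ itself. Concretely, I would first verify that $(\C,\seq,\weq)$ is a weak Waldhausen (in fact wWET) category with respect to $(\C,\BE,\fs)$: axioms \ref{WC0}--\ref{WC2} are inherited from the canonical wbiWET category $(\C,\seq,\veq)$, and \ref{WW0} follows since $\weq=\Q_{\I}\circ\K_{\N}$ clearly contains isomorphisms and is closed under composition (using that $\K_{\N}$ and $\Q_{\I}$ are closed under composition, the former by \ref{ET4} and the coresolving hypothesis on $\N$, the latter as retractions with $\I$-kernels). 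The Gluing Axiom \ref{WW1} is the first real point: given the diagram \eqref{eqn:gluing-axiom} with $x,y,z\in\weq$, I would produce the morphism \eqref{diag:ex_seq_wWald_from_NOS} of $\fs$-triangles via \cref{lem:wPO_wPB} and (ET$3$), then apply a right exact functor that inverts exactly $\weq$ to conclude $w\in\weq$; since $\weq$ should be saturated, this forces $w\in\weq$.

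The second, and I expect \textbf{the main obstacle}, is identifying the correct ``homotopy'' functor and establishing that $\weq$ is saturated and that $\C^{\weq}=\N$. The natural candidate is the composite of the extriangulated localization at $\I$ (which, since $\I$ consists of $\BE$-injectives, should be something like the stable category $\C/[\I]$ or a further localization thereof) followed by a localization detecting $\K_{\N}$; alternatively one can argue directly. For $\C^{\weq}=\N$: an object $X$ is $\weq$-acyclic iff $0\to X$ factors as an $\fs$-inflation $0\to Y$ with $\cone\in\N$ (so $Y\cong$ an object with $X$-related data) followed by a retraction $Y\twoheadrightarrow X$ with kernel in $\I\subseteq\N$; unwinding, $0\to X$ in $\weq$ means $X\in\N$ using that $\N\supseteq\I$ is extension-closed and closed under direct summands and cones of $\fs$-inflations. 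Conversely, if $X\in\N$, then $X\to I_{X}\to\Sigma X$ gives $0\rightarrowtail X$ as an $\fs$-inflation with cone $X\in\N$, so $0\to X$ lies in $\K_{\N}\subseteq\weq$. For saturation, I would show $\weq$ equals the class of morphisms inverted by the relevant localization functor, using a description analogous to \cref{lem:char_of_Sn}; here the key input is that $\N$ is closed under cones of $\fs$-inflations (the one-sided analogue of the thickness used in \cref{thm:localization_of_tri_cat}\ref{item:N_thick}), together with the enough-injectives hypothesis, which together play the role that (MR1)--(MR4) played before.

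Granting \cref{setup:Quillen_loc1} is met (which follows once $(\C,\seq,\weq)$ is weak Waldhausen, $\C^{\weq}=\N$, and everything is skeletally small), I would then invoke \cref{thm:Quillen_localization1_new}: I must check that $\weq$ consists of finite compositions of morphisms from $\L^{\ac}\cup\T\cup\veq$. By definition $\weq=\Q_{\I}\circ\K_{\N}$, so it suffices to show $\K_{\N}\subseteq\L^{\ac}$ and $\Q_{\I}\subseteq\T$. The inclusion $\K_{\N}\subseteq\L^{\ac}=\cof\cap\weq$ is immediate: a morphism in $\K_{\N}$ is an $\fs$-inflation, hence a cofibration, and lies in $\weq$ trivially. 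For $\Q_{\I}\subseteq\T$: if $g\colon B\twoheadrightarrow C$ is an $\fs$-deflation with $\cocone(g)=A\in\I\subseteq\N$, then $A\rightarrowtail B\overset{g}{\twoheadrightarrow}C$ is an $\fs$-conflation with $A\in\C^{\weq}=\N$, and $g\in\R^{\ac}=\fib\cap\weq$, so $g\in\T$ by \cref{def:mors_assoc_to_wWH_cat}. Thus \cref{thm:Quillen_localization1_new} yields that \eqref{seq:Sarazola_homotopy_fiber} is a localization sequence in $\wWald$ and produces the right exact sequence
\[
\begin{tikzcd}[column sep=1.3cm]
K_{0}(\C^{\weq},\seq^{\weq},\veq^{\weq})
    \arrow{r}{K_{0}(\inc)}
& K_{0}(\C,\seq,\veq)
    \arrow{r}{K_{0}(\id_{\C})}
& K_{0}(\C,\seq,\weq)
    \arrow{r}
& 0.
\end{tikzcd}
\]
Finally, to rewrite this as \eqref{eqn:7.2-K0-groups}, I would use \cref{ex:WET1} to identify $K_{0}(\C,\seq,\veq)=K_{0}(\C,\BE,\fs)$, and observe that since $\C^{\weq}=\N$ is extension-closed with $\seq^{\weq}=\seq\cap\N^{\to\to}$ the collection of $\fs$-conflations with all terms in $\N$ and $\veq^{\weq}=\Iso\N$, the triplet $(\N,\seq^{\weq},\veq^{\weq})$ is the canonical wbiWET category corresponding to $(\N,\BE|_{\N},\fs|_{\N})$ (by \cref{prop:ext_closed_ET} and \cref{ex:WET1}), whence $K_{0}(\N,\seq^{\weq},\veq^{\weq})=K_{0}(\N,\BE|_{\N},\fs|_{\N})$. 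Substituting these identifications gives exactly \eqref{eqn:7.2-K0-groups}, completing the proof.
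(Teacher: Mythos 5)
Your overall skeleton matches the paper's proof: check that $(\C,\seq,\weq)$ is weak Waldhausen, identify $\C^{\weq}=\N$, verify $\K_{\N}\sse\L^{\ac}$ and $\Q_{\I}\sse\T$, invoke \cref{thm:Quillen_localization1_new}, and then identify the Grothendieck groups via \cref{ex:WET1}. However, there is a genuine gap at precisely the step you flag as ``the main obstacle'': the Gluing Axiom \ref{WW1}. Your plan is to find a right exact functor inverting exactly $\weq=\Q_{\I}\circ\K_{\N}$ and to use saturation of $\weq$, in analogy with \cref{lem:CSeqW-is-wbiWET} and \cref{thm:ex_seq_wWald_from_abel_loc}. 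But in \cref{setup:Sarazola1} the subcategory $\N$ is \emph{not} thick, no (MR)-type conditions are assumed, and no localization of $(\C,\BE,\fs)$ at $\weq$ is constructed; neither the existence of such a detecting functor nor the saturation of $\weq$ is established (the paper never claims $\weq$ is saturated here), and producing them is essentially as hard as the axiom itself. The paper instead verifies \ref{WW1'} directly: it first reduces, via the factorization \cref{lem:factorization_via_injective_hull} and \cref{cor:inflation-to-I-and-weq-is-again-weq}, to the case where the two given weak equivalences lie in $\K_{\N}$, then uses the extriangulated Horseshoe Lemma (\cref{lem:horseshoe}) together with \ref{ET4}, \cref{lem:wPO_wPB} and a careful diagram chase to construct an explicit morphism $c'\in\Q_{\I}\circ\K_{\N}$ making the diagram commute. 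Without this (or some substitute argument of comparable substance), your proof of the weak Waldhausen structure on $(\C,\seq,\weq)$ is incomplete; note also that even granting a functor inverting $\weq$, inverting the induced map $w$ would only tell you $w$ is ``formally inverted'', not that $w$ admits a factorization in $\Q_{\I}\circ\K_{\N}$, unless saturation in this structural sense has been proved.

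A smaller point: your justification of \ref{WW0} is insufficient as stated. Closure of $\K_{\N}$ and of $\Q_{\I}$ under composition does not by itself give closure of the composite class $\Q_{\I}\circ\K_{\N}$; one needs the exchange $\K_{\N}\circ\Q_{\I}\sse\Q_{\I}\circ\K_{\N}$, which the paper proves by rewriting a composite $\begin{psmallmatrix}0, & \id_{B}\end{psmallmatrix}$ followed by $l\in\K_{\N}$ as $(\id_{I}\oplus l)$ followed by $\begin{psmallmatrix}0, & \id_{C}\end{psmallmatrix}$. This is easy to repair, unlike the gluing axiom. Your remaining steps (the identification $\C^{\weq}=\N$ using weak idempotent completeness and closure of $\N$ under direct summands, the inclusions $\K_{\N}\sse\L^{\ac}$ and $\Q_{\I}\sse\T$, the application of \cref{thm:Quillen_localization1_new}, and the $K_{0}$ identifications) agree with the paper's argument.
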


Before proving the above, we show that an example of Setup~\ref{setup:Sarazola1} arises via hereditary cotorsion pairs of extriangulated categories (see \cite{Gil06} for the exact case).

\begin{example}
For this example, we assume that $(\C,\BE,\fs)$ also has enough $\BE$-projectives, which is defined dually to having enough $\BE$-injectives. 
A \emph{syzygy} $\Omega A$ of $A\in\C$ is defined dually to a cosyzygy. 
Then one can define the \emph{higher extensions} $\BE^i(-,-)$ of the bifunctor $\BE(-,-)$, namely, one puts $\BE^i(C,A)\deff \BE(C,\Sigma^{i+1}A)\cong\BE(\Omega^{i+1}C,A)$ for $i\geq 1$ and $A,C\in\C$ (see \cite[Sec.~5]{LN19}).

Let $(\U,\V)$ be a cotorsion pair in $(\C,\BE,\fs)$ (see \cite[Def.~4.1]{NP19}), and assume it is 
\emph{hereditary} (see \cite[Def.~2.7]{Yu22}, also \cite[Def.~2.7]{LZ23}), i.e.\ $\BE^{i}(\U,\V)=0$ for all $i\geq 1$. 
Note that $\V = \U^{\perp_{1}} = \Set{ A\in\C | \BE(\U,A) = 0 }$, and so $\V$ is extension-closed, closed under direct summands and contains $\I$.
It follows that $\V$ is closed under cones of $\fs$-inflations. Indeed, like the exact case, it is known from \cite[Prop.~5.2]{LN19} that, 
for any $\fs$-conflation $A\to B\to C,$
the sequence 
\[
\BE(U,A)\to \BE(U,B)\to \BE(U,C)\to \BE^2(U,A)\to \BE^2(U,B)\to \BE^2(U,C)\to \cdots
\]
is exact for any $U\in\U$. 
Thus if $A,B\in\V$, then $C\in\V$.
In particular, $\V$ fulfills all properties that we demand of $\N$ in Setup~\ref{setup:Sarazola1}.

Since $\V$ is extension-closed, it inherits an extriangulated structure from $(\C,\BE,\fs)$. 
Thus, applying Proposition~\ref{prop:Sarazola_homotopy_fiber} to $\N = \V$ measures the difference of these Grothendieck groups of $\C$ and $\V$ in some sense, namely, there is a right exact sequence $K_{0}(\V)\to K_{0}(\C)\to K_{0}(\C,\seq,\weq)\to 0$.
\end{example}

To show Proposition~\ref{prop:Sarazola_homotopy_fiber}, we first prepare a more detailed description of the factorization $\weq$.

\begin{lemma}
\label{lem:factorization_via_injective_hull}
Let $w\colon A\to B$ be a weak equivalence in $\weq=\Q_{\I}\circ \K_{\N}$. 
Then there is a factorization 
$
\begin{tikzcd}[column sep=1.2cm, cramped]
A
    \arrow{r}{\begin{psmallmatrix}
 \gamma_A \\
 w
\end{psmallmatrix}}
& I_A \oplus B
    \arrow{r}{\begin{psmallmatrix}0, \amph \id_B
 \end{psmallmatrix}}
& B
\end{tikzcd}
$
of $w$, where
$A\overset{\gamma_A}{\lra}I_A$ is an $\fs$-inflation  with $I_A\in\I$, 
${\begin{psmallmatrix}\gamma_A \\
w
\end{psmallmatrix}}\in\K_{\N}$ 
and
${\begin{psmallmatrix}0, & \id_B
\end{psmallmatrix}}\in\Q_{\I}$. 
\end{lemma}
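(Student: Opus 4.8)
The statement asks us to factor an arbitrary weak equivalence $w\colon A\to B$ in $\weq = \Q_{\I}\circ\K_{\N}$ through a specific $\I$-injective-based square. Since $w$ lies in $\weq$, by definition there is some intermediate object $M$ and morphisms $u\colon A\to M$ in $\K_{\N}$ and $v\colon M\to B$ in $\Q_{\I}$ with $w = vu$. So $u$ is an $\fs$-inflation with $\cone(u)\in\N$, and $v$ is a retraction (an $\fs$-deflation) with $\cocone(v)\in\I$ — using here that, by weak idempotent completeness together with $\BE$-injectivity of objects of $\I$, every morphism in $\Q_{\I}$ is a split epimorphism whose kernel lies in $\I$ (as noted after \cref{setup:Sarazola1}). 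The plan is to replace this arbitrary factorization with the canonical one built from an $\BE$-injective hull of $A$.

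\textbf{Step 1: Build the candidate factorization.} Since $(\C,\BE,\fs)$ has enough $\BE$-injectives, fix an $\fs$-conflation $\begin{tikzcd}[column sep=0.6cm, cramped]A \arrow{r}{\gamma_A}& I_A \arrow{r}& \Sigma A\end{tikzcd}$ with $I_A\in\I$. Form the morphism $\binom{\gamma_A}{w}\colon A\to I_A\oplus B$, and note immediately that $\binom{0,\ \id_B}{}\circ\binom{\gamma_A}{w} = w$, so this is genuinely a factorization of $w$. It remains to check the two membership claims.

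\textbf{Step 2: $\binom{0,\ \id_B}{}\in\Q_{\I}$.} This map is split epic (a section is $x\mapsto(0,x)$), and its kernel is $I_A\oplus 0\cong I_A\in\I$. Since $\C$ is weakly idempotent complete, a split epimorphism is an $\fs$-deflation (cf.\ the discussion after \cref{setup:Sarazola1}, using \cite[Prop.~2.5]{BHST2022}), and here its cocone is $I_A\in\I$. Hence $\binom{0,\ \id_B}{}\in\Q_{\I}$.

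\textbf{Step 3: $\binom{\gamma_A}{w}\in\K_{\N}$ — this is the main obstacle.} We must show $\binom{\gamma_A}{w}$ is an $\fs$-inflation with cone in $\N$. The idea is to exploit that $\gamma_A$ itself is an $\fs$-inflation and $I_A$ is $\BE$-injective, so that any map out of $A$ can be ``corrected'' by $\gamma_A$. Concretely, I expect to proceed as follows: the morphism $\binom{\gamma_A}{w}$ equals the composite $\begin{tikzcd}[column sep=1.4cm, cramped]A \arrow{r}{\binom{\gamma_A}{u}}& I_A\oplus M \arrow{r}{\id_{I_A}\oplus v}& I_A\oplus B\end{tikzcd}$, where the second map is an $\fs$-deflation with cocone $\cocone(v)\in\I\sse\N$ — but we want an inflation, so instead I would argue directly. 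Since $I_A$ is $\BE$-injective and $u\colon A\to M$ is an $\fs$-inflation with $\cone(u)=:N_u\in\N$, there is a factorization $\gamma_A = \gamma' u$ for some $\gamma'\colon M\to I_A$ (lift $\gamma_A$ along $u$ using $\BE$-injectivity of $I_A$). Then $\binom{\gamma_A}{w} = \binom{\gamma'}{v}\circ u$. Now $\binom{\gamma'}{v}\colon M\to I_A\oplus B$ is a split monomorphism: because $v$ is a split epimorphism with $\BE$-injective kernel, one checks $M\cong B\oplus\cocone(v)$ compatibly, and on the $I_A$ summand $\gamma'$ plays no role in splitting — more carefully, $\binom{\gamma'}{v}$ has a retraction since $v$ does (take $(r_1,r_2)$ with $r_2$ a retraction of $v$ corrected by $\gamma'$). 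A split monomorphism is an $\fs$-inflation (weak idempotent completeness again) with cone a direct summand, in fact $\cone\binom{\gamma'}{v}\cong\cocone(v)[\text{-ish}]\in\I\sse\N$. Finally, $\binom{\gamma_A}{w}$ is the composite of the $\fs$-inflation $u$ (cone $N_u\in\N$) followed by the $\fs$-inflation $\binom{\gamma'}{v}$ (cone in $\I\sse\N$); by \ref{ET4} applied to these two $\fs$-triangles, the composite is an $\fs$-inflation whose cone $C$ sits in an $\fs$-conflation $N_u\to C\to(\text{cone of }\binom{\gamma'}{v})$ with both outer terms in $\N$. Since $\N$ is extension-closed (part of \cref{setup:Sarazola1}), $C\in\N$, so $\binom{\gamma_A}{w}\in\K_{\N}$, completing the proof. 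The delicate point to get right in the writeup is the precise identification of cones/cocones and the invocation of \ref{ET4}; the $\BE$-injective lift and the extension-closure of $\N$ are the two essential ingredients.
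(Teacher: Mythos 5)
Your Steps 1 and 2 are fine, but Step 3 — the whole content of the lemma — has a genuine gap. You fix an injective copresentation $\gamma_A\colon A\to I_A$ in advance, lift it along $u$ to get $\gamma'\colon M\to I_A$ with $\gamma'u=\gamma_A$, and then claim that $\begin{psmallmatrix}\gamma'\\ v\end{psmallmatrix}\colon M\to I_A\oplus B$ is a split monomorphism with cone in $\I$. This is not justified and is false for legitimate choices of the lift. Writing $M\cong J\oplus B$ with $J=\cocone(v)\in\I$, a retraction $(\rho_1,\rho_2)$ of $\begin{psmallmatrix}\gamma'\\ v\end{psmallmatrix}$ must satisfy $\rho_1\gamma'=\id_M-sv=\iota_J\pi_J$, i.e.\ the projection $\pi_J$ must factor through $\gamma'$. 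Injectivity of $J$ would give such a factorization only if $\gamma'$ were an $\fs$-inflation, and the lift produced by $\BE$-injectivity of $I_A$ need not be one: e.g.\ in $\mod k[x]/(x^2)$ with $\N=\C$, $A=B=k$, $w=\id$, $u=\begin{psmallmatrix}\gamma_A\\ \id\end{psmallmatrix}\colon k\to I_A\oplus k$, $v=\begin{psmallmatrix}0, & \id\end{psmallmatrix}$, the lift $\gamma'=\begin{psmallmatrix}0, & \gamma_A\end{psmallmatrix}$ satisfies $\gamma'u=\gamma_A$ but $\begin{psmallmatrix}\gamma'\\ v\end{psmallmatrix}$ kills the summand $J=I_A$ and is not even a monomorphism. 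So the composite-of-two-inflations-plus-\ref{ET4} conclusion never gets off the ground as written.

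Note also that you are in effect trying to prove the stronger statement of \cref{cor:inflation-to-I-and-weq-is-again-weq} (any prescribed inflation into an injective works), which the paper deduces \emph{from} the lemma by comparing two $\fs$-conflations with the same inflation and using closure of $\N$ under direct summands. The lemma itself only asserts the \emph{existence} of some suitable $\gamma_A$, and the paper's proof exploits this freedom: after using injectivity of $\cocone(r)$ to normalize the given factorization to $w=\begin{psmallmatrix}0, & \id_B\end{psmallmatrix}\circ\begin{psmallmatrix}\gamma\\ w\end{psmallmatrix}$ with $\begin{psmallmatrix}\gamma\\ w\end{psmallmatrix}\in\K_{\N}$, it takes any inflation $\gamma'\colon A\to I'$ with $I'\in\I$, forms the weak pushout of \cref{lem:wPO_wPB} to get a conflation $A\to I'\oplus I\oplus B\to N'$ with $N'\cong I'\oplus N\in\N$, and \emph{defines} $\gamma_A\deff\begin{psmallmatrix}\gamma'\\ \gamma\end{psmallmatrix}$, $I_A\deff I'\oplus I$. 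If you want to keep your structure, you should either adopt this construction of $\gamma_A$ (dependent on the given factorization of $w$), or first prove the lemma this way and only then attempt the prescribed-$\gamma_A$ version by the cone-comparison argument.
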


\begin{proof}
By the definition of $\weq$, the weak equivalence $w$ admits a factorization 
\[
\begin{tikzcd}[row sep=0.6cm]
A \arrow{dr}{l}\arrow{rr}{w}&& B \\
&C\arrow{ur}{r}&
\end{tikzcd}
\]
with $l\in\K_{\N}$ and $r\in\Q_{\I}$. 
It follows that we may assume 
$C = I \oplus B$ for some $I\in\I$, 
$r = \begin{psmallmatrix}0, & \id_B
 \end{psmallmatrix}$, 
and 
$
l = \begin{psmallmatrix}
\gamma \\
 w
 \end{psmallmatrix}
\colon A \to I\oplus B$.

There is an $\fs$-conflation 
$A\overset{\gamma'}{\lra} I'\overset{}{\lra} \Sigma A$ 
with $I'\in\I$ as $(\C,\BE,\fs)$ has enough $\BE$-injectives. 
By Lemma~\ref{lem:wPO_wPB}, there is a morphism 
\begin{equation}
\label{eqn:wPO-gamma-prime-s}
\begin{tikzcd}
    A\arrow{r}{\begin{psmallmatrix}\gamma \\
 w
 \end{psmallmatrix}}\arrow{d}[swap]{\gamma'}\wPO{dr}
 &I\oplus B\arrow{r}{}\arrow{d}{}
 &N\arrow[dashed]{r}{}\arrow[equals]{d}{}&{}\\
    I'
    	\arrow{r}{}&N'\arrow{r}{}&N\arrow[dashed]{r}{}&{}
\end{tikzcd}
\end{equation}
of $\fs$-triangles, where $N\in\N$, 
as well as an $\fs$-conflation
\begin{equation}
\label{eqn:conflation-for-gamma-gamma-prime-and-s}
\begin{tikzcd}
A
	\arrow{r}{\begin{psmallmatrix}
 \gamma'\\
 \gamma \\
 w
\end{psmallmatrix}}
& I'\oplus I\oplus B
	\arrow{r}
& N'. 
\end{tikzcd}
\end{equation}
In addition, the bottom row of \eqref{eqn:wPO-gamma-prime-s} splits as $I'\in\I$, and so $N' \cong I'\oplus N \in\N$. 

Lastly, by \cite[Cor.~3.16]{NP19}, we have that 
$
\begin{psmallmatrix}
\gamma' \\
\gamma
\end{psmallmatrix}
\colon A \to I' \oplus I
$
is an $\fs$-inflation. So we are done by setting 
$I_A \deff I' \oplus I$ and 
$\gamma_A
\deff \begin{psmallmatrix}
\gamma' \\
\gamma
\end{psmallmatrix}$.
\end{proof}

\begin{corollary}
\label{cor:inflation-to-I-and-weq-is-again-weq}
If $w\colon A\to B$ is in $\weq$ and $\alpha\colon A \to J$ is an $\fs$-inflation with $J\in\I$, then $
\begin{psmallmatrix}
    \alpha \\
    w
\end{psmallmatrix}
\in \K_{\N}$. 
\end{corollary}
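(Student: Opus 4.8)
The plan is to bootstrap from \cref{lem:factorization_via_injective_hull}. Applied to the given $w\in\weq$, that lemma furnishes an $\fs$-inflation $\gamma_{A}\colon A\to I_{A}$ with $I_{A}\in\I$ such that $\begin{psmallmatrix}\gamma_{A}\\w\end{psmallmatrix}\colon A\to I_{A}\oplus B$ belongs to $\K_{\N}$; fix an $\fs$-triangle $A\xrightarrow{\begin{psmallmatrix}\gamma_{A}\\w\end{psmallmatrix}}I_{A}\oplus B\to N\overset{\delta}{\dashrightarrow}$ with $N\in\N$. The first step is to \emph{fatten} this along $\alpha$. Since $J\in\I$ is $\BE$-injective we have $\alpha_{*}\delta\in\BE(N,J)=0$, so applying \cref{lem:wPO_wPB}\ref{item:wPO} to the above $\fs$-triangle and the morphism $\alpha\colon A\to J$, with the associated $\fs$-triangle taken to be the split one $J\to J\oplus N\to N$, yields an $\fs$-triangle $A\xrightarrow{\widehat{w}}I_{A}\oplus B\oplus J\to J\oplus N\dashrightarrow$ in which $\widehat{w}$ has components $\gamma_{A},w,\alpha$. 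As $J\oplus N\in\N$, this shows $\widehat{w}\in\K_{\N}$.

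The second step is to express $\widehat{w}$ as $\begin{psmallmatrix}\alpha\\w\end{psmallmatrix}$ followed by a split $\fs$-inflation. Using $\BE$-injectivity of $I_{A}$ against the $\fs$-inflation $\alpha$, pick $\beta'\colon J\to I_{A}$ with $\beta'\alpha=\gamma_{A}$, and put $\psi\deff\begin{psmallmatrix}\beta'&0\\0&\id_{B}\\\id_{J}&0\end{psmallmatrix}\colon J\oplus B\to I_{A}\oplus B\oplus J$. A direct check gives $\psi\circ\begin{psmallmatrix}\alpha\\w\end{psmallmatrix}=\widehat{w}$ and shows that the projection $I_{A}\oplus B\oplus J\to J\oplus B$ annihilating $I_{A}$ is a left inverse of $\psi$; hence $\psi$ is a section, so (using weak idempotent completeness) it is a split $\fs$-inflation with cone $\cong I_{A}$ and vanishing connecting extension. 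Moreover $\begin{psmallmatrix}\alpha\\w\end{psmallmatrix}$ is itself an $\fs$-inflation by \cref{lem:wPO_wPB}\ref{item:wPO}, applied to an $\fs$-triangle realizing $\alpha$ and the morphism $w\colon A\to B$; fix an $\fs$-triangle $A\xrightarrow{\begin{psmallmatrix}\alpha\\w\end{psmallmatrix}}J\oplus B\to M\dashrightarrow$.

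The third step is an octahedron. Apply \ref{ET4} to the composable pair $\begin{psmallmatrix}\alpha\\w\end{psmallmatrix}$ and $\psi$, choosing for $\psi$ the split $\fs$-triangle from the previous step, so that its connecting extension is $0$. The resulting commutative diagram contains an $\fs$-triangle $A\xrightarrow{\widehat{w}}I_{A}\oplus B\oplus J\to E\dashrightarrow$ and an $\fs$-triangle $M\to E\to I_{A}\dashrightarrow$ whose connecting extension is the pushforward of $0$, hence $0$. Since cones are unique up to isomorphism, $E\cong J\oplus N$ by the first step, and the last $\fs$-triangle splits, giving $M\oplus I_{A}\cong J\oplus N\in\N$. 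As $\N$ is closed under direct summands, $M\in\N$; thus $\begin{psmallmatrix}\alpha\\w\end{psmallmatrix}$ is an $\fs$-inflation with cone in $\N$, that is, $\begin{psmallmatrix}\alpha\\w\end{psmallmatrix}\in\K_{\N}$, as wanted.

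The step I expect to be the main obstacle is the second one: recognizing that one should not compare $\begin{psmallmatrix}\alpha\\w\end{psmallmatrix}$ with $\begin{psmallmatrix}\gamma_{A}\\w\end{psmallmatrix}$ directly---the evident comparison maps built from $\beta'$ and its counterpart do not compose to identities---but should instead route through the fattened morphism $\widehat{w}$ and exploit the $\BE$-injectivity of $I_{A}$, so that both the weak-pushout term $J\oplus N$ and the octahedral term $E$ split off exactly the cone $M$ of interest. Once this is set up, the remaining points (that $\psi$ is a section, that the pertinent connecting extensions vanish, and that \cref{lem:wPO_wPB}\ref{item:wPO} supplies the required $\fs$-inflations) are routine.
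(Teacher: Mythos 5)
Your proof is correct, and it shares the paper's endgame: both arguments produce the three-component inflation $\begin{psmallmatrix}\gamma_A\\ w\\ \alpha\end{psmallmatrix}$ with cone $J\oplus N\in\N$ exactly as in your first step (the paper's ``first way''), deduce an isomorphism $I_A\oplus M\cong J\oplus N$ where $M$ is a cone of $\begin{psmallmatrix}\alpha\\ w\end{psmallmatrix}$, and conclude via closure of $\N$ under direct summands. The routes diverge only in how that isomorphism is obtained. The paper realizes the \emph{same} fattened inflation a second time by applying \cref{lem:wPO_wPB} twice, starting from the conflation $A\overset{\alpha}{\to}J\to\Sigma A$: first a weak pushout along $w$ (giving the conflation $A\to J\oplus B\to M$), then one along $\gamma_A$, where $\BE(M,I_A)=0$ forces the second pushout to split and yields the cone $I_A\oplus M$ directly; no appeal to \ref{ET4} or to a factorization of $\widehat{w}$ is needed. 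You instead use the injectivity of $I_A$ as a lifting property to produce $\beta'$ with $\beta'\alpha=\gamma_A$, factor $\widehat{w}=\psi\circ\begin{psmallmatrix}\alpha\\ w\end{psmallmatrix}$, and run \ref{ET4} with the split triangle on $\psi$; this trades the paper's second weak pushout for an octahedron plus the auxiliary section $\psi$. Both are sound, and of comparable length. One point you should spell out: that a section has cone $I_A$ is not automatic, so the claim that $\psi$ is a split $\fs$-inflation with cone $\cong I_A$ needs the (routine) verification that the complementary idempotent $\id-\psi\pi$ factors as $\begin{psmallmatrix}\id_{I_A}\\ 0\\ 0\end{psmallmatrix}(\id_{I_A},0,-\beta')$ with composite $\id_{I_A}$, equivalently that $\begin{psmallmatrix}\beta' & \id_{I_A}\\ \id_J & 0\end{psmallmatrix}$ is invertible, so that $\psi$ is isomorphic to the canonical inclusion $J\oplus B\to (J\oplus B)\oplus I_A$; in particular weak idempotent completeness is not actually needed for this step.
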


\begin{proof} 
Suppose we have the factorization of $w$ as given in the statement of Lemma~\ref{lem:factorization_via_injective_hull}, 
and by assumption we can consider an $\fs$-conflation  $A\overset{\alpha}{\lra} J\overset{}{\lra} \Sigma A.$
We can construct $\fs$-conflations witnessing that 
$
\begin{psmallmatrix}
 \gamma_{A}\\
 \alpha \\
 w
\end{psmallmatrix}
$
is an $\fs$-inflation in the following two ways. 
First, start with an $\fs$-conflation 
$
\begin{tikzcd}[cramped]
A
	\arrow{r}{\begin{psmallmatrix}
 \gamma_{A}\\
 w
\end{psmallmatrix}}
& I_{A}\oplus B
	\arrow{r}
& N'
\end{tikzcd}
$
with $N'\in\N$
(see \eqref{eqn:conflation-for-gamma-gamma-prime-and-s}), and 
apply Lemma~\ref{lem:wPO_wPB} to obtain
an $\fs$-conflation
$
\begin{tikzcd}[cramped]
A
	\arrow{r}{\begin{psmallmatrix}
 \gamma_{A}\\
 \alpha \\
 w
\end{psmallmatrix}}
& I_{A}\oplus J\oplus B
    \arrow{r}
& J \oplus N'.
\end{tikzcd}
$
Second, start with $A\overset{\alpha}{\lra} J\overset{}{\lra} \Sigma A$
and apply Lemma~\ref{lem:wPO_wPB} twice to obtain 
diagrams
\begin{equation}
\begin{tikzcd}
A
    \arrow{r}{\alpha}
 \arrow{d}[swap]{w}
 \wPO{dr}
&J
    \arrow{r}{}
    \arrow{d}{}
&\Sigma A
    \arrow[dashed]{r}{}
    \arrow[equals]{d}{}&{}\\
B
    \arrow{r}{}
&M
    \arrow{r}{}
&\Sigma A
    \arrow[dashed]{r}{}&{}
\end{tikzcd}
\hspace{0.5cm}
\text{and}
\hspace{0.5cm}
\begin{tikzcd}
A
    \arrow{r}{\begin{psmallmatrix}\alpha \\
 w
 \end{psmallmatrix}}
    \arrow{d}[swap]{\gamma_{A}}
    \wPO{dr}
&J\oplus B
    \arrow{r}{}
    \arrow{d}{}
&M
    \arrow[dashed]{r}{}
    \arrow[equals]{d}{}
&{}\\
I_{A}
    \arrow{r}{}
&I_{A}\oplus M
    \arrow{r}{}
&M
    \arrow[dashed]{r}{0}
&{,}
\end{tikzcd}
\end{equation}
the second of which gives rise to the $\fs$-conflation
$
\begin{tikzcd}[cramped]
A
    \arrow{r}{\begin{psmallmatrix}
 \gamma_{A}\\
 \alpha \\
 w
\end{psmallmatrix}}
& I_{A}\oplus J\oplus B
    \arrow{r}
& I_{A}\oplus M. 
\end{tikzcd}
$
It follows that $I_{A}\oplus M \cong J\oplus N' \in\N$, so $\cone \begin{psmallmatrix}
\alpha \\
w
\end{psmallmatrix} = M\in\N$ as $\N$ is closed under direct summands.
\end{proof}

Lemma~\ref{lem:factorization_via_injective_hull} also yields a characterization of the $\fs$-inflations in $\weq$.

\begin{corollary}
\label{cor:char_Ln}
Let $A\overset{f}{\lra} B\overset{g}{\lra} C\overset{\delta}{\dra}$ be any $\fs$-triangle.
Then $f\in\K_{\N}$ if and only if $f\in\weq$.
\end{corollary}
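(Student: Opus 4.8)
The plan is to prove both implications using the characterizations already established. For the forward direction, suppose $f\in\K_{\N}$, so $f$ is an $\fs$-inflation with $\cone(f)\in\N$. Since $\K_{\N}\sse\Q_{\I}\circ\K_{\N} = \weq$ (noting that any identity morphism, or more simply the retraction of a split $\fs$-conflation with cocone $0\in\I$, lies in $\Q_{\I}$), we immediately get $f\in\weq$. Concretely, $f = \begin{psmallmatrix}0,&\id_{B}\end{psmallmatrix}\circ\begin{psmallmatrix}0\\ f\end{psmallmatrix}$ where $\begin{psmallmatrix}0\\ f\end{psmallmatrix}\colon A\to 0\oplus B$; since $0\in\I\sse\N$ and $\cone(f)\in\N$, one checks $\begin{psmallmatrix}0\\ f\end{psmallmatrix}\in\K_{\N}$ and $\begin{psmallmatrix}0,&\id_{B}\end{psmallmatrix}\in\Q_{\I}$. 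This is essentially immediate and should not present difficulties.

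For the converse, suppose $f\in\weq$, i.e.\ $f\in\Q_{\I}\circ\K_{\N}$. First I would invoke \cref{setup:Sarazola1} to pick an $\fs$-conflation $A\overset{\alpha}{\lra} J\lra \Sigma A$ with $J\in\I$, using that $\C$ has enough $\BE$-injectives, so $\alpha$ is an $\fs$-inflation with $J\in\I$. Then \cref{cor:inflation-to-I-and-weq-is-again-weq} applied to $w = f$ and this $\alpha$ gives $\begin{psmallmatrix}\alpha\\ f\end{psmallmatrix}\in\K_{\N}$, so there is an $\fs$-conflation $A\overset{\binom{\alpha}{f}}{\lra} J\oplus B\lra M$ with $M\in\N$. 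The task is then to produce from this, together with the given $\fs$-triangle $A\overset{f}{\lra}B\overset{g}{\lra}C\overset{\delta}{\dra}$, a conclusion that $f$ itself is an $\fs$-inflation with $\cone(f) = C\in\N$. Since $f$ is already an $\fs$-inflation (it sits in an $\fs$-triangle), it remains only to show $C\in\N$.

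To extract $C\in\N$, the key idea is to relate the two $\fs$-conflations using the (ET4)-type octahedral machinery: the $\fs$-inflation $\binom{\alpha}{f}$ factors as $A\overset{f}{\lra}B\overset{\binom{0}{\id_{B}}}{\lra}J\oplus B$ composed with\ldots wait, more carefully, one should factor via the split inflation $B\rtail J\oplus B$ and use \ref{ET4} with the two $\fs$-triangles $A\overset{f}{\lra}B\lra C$ and $B\overset{\binom{0}{\id_{B}}}{\lra}J\oplus B\lra J$; this yields an $\fs$-triangle $C\lra M\lra J\overset{}{\dra}$ with $M$ as above, expressing $M\in\cone(J,C)$. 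Since $J\in\I\sse\N$, $M\in\N$, and $\N$ is closed under direct summands and — crucially — cocones of $\fs$-deflations is \emph{not} among the assumed closure properties, so instead I would rotate: the $\fs$-triangle $C\lra M\lra J\overset{\epsilon}{\dra}$ gives, upon shifting, that $C$ is a cocone; but \cref{setup:Sarazola1} only guarantees $\N$ closed under \emph{cones} of $\fs$-inflations. The cleaner route is to observe that since $J$ is $\BE$-injective the $\fs$-triangle $C\lra M\lra J\overset{\epsilon}{\dra}$ splits (as $\epsilon\in\BE(J,C)$ and $J$ is $\BE$-injective forces $\BE(J,C)$ to\ldots no, injectivity of $J$ gives $\BE(-,J)$ exact, not $\BE(J,-)=0$). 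Instead, projectivity is what one wants; so I would dualize the argument from \cref{cor:inflation-to-I-and-weq-is-again-weq}: there the diagram $\begin{psmallmatrix}\alpha\\ w\end{psmallmatrix}$ produces an $\fs$-conflation whose cone is shown to be $M\cong\cone\begin{psmallmatrix}\alpha\\ w\end{psmallmatrix}$, and by the weak-pushout diagram there, $M$ also fits $B\lra M\lra\Sigma A$, so $M\cong\cone(f$ along $\gamma')$-style. The precise bookkeeping — identifying $\cone(f)=C$ with a direct summand of $M\in\N$, hence $C\in\N$ by closure under direct summands — is the main obstacle, and it is exactly the kind of weak-pushout / octahedron diagram chase carried out in the proof of \cref{cor:inflation-to-I-and-weq-is-again-weq}; I expect the proof to reduce to citing that corollary's construction and reading off that $C$ is a summand of the relevant $\N$-object. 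Once $C\in\N$ is established, $f\in\K_{\N}$ by definition, completing the equivalence.
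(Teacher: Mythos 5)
Your overall strategy is the right one and matches the paper up to a point: the forward direction is indeed immediate (write $f=\id_{B}\circ f$ with $\id_{B}\in\Q_{\I}$), and for the converse you correctly reduce to showing $\cone(f)=C\in\N$, starting from an $\fs$-inflation $\alpha\colon A\to J$ with $J\in\I$ and the fact that $\begin{psmallmatrix}\alpha\\ f\end{psmallmatrix}\in\K_{\N}$ (via \cref{cor:inflation-to-I-and-weq-is-again-weq}, equivalently \cref{lem:factorization_via_injective_hull}). However, the step that actually exhibits $C$ as a direct summand of an object of $\N$ is where your sketch breaks down, and it is precisely the content of the proof. Your claimed factorization $\begin{psmallmatrix}\alpha\\ f\end{psmallmatrix}=\begin{psmallmatrix}0\\ \id_{B}\end{psmallmatrix}\circ f$ is false: that composite is $\begin{psmallmatrix}0\\ f\end{psmallmatrix}$, and $\begin{psmallmatrix}0\\ f\end{psmallmatrix}$ is isomorphic to $\begin{psmallmatrix}\alpha\\ f\end{psmallmatrix}$ only if $\alpha$ factors through $f$. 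Consequently the \ref{ET4} application you describe computes the cone of $\begin{psmallmatrix}0\\ f\end{psmallmatrix}$, which you have no right to identify with $M=\cone\begin{psmallmatrix}\alpha\\ f\end{psmallmatrix}$ and which is not known to lie in $\N$; moreover your triangle comes out in the orientation $C\lra E\lra J$, and, as you yourself observe, $\BE$-injectivity of $J$ gives no splitting in that direction. You end by deferring the ``bookkeeping'' to the construction in \cref{cor:inflation-to-I-and-weq-is-again-weq}, but that corollary only supplies $\begin{psmallmatrix}\alpha\\ f\end{psmallmatrix}\in\K_{\N}$; it does not relate $C$ to $M$, so the proof is not complete as proposed.

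The missing diagram is short. The paper applies \cref{lem:wPO_wPB}\ref{item:wPO} to the given $\fs$-triangle $A\overset{f}{\lra}B\overset{g}{\lra}C\overset{\delta}{\dra}$ and the morphism $\gamma_A\colon A\to I_A$: the pushed-out $\fs$-triangle $I_A\lra N\lra C\overset{(\gamma_A)_*\delta}{\dra}$ has $N\cong\cone\begin{psmallmatrix}\gamma_A\\ f\end{psmallmatrix}\in\N$ by the ``Furthermore'' clause of that lemma, and its $\BE$-extension lies in $\BE(C,I_A)=0$ because $I_A$ is $\BE$-injective, so the triangle splits and $C$ is a direct summand of $N\in\N$; closure of $\N$ under direct summands finishes the argument. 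If you prefer your octahedral route, it can be repaired by factoring $\begin{psmallmatrix}\alpha\\ f\end{psmallmatrix}=(\id_{J}\oplus f)\circ\begin{psmallmatrix}\alpha\\ \id_{A}\end{psmallmatrix}$ through $J\oplus A$ (note $\begin{psmallmatrix}\alpha\\ \id_{A}\end{psmallmatrix}$ has cone $J$ and $\id_{J}\oplus f$ has cone $C$); then \ref{ET4} produces an $\fs$-triangle $J\lra M\lra C\dra$ whose extension lies in $\BE(C,J)=0$, and the same splitting argument gives $M\cong J\oplus C$ with $M\in\N$. Either way, the conclusion $C\in\N$, hence $f\in\K_{\N}$, requires this specific weak-pushout or \ref{ET4} identification, which your proposal does not supply.
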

\begin{proof}
The forwards direction is clear. Thus suppose $f\in\weq$. 
By Lemma~\ref{lem:factorization_via_injective_hull},
there exists an $\fs$-inflation $\gamma_A\colon A\to I_A$ with $I_A\in\I$ such that the morphism $\begin{psmallmatrix}
\gamma_A \\
f
\end{psmallmatrix}\colon A\to I_A\oplus B$ belongs to $\K_{\N}$.
By Lemma~\ref{lem:wPO_wPB}, there is a morphism 
\[
\begin{tikzcd}[row sep=0.6cm]
    A\arrow{r}{f}\arrow{d}[swap]{\gamma_A}\wPO{dr}
 &B\arrow{r}{g}\arrow{d}{}
 &C\arrow[dashed]{r}{}\arrow[equals]{d}{}&{}\\
    I_{A}
    	\arrow{r}{}&N\arrow{r}{}&C\arrow[dashed]{r}{}&{}
\end{tikzcd}
\]
of $\fs$-triangles, in which the bottom row splits as $I_{A}\in\I$. 
In particular, $C$ is a direct summand of $N = \cone\begin{psmallmatrix}
\gamma_A \\
f
\end{psmallmatrix} \in \N$, so $C\in\N$ and we are done.
\end{proof}

The next lemma is just an easy application of the Horseshoe Lemma in the exact category case.
However, our category $\C$ is extriangulated and we need a careful discussion.

\begin{lemma}
\label{lem:horseshoe}
(cf. \cite[Lem.~3.4]{Yu22})
Let $A_1\overset{f}{\lra} A_2\overset{g}{\lra} A_3\overset{\delta}{\dra}$ be an $\fs$-triangle.
Then there exist $\fs$-triangles $A_i\overset{f_i}\lra I_i\overset{g_i}{\lra} \Sigma A_i\overset{\xi_i}{\dra}$ with $I_i\in\I$, and also a commutative diagram
\begin{equation}
\label{diag:horseshoe1}
\begin{tikzcd}[column sep=1.2cm, ampersand replacement=\&, row sep=0.6cm]
A_1 \arrow{r}{f}\arrow{d}{f_1} \& A_2 \arrow{r}{g}\arrow{d}{f_2}\& A_3\arrow{d}{f_3}\arrow[dashed]{r}{\delta} \&{}\\
I_1 \arrow{r}{p}\arrow{d}{g_1}\& I_2 \arrow{r}{q}\arrow{d}{g_2}\& I_3\arrow[dashed]{r}{0} \arrow{d}{g_3}\& {} \\
\Sigma A_1\arrow{r}{\Sigma f}\arrow[dashed]{d}{\xi_1} \& \Sigma A_2 \arrow{r}{\Sigma g}\arrow[dashed]{d}{\xi_2}\& \Sigma A_3 \arrow[dashed]{r}{\Sigma \delta}\arrow[dashed]{d}{\xi_3}\& {} \\
{} \& {} \& {} \& {} 
\end{tikzcd}
\end{equation}
in which 
$(f_1,f_2,f_3), (g_1,g_2,g_3), (f,p,\Sigma f), (g,q,\Sigma g)$ are morphisms of $\fs$-triangles.
\end{lemma}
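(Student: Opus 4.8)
The plan is to build the injective "envelopes" of $A_1$ and $A_3$ first, then obtain $I_2$ via a weak pushout/pullback together with the properties of $\BE$-injectives, exactly as in the classical Horseshoe Lemma, but accounting for the fact that our $\fs$-conflations are not kernel-cokernel pairs. Since $(\C,\BE,\fs)$ has enough $\BE$-injectives by \cref{setup:Sarazola1}, fix $\fs$-triangles $A_1\overset{f_1}{\lra} I_1\overset{g_1}{\lra}\Sigma A_1\overset{\xi_1}{\dra}$ and $A_3\overset{f_3}{\lra} I_3\overset{g_3}{\lra}\Sigma A_3\overset{\xi_3}{\dra}$ with $I_1, I_3\in\I$. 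The middle object I would take to be $I_2\deff I_1\oplus I_3$. The morphism $f_2\colon A_2\to I_2$ is the tricky one: because $I_1$ is $\BE$-injective, the $\fs$-inflation $f\colon A_1\rtail A_2$ admits a retraction-type extension, i.e.\ there is $r\colon A_2\to I_1$ with $rf=f_1$ (this uses the characterization of $\BE$-injectivity together with $f$ being an $\fs$-inflation; cf.\ \cite[Def.~3.23]{NP19}). Then set $f_2\deff\begin{psmallmatrix}r\\ f_3 g\end{psmallmatrix}\colon A_2\to I_1\oplus I_3$. With $p\deff\begin{psmallmatrix}\id_{I_1}\\ 0\end{psmallmatrix}$ and $q\deff\begin{psmallmatrix}0, & \id_{I_3}\end{psmallmatrix}$, the top two rows and the first two columns of \eqref{diag:horseshoe1} commute by direct computation: $pf_1 = \begin{psmallmatrix}f_1\\ 0\end{psmallmatrix} = \begin{psmallmatrix}rf\\ f_3 g f\end{psmallmatrix} = f_2 f$ (using $gf=0$), and $q f_2 = f_3 g$.

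Next I would verify that $f_2$ is an $\fs$-inflation. The natural approach is to use \ref{ET4} (or its dual) applied to the $\fs$-triangles $A_1\overset{f}\lra A_2\overset{g}\lra A_3\overset{\delta}\dra$ and $A_1\overset{f_1}\lra I_1\overset{g_1}\lra \Sigma A_1\overset{\xi_1}\dra$: this produces an $\fs$-triangle with middle term built from $A_2$ and $I_1$, realizing $\begin{psmallmatrix}r\\ g\end{psmallmatrix}$ or $\begin{psmallmatrix}f_1 \\ *\end{psmallmatrix}$-type morphisms as $\fs$-inflations, and combined with the fact that $f_3\colon A_3\rtail I_3$ is an $\fs$-inflation and \cref{lem:wPO_wPB}, one shows $f_2 = \begin{psmallmatrix}r\\ f_3 g\end{psmallmatrix}$ is an $\fs$-inflation with cone $\Sigma A_1\oplus\Sigma A_3$ (or more precisely, an object isomorphic to such). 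Alternatively, and perhaps more cleanly, one can observe that $f_2$ factors as $A_2\xrightarrow{\begin{psmallmatrix}r\\ g\end{psmallmatrix}} I_1\oplus A_3\xrightarrow{\id_{I_1}\oplus f_3} I_1\oplus I_3$, where the first map is an $\fs$-inflation by \cref{lem:wPO_wPB}\ref{item:wPO} applied to $f$ and $f_1$ (it is the ``$\binom{f_1}{?}$'' morphism appearing there, up to sign and identification), and the second is an $\fs$-inflation since $f_3$ is and $\fs$-inflations are closed under taking direct sums with identities and under composition (\ref{ET4}); see also the argument in \cref{lem:basics_of_cofibration_and_weq}. This shows $f_2\in\cof$.

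Once $f_2$ is an $\fs$-inflation, let $g_2\colon I_2\to Q$ be a distinguished $\fs$-deflation with $\fs$-triangle $A_2\overset{f_2}\lra I_2\overset{g_2}\lra Q\overset{\xi_2}\dra$. The remaining work is to identify $Q$ with $\Sigma A_2$ and to arrange commutativity of the bottom rows/columns. Since $I_2 = I_1\oplus I_3\in\I$ (as $\I$ is closed under finite direct sums), $Q$ is a cosyzygy of $A_2$, so we may relabel $Q=\Sigma A_2$. The morphisms $\Sigma f\colon \Sigma A_1\to\Sigma A_2$ and $\Sigma g\colon\Sigma A_2\to\Sigma A_3$ are then induced: $\Sigma f$ is obtained from the morphism of $\fs$-triangles $(f, p, \Sigma f)$ using that $(f,p)$ gives a commutative square $pf_1 = f_2 f$, via the fact that $g_2 p\colon I_1\to\Sigma A_2$ coequalizes appropriately with $f_1$ (here I use that $g_1$ is a weak cokernel of $f_1$, \cite[Prop.~3.3]{NP19}, to factor $g_2 p$ through $g_1$); similarly $\Sigma g$ comes from $(g,q,\Sigma g)$. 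Commutativity of the lower-left square $\Sigma f\circ g_1 = g_2\circ p$ and the analogous square for $\Sigma g, g_3, q$ then holds by construction, and $(\Sigma f)(\Sigma g)$-compatibility with $\Sigma\delta$ follows from functoriality of the connecting $\BE$-extensions. The middle column $A_2\overset{f_2}\lra I_2\overset{g_2}\lra\Sigma A_2$ being an $\fs$-triangle is exactly the relabeling above.

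\textbf{Main obstacle.} The crux is showing $f_2$ is an $\fs$-inflation and that the induced diagram \eqref{diag:horseshoe1} genuinely commutes on the nose — in the exact setting this is immediate from the Horseshoe Lemma, but here, because $\fs$-conflations are only weak-kernel--weak-cokernel pairs, the induced maps on cosyzygies are not canonical and commutativity of the bottom rows requires invoking the weak-cokernel property of $g_1, g_2, g_3$ (\cite[Prop.~3.3]{NP19}) at each step rather than a universal property. Careful bookkeeping with \ref{ET4} and \cref{lem:wPO_wPB} to keep all the $\BE$-extensions $\xi_i$, $\delta$ compatible ($\xi_i$ being the connecting extension of $A_i\rtail I_i\twoheadrightarrow\Sigma A_i$, and the relations $d^*\delta'=\delta$ etc.\ from \ref{ET4}) is where the real effort lies; everything else is routine matrix computation with the block forms of $f_2, p, q$.
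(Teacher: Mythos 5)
Your construction of $f_2$ is essentially the paper's: extend $f_1$ along the $\fs$-inflation $f$ using $\BE(A_3,I_1)=0$, observe that $\begin{psmallmatrix}r\\ g\end{psmallmatrix}\colon A_2\to I_1\oplus A_3$ is an $\fs$-inflation (weak pushout, \cref{lem:wPO_wPB}), and compose with $\id_{I_1}\oplus f_3$ using \ref{ET4}; the identities $pf_1=f_2f$ and $qf_2=f_3g$ are the same matrix computations. (A small slip: the cone of $f_2$ is a cosyzygy $\Sigma A_2$, which sits in a generally non-split $\fs$-conflation $\Sigma A_1\to\Sigma A_2\to\Sigma A_3$; it is \emph{not} $\Sigma A_1\oplus\Sigma A_3$ in general, though your "alternative" factorization argument does not need this claim.)

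The genuine gap is in the second half. The lemma does not merely ask for morphisms $\Sigma f,\Sigma g$ making squares commute: it asserts that the bottom row $\Sigma A_1\xrightarrow{\Sigma f}\Sigma A_2\xrightarrow{\Sigma g}\Sigma A_3$ is itself an $\fs$-conflation realizing some extension $\Sigma\delta$, and that all four triples are morphisms of $\fs$-\emph{triangles}, i.e.\ satisfy the extension compatibilities $f_{*}\xi_1=(\Sigma f)^{*}\xi_2$, $g_{*}\xi_2=(\Sigma g)^{*}\xi_3$ and $(g_3)^{*}(\Sigma\delta)=0$, not just commutativity. Producing $\Sigma f$ from the weak-cokernel property of $g_1$ (or even from (ET3), which would at least give the column compatibility) and $\Sigma g$ similarly, \emph{separately}, gives no control over whether these particular choices assemble into an $\fs$-conflation in the bottom row, and "functoriality of the connecting $\BE$-extensions" is not an available principle here since $\Sigma$ is not a functor and $\fs$-conflations are only weak (co)kernel pairs. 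The paper resolves exactly this point by applying the dual of the extriangulated $3\times 3$ lemma \cite[Lem.~5.9]{NP19} to the commuting square $\begin{psmallmatrix}0, & \id_{I_3}\end{psmallmatrix}f_2=f_3g$, which produces the entire diagram (bottom row included, with all compatibilities) at once, and then identifies the resulting first column with the chosen triangle $\xi_1$ by checking $f_1'=f_1$. Your argument needs this $3\times 3$-type input (or an equivalent simultaneous construction); as written, the existence of the $\fs$-triangle structure on the bottom row compatible with your hand-picked $\Sigma f$ and $\Sigma g$ is unproved.
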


\begin{proof}
First, for $i=1,3$ consider $\fs$-triangles $A_i\overset{f_i}{\lra}I_i\overset{g_i}{\lra}\Sigma A_i\overset{\xi_i}{\dra}$ with $I_i\in\I$. 
By \cite[Prop.~3.15]{NP19} and using that $I_{1}$ is an $\BE$-injective, we get the following commutative diagram. 
\[
\begin{tikzcd}[column sep=1.5cm, row sep=0.6cm]
A_{1}
	\arrow{r}{f}
	\arrow{d}[swap]{f_{1}}
& A_{2}
	\arrow{r}{g}
	\arrow{d}{\begin{psmallmatrix}
	a \\ g
	\end{psmallmatrix}}
& A_{3}
	\arrow[dashed]{r}{\delta}
	\arrow[equals]{d}{}
& {}
\\
I_{1} 
	\arrow{r}{\begin{psmallmatrix}
	\id_{I_{1}} \\ 0
	\end{psmallmatrix}}
	\arrow{d}{}
& I_{1} \oplus A_{3}
	\arrow{r}{\begin{psmallmatrix}
	0, \amph \id_{A_{3}}
	\end{psmallmatrix}}
	\arrow{d}{}
& A_{3}
	\arrow[dashed]{r}{0}
& {} 
\\
\Sigma A_{1}
	\arrow[equals]{r}{}
	\arrow[dashed]{d}{\xi_{1}}
& \Sigma A_{1}
	\arrow[dashed]{d}{}
& {}
& {}
\\
{}&{}&{}&{}
\end{tikzcd}
\]
In particular, the morphism 
$
\begin{psmallmatrix}
	a \\ g
\end{psmallmatrix}
	\colon A_{2} \to I_{1}\oplus A_{3}
$
is an $\fs$-inflation. 
The morphism 
$
\begin{psmallmatrix}
\id_{I_{1}} \amph 0 \\
0 \amph f_{3}
\end{psmallmatrix}
$
is a direct sum of $\fs$-inflations, so also an $\fs$-inflation. 
Thus, the composite
$
f_{2} \deff 
\begin{psmallmatrix}
\id_{I_{1}} \amph 0 \\
0 \amph f_{3}
\end{psmallmatrix} 
\circ
\begin{psmallmatrix}
	a \\ g
\end{psmallmatrix}
= \begin{psmallmatrix}
	a \\ f_{3}g
\end{psmallmatrix}
\colon A_{2} \to I_{1} \oplus I_{3} \eqqcolon I_{2}
$
is an $\fs$-inflation by \ref{ET4}, and there is an $\fs$-triangle
$
\begin{tikzcd}[column sep=0.6cm, cramped]
A_{2} 
	\arrow{r}{f_{2}}
& I_{2}
	\arrow{r}
& \Sigma A_{2}
	\arrow[dashed]{r}{\xi_{2}}
& {}
\end{tikzcd}
$
for some object $\Sigma A_{2}$. 

Since $\begin{psmallmatrix}
0, & \id_{I_{3}}
\end{psmallmatrix}f_{2} = f_{3}g$, 
the dual of \cite[Lem.~5.9]{NP19} yields 
a commutative diagram
\begin{equation}
\label{diag:horseshoe4}
\begin{tikzcd}[column sep=1.5cm, ampersand replacement=\&]
A_1 
	\arrow{r}{f} 
	\arrow[dotted]{d}{f'_{1}}
\& A_2 
	\arrow{r}{g}
	\arrow{d}{f_2}
\& A_3
	\arrow{d}{f_3}
	\arrow[dashed]{r}{\delta} 
\&{}\\
I_1
	\arrow{r}{\begin{psmallmatrix} \id_{I_{1}}\\
	 0
	 \end{psmallmatrix}}
	 \arrow[dotted]{d}{g'_{1}}
 \& I_{2}
	 \arrow{r}{\begin{psmallmatrix}0, & \id_{I_{3}}
	 \end{psmallmatrix}}
	 \arrow{d}{g_2}
\arrow[mysymbol]{ur}[description]{\circlearrowleft}
 \& I_3
	 \arrow[dashed]{r}{0} 
	 \arrow{d}{g_3}
 \& {} \\
B_{1}
	\arrow[dotted]{r}
	\arrow[dashed]{d}{\xi'_{1}}
\& \Sigma A_2 
	\arrow[dotted]{r}
	\arrow[dashed]{d}{\xi_2}
\& \Sigma A_3 
	\arrow[dashed]{r}{}
	\arrow[dashed]{d}{\xi_3}
\& {} \\
{} \& {} \& {} \& {} 
\end{tikzcd}
\end{equation}
of $\fs$-triangles and morphisms of $\fs$-triangles.
On inspection, we see that $f'_{1} = f_{1}$ so $\xi'_{1} \cong \xi_{1}$, and hence we may replace $\xi'_{1}$ by $\xi_{1}$ to obtain \eqref{diag:horseshoe1}.
\end{proof}

Now we are ready to prove Proposition~\ref{prop:Sarazola_homotopy_fiber}.

\begin{proof}[Proof of Proposition~\ref{prop:Sarazola_homotopy_fiber}]
We already know $(\C,\seq,\veq)$ is a weak (bi)Waldhausen category. 
To show $(\C,\seq,\weq)$ is a weak Waldhausen category, we need only check \ref{WW0} and \ref{WW1}.

For \ref{WW0}, it is clear that $\weq\deff\Q_{\I}\circ\K_{\N}$ contains $\Iso\C$, so we just need to check $\weq$ is closed under composition. First we note that $\K_{\N}$ and $\Q_{\I}$ are each closed under composition as $\N$ and $\I$, respectively, are extension-closed. 
Each composite
$
\begin{tikzcd}[column sep=1.3cm]
I\oplus B
    \arrow[two heads]{r}{\begin{psmallmatrix}
    0, \amph \id_{B}
    \end{psmallmatrix}}
& B
    \arrow{r}{l}
&[-0.5cm] C
\end{tikzcd}
$
in $\K_\N\circ \Q_\I$, 
where $I\in\I$ and $l\in\K_\N$, 
is also the composite 
$
\begin{tikzcd}[column sep=1.3cm, cramped]
I\oplus B
    \arrow{r}{\id_{I} \oplus l}
& I \oplus C
    \arrow[two heads]{r}{\begin{psmallmatrix}
    0, \amph \id_{C}
    \end{psmallmatrix}}
& C ,
\end{tikzcd}
$
where 
$
\id_{I} \oplus l
$
belongs to $\K_\N$ as it is a direct sum of morphisms of $\K_\N$, 
and 
$
\begin{psmallmatrix}
    0, \amph \id_{C}
    \end{psmallmatrix}
    \in \Q_{\I}
$.
Therefore, $\K_\N\circ \Q_\I\sse \Q_\I\circ\K_\N$, hence  
$\weq\circ\weq = \Q_{\I}\circ\K_{\N}\circ\Q_{\I}\circ\K_{\N}\sse \Q_{\I}\circ\Q_{\I}\circ\K_{\N}\circ\K_{\N}=\Q_\I\circ\K_\N = \weq$ and 
$\weq$ is closed under composition.

Now we will check \ref{WW1'} (see Lemma~\ref{lem:WW1'}). Consider the following solid diagram of $\fs$-triangles in which $f'a=bf$ and where $a,b\in\weq$.
\begin{equation}
\label{diag:Sarazola_homotopy_fiber_proof}
\begin{tikzcd}[row sep=0.6cm]
    A\arrow{r}{f}\arrow{d}{\sim}[swap]{a}\arrow[mysymbol]{dr}[description]{\circlearrowleft}& B\arrow{r}{g}\arrow{d}{\sim}[swap]{b} & C\arrow[dashed]{r}{\delta}\arrow[dotted]{d}{}[swap]{c'}&{}\\
    A'\arrow{r}{f'}& B'\arrow{r}{g'}& C'\arrow[dashed]{r}{\delta'}&{}
\end{tikzcd}
\end{equation}
We shall find a weak equivalence $c'\colon C \to C'$ that makes the whole diagram commute. 
Note that \ref{WW1'} does not require the triplet $(a,b,c')$ to be a morphism of $\fs$-triangles.

First, by axiom (ET3) for an extriangulated category, we may complete the pair $(a,b)$ to a morphism $(a,b,c)$ of $\fs$-triangles.
Applying Lemma~\ref{lem:horseshoe} to 
$\begin{tikzcd}[column sep=0.5cm, cramped, row sep=0.6cm]
    A 
        \arrow{r}{f}
    & B 
        \arrow{r}{g}
    & C
        \arrow[dashed]{r}{\delta}
    &{,}
\end{tikzcd}$
we have a morphism
\begin{equation}
\label{diag:Sarazola_homotopy_fiber_proof2}
\begin{tikzcd}[row sep=0.8cm, ampersand replacement=\&]
    A\arrow{r}{f}\arrow[tail]{d}{}[swap]{\gamma_A} \& B\arrow{r}{g}\arrow[tail]{d}{}[swap]{\gamma_B} \& C\arrow[dashed]{r}{\delta}\arrow[tail]{d}{}[swap]{\gamma_C} \& {}\\
    I_A\arrow{r}{}  \& I_B\arrow{r}{} \& I_C\arrow[dashed]{r}{0} \& {}
\end{tikzcd}
\end{equation}
of $\fs$-triangles 
from the upper half of the corresponding diagram \eqref{diag:horseshoe1}. 
In particular, the morphisms $\gamma_{A}, \gamma_{B},\gamma_{C}$ are $\fs$-inflations, and hence ${\begin{psmallmatrix}\gamma_A \\
a
\end{psmallmatrix}},
{\begin{psmallmatrix}\gamma_B \\
b
\end{psmallmatrix}},
{\begin{psmallmatrix}\gamma_C \\
c
\end{psmallmatrix}}$
are also $\fs$-inflations. 
Moreover, as $a,b\in\weq$, we have 
${\begin{psmallmatrix}\gamma_A \\
a
\end{psmallmatrix}},
{\begin{psmallmatrix}\gamma_B \\
b
\end{psmallmatrix}}\in\K_{\N}$
by Corollary~\ref{cor:inflation-to-I-and-weq-is-again-weq}.

Thus we have a factorization of $(a,b,c)$ as in the following commutative diagram.
\begin{equation}
\label{diag:Sarazola_homotopy_fiber_proof2-1}
\begin{tikzcd}[column sep=1.5cm, row sep=0.6cm, ampersand replacement=\&]
    A\arrow{r}{f}\arrow[tail]{d}{\sim}[swap]{\begin{psmallmatrix}\gamma_A \\
 a
 \end{psmallmatrix}} \& B\arrow{r}{g}\arrow[tail]{d}{\sim}[swap]{\begin{psmallmatrix}\gamma_B \\
 b
 \end{psmallmatrix}} \& C\arrow[dashed]{r}{\delta}\arrow[tail]{d}{}[swap]{\begin{psmallmatrix}\gamma_C \\
 c
 \end{psmallmatrix}} \& {}\\
    I_A\oplus A'\arrow{r}{}\arrow{d}{\sim}[swap]{\begin{psmallmatrix}0, & \id_{A'}\end{psmallmatrix}}  \& I_B\oplus B'\arrow{r}{}\arrow{d}{\sim}[swap]{\begin{psmallmatrix}0, & \id_{B'}\end{psmallmatrix}} \& I_C\oplus C'\arrow[dashed]{r}{0\oplus\delta'}\arrow{d}{\sim}[swap]{\begin{psmallmatrix}0, & \id_{C'}\end{psmallmatrix}} \& {}\\
    A'\arrow{r}{f'} \& B'\arrow{r}{g'} \& C'\arrow[dashed]{r}{\delta'} \& {}
\end{tikzcd}
\end{equation}
The vertical arrows in the lower half of \eqref{diag:Sarazola_homotopy_fiber_proof2-1} lie in $\Q_{\I}\sse\weq$. 
Therefore, we are reduced to showing 
$\begin{psmallmatrix}\gamma_C \\
c
\end{psmallmatrix}\in\K_{\N}$. 
In particular, 
we may assume $a,b\in\K_{\N}$ 
in \eqref{diag:Sarazola_homotopy_fiber_proof} from the very beginning in order to find an appropriate $c'\colon C\to C'$ in $\weq$ making \eqref{diag:Sarazola_homotopy_fiber_proof} commute.

Let $\delta_{1} \deff a_{*}\delta = c^{*}\delta'$. 
By \cite[the dual of Prop.~3.15]{NP19}, there is a commutative diagram
\[
\begin{tikzcd}[column sep=1.5cm, row sep=0.6cm]
A
    \arrow{r}{f}
    \arrow[tail]{d}{a}
& B
    \arrow{r}{g}
    \arrow[tail]{d}{b_{1}}
& C
    \arrow[dashed]{r}{\delta}
    \arrow[equals]{d}{}
& {}
\\
A' 
    \arrow{r}{f_{1}}
    \arrow{d}{}
    \wPO{ur}
& B_{1}
    \arrow{r}{g_{1}}
    \arrow{d}{}
& C 
    \arrow[dashed]{r}{\delta_{1}}
&[1cm] {}
\\
N_{a}
    \arrow[equals]{r}{}
    \arrow[dashed]{d}{}
& N_{a}
    \arrow[dashed]{d}{}
& {}
& {}
\\
{}&{}&{}&{}
\end{tikzcd}
\]
of $\fs$-triangles and morphisms of $\fs$-triangles. Note that since $\delta_{1} = a_{*}\delta$, the square marked (wPO) is indeed a weak pushout of $f$ along $a$. 
We know $N_{a}\in\N$ as $a\in\K_{\N}$, hence $b_{1}\in\K_{\N}$ too.
Since $f'a = bf$, there exists $p\colon B_{1} \to B'$ such that 
$b = pb_{1}$ and $f' = pf_{1}$. 

By Lemma~\ref{lem:wPO_wPB}\ref{item:wPB}, we have the following morphism of $\fs$-triangles.
\[
\begin{tikzcd}[column sep=1cm, row sep=0.6cm]
    A'
        \arrow{r}{f_{1}}
        \arrow[equals]{d}{}
    & B_{1}
        \arrow{r}{g_{1}}
        \arrow{d}[swap]{b_{2}}
        \wPB{dr}
    & C
        \arrow[dashed]{r}{\delta_{1}}
        \arrow{d}{c}
    & {}\\
    A'
        \arrow{r}{f'}
    & B'
        \arrow{r}{g'}
    & C'
        \arrow[dashed]{r}{\delta'}
    & {}
\end{tikzcd}
\]
Consider the morphism $b-b_{2}b_{1} \colon B \to B'$. 
We have 
$
(b-b_{2}b_{1})f = 0
$, 
so there exists $v\colon C \to B'$ such that
$
b - b_{2} b_{1} = vg
$, 
or 
$
b 
    = b_{2} b_{1} + vg
    = b_{2} b_{1} + v g_{1} b_{1}
    = ( b_{2} + v g_{1} ) b_{1}
$.
Set $b_{3} \deff b_{2} + v g_{1} \colon B_{1} \to B'$, so that 
$b = b_{3} b_{1}$. 

Let 
$
\begin{tikzcd}[column sep=0.7cm, cramped]
B_{1}
    \arrow[tail]{r}{\gamma_{B_{1}}}
& I_{B_{1}}
\end{tikzcd}
$
be an $\fs$-inflation with $I_{B_{1}}\in\I$ coming from $(\C,\BE,\fs)$ having enough $\BE$-injectives.
This yields another $\fs$-inflation
$
\begin{tikzcd}[column sep=1cm, cramped]
B_{1}
    \arrow[tail]{r}{
    \begin{psmallmatrix}
    \gamma_{B_{1}} \\
    b_{3}
    \end{psmallmatrix}
    }
& I_{B_{1}} \oplus B'
\end{tikzcd}
$
by \cite[Cor.~3.16]{NP19}.
Applying \ref{ET4} to the composition 
$
\begin{psmallmatrix}
\gamma_{B_{1}} \\
 b_{3}
\end{psmallmatrix}\circ b_{1}
= \begin{psmallmatrix}
\gamma_{B_{1}}b_{1} \\
 b_{3}b_{1}
\end{psmallmatrix}
= \begin{psmallmatrix}
\gamma_{B_{1}}b_{1} \\
 b
\end{psmallmatrix}
$, 
we have a commutative diagram
\begin{equation}
\label{eqn:ET4-to-composition}
\begin{tikzcd}[column sep=1.5cm, row sep=0.6cm]
B 
    \arrow{r}{b_{1}}
    \arrow[equals]{d}{}
& B_{1}
    \arrow{r}{}
    \arrow[tail]{d}{\begin{psmallmatrix}
            \gamma_{B_{1}} \\
            b_{3}
            \end{psmallmatrix}}
& N_{a}
    \arrow[dashed]{r}{}
    \arrow[tail]{d}{}
& {}
\\
B 
    \arrow{r}[swap]{\begin{psmallmatrix}
                \gamma_{B_{1}}b_{1} \\
                b
                \end{psmallmatrix}}
& I_{B''}\oplus B'
    \arrow{r}[swap]{}
        \arrow{d}{}
& N
    \arrow[dashed]{r}{}
    \arrow{d}{}
& {}
\\
& M 
    \arrow[equals]{r}{}
    \arrow[dashed]{d}{}
& M 
    \arrow[dashed]{d}{} 
& {}
\\
&{} &{} &{}
\end{tikzcd}
\end{equation}
of $\fs$-triangles, in which 
$N\in\N$ by Corollary~\ref{cor:inflation-to-I-and-weq-is-again-weq}. Hence, $M\in\N$ as $\N$ is closed under cones of $\fs$-inflations by assumption (see Setup~\ref{setup:Sarazola1}).

Next we inspect the cone of the composite $\fs$-inflation
$
\begin{psmallmatrix}
\gamma_{B_{1}} \\
b_{3}
\end{psmallmatrix} \circ f_{1}
\colon A' \to I_{B_{1}}\oplus B'$.
Notice that 
$
b_{3}f_{1} 
    = (b_{2}+vg_{1})f_{1}
    = b_{2}f_{1}
    = f'
$
since $g_{1}f_{1}$ vanishes, and thus
$
\begin{psmallmatrix}
\gamma_{B_{1}} \\
b_{3}
\end{psmallmatrix} f_{1}
    = \begin{psmallmatrix}
\gamma_{B_{1}}f_{1} \\
f'
\end{psmallmatrix}
$.
A weak pushout of $f'$ along $\gamma_{B_{1}}f_{1}$ 
yields a morphism of $\fs$-triangles as follows
\[
\begin{tikzcd}[column sep=2cm, row sep=0.6cm]
    A'\arrow{r}{f'}\arrow{d}{}[swap]{\gamma_{B_{1}}f_{1}}
    & B'\arrow{r}{g'}
    \arrow{d}
    & C'\arrow[dashed]{r}{\delta'}\arrow[equal]{d}{}
    &[-1cm]{}\\
    I_{B_{1}}\arrow{r}{\begin{psmallmatrix}
        \id_{I_{B_{1}}} \\ 0
    \end{psmallmatrix}}
    & I_{B_{1}}\oplus C'\arrow{r}{\begin{psmallmatrix}
        0, \amph \id_{C'}
    \end{psmallmatrix}}
    & C'\arrow[dashed]{r}{0}
    &[-1cm]{,}
\end{tikzcd}
\] 
where the bottom row splits as $I_{B_{1}}\in\I$.
Moreover, by \cite[Prop.~2.22]{HS20} we have an $\fs$-triangle
\begin{equation}
\label{eqn:PB-of-delta-prime-along-01}
\begin{tikzcd}[column sep=2cm]
A' 
    \arrow{r}{\begin{psmallmatrix}
    \gamma_{B_{1}}f_{1} \\
f'
\end{psmallmatrix}}
& I_{B_{1}} \oplus B'
    \arrow{r}{\begin{psmallmatrix}-\id_{I_{B_{1}}}\amph \alpha \\
 0\amph h
 \end{psmallmatrix}}
& I_{B_{1}} \oplus C'
    \arrow[dashed]{r}{\begin{psmallmatrix}
        0, \amph \id_{C'}
    \end{psmallmatrix}^{*}\delta'}
& {,}
\end{tikzcd}
\end{equation}
where 
$
\begin{psmallmatrix}
\alpha \\
h
\end{psmallmatrix}
\colon B' \to I_{B_{1}} \oplus C'
$
satisfies 
$ h = 
\begin{psmallmatrix}
0, \amph \id_{C'}
\end{psmallmatrix} 
\begin{psmallmatrix}
\alpha \\
h
\end{psmallmatrix}
= g'
$
and 
$
\begin{psmallmatrix}
\alpha \\
h
\end{psmallmatrix} f' 
    = \begin{psmallmatrix}
\id_{I_{B_{1}}} \\
0
\end{psmallmatrix} \gamma_{B_{1}}f_{1}
$.
Applying \ref{ET4} to the composite $\begin{psmallmatrix}\gamma_{B_{1}} \\
b_{3}
\end{psmallmatrix}\circ f_{1}$, we have a commutative diagram
\begin{equation}
\label{diag:Sarazola_homotopy_fiber_proof3}
\begin{tikzcd}[column sep=1.5cm, ampersand replacement=\&, row sep=0.6cm]
    A' \arrow{r}{f_{1}}\arrow[equals]{d}{}
    \& B_{1} \arrow{r}{g_{1}}\arrow{d}{\begin{psmallmatrix}\gamma_{B_{1}} \\
 b_{3}
 \end{psmallmatrix}}
    \& C \arrow[dashed]{r}{\delta_{1}}\arrow{d}{\begin{psmallmatrix}\gamma' \\
 c'
 \end{psmallmatrix}}
    \& {}
\\
    A' \arrow{r}[swap]{\begin{psmallmatrix}\gamma_{B_{1}}f_{1} \\
 f'
 \end{psmallmatrix}}
\& I_{B_{1}}\oplus B'
\arrow{r}
 \arrow{d}{}
\& I_{B_{1}}\oplus C' 
    \arrow[dashed]{r}{\delta_{2}}
    \arrow{d}{}
    \& {}
    \\
\& M \arrow[equals]{r}{}\arrow[dashed]{d}{}
    \& M \arrow[dashed]{d}{} 
    \& {}
    \\
\& {}  \&{}   \&
\end{tikzcd}
\end{equation}
of $\fs$-triangles. 
Note that
$\begin{psmallmatrix}\gamma' \\
 c'
 \end{psmallmatrix}\in\K_{\N}$ 
 as $M$ belongs to $\N$. 
 
Although it is not necessarily true that $\delta_{2} = \begin{psmallmatrix}
        0, \amph \id_{C'}
    \end{psmallmatrix}^{*}\delta'
$, 
these $\BE$-extensions are isomorphic thanks to \cite[Cor.~3.6]{NP19}. In particular, from \eqref{eqn:PB-of-delta-prime-along-01} and \eqref{diag:Sarazola_homotopy_fiber_proof3} we have a morphism of $\fs$-triangles
\[
\begin{tikzcd}[column sep=2cm, row sep=0.6cm]
A' 
    \arrow{r}{f_{1}}
    \arrow[equals]{d}{}
& B_{1} 
    \arrow{r}{g_{1}}
        \arrow{d}{\begin{psmallmatrix}\gamma_{B_{1}} \\
                b_{3}
                \end{psmallmatrix}}
& C 
    \arrow[dashed]{r}{\delta_{1}}
    \arrow{d}{\beta\begin{psmallmatrix}
                \gamma' \\
                c_{1}
                \end{psmallmatrix}}
& {}
\\
A' 
    \arrow{r}[swap]{\begin{psmallmatrix}
                    \gamma_{B_{1}}f_{1} \\
                    f'
                    \end{psmallmatrix}}
& I_{B_{1}}\oplus B'
    \arrow{r}[swap]{\begin{psmallmatrix}-\id_{I_{B_{1}}}\amph \alpha \\
 0\amph g'
 \end{psmallmatrix}}
& I_{B_{1}}\oplus C' 
    \arrow[dashed]{r}{\begin{psmallmatrix}
        0, \amph \id_{C'}
    \end{psmallmatrix}^{*}\delta'}
&{}
\end{tikzcd}
\]
for some automorphism $\beta$ of $ I_{B_{1}}\oplus C'$. 
Lastly, set 
$c'
\deff 
    \begin{psmallmatrix}
    0, \amph \id_{C'}
    \end{psmallmatrix}
    \beta
    \begin{psmallmatrix}
    \gamma'\\
    c_{1}
    \end{psmallmatrix}
    \colon C \to C'
$, which lies in $\weq=\Q_{\I}\circ\K_{\N}$ as $I_{B_{1}}\in\I$. 
One can then confirm the commutativity of \eqref{diag:Sarazola_homotopy_fiber_proof} as required, and so \ref{WW1'} holds for $(\C,\seq,\weq)$.

We have thus checked that $(\C,\seq,\weq)$ is a weak Waldhausen category 
and hence established the sequence \eqref{seq:Sarazola_homotopy_fiber} lies in $\wWald$.
Now we check $\N=\C^{\weq}$. Recall that $\C^{\weq}$ consists of the objects $C\in\C$ for which $0\to C$ lies in $\weq$. 
As $\C$ is weakly idempotent complete, the section 
$0\to N$ belongs to $\K_{\N}$ for all $N\in\N$ by e.g.\ \cite[Prop.~2.5]{BHST2022}, 
so $\N\sse\C^{\weq}$. 
Conversely, suppose we are given an object $X\in\C^{\weq}$. Then, by the definition of $\weq$, we get morphisms $0\overset{l}{\to} N\overset{r}{\to} X$ with $l\in\K_{\N}$ and $r\in\Q_{\I}$. 
In particular, $N\in\N$ and $X$ is a direct summand of $N$, which shows $X\in\N$.

To produce the right exact sequence \eqref{eqn:7.2-K0-groups}, we will use Theorem~\ref{thm:Quillen_localization1_new}. First, note that Setup~\ref{setup:Quillen_loc1} is satisfied, so we may consider the classes defined in Definition~\ref{def:mors_assoc_to_wWH_cat}. 
Since $\weq = \Q_{\I}\circ \K_{\N}$, 
it suffices to show $\K_{\N}\sse\L^{\ac} = \cof \cap \weq$ 
and 
\[
\Q_{\I} \sse 
    \T 
        = \set{ g\in\fib \cap \weq | \text{there is an } \fs\text{-conflation } A\overset{f}{\to}B\overset{g}{\to}C \text{ with } A\in\C^{\weq}=\N}.
\]
Since $\K_{\N}$ is precisely the class of $\fs$-inflations in $\weq$ by Corollary~\ref{cor:char_Ln}, 
the containment $\K_{\N}\sse\L^{\ac}$ holds. 
Since a morphism in $\Q_{\I}\sse\weq$ is a retraction with cocone in $\I\sse\N$, we also see $\Q_{\I}\sse\T$. 
Therefore, by Theorem~\ref{thm:Quillen_localization1_new} we know \eqref{seq:Sarazola_homotopy_fiber} is a localization sequence in $\wWald$ and 
\eqref{seq:homotopy_fiber1} 
is right exact. 
Finally, the weak Waldhausen categories $(\C,\seq,\veq)$ and $(\N,\seq^{\weq},\veq^{\weq})$ are the canonical wbiWET structures on $\C$ and $\N$, respectively, arising from the extriangulated structure $(\C,\BE,\fs)$. 
Hence, the isomorphisms $K_{0}(\N,\BE|_{\N},\fs|_{\N})\cong K_{0}(\N,\seq^{\weq},\veq^{\weq})$ and $K_{0}(\C,\BE,\fs)\cong K_{0}(\C,\seq,\veq)$  (see Example~\ref{ex:WET1}) imply 
the sequence \eqref{eqn:7.2-K0-groups} of Grothendieck groups is right exact, as claimed.
\end{proof}

\subsection{A connection to extriangulated localization}
\label{sec:connection_to_extri_loc}

To clarify how the localization sequence \eqref{seq:Sarazola_homotopy_fiber} relates to extriangulated localization, we investigate the situation when $\N$ is thick in $(\C,\BE,\fs)$. 
We will see that, under Setup~\ref{setup:Sarazola2} below, our way of considering an extriangulated localization as a localization sequence of wWET categories as in Section~\ref{sec:wWET-cats} is equivalent to following the construction motivated by \cite{Sar20} in Section~\ref{sec:Sarazola_Wald_cat} above. 
Assume the following in the rest.

\begin{setup}
\label{setup:Sarazola2}
In addition to Setup~\ref{setup:Sarazola1}, we assume 
$\N$ is thick in $(\C,\BE,\fs)$, 
$\Sn$ is saturated and $\overline{\Sn}$ satisfies conditions 
\textup{(MR1)}--\textup{(MR4)} of \cite[p.~343]{NOS22}. 
Thus, there is the following extriangulated localization sequence (i.e.\ exact sequence) in $\ET$ (see Theorem~\ref{thm:NOS}).
\begin{equation}
\label{seq:ET_loc_from_Sarazola}
\begin{tikzcd}[column sep=1.2cm]
(\N,\BE|_\N,\fs|_\N)
    \arrow{r}{(\inc,\iota)}
& (\C,\BE,\fs)
\arrow{r}{(Q,\mu)} 
& (\C/\N,\widetilde{\BE},\widetilde{\fs})
\end{tikzcd}
\end{equation}
\end{setup}

Thanks to Proposition~\ref{prop:ex_seq_wWald_from_NOS},
the exact sequence \eqref{seq:ET_loc_from_Sarazola} in $\ET$ induces the following localization sequence in $\wWald$ where $\weq' \deff \Sn$.
\begin{equation}
\label{seq:exact_sequence_from_Sarazola}
\begin{tikzcd}
    (\N,\seq^{\weq'},\veq^{\weq'})
        \arrow[hook]{r}{\inc}
    &(\C,\seq,\veq)
        \arrow{r}{\id_{\C}} 
    &(\C,\seq,\weq')
\end{tikzcd}
\end{equation}

\begin{proposition}\label{lem:toward_higher_K}
The localization sequences \eqref{seq:Sarazola_homotopy_fiber} and \eqref{seq:exact_sequence_from_Sarazola} coincide.
In particular, we have a group isomorphism 
$K_{0}(\C,\seq,\weq)
= K_{0}(\C,\seq,\weq')
\overset{\cong}{\lra} K_{0}(\C/\N,\widetilde{\BE},\widetilde{\fs})$.
\end{proposition}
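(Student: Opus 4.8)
The plan is to prove that the two localization sequences literally coincide --- which, since both have middle term $(\C,\seq,\veq)$, both begin with a canonical inclusion followed by $\id_{\C}$, and both have first term determined by the subcategory of acyclic objects --- amounts to showing $\weq=\weq'$, i.e.\ $\Q_{\I}\circ\K_{\N}=\Sn$. The asserted isomorphism will then be extracted exactly as in the proof of \cref{cor:ES_via_Waldhausen}: \cref{setup:Sarazola2} guarantees that \cref{setup:section-3-1} holds (the class $\Sn$ is saturated and $\overline{\Sn}$ is a multiplicative system by (MR2)), so \cref{ex:exact_localization_homotopy_cat} identifies $\Ho(\C,\seq,\weq')$ with the wbiWET category corresponding to $(\C/\N,\widetilde{\BE},\widetilde{\fs})$, and \cref{prop:K0_of_homotopy_cat} together with \cref{ex:WET1} gives $K_{0}(\C,\seq,\weq')\cong K_{0}(\C/\N,\widetilde{\BE},\widetilde{\fs})$.

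First I would record the easy inclusion. By definition $\K_{\N}$ is precisely the class $\L$ of \cref{def:Sn_from_thick}, and $\Q_{\I}\sse\R$ because $\I\sse\N$; since $\Sn$ is closed under composition and contains both $\L$ and $\R$, this gives $\weq=\Q_{\I}\circ\K_{\N}\sse\R\circ\L\sse\Sn=\weq'$. For the reverse inclusion $\Sn\sse\weq$, I would use that $\Sn$ is the smallest composition-closed subclass of $\C^{\to}$ containing $\L\cup\R$, while $\weq$ is closed under composition by the proof of \cref{prop:Sarazola_homotopy_fiber}; hence it suffices to show $\L\sse\weq$ and $\R\sse\weq$. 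The former is immediate, as identities lie in $\Q_{\I}$ and so $\L=\K_{\N}\sse\Q_{\I}\circ\K_{\N}=\weq$.

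For the latter, I would take $g\in\R$ realized by an $\fs$-conflation $A\overset{f}{\rightarrowtail}B\overset{g}{\twoheadrightarrow}C$ with $A\in\N$, and choose (using that $\C$ has enough $\BE$-injectives) an $\fs$-conflation $A\overset{\gamma}{\rightarrowtail}I\twoheadrightarrow\Sigma A$ with $I\in\I$. Forming the weak pushout of $B\leftarrow A\to I$ via \cref{lem:wPO_wPB}\ref{item:wPO} --- concretely, pushing the $\fs$-triangle on $A\overset{f}{\rightarrowtail}B\overset{g}{\twoheadrightarrow}C$ out along $\gamma$ and, independently, pushing the $\fs$-triangle on $A\overset{\gamma}{\rightarrowtail}I\twoheadrightarrow\Sigma A$ out along $f$ (the two resulting cones agreeing up to isomorphism) --- produces an object $D$ sitting in $\fs$-conflations $I\rightarrowtail D\overset{g'}{\twoheadrightarrow}C$ and $B\overset{f''}{\rightarrowtail}D\twoheadrightarrow\Sigma A$, and tracing the morphisms of $\fs$-triangles supplied by the lemma identifies $g$ with $g'f''$. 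Since $A,I\in\N$ and $\gamma$ is an $\fs$-inflation, \cref{setup:Sarazola1} gives $\Sigma A\in\N$, so $f''\in\K_{\N}$; and $\cocone(g')=I\in\I$, so $g'\in\Q_{\I}$. Thus $g=g'f''\in\weq$, and therefore $\weq=\Sn=\weq'$, so the two sequences coincide. Combined with $\C^{\weq}=\N$ from \cref{prop:Sarazola_homotopy_fiber} (equivalently \cref{lem:acyclics-is-N}), the first paragraph completes the $K_{0}$ statement.

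The step I expect to be the main obstacle is $\R\sse\weq$: arranging the weak pushout object $D$ so that it simultaneously carries the two required $\fs$-conflations, and then checking --- with the correct sign conventions --- that the composite $B\overset{f''}{\rightarrowtail}D\overset{g'}{\twoheadrightarrow}C$ genuinely equals $g$. This is a Horseshoe-lemma-style diagram chase in the spirit of \cref{lem:horseshoe} and \cref{lem:factorization_via_injective_hull}.
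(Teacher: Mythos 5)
Your proposal is correct, and it reaches the key equality $\weq'=\Sn=\Q_{\I}\circ\K_{\N}=\weq$ by a genuinely different route from the paper on the hard inclusion $\Sn\sse\weq$. The paper argues in one stroke: given an arbitrary $f\colon A\to B$ in $\Sn$, it forms the $\fs$-inflation $\begin{psmallmatrix}\gamma_A\\ f\end{psmallmatrix}\colon A\to I_A\oplus B$ (as in \cref{lem:factorization_via_injective_hull}), notes that $Q\begin{psmallmatrix}\gamma_A\\ f\end{psmallmatrix}\cong Qf$ is invertible, and uses the saturation of $\Sn$ together with $\Ker Q=\N$ to conclude that its cone lies in $\N$; then $f=\begin{psmallmatrix}0, & \id_{B}\end{psmallmatrix}\begin{psmallmatrix}\gamma_A\\ f\end{psmallmatrix}$ exhibits $f\in\Q_{\I}\circ\K_{\N}$ directly. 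You instead exploit the minimality of $\Sn$ as the composition closure of $\L\cup\R$: since $\weq$ is closed under composition (established in the proof of \cref{prop:Sarazola_homotopy_fiber}), it suffices to show $\L=\K_{\N}\sse\weq$ (trivial, as $\id\in\Q_{\I}$) and $\R\sse\weq$, the latter via a weak pushout along an inflation $\gamma\colon A\to I$ with $I\in\I$. Your route does not need saturation of $\Sn$ for this inclusion (it is of course still needed elsewhere, e.g.\ for the $K_0$ identification and $\Ker Q=\N$), whereas the paper's argument is shorter under the standing hypotheses of \cref{setup:Sarazola2}. As for the step you flag as the main obstacle: rather than performing the two pushouts independently and matching the cones afterwards, a single application of the dual of \cite[Prop.~3.15]{NP19} — exactly as in the proof of \cref{prop:Sarazola_homotopy_fiber} — produces one object $D$ carrying both $\fs$-conflations $I\rightarrowtail D\overset{g'}{\twoheadrightarrow}C$ and $B\overset{f''}{\rightarrowtail}D\twoheadrightarrow\Sigma A$ with $g=g'f''$ on the nose; since $A,I\in\N$ and $\N$ is closed under cones of $\fs$-inflations, $\Sigma A\in\N$, so $f''\in\K_{\N}$ and $g'\in\Q_{\I}$ as you say (alternatively, your two-pushout version can be repaired by the uniqueness of cones, at the cost of chasing an isomorphism and a sign). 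Your concluding $K_0$ step via \cref{ex:exact_localization_homotopy_cat} and \cref{prop:K0_of_homotopy_cat} is exactly how the paper extracts the isomorphism $K_{0}(\C,\seq,\weq')\cong K_{0}(\C/\N,\widetilde{\BE},\widetilde{\fs})$.
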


\begin{proof}
We have to only show that $\weq' = \Sn$ coincides with $\weq = \Q_{\I}\circ \K_{\N}$. 
We immediately see that $\K_{\N}\sse\L$ and $\Q_{\I}\sse\R$, so 
$\weq'$ being the closure of $\L \cup \R$ with respect to the composition (see Definition~\ref{def:Sn_from_thick}) implies $\weq\sse\weq'$. 
To show the converse, take a morphism $A\overset{f}{\to}B$ in $\weq'$.
Since $\C$ has enough $\BE$-injectives, there is an $\fs$-inflation $A\overset{\gamma_A}{\lra}I_A$ with $I_A\in\I\sse\N$ which induces an $\fs$-conflation 
$\begin{tikzcd}[cramped]
A
	\arrow{r}{\begin{psmallmatrix}\gamma_A \\
				f
				\end{psmallmatrix}}
& I_A\oplus B
	\arrow{r}
& N.
\end{tikzcd}
$
Since $Q\begin{psmallmatrix}\gamma_A \\
f
\end{psmallmatrix} \cong Qf$ is an isomorphism in $\C/\N$, 
the morphism $\begin{psmallmatrix}\gamma_A \\
f
\end{psmallmatrix}$ belongs to $\weq' = \Sn$ as $\Sn$ is saturated.
Thus, we have checked $N\in\Ker Q=\N$ and $\begin{psmallmatrix}\gamma_A \\
f
\end{psmallmatrix}\in\K_{\N}$.
As 
$f=\begin{psmallmatrix}0, & \id_{B}
\end{psmallmatrix}\begin{psmallmatrix}\gamma_A \\
f
\end{psmallmatrix}$, we have thus verified that $\weq'\sse\Q_{\I}\circ \K_{\N}=\weq$.

Therefore, we have 
$(\C,\seq,\weq)
= (\C,\seq,\weq')$. 
Finally, from the proof of Corollary~\ref{cor:ES_via_Waldhausen} we know that there is the group isomorphism 
$K_{0}(Q) \colon K_{0}(\C,\seq,\weq')\overset{\cong}{\lra}  K_{0}(\C/\N,\widetilde{\BE},\widetilde{\fs})$. 
\end{proof}

\medskip
\noindent
{\bf Acknowledgements.}
The authors would like to thank Marcel B\"{o}kstedt, Hiroyuki Nakaoka and Katsunori Takahashi for their time, valuable discussions and comments.

Y.\ O.\ is supported by JSPS KAKENHI (grant JP22K13893).
A.\ S.\ is supported by: the Danish National Research Foundation (grant DNRF156); the Independent Research Fund Denmark (grant 1026-00050B); and the Aarhus University Research Foundation (grant AUFF-F-2020-7-16).


%
\end{document}